\theoremstyle{plain}
\newtheorem{theorem}{Theorem}[section]
\newtheorem{proposition}[theorem]{Proposition}
\newtheorem{lemma}[theorem]{Lemma}
\newtheorem{corollary}[theorem]{Corollary}
\theoremstyle{remark}
\newtheorem{remark}[theorem]{Remark}
\theoremstyle{definition}
\newtheorem{definition}[theorem]{Definition}
\newtheorem{example}[theorem]{Example}
\newtheorem*{claim*}{Claim}
\title{Stable Lyapunov Spectrum Rigidity of Nilmanifold Endomorphisms}
\author[1]{Ruihao Gu}
\author[2]{Wenchao Li \thanks{Corrosponding author: lwc@pku.edu.cn}}
\affil[1]{Shanghai Center for Mathematical Sciences, Fudan University, Shanghai, 200433, China}
\affil[2]{School of Mathematical Sciences, Peking University, Beijing, 100871, China}
\date{}
\begin{document}








\maketitle

{\bf \centering Abstract

}

Under some non-invertibility and irreducibility condition, for nilmanifold Anosov maps with one-dimensional stable bundle, we get the equivalence among the existence of invariant unstable bundle, the existence of topological conjugacy to its linear part, and a constant periodic stable Lyapunov exponent.\\

{\bf Keywords:} nilmanifold, endomorphism, Lyapunov exponent, global rigidity\\

{\bf MSC Classification: } 37D20

\section{Introduction}\label{Introduction}
As an important example of dynamical systems, Anosov diffeomorphisms have been concerned since several decades ago. Since then, many results about topological classification of Anosov diffeomorphisms have been established. For example, Anosov diffeomorphisms are structurally stable \cite{Anosov67}, and nilmanifold Anosov diffeomorphisms are always topologically conjugate to hyperbolic nilmanifold automorphisms \cite{Franks1970,Manning1974,Newhouse1970}. Note that the conjugacy between two Anosov diffeomorphisms is actually H\"{o}lder continuous, but generally not smooth \cite{Katok1995}.

However, things become different when it comes to non-invertible, non-expanding Anosov maps. They are \textit{not} structurally stable \cite{ManePugh1975,Przytycki1976}, and a toral Anosov map may not be topologically conjugate to any toral endomorphism, when it has no invariant unstable bundle, see Proposition \ref{intro prop: special and conjugacy}. Reasonably, rigidity phenomenon happens when the conjugacy indeed exists. For example, the conjugacy between two non-invertible, non-expanding Anosov maps on a 2-torus is automatically smooth along each stable leaf \cite{An2023,GS22}.

On the Smale's conjecture \cite{Smale1967} that an Anosov diffeomorphism is always supported on an infra-nilmanifold, which is finitely covered by a nilmanifold, and since all known examples of Anosov maps are indeed supported on infra-nilmanifolds, it is natural to study nilmanifold Anosov maps. However, most researches on \textit{rigidity} issue are focus on tori, see for example \cite{An2023,Llave1987,Llave1992,GG2008,Gogolev2008,Gogolev2017,GKS2011,GKS2020,GS22,RY2019}. 

As far as authors know, the study of rigidity phenomenon for Anosov maps on non-toral nilmanifolds can be only found in \cite{DeWitt2021,GH2023}. Dewitt \cite{DeWitt2021} studied the local Lyapunov spectrum rigidity of hyperbolic nilmanifold automorphisms under some irreducibility and sorted spectrum condition, which is partially related to the conditions we need in this paper.

It is worth to point out that the promotion from tori to nilmanifolds is nontrivial. The lack of commutativity leads to a weird geometric structure and a more complicated algebraic structure. To overcome such obstructions, generally an induction with respect to the lower central series is needed, although some properties may be destroyed during the induction.

\subsection{Rigidity of Conjugacy}
In this paper, inspired by \cite{An2023}, we consider a rigidity question of non-invertible nilmanifold Anosov maps: 

\textbf{Question.} \textit{Is the conjugacy between a non-invertible Anosov map and its linear part automatically smooth along each stable leaf\ ?}

The \textit{linear part} of a covering map $f$ on a nilmanifold $M$, is the unique (up to homotopy) endomorphism $\Psi$ such that $f$ is homotopic to $\Psi$. Note that if $f$ is Anosov, then $\Psi$ is hyperbolic and unique up to an algebraic conjugacy \cite{Aoki1994,Sumi1996}.

Before answering this question, let us talk about a main tool on this question in \cite{An2023}, the \textit{exponentially dense preimage set}. In the torus case, \cite{An2023} shows that a non-invertible irreducible toral endomorphism $\Psi: \mathbb{T}^d\to \mathbb{T}^d$ has exponential density of preimage set, which means that the set of $k$-th preimages of any point becomes dense exponentially as $k$ tends to infinity, i.e., there exist constants $C > 1$ and $0 < \mu < 1$ such that for every point $x\in \mathbb{T}^d$, the set $\Psi^{-k}(x)$ is $C\mu^k$-dense in $\mathbb{T}^d$.

Notice that irreducibility is not a necessary condition. For example, the endomorphism $A = \left(\begin{array}{cc}2 & 0 \\ 0 & 2\end{array}\right)$ of $\mathbb{T}^2$ still has exponential density of preimage set. We generalize this result for nilmanifold endomorphisms as following. A nilmanifold endomorphism is said to be \textit{totally non-invertible}, if its eigenvalues are not algebraic units. In some sense the definition can be understood as having no invertible factors, see the discussion after Definition \ref{def: totally non-invertible}.

\begin{theorem}\label{intro thm: exponential density}
	Let $\Psi: M \to M$ be a nilmanifold endomorphism. Then $\Psi$ has exponential density of preimage set if and only if $\ \Psi$ is totally non-invertible.
\end{theorem}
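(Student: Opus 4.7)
The plan is to prove the two implications separately and to reduce each to the toral (abelian) case by induction on the nilpotency step of $N$, using the lower central series $\mathfrak{n} = \mathfrak{n}_1 \supseteq \mathfrak{n}_2 \supseteq \cdots$. The key compatibility is that the eigenvalues of $D\Psi$ on $\mathfrak{n}/\mathfrak{n}_2$ and on $\mathfrak{n}_2$ are each subsets of the multiset of eigenvalues of $D\Psi$ on $\mathfrak{n}$, so the property ``every eigenvalue is an algebraic non-unit'' descends both to the toral quotient $\bar{M} = N/(N_2 \Gamma)$ and to the invariant sub-nilmanifold $N_2/\Gamma_2$; conversely, any unit eigenvalue of $D\Psi$ on $\mathfrak{n}$ must appear on at least one of these two pieces.

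For the sufficiency direction, I would first handle the toral base case. Let $A \in M_d(\mathbb{Z})$ represent $\Psi$, decompose $\mathbb{Q}^d = \bigoplus_i V_i$ via the primary decomposition associated to the irreducible factors $p_i \in \mathbb{Z}[x]$ of the minimal polynomial, and note that total non-invertibility forces $|p_i(0)| \geq 2$ on each factor. Restricted to $V_i$, $A$ is conjugate to multiplication by an algebraic integer $\alpha_i$ of absolute norm at least $2$ on an order in $\mathbb{Q}(\alpha_i)$. A Liouville/AM-GM type lower bound $\prod_\sigma |\sigma(v)| \geq 1$ over Galois embeddings $\sigma$, applied to $A^k v$ for $v \in \mathcal{O}_i \setminus \{0\}$, yields $\|A^k v\| \geq c\,|\det(A|_{V_i})|^{k/\dim V_i}$; by the duality linking the shortest vector of $(A^T)^k \mathbb{Z}^d$ to the covering radius of $A^{-k}\mathbb{Z}^d / \mathbb{Z}^d$, this translates to exponential density of $\ker A^k$ in $\mathbb{T}^d$. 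For the inductive step, the key compatibility makes both $\bar\Psi$ and $\Psi|_{N_2/\Gamma_2}$ totally non-invertible, their preimage sets are exponentially dense by the base case and the inductive hypothesis, and a local product chart for the fibration $M \to \bar{M}$ assembles these into exponential density on $M$.

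For the necessity direction, argue contrapositively: let $\lambda$ be a unit eigenvalue of $D\Psi$. If $\lambda$ appears on $\mathfrak{n}/\mathfrak{n}_2$, the toral case produces a rational $\bar{A}$-invariant subspace $V \subseteq \mathfrak{n}/\mathfrak{n}_2$ on which $|\det \bar{A}|_V| = 1$; this exponentiates to an invariant subtorus of $\bar{M}$ on which $\bar\Psi$-preimages do not spread, and Lipschitzness of the projection $\pi: M \to \bar{M}$ then forbids exponential density for $\Psi$ itself. If $\lambda$ appears only on $\mathfrak{n}_2$, pass to $\Psi|_{N_2/\Gamma_2}$ and apply the induction hypothesis, after first arguing that exponential density on $M$ descends to exponential density on the invariant sub-nilmanifold $N_2/\Gamma_2$ --- for instance by choosing the base point to be the identity coset $e \in M$ and noting that $\Psi^{-k}(e)$ is a finite subgroup of $M$ whose intersection with $N_2/\Gamma_2$ is exactly $(\Psi|_{N_2/\Gamma_2})^{-k}(e)$.

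The main technical obstacle I expect is ensuring this descent of exponential density to the sub-nilmanifold fiber in the necessity direction. Descent to $\bar{M}$ is automatic by Lipschitz projection, but comparing metrics between $M$ and $N_2/\Gamma_2$ requires quantitative control: non-commutativity makes cosets of $N_2/\Gamma_2$ in $M$ only quasi-isometric, not isometric, to $N_2/\Gamma_2$, and one must also choose the base point so that the restricted preimage set remains nontrivially dense in the fiber. Verifying that at each inductive step the BCH formula allows the covering-radius estimates on Lie algebra lattices to be transferred to the group level without losing the exponential rate is the kind of subtlety the introduction flags as the inevitable price of replacing tori with nilmanifolds.
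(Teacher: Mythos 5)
Your necessity direction has a genuine gap, at exactly the point you flagged, and the proposed fix does not close it. When the unit eigenvalue lives only on $\mathfrak{n}_2$, you want to contradict the inductive hypothesis by arguing that exponential density of $\Psi^{-k}(e)$ in $M$ forces exponential density of its trace on the fibre $N_2\Gamma/\Gamma\cong N_2/\Gamma_2$. The intersection identity you invoke is correct (for $n_2\in N_2$ one has $\Psi^k(n_2)\in\Gamma$ iff $\Psi^k(n_2)\in\Gamma\cap N_2=\Gamma_2$; note, though, that $M=N/\Gamma$ is not a group, so ``finite subgroup of $M$'' is a slip), but it only identifies the trace, it does not make it dense: density of a set in $M$ gives no lower bound whatsoever on the density of its intersection with a fixed positive-codimension submanifold. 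Concretely, $\Psi^{-k}(e)$ lies entirely over the finite set $\Psi_1^{-k}(\bar e)\subseteq M_1$, so the points approximating a given point of the central fibre generically sit in nearby fibres over \emph{other} base preimages and never on $N_2\Gamma/\Gamma$ itself; ambient $C\mu^k$-density is perfectly compatible with the fibre trace being exactly as sparse as your inductive hypothesis predicts, so no contradiction is produced. Information flows through quotients, not restrictions: Lipschitz projections push density forward and pull non-density back, restriction to a submanifold does neither. This is why the paper's necessity argument works with the quotient tower $M_i=N/N_{i+1}\Gamma$: a unit eigenvalue of $\psi$ is a unit eigenvalue of some graded piece $\psi_i$, which is the \emph{toral fibre} of $M_i\to M_{i-1}$; the restriction of $\Psi_i^k$ to fibres is affine, the toral argument confines the fibre preimages to translates of a fixed proper closed union of subtori, so $\Psi_i^{-k}$ of the base point sits in a proper compact subbundle and is not dense in $M_i$, and non-density lifts to $M$ through the projection $M\to M_i$. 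To repair your scheme you must replace ``restrict to $N_2/\Gamma_2$'' by ``pass to a quotient nilmanifold on which the offending eigenvalue sits on a toral fibre,'' which is essentially the tower the paper uses in the proof of Theorem \ref{thm: exponential density}.

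For the sufficiency direction your organization (toral base $M_1$, nilmanifold fibre $N_2/\Gamma_2$, induct on the step, assemble with a bundle density lemma) differs from the paper's (which inducts up the tower with toral fibres, the base carrying the inductive hypothesis, assembled by Lemma \ref{lem: density lemma}), and it can be made to work: over any base preimage, the fibre, identified with $N_2/\Gamma_2$ via a representative in a fixed fundamental domain, carries the restricted endomorphism up to an automorphism preserving $\Gamma_2$, so the fibre trace of $\Psi^{-k}(x)$ is again a $(\Psi|_{N_2/\Gamma_2})^{-k}$-preimage of \emph{some} point, and your inductive hypothesis is uniform in the point; the remaining uniformity of the trivializations is what Lemma \ref{lem: density lemma} packages. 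However, your toral base case is incomplete as sketched: after the primary decomposition, $A|_{V_i}$ is ``multiplication by an algebraic integer on an order'' only when that primary component is semisimple with irreducible minimal polynomial; when the minimal polynomial is $g^t$ with $t\geq 2$ (already for $\bigl(\begin{smallmatrix}2&1\\0&2\end{smallmatrix}\bigr)$) the Liouville/AM--GM norm bound does not apply as stated, and the decomposition splits $\mathbb{Z}^d$ only up to finite index. Handling exactly this is the role of the rational canonical form step (Corollary \ref{cor: rcf}) and the extra induction along the filtration $\ker g^j(A)$ inside the proof of Theorem \ref{thm: density torus}; your transference argument can be salvaged by first passing to the semisimple subquotients of that filtration, but as written the base case does not cover non-semisimple integer matrices.
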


We will characterize the exponential density of preimage set more completely in Theorem \ref{thm: exponential density}.

Next, we show global stable Lyapunov spectrum rigidity for special nilmanifold Anosov maps with one-dimensional stable bundle.

\begin{theorem}\label{intro thm: conjuacy implies pdc}
	Let $f$ be an Anosov map on a nilmanifold $M$ with one-dimensional stable bundle and totally non-invertible linear part $\Psi$. If $f$ is topologically conjugate to $\Psi$, then the stable Lyapunov exponent of every periodic point of $f$ coincides with $\Psi$.
\end{theorem}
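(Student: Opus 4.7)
The plan is to set up a Schr\"oder-type functional equation on the stable leaf through the periodic point and to rule out its ``bad'' solutions by exploiting the exponential density of preimages from Theorem~\ref{intro thm: exponential density}.

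Let $p$ be an $n$-periodic point of $f$, set $y=h(p)$, and write $a:=Df^n|_{E^s_f(p)}$ and $\Lambda:=(\mu^s)^n$, so the conclusion of the theorem is $|a|=|\Lambda|$. Since $E^s_f$ is one-dimensional, $W^s_f(p)$ is a smooth curve on which $f^n$ is a smooth contraction fixing $p$ with multiplier $a$; Sternberg's theorem gives a smooth coordinate $\sigma\colon W^s_f(p)\to\mathbb{R}$ in which $f^n$ becomes $t\mapsto at$. On the other hand $W^s_\Psi(y)$ is an affine line on which $\Psi^n$ acts as multiplication by $\Lambda$. In these coordinates the restriction of the conjugacy becomes a homeomorphism $g\colon\mathbb{R}\to\mathbb{R}$ satisfying $g(at)=\Lambda g(t)$; every continuous solution is Hölder of exponent $\beta:=\log|\Lambda|/\log|a|>0$ at the origin, and is homeomorphic exactly for such $\beta$. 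Hence the theorem reduces to proving $\beta=1$.

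To force $\beta=1$, I invoke Theorem~\ref{intro thm: exponential density}: since $\Psi$ is totally non-invertible, $\Psi^{-kn}(y)$ is $C\mu_0^k$-dense in $M$ for some $0<\mu_0<1$. Choose preimages $v_k\to y$ coming from directions transverse to $E^s_\Psi$. The stable leaves $W^s_\Psi(v_k)$ accumulate on $W^s_\Psi(y)$, and $\Psi^{kn}$ sends $W^s_\Psi(v_k)$ onto $W^s_\Psi(y)$ with multiplier $\Lambda^k$ in the canonical parametrization. Setting $q_k:=h^{-1}(v_k)$, the conjugacy gives $f^{kn}(q_k)=p$, $q_k\to p$, and $f^{kn}$ restricts to a diffeomorphism $W^s_f(q_k)\to W^s_f(p)$ with stable multiplier $b_k$. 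The intertwining relation $h\circ f^{kn}=\Psi^{kn}\circ h$ transported to the stable leaves produces, through $g$, a comparison $b_k=\Lambda^k\cdot R_k$, where the correction $R_k$ is governed by the asymptotic behavior of $g$ near $0$ and hence by the power $\beta$. Combining this with bounded distortion for the one-dimensional stable cocycle of $f$ along the accumulating family $W^s_f(q_k)\to W^s_f(p)$, I expect to identify $b_k$ with $a^k$ up to a uniformly bounded factor, and then the comparison forces $|a|^k\asymp|\Lambda|^k$, sharpened by the density rate $\mu_0^k$ to the equality $|a|=|\Lambda|$.

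The main obstacle, and where the technical effort should concentrate, is bounding $R_k$ sharply enough to promote the two-sided bound to equality. The orbit of $q_k$ under $f$ does \emph{not} stay close to that of $p$ except at the endpoints (it shadows the $\Psi$-orbit of $v_k$, which traverses a substantial region of $M$ before returning to $p$), so distortion estimates for the one-dimensional stable cocycle cannot be applied naively on the whole orbit; they must be combined with the explicit exponential density rate $\mu_0^k$ from Theorem~\ref{intro thm: exponential density} and with the homogeneity of the stable foliation of $\Psi$. If $\beta\neq 1$, the Schr\"oder equation forces $R_k$ to grow or decay like a nontrivial power of $|v_k-y|$, which by the density rate can be driven to contradict the uniform boundedness produced by the distortion estimate on the $f$-side. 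Closing this numerical balance is the delicate point of the proof.
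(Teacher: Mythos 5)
Your proposal has a genuine gap at its central step, and it is exactly the step you yourself flag as ``the delicate point'': the identification $b_k\asymp a^k$ up to a uniformly bounded factor. Bounded distortion for the stable cocycle only applies when the orbit segment of $q_k$ shadows the orbit of $p$; here the $f$-orbit of $q_k$ spends time of order $kn$ wandering far from $p$ (it shadows the $\Psi$-orbit of $v_k$), where $\|Df|_{E^s}\|$ is a priori governed by whatever periodic exponents the segment passes near, which is precisely what the theorem is trying to control. So the comparison $b_k\asymp a^k$ is essentially equivalent to the statement being proved, not a tool for proving it. The other half of your balance, $b_k=\Lambda^k R_k$ with $R_k$ controlled by the local exponent $\beta$ of $g$ at $0$, is also unjustified: $h$ is only (bi-)H\"older, the relation $h\circ f^{kn}=\Psi^{kn}\circ h$ on leaves yields two-sided length inequalities with H\"older losses rather than a multiplicative derivative identity, and the correction involves the modulus of continuity of $h$ near $q_k$ and $v_k$ (points ranging over a dense subset of $M$), not only the germ of $g$ at the fixed point. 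Two smaller points: Sternberg linearization needs more than the $C^1$ regularity under which the theorem is stated; and the ``reduction to $\beta=1$'' carries no content by itself, since any two linear contractions of $\mathbb{R}$ are locally topologically conjugate, so no local Schr\"oder-type constraint can relate $a$ and $\Lambda$ --- all the work is global, and that global work is left open in the proposal.

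For comparison, the paper avoids estimating multipliers of long non-periodic segments altogether. It first proves that all periodic stable exponents of $f$ coincide (Theorem \ref{thm: conjugacy implies pdc}) by contradiction: if $\mu_-<\mu_+$, Lemma \ref{lem: adapted metric} (shadowing plus transitivity) produces an adapted metric pinching $\|Df|_{E^s}\|$ between roughly $\mu_-$ and $\mu_+$ everywhere --- this is the global substitute for your bounded distortion. Exponential density of preimages (Theorem \ref{thm: exponential density}), transported through the bi-H\"older conjugacy, gives a point $x(\varepsilon)\in B(q,\varepsilon)$ with $f^{k(\varepsilon)}(x(\varepsilon))=p$ and $k(\varepsilon)\lesssim\ln(1/\varepsilon)$; a dwell-time estimate via the linear model shows a short stable curve through $x(\varepsilon)$ stays near $q$ for time $N(\varepsilon)\geq(\alpha-o(1))k(\varepsilon)$, and the $\Gamma$-equivariance of $H$ (Proposition \ref{prop: unique H}) lets one compare the image curve with its lift $J(\varepsilon)$ near $p$, whose length is controlled because $h(J(\varepsilon))$ and $h(I(\varepsilon))$ are translates; the resulting length ratio exceeds $1$, a contradiction. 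Only afterwards is the common value identified with $\lambda^s(\Psi)$ (Theorem \ref{thm: p, q = A}), using quasi-isometry of $\widetilde{\mathcal{F}}^s$ (Lemma \ref{lem: quasi-isometry}) and $d(H,\mathrm{Id})<\infty$ --- an ingredient your direct comparison with $\Lambda$ would also need but does not supply.
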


\begin{remark}
	Note that Theorem \ref{intro thm: conjuacy implies pdc} only needs $C^1$-regularity of the map $f$.
\end{remark}

As a corollary, if the topological conjugacy exists, then it is automatically smooth along each stable leaf. Here we assume $C^r$-regularity ($r > 1$) of $f$, for applying Livschitz Theorem, see Proposition \ref{prop: Livschitz}. 

\begin{corollary}\label{intro cor: rigidity of conjugacy}
	Let $f$ be a $C^r$ ($r > 1$) Anosov map on a nilmanifold $M$ with one-dimensional stable bundle and totally non-invertible linear part $\Psi$. If $f$ is topologically conjugate to $\Psi$ via some homeomorphism $h$, then $h$ is $C^r$-smooth along each stable leaf.
\end{corollary}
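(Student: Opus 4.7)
My plan is to convert the claim into a cohomological equation for the log-Jacobian of $h$ along $E^s$, verify the periodic obstruction vanishes via Theorem \ref{intro thm: conjuacy implies pdc}, solve the equation using Livschitz, and finally integrate the $C^r$ solution along each stable leaf.

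First, since $\dim E^s = 1$, the function $\phi(x) := \log \|Df|_{E^s(x)}\|$ is a scalar $C^{r-1}$ cocycle over $f$. By Theorem \ref{intro thm: conjuacy implies pdc}, every periodic orbit of $f$ has the same stable Lyapunov exponent $\lambda^s$ as $\Psi$, so the H\"older cocycle $\psi := \phi - \lambda^s$ has vanishing Birkhoff sums on every periodic orbit. The Livschitz theorem for Anosov maps (Proposition \ref{prop: Livschitz}) then yields a H\"older continuous $u : M \to \mathbb{R}$ satisfying
\[
\phi - \lambda^s \;=\; u \circ f - u.
\]

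Second, I upgrade $u$ to $C^r$ along each stable leaf. Iterating the cohomological equation and using the exponential contraction $\|Df^k|_{E^s}\| \leq C \mu^k$ (with $\mu < 1$), for $y \in W^s_f(x)$ the telescoping identity
\[
u(y) - u(x) \;=\; \sum_{k=0}^\infty \bigl(\psi(f^k x) - \psi(f^k y)\bigr)
\]
is absolutely convergent, and the same is true when one differentiates up to order $r$ along the $1$-dimensional direction $E^s$; thus $u|_{W^s_f(x)} \in C^r$.

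Third, I identify $e^{-u}$ with the stable log-Jacobian of $h$. Parametrize $W^s_f(x)$ by arclength $t$ with $\gamma(0) = x$, and parametrize $W^s_\Psi(h(x))$ linearly via the exponential coordinate on the $1$-dimensional stable subgroup of the ambient nilpotent Lie group. Define the $C^r$ map
\[
\tilde h(y) := \int_0^{t(y)} e^{-u(\gamma(\tau))}\, d\tau, \qquad y = \gamma(t(y)) \in W^s_f(x).
\]
A direct change-of-variables computation using $u \circ f - u = \phi - \lambda^s$ gives $\tilde h \circ f = e^{\lambda^s}\, \tilde h$, so $\tilde h$ is a $C^r$ conjugacy between $f|_{W^s_f(x)}$ and the linear contraction $\Psi|_{W^s_\Psi(h(x))}$. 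Comparing with the given topological conjugacy $h|_{W^s_f(x)}$, which enjoys the same intertwining property, and using that $h|_{W^s_f(x)}$ is the restriction of a \emph{global} topological conjugacy on $M$, one concludes $h|_{W^s_f(x)} = \tilde h$ up to an affine rescaling of the target leaf, so $h$ is $C^r$ along each stable leaf.

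\textbf{The main obstacle} is the identification in the third step: the topological centralizer of a $1$-dimensional linear contraction is far larger than its smooth centralizer, so the agreement of $h|_{W^s_f(x)}$ with the smooth model $\tilde h$ cannot be deduced from the leaf dynamics alone. One must exploit the global compatibility of $h$ on $M$ — in particular how $h$ intertwines the transverse dynamics arising from the totally non-invertible structure of $\Psi$ (Theorem \ref{intro thm: exponential density}) — to rule out non-smooth rescalings on each stable leaf.
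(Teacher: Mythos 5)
Your overall strategy (Livschitz for $\log\|Df|_{E^s}\|-\lambda^s(\Psi)$, then an affine parametrization of each stable leaf built from the transfer function) is the same as the paper's, but your proof is not complete: the decisive step is exactly the one you flag as ``the main obstacle'' and then leave open. Producing the $C^r$ model $\tilde h$ on each leaf is the easy half; the theorem is proved only once you show that the actual conjugacy agrees with (an affine rescaling of) that model, and as you correctly observe, the intertwining relation on a single leaf cannot force this, since the topological centralizer of a one-dimensional contraction is huge. So as written the argument establishes the existence of \emph{some} $C^r$ leaf conjugacy, not the smoothness of $h$. A secondary slip: with $f\in C^r$ the cocycle $\phi$ is only $C^{r-1}$, so the Livschitz solution $u$ is $C^{r-1}$ (not $C^r$) along stable leaves; this is harmless because $\tilde h$ is an integral of $e^{-u}$, but the claim as stated is off by one. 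Also, the statement concerns an arbitrary conjugacy $h$, whereas the natural construction applies to the canonical bounded-distance conjugacy $H$ on the universal cover; the paper passes from $H$ to a general $\widehat h$ via Proposition \ref{prop: unique H} ($\widehat H = R_{n^{-1}}\circ Ad_n\circ A\circ H$ up to algebraic maps), a point your writeup skips.

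The paper closes the gap you identify in Lemma \ref{lem: H is isometric under affine metric}, and the mechanism is worth knowing. One works with $H$ on $N$ and the affine metric $d_s$ (Lemma \ref{lem: affine metric}): because $F$ contracts $d_s$ and $\Psi$ contracts $d_{\widetilde{\mathcal{L}}^s}$ by the \emph{same} factor $\mu^s(\Psi)$, the H\"older estimate of Lemma \ref{lem: Holder} runs with exponent $1$, so $H^{-1}$ is Lipschitz along $\widetilde{\mathcal{L}}^s$ with respect to $d_{\widetilde{\mathcal{L}}^s}$ and $d_s$, hence differentiable along the leaf at some point $x$. The global input is then a homogenization argument: using cocompactness of $\Gamma$, the approximate $\Psi^k\Gamma$-equivariance of $H^{-1}$ (Lemma \ref{lem: sequence}), the right-invariance and holonomy-invariance of $d_s$, and the product structure $\beta$, one produces points $x_k\in\widetilde{\mathcal{L}}^s(x)$ with $d_{\widetilde{\mathcal{L}}^s}(x_k,x)\to d_{\widetilde{\mathcal{L}}^s}(z,y)$ and $d_s(H^{-1}(x_k),H^{-1}(x))\to d_s(H^{-1}(z),H^{-1}(y))$ for arbitrary $y,z$ on a common stable leaf; this transports the derivative at $x$ to every point and shows it is constant, i.e.\ $H$ is an isometry from $(\widetilde{\mathcal{F}}^s,d_s)$ to $(\widetilde{\mathcal{L}}^s,d_{\widetilde{\mathcal{L}}^s})$ after scaling. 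Since $d_s$ is $C^r$ along leaves, smoothness of $H$ (and then of any conjugacy $h$) follows. Your appeal to the exponential density of preimages (Theorem \ref{intro thm: exponential density}) points in roughly the right direction, but that theorem is used in the paper for the converse implication (Theorem \ref{thm: conjugacy implies pdc}); the rigidity of the leafwise conjugacy comes from the equivariance-plus-density argument above, which your proposal would still need to supply.
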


On the other hand, we prove the opposite direction, where $\Psi$ demands a dense stable leaf instead of exponential density of preimage set.

\begin{theorem}\label{intro thm: pdc implies conjugacy}
	Let $f$ be a $C^r$ ($r > 1$) Anosov map on a nilmanifold $M$ with one-dimensional stable bundle and horizontally irreducible linear part $\Psi$. If the stable Lyapunov exponent of every periodic point of $f$ coincide with $\Psi$, then $f$ is topologically conjugate to $\Psi$.
\end{theorem}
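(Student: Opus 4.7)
The plan is to take the Franks--Manning-type semi-conjugacy between $f$ and $\Psi$, use the periodic Lyapunov hypothesis together with a Livschitz argument to make it a bijection along each stable leaf, and then invoke horizontal irreducibility of $\Psi$, through an induction along the lower central series of the universal cover, to rule out collapse in the transverse directions.

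First I would produce, by the standard nilmanifold Franks--Manning construction, the unique continuous surjection $h:M\to M$ homotopic to $\mathrm{id}_M$ with $h\circ f=\Psi\circ h$. This $h$ sends $f$-stable leaves into $\Psi$-stable leaves, and because $\dim E^s=1$, it restricts to each stable leaf as a continuous map between one-dimensional arcs.

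For the leafwise step, let $\lambda<0$ be the stable eigenvalue of $\Psi$ and consider the Hölder function $\varphi=\log\|Df|_{E^s}\|-\lambda$. The hypothesis that every periodic stable Lyapunov exponent equals $\lambda$ says the periodic orbit sums of $\varphi$ vanish, so the Livschitz theorem for nilmanifold Anosov maps (Proposition~\ref{prop: Livschitz}) yields a Hölder $\psi:M\to\mathbb{R}$ with $\varphi=\psi\circ f-\psi$. Integrating $e^{-\psi}$ along each $f$-stable leaf produces a continuous affine parameter in which $f$ acts as a linear contraction of rate $e^{\lambda}$, matching the native affine structure on the $\Psi$-stable leaves. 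For a periodic point $p$ of period $n$ the map $h$, expressed in these matched affine coordinates with the periodic points as origins, intertwines two copies of the scaling $\tau\mapsto e^{n\lambda}\tau$. Combined with the fact that the Franks--Manning lift is at bounded distance from the identity, this forces $h$ to be a monotone homeomorphism between the leaves. Density of periodic points transfers this to every stable leaf.

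To promote leafwise injectivity to global injectivity, suppose $h(x)=h(y)$ with $x,y$ on distinct $f$-stable leaves, and lift to $\tilde h$ on the universal cover $N$. The fiber $\tilde h^{-1}(\tilde z)$ meets every stable coset in at most one point by the previous step. Projecting everything to the abelianization $M^{\mathrm{ab}}=M/[M,M]$, the induced linear part $\Psi^{\mathrm{ab}}$ is a horizontally irreducible toral endomorphism, and the toral case---via An's argument combined with the exponential density of preimages (Theorem~\ref{intro thm: exponential density})---forces the projected fiber to be a single point. Hence $\tilde h^{-1}(\tilde z)$ lies in a single coset of $[N,N]$, and an induction along the lower central series of $N$, modelled on the induction used in the proof of Theorem~\ref{intro thm: exponential density}, transfers the injectivity conclusion up through each central extension, giving $\tilde h^{-1}(\tilde z)=\{\tilde z\}$ universally. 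Thus $h$ is a continuous bijection of a compact space, hence a homeomorphism, providing the desired conjugacy.

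The main obstacle I expect is exactly this induction: at each central extension step one must simultaneously propagate the leafwise affine data obtained from $\psi$, preserve horizontal irreducibility on the relevant quotient, and arrange the Franks--Manning semi-conjugacy to descend compatibly. The one-dimensionality of the stable bundle keeps the Livschitz coboundary in a single direction throughout, but verifying that the cohomological equation, the affine stable structure, and the irreducibility all behave well with the central quotients---so that the toral base case genuinely applies at each step---is where most of the technical work should live.
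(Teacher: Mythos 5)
Your opening step already contains the fatal gap. For a non-invertible Anosov map on a nilmanifold there is in general \emph{no} continuous surjection $h:M\to M$ homotopic to the identity with $h\circ f=\Psi\circ h$: the Franks--Manning construction only yields a homeomorphism $H$ of the universal cover $N$ conjugating a lift $F$ to $\Psi$ (Lemma~\ref{lem: conjugate}), and whether $H$ descends to $M$, i.e.\ whether $H(x\gamma)=H(x)\gamma$ for all $\gamma\in\Gamma$, is precisely the content of the theorem. Indeed, if your $h$ existed, its lift could be normalized exactly as in Proposition~\ref{prop: unique H} into a solution of $\Psi\circ H_0=H_0\circ F$ at bounded distance from the identity which commutes with $\Gamma$; by uniqueness $H_0=H$, and then $\widetilde{\mathcal{F}}^u$ would be right-$\Gamma$-invariant, i.e.\ $f$ would be special --- which fails for generic non-invertible Anosov maps (Ma\~n\'e--Pugh, Przytycki). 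So the object you start from is essentially equivalent to the conclusion you want. Consequently the later steps attack the wrong difficulty: $H$ is already a bijection of $N$ and a leaf conjugacy, so leafwise and global injectivity are never the issue; the issue is equivariance under the deck group, about which Lemma~\ref{lem: H and Gamma} only gives $H^{-1}(x\gamma)\gamma^{-1}\in\widetilde{\mathcal{F}}^s(H^{-1}(x))$, and the whole work is to kill this stable displacement.

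Two further points. Your appeal to exponential density of preimages (Theorem~\ref{intro thm: exponential density}) is not available here: that theorem needs $\Psi$ totally non-invertible, which is not among the hypotheses of this statement; in this theorem horizontal irreducibility is used only to guarantee that $\widetilde{\mathcal{L}}^s(\Gamma)$ is dense in $N$ (via Lemma~\ref{lem: dense leaf}). On the positive side, your Livschitz/affine-parameter idea is a genuine ingredient of the correct argument: the paper builds the leafwise affine metric $d_s$ from the coboundary equation, proves $H^{-1}$ is an isometry along stable leaves (Lemma~\ref{lem: H is isometric under affine metric}), and then studies $\alpha(x,\gamma)=d_s\bigl(H^{-1}(x\gamma)\gamma^{-1},H^{-1}(x)\bigr)$; it shows $\alpha$ is constant along $\widetilde{\mathcal{L}}^s$-leaves, uses density of $\widetilde{\mathcal{L}}^s(\Gamma)$ together with an induction along the lower central series (on the subgroups $\Gamma_i$) to show $\alpha(\cdot,\gamma)$ is constant in $x$, and finally an order-preservation argument on the one-dimensional stable leaves to show that a constant displacement must vanish, i.e.\ $H(x\gamma)=H(x)\gamma$. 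Your proposal would need to be reorganized around this equivariance problem rather than around injectivity of a semiconjugacy on $M$.
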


Recall that in \cite{An2023} dealing with the torus case, in order for dense stable leaves, the linear part $\Psi$ needs to be \textit{irreducible}, i.e., the characteristic polynomial of $\Psi$ is irreducible over $\mathbb{Q}$. In the nilmanifold case, the condition becomes \textit{horizontal irreducibility}, in order for the same property. An endomorphism $\Psi$ of a nilmanifold $M = N/\Gamma$ is horizontally irreducible, if the induced toral endomorphism $\Psi_1$ of the horizontal torus $M_1 = (N/N_2)/(\Gamma/\Gamma_2)$ is irreducible. Here $\Gamma$ is a lattice of a simply connected nilpotent Lie group $N$, $N_2 = [N, N]$, $\Gamma_2 = \Gamma\bigcap N_2$.

\subsection{Rigidity of existence of unstable bundle}
The most studies \cite{Aoki1994,Moosavi2019,Moosavi2019a,Sumi1996}, in the past, on the existence of conjugacy between a nilmanifold Anosov map $f$ with its linear part $\Psi$, are focus on a direct criterion: the \textit{existence of $f$-invariant unstable bundle}, see also Proposition \ref{intro prop: special and conjugacy}. For short, we call such $f$ \textit{special}.

Indeed, in \cite{ManePugh1975}, for a given special Anosov map $f$ of any closed manifold $M$, Ma\~n\'e and Pugh $C^r$-smoothly perturbs it along stable leaf such that there is no conjugacy (close to identity) between $f$ and the perturbation $g$, meanwhile $g$ is not special. It follows that being special is not a $C^r$-open property. In \cite{Przytycki1976}, Przytycki even constructs a class of Anosov maps which has infinitely many unstable directions on certain points such that unstable directions on a certain point contains a curve homeomorphic to an interval in the ${\rm dim}(TM/E^s)$-Grassmann space, where $E^s$ is the stable bundle. Moreover, this phenomenon observed by Przytycki is generic \cite{CM2022,MT2016,Moosavi2019a}.

We already know that on a torus \cite{Aoki1994,Sumi1996,Moosavi2019}, an Anosov map is special if and only if it is topologically conjugate to its linear part. Although researches \cite{Moosavi2019} proved the same conclusion for nilmanifolds, most notably, some claims in \cite{Moosavi2019} need more conditions than stated in their statements. For instance, in \cite[Proposition 3.6]{Moosavi2019}, they need the following claim \cite[Lemma2.21]{Moosavi2019} which is direct for torus case:

\textit{Let $\Psi\in\text{Aut}(N)$ be a lift of some hyperbolic endomorphism of a nilmanifold $M = N/\Gamma$. Then for any $\varepsilon > 0$ there exists $\delta > 0$ such that $x\in\widetilde{\mathcal{L}}^s(y)$ and $d(x, y) < \delta$ implies that $\widetilde{\mathcal{L}}^u(x) \subseteq B_\varepsilon(\widetilde{\mathcal{L}}^u(y))$. Here $\widetilde{\mathcal{L}}^s$ and $\widetilde{\mathcal{L}}^u$ are stable and unstable foliations of $\ \Psi$ on $N$, and  $B_\varepsilon(S) = \bigcup_{x\in S}B_\varepsilon(x)$.}

However, this claim is \textit{not} true for some nilmanifold endomorphisms, see Example \ref{example: holonomy unbounded}. We will give a sufficient condition for this claim, that is $\Psi$ being $u$-ideal. A nilmanifold endomorphism is said to be $u$-ideal, if in the hyperbolic splitting of $\text{Lie}(N) = \mathfrak{n} = \mathfrak{n}^s\oplus\mathfrak{n}^u$, $\mathfrak{n}^u$ is an ideal. Equivalently, $[\mathfrak{n}^s, \mathfrak{n}^u] \subseteq\mathfrak{n}^u$. In \cite{DeWitt2021}, Dewitt introduces a condition called sorted spectrum, which implies that $[\mathfrak{n}^s, \mathfrak{n}^u] = 0$, hence avoids similar problems.

Under the assumption of being $u$-ideal, we get the following equivalence between the existence of conjugacy and unstable bundle.

\begin{proposition}\label{intro prop: special and conjugacy}
	Let $f$ be a nilmanifold Anosov map with linear part $\Psi$.
	\begin{enumerate}
		\item  If $f$ is topologically conjugate to $\Psi$, then $f$ is special.
		\item  If $f$ is special and $\Psi$ is $u$-ideal, then $f$ is topologically conjugate to $\Psi$.
	\end{enumerate}
\end{proposition}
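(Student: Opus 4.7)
The plan is to treat the two directions separately: Part~1 is the easy direction, essentially a transport-of-structure argument through the conjugacy, while Part~2 requires building a Franks--Manning--style semi-conjugacy on the universal cover, and this is where the $u$-ideal hypothesis enters decisively.

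\textbf{Sketch for Part 1.} For any hyperbolic nilmanifold endomorphism $\Psi$, the unstable subspace $\mathfrak n^u$ is automatically a subalgebra of $\mathfrak n$ (because the product of two expanding eigenvalues is again expanding), so the left-invariant distribution $\mathfrak n^u$ integrates to a well-defined foliation $\mathcal F^u_\Psi$ on $M$ whose leaves are exactly the unstable manifolds of $\Psi$, independent of any choice of pre-orbit. If $h\circ f=\Psi\circ h$, then $h$ sends an $f$-pre-orbit $\{x_{-i}\}$ of $x$ bijectively to the $\Psi$-pre-orbit $\{h(x_{-i})\}$ of $h(x)$, giving $h\bigl(W^u_f(x,\{x_{-i}\})\bigr)=\mathcal F^u_\Psi(h(x))$. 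Since the right-hand side does not depend on $\{x_{-i}\}$, neither does the left-hand side, so $E^u_f(x)$ is well-defined globally and $f$ is special.

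\textbf{Sketch for Part 2.} Lift to $\tilde f,\tilde\Psi:N\to N$ with $d_N(\tilde f,\tilde\Psi)$ uniformly bounded. The $u$-ideal hypothesis says $U:=\exp(\mathfrak n^u)$ is a closed normal Lie subgroup of $N$, so $\bar N:=N/U$ is a simply connected nilpotent Lie group and $\tilde\Psi$ descends to a contraction $\bar\Psi$ on $\bar N$. Crucially, the same hypothesis delivers the bounded-holonomy claim displayed before the proposition (the very property that fails in Example~\ref{example: holonomy unbounded}). Because $f$ is special, we have a globally defined $\tilde f$-invariant unstable foliation $\widetilde{\mathcal F}^u_f$; comparing it with the foliation of $N$ by cosets of $U$ and using the bounded-holonomy property, one shows that each leaf of $\widetilde{\mathcal F}^u_f$ stays within uniformly bounded Hausdorff distance of a unique $U$-coset. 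This yields a continuous surjection $\pi_f:N\to\bar N$ intertwining $\tilde f$ with $\bar\Psi$. To upgrade $\pi_f$ to a full map $H:N\to N$ satisfying $\tilde\Psi\circ H=H\circ\tilde f$, parametrize each $\widetilde{\mathcal F}^u_f$-leaf by the matching $\widetilde{\mathcal F}^u_{\tilde\Psi}$-leaf over the same point of $\bar N$, pinning down the identification by the expanding dynamics of $\tilde f$ restricted to unstable leaves. Equivariance of $H$ under $\Gamma$ comes from $f_*=\Psi_*$ on $\pi_1(M)=\Gamma$, so $H$ descends to $h:M\to M$ with $h\circ f=\Psi\circ h$. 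Finally, injectivity of $h$ follows from a standard argument: two points with the same $h$-image would lie in a common stable leaf of $\tilde f$ with matching $\bar N$-projections, contradicting the expansion on unstable leaves under forward iteration.

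\textbf{Main obstacle.} The hard step is the geometric comparison asserting that each leaf of $\widetilde{\mathcal F}^u_f$ is uniformly close to a coset of $U$. Without the $u$-ideal hypothesis, the unstable foliation of $\tilde\Psi$ itself can have unbounded holonomy along stable leaves (cf.\ Example~\ref{example: holonomy unbounded}), so no such comparison is possible. Once the bounded-Hausdorff-distance statement is in place, the construction of $\pi_f$, the lift to $H$, the descent to $M$, and the upgrading of the semi-conjugacy to a homeomorphism are routine adaptations of the classical Franks--Manning argument to the non-invertible nilmanifold setting.
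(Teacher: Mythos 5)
Your Part 1 is correct, and it takes a different route from the paper: the paper first proves that the canonical conjugacy $H$ of Lemma \ref{lem: conjugate} must commute with $\Gamma$ whenever \emph{some} conjugacy exists (Proposition \ref{prop: unique H}), and then pulls the right-$\Gamma$-invariance of $\widetilde{\mathcal{L}}^u$ back through $H$; your argument transports the pre-orbit characterization of unstable sets directly through $h$ on $M$. Both work, and yours is more elementary, though it silently uses the standard fact that unstable manifolds of an Anosov map are characterized topologically by convergence of pre-orbits.

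Part 2, however, has a genuine gap at the decisive step. You assert that $\Gamma$-equivariance of $H$ ``comes from $f_*=\Psi_*$ on $\pi_1(M)=\Gamma$.'' But the identity $F(x\gamma)=F(x)\Psi(\gamma)$ holds for \emph{every} Anosov map with linear part $\Psi$, special or not, while for non-special maps the conjugacy $H$ of Lemma \ref{lem: conjugate} does \emph{not} commute with $\Gamma$ (otherwise every nilmanifold Anosov map would be conjugate to its linear part, contradicting Part 1 together with the Ma\~n\'e--Pugh and Przytycki examples). The classical Franks--Manning descent breaks precisely because $\Psi^{-1}(\Gamma)\not\subseteq\Gamma$: from $\Psi\circ H=H\circ F$ one only obtains the stable-direction statement $H(x\gamma)\gamma^{-1}\in\widetilde{\mathcal{L}}^s(H(x))$ (Lemma \ref{lem: H and Gamma}), and controlling the unstable discrepancy is exactly where speciality and the $u$-ideal hypothesis must be spent. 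In the paper this is done by setting $H'(x):=H(x\gamma^{-1})\gamma$ on the leaves $\widetilde{\mathcal{F}}^u(\gamma)$, proving $H'$ is well defined (which needs $\widetilde{\mathcal{F}}^u(e)\cap\Gamma\subseteq\widetilde{\mathcal{L}}^u(e)$, Lemma \ref{lem: Fu bigcap Gamma}) and uniformly continuous on the dense set $\widetilde{\mathcal{F}}^u(\Gamma)$ (Lemma \ref{lem: uniform continuity of H'}, which plays the polynomial lower bound $d(\gamma^k,\widetilde{\mathcal{L}}^u(e))\gtrsim k^{1/s}$ of Lemmas \ref{lem: growth} and \ref{lem: geometric} against the linear accumulation coming from bounded holonomy), and only then invoking the uniqueness in Lemma \ref{lem: conjugate} to conclude $H'=H$ and hence $H(x\gamma)=H(x)\gamma$.

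Your sketch supplies no argument of this type. The comparison you single out as the hard step---each $\widetilde{\mathcal{F}}^u_f$-leaf lies at bounded Hausdorff distance from a $U$-coset---is in fact immediate from $d(H,\mathrm{Id}_N)\le C_0$ and $H(\widetilde{\mathcal{F}}^u(x))=\widetilde{\mathcal{L}}^u(H(x))$, and by itself it produces only the map $\bar\pi\circ H$, not equivariance; meanwhile the bounded-holonomy claim is asserted rather than derived from the $u$-ideal hypothesis, and, more importantly, the passage from these geometric facts to $H(x\gamma)=H(x)\gamma$ is exactly the content that is missing. As written, the proof of Part 2 does not go through.
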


\begin{remark}
	Note that, when $f$ is on a torus, then the $u$-ideal condition is automatically satisfied. Moreover, this condition is also satisfied when $f$ has one-dimensional stable bundle, see more details in Remark \ref{rmk: codimension one to u-ideal}.
\end{remark}

Combining results above, we get the following corollary which describes the existence of topological conjugacy by the complete characteristics in the sense of geometry: the existence of invariant unstable bundle, and also in the sense of statistics: the stable Lyapunov exponent.

For $r > 0$, let $r_*=\left\{\begin{array}{l}r - 1 + {\rm Lip}, \quad\ \ \ r\in\mathbb{N} \\ r, \quad r\notin \mathbb{N}\ {\rm or}\ r = +\infty\end{array}\right.$. 

\begin{corollary}\label{intro cor: equivalence}
	Let $f$ be a $C^{r + 1}\ (r>0)$ Anosov map on a nilmanifold $M$ with one-dimensional stable bundle and linear part $\Psi$. If $\ \Psi$ is totally non-invertible and horizontally irreducible, then the following statements are equivalent:
	\begin{enumerate}
		\item $f$ admits an invariant unstable bundle;
		\item $f$ is topologically conjugate to $\Psi$;
		\item Every periodic point of $f$ admits the same stable Lyapunov exponent;
		\item Every periodic point of $f$ admits the same stable Lyapunov exponent with $\Psi$;
		\item $f$ admits a $C^{r_*}$-smooth invariant unstable bundle.
	\end{enumerate}
	Moreover, each item implies that the conjugacy is $C^{r + 1}$-smooth along each stable leaf.
\end{corollary}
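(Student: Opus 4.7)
The plan is to close a cycle of implications among (1)--(4), add (1) $\Leftrightarrow$ (5), and conclude the leafwise smoothness from Corollary \ref{intro cor: rigidity of conjugacy}. Four of the links among (1)--(4) are direct citations of previously stated results: (2) $\Rightarrow$ (1) is Proposition \ref{intro prop: special and conjugacy}(1); (1) $\Rightarrow$ (2) is Proposition \ref{intro prop: special and conjugacy}(2), whose $u$-ideal hypothesis is automatic in our setting by Remark \ref{rmk: codimension one to u-ideal} since the stable bundle has codimension one; (2) $\Rightarrow$ (4) is Theorem \ref{intro thm: conjuacy implies pdc}, using total non-invertibility of $\Psi$; and (4) $\Rightarrow$ (2) is Theorem \ref{intro thm: pdc implies conjugacy}, using horizontal irreducibility of $\Psi$. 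The implication (4) $\Rightarrow$ (3) is trivial.

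The only genuinely new work is therefore (3) $\Rightarrow$ (4). Since $f$ is $C^{r+1}$ and $E^s$ has rank one, the potential $\varphi := \log\|Df|_{E^s}\|$ is H\"older. By (3) it has the same periodic average $\lambda$ at every periodic orbit, so Livschitz's theorem (Proposition \ref{prop: Livschitz}) furnishes a H\"older $u$ with $\varphi - \lambda = u\circ f - u$. The remaining task is to identify $\lambda$ with the log-modulus of the stable eigenvalue of $\Psi$. My approach is to pass to the universal cover $N$ and use the Franks--Manning type semiconjugacy $H: N\to N$ between lifts $\tilde f$ and $\tilde\Psi$, chosen at bounded $C^0$-distance from the identity in a left-invariant Riemannian metric on $N$. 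Restricting to an $\tilde f$-stable leaf $W$ --- a $1$-dimensional curve unbounded in $N$ --- $H$ sends $W$ into a $\tilde\Psi$-stable leaf $L$. Switching to the Livschitz-adapted norm $e^u\|\cdot\|$ on $E^s$ makes $\tilde f|_W$ an exact linear contraction by $e^{\lambda}$, semiconjugated by $H$ to the linear contraction on $L$ by the modulus of the stable eigenvalue of $\Psi$. A comparison of asymptotic slopes on either side of $H\circ\tilde f = \tilde\Psi\circ H$ far out along $W$, possible because $H - \mathrm{id}$ is bounded, forces the two contraction rates to agree. This rate-matching is the main obstacle of the plan: the non-abelian geometry of $N$ complicates both the construction of a uniformly bounded semiconjugacy and the control over leaf parametrisations beyond the toral case.

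For (1) $\Leftrightarrow$ (5): the direction (5) $\Rightarrow$ (1) is immediate, while (1) $\Rightarrow$ (5) follows from Hirsch--Pugh--Shub type $C^{r_*}$-regularity of an invariant sub-bundle in a smooth Anosov splitting, the required spectral bunching being automatic when $E^s$ has rank one. Once any of (1)--(5) is known to hold, the conjugacy $h$ is $C^{r+1}$-smooth along each stable leaf by Corollary \ref{intro cor: rigidity of conjugacy}.
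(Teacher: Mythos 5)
Your chain among (1)--(4) coincides with the paper's: $(1)\Leftrightarrow(2)$ via Theorem \ref{thm: special and conjugacy}, with the $u$-ideal hypothesis supplied by Remark \ref{rmk: codimension one to u-ideal}; $(2)\Rightarrow(4)$ via Theorem \ref{thm: conjugacy implies pdc}; $(4)\Rightarrow(2)$ via Theorem \ref{thm: pdc implies conjugacy}; $(4)\Rightarrow(3)$ trivially. For $(3)\Rightarrow(4)$ you sketch essentially the rate-comparison on the universal cover that the paper has already carried out as Theorem \ref{thm: p, q = A}, so you could simply cite it; note that what you flag as the ``main obstacle'' --- control of leaf parametrisations in the non-abelian cover --- is exactly the quasi-isometry of $\widetilde{\mathcal{F}}^s$ (Lemma \ref{lem: quasi-isometry}), which your sketch does not actually establish, so as written that step is incomplete even though the needed statement is available in the paper.

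The genuine gap is $(1)\Rightarrow(5)$. You assert it follows from Hirsch--Pugh--Shub regularity ``with the required spectral bunching automatic when $E^s$ has rank one.'' That is not true: a rank-one stable bundle yields at best $C^1$-type regularity of the codimension-one distribution $E^u$, while $C^k$ regularity for $k\geq 2$ via the $C^r$-section theorem needs genuine bunching inequalities (roughly $\mu^s\cdot\mu^u_{\min}>1$ for $C^2$, and stronger for larger $k$), which fail for typical spectra, e.g.\ stable eigenvalue $1/8$ and weakest unstable eigenvalue $2$. If (1) alone gave $C^{r_*}$-smoothness of $E^u$ by soft bundle theory, the same argument would make the unstable distribution of every Anosov diffeomorphism with one-dimensional stable bundle $C^{r_*}$, which is well known to be false (on $\mathbb{T}^2$ it is generically not even $C^2$). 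In the paper, $(5)$ is reached only through the rigidity chain: one first gets (4), and then $(4)\Rightarrow(5)$ is proved by a rigidity-theoretic argument --- the Livschitz-built affine metric $d_s$ is $C^{r+1}$ along stable leaves (Remark \ref{rmk: regularity of Livschitz}), the unstable holonomies are $d_s$-translations between stable leaves (Lemma \ref{lem: affine metric}), hence these holonomies are $C^{r+1}$, and the regularity theory of \cite{PSW1997} then upgrades this to $\mathcal{F}^u$ being a $C^{r_*+1}$ foliation and $E^u$ a $C^{r_*}$ distribution. Your proposal contains no substitute for this step, so the equivalence with (5) is not established.
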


\begin{remark}\label{intro rmk: example}
	We note that such $\Psi$ satisfying the condition of Corollary \ref{intro cor: equivalence} exists. For example, consider the three-dimensional Heisenberg Lie algebra $\mathfrak{h} = \text{span}_\mathbb{R}\{X, Y, Z\}$, where
	$$X = \left(\begin{array}{ccc}0&1&0\\0&0&0\\0&0&0\end{array}\right), \ Y = \left(\begin{array}{ccc}0&0&0\\0&0&1\\0&0&0\end{array}\right), \ Z = \left(\begin{array}{ccc}0&0&1\\0&0&0\\0&0&0\end{array}\right).$$
	A Lie algebra automorphism defined by
	$$\psi(X, Y, Z) = (X, Y, Z)\left(\begin{array}{ccc}4&2&0\\2&2&0\\0&0&4\end{array}\right)$$
	uniquely decides a Lie group automorphism $\Psi$ of the three-dimensional Heisenberg Lie group $H$ preserving a lattice $\Gamma$, where
	\begin{align*}
		&H = \left\{\left(\begin{array}{ccc}1&x&z\\0&1&y\\0&0&1\end{array}\right): x, y, z\in\mathbb{R}\right\},\\
		&\Psi: \left(\begin{array}{ccc}1&x&z\\0&1&y\\0&0&1\end{array}\right) \mapsto \left(\begin{array}{ccc}1&4x + 2y&4z + 4xy + 4x^2 + 2y^2\\0&1&2x + 2y\\0&0&1\end{array}\right)\\
		&\Gamma = \left\{\left(\begin{array}{ccc}1&x&z\\0&1&y\\0&0&1\end{array}\right): x, y, z\in\mathbb{Z}\right\}.
	\end{align*}
\end{remark}

Actually, for a non-expanding Anosov map $f$ on a non-toral 3-nilmanifold, the following two conditions 
\begin{itemize}
    \item $f$ has one-dimensional stable bundle,
    \item the linear part $\Psi$ is totally non-invertible and horizontally irreducible, 
\end{itemize} 
hold automatically, see Remark \ref{rmk: 3-dim case}. Moreover, when $f$ is expanding, its invariant unstable bundle is the whole tangent bundle and the stable bundle vanishes, and it is well known that $f$ is topologically conjugate to $\Psi$ \cite{Shub1969}. Hence by Corollary \ref{intro cor: equivalence},  we get an immediate corollary for Anosov maps on non-toral 3-nilmanifold without any limitation.

\begin{corollary}\label{intro cor: equivalence of 3-dim case}
    Let $f$ be a $C^{r + 1}\ (r > 0)$ Anosov map on a non-toral 3-nilmanifold with linear part $\Psi$. Then the following statements are equivalent:
    \begin{enumerate}
        \item $f$ admits an invariant unstable bundle;
        \item $f$ is topologically conjugate to $\Psi$;
        \item Every periodic point of $f$ admits the same stable Lyapunov exponent;
        \item Every periodic point of $f$ admits the same stable Lyapunov exponent with $\Psi$;
        \item $f$ admits a $C^{r_*}$-smooth  invariant unstable bundle.
     \end{enumerate}
     Moreover, each item implies that the conjugacy is $C^{r + 1}$-smooth along each stable leaf.
\end{corollary}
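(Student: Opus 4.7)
The plan is to reduce everything to Corollary~\ref{intro cor: equivalence} by bifurcating on whether $f$ is expanding. If $f$ is expanding, then $E^s$ is trivial: items (1) and (5) hold tautologically with $E^u = TM$, items (3) and (4) hold vacuously, and item (2) together with the leaf-smoothness conclusion reduce to Shub's theorem \cite{Shub1969} (the smoothness statement being trivially true because stable leaves are singletons). So the real content of the corollary lies in the non-expanding case, where the strategy is to verify the hypotheses of Corollary~\ref{intro cor: equivalence}, namely that $\dim E^s = 1$ and that $\Psi$ is both totally non-invertible and horizontally irreducible.

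A non-toral $3$-nilmanifold is a quotient $M = H/\Gamma$ of the Heisenberg group, whose Lie algebra $\mathfrak{n}$ has one-dimensional center $\mathfrak{n}_2 = [\mathfrak{n},\mathfrak{n}]$ and two-dimensional horizontal quotient $\mathfrak{n}/\mathfrak{n}_2$. Any Lie algebra endomorphism $\psi$ preserves $\mathfrak{n}_2$; since $\psi$ respects the bracket and $\Lambda^2(\mathfrak{n}/\mathfrak{n}_2)$ is one-dimensional, the induced action $\psi_2$ on $\mathfrak{n}_2$ equals multiplication by $\det(\psi_1)$, where $\psi_1$ is the action on $\mathfrak{n}/\mathfrak{n}_2$. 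Thus the eigenvalues of $\psi$ are $\lambda_1,\lambda_2,\lambda_3$ with $\lambda_3=\lambda_1\lambda_2$. Because $f$ is Anosov, $\Psi$ is a self-covering of $M$, so $\psi$ is a Lie algebra automorphism and all eigenvalues are nonzero; because $\Psi(\Gamma)\subseteq\Gamma$, the matrix of $\psi_1$ in a lattice basis has integer entries, hence $\lambda_3\in\mathbb{Z}\setminus\{0\}$, and Anosov hyperbolicity forces $|\lambda_3|\geq 2$, placing the center direction automatically in $E^u$.

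The main obstacle is then to rule out $\dim E^s = 2$, and this is precisely where the non-toral hypothesis is essential: the integrality of $\det(\psi_1)$ constrains the stable dimension. Non-expansion excludes both of $\lambda_1,\lambda_2$ having modulus greater than $1$; if they were complex conjugates, both would have modulus $\sqrt{|\lambda_3|}>1$, contradicting non-expansion. So they are real with exactly one of them stable, say $|\lambda_2|<1$, and then $|\lambda_1|=|\lambda_3|/|\lambda_2|>|\lambda_3|\geq 2$, giving $\dim E^s=1$. The same integrality argument disposes of the remaining hypotheses: $\lambda_2$ cannot be a rational integer (a nonzero integer has modulus at least $1$), so the characteristic polynomial of $\psi_1$ is irreducible over $\mathbb{Q}$, giving horizontal irreducibility; moreover $\lambda_1,\lambda_2$ have algebraic norm $\lambda_3$ with $|\lambda_3|\geq 2$, so neither is an algebraic unit, and $\lambda_3$ is itself a non-unit integer, giving total non-invertibility. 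An application of Corollary~\ref{intro cor: equivalence} then yields all five equivalences and the smoothness-along-stable-leaves conclusion.
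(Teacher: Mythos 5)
Your proposal is correct and follows essentially the same route as the paper: split into the expanding case (handled by Shub's theorem, with stable objects trivial) and the non-expanding case, where one verifies the hypotheses of Corollary \ref{intro cor: equivalence} — one-dimensional stable bundle, horizontal irreducibility, and total non-invertibility — via the Heisenberg structure and the integrality of $\det\psi_1=\psi_2$. Your eigenvalue analysis is just a more explicit write-up of what the paper delegates to Remark \ref{rmk: 3-dim case} (the paper rules out a two-dimensional stable bundle via Remark \ref{rmk: codimension one to u-ideal} rather than by your direct case check on $\lambda_1,\lambda_2$), so there is no substantive difference.
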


Here is the organization of this paper.

In section \ref{Preliminaries}, we introduce some basic definitions and properties of Anosov maps and nilmanifolds. We also prove Proposition \ref{intro prop: special and conjugacy} (see also Theorem \ref{thm: special and conjugacy}): the relationship between being special and being conjugate to its linear part.

In section \ref{Density of preimages}, we prove Theorem \ref{intro thm: exponential density},  the exponential density of preimage set for totally non-invertible nilmanifold endomorphisms. The proof is an induction based on the result for the torus case \cite{An2023}.

In section \ref{Rigidity of conjugacy and stable Lyapunov exponents}, we prove Theorem \ref{intro thm: conjuacy implies pdc} (see also Theorem \ref{thm: conjugacy implies pdc}): the existence of  conjugacy between $f$ and $\Psi$ implies the  same periodic stable Lyapunov exponents, provided that $\Psi$ is totally non-invertible. Then without the assumption of conjugacy on $M$, we consider the conjugacy $H$ on the universal cover and show that constant stable Lyapunov exponent of periodic points guarantees the smoothness of $H$ restricted on the stable leaves, see Theorem \ref{thm: pdc implies H smooth}. This implies Corollary \ref{intro cor: rigidity of conjugacy} directly. If we assume further that $\Psi$ is horizontally irreducible, Theorem \ref{thm: pdc implies H smooth} also deduces Theorem \ref{intro thm: pdc implies conjugacy} (see also Theorem \ref{thm: pdc implies conjugacy}). Finally, we prove Corollary \ref{intro cor: equivalence}. 

\section{Preliminaries}\label{Preliminaries}

First of all, we introduce some notations in this paper.

For a Lie group $N$ and $x\in N$, $L_x$ is the left translation by $x$: $L_x(y) = xy$, and $R_x$ is the right translation by $x$: $R_x(y) = yx$. $Ad_x := L_{x^{-1}}\circ R_x$, that is, $Ad_x(y) = x^{-1}yx$.

The collection of automorphisms of a Lie group $N$ is denoted by $\text{Aut}(N)$.

The collection of automorphisms of a Lie algebra $\mathfrak{n}$ is denoted by $\text{Aut}(\mathfrak{n})$.
	
For a foliation $\mathcal{F}$ on a Riemannian manifold $M$ with smooth leaves, on each leaf there is an induced Riemannian metric, and hence an induced distance, denoted by $d_{\mathcal{F}}$.

Let $f, g$ be maps on a metric space $(X, d)$. $d(f, g) := \sup\{d(f(x), g(x)): x\in X\}$.

In a metric space $(X, d)$, for $x\in X$ and $S$, $S_1$, $S_2\subseteq X$, $d(x, S):=\inf\{d(x, y): y\in S\}$, $d(S_1, S_2):=\inf\{d(z_1, z_2): z_1\in S_1, z_2\in S_2\}$.

\subsection{Automorphisms and Endomorphisms of Nilmanifolds}

For an $s$-step nilpotent Lie group $N$, denote the lower central series of $N$ by $N = N_1 \triangleright \cdots \triangleright N_{s + 1} = \{e\}$. For an $s$-step nilpotent Lie algebra $\mathfrak{n}$, denote the lower central series of $\mathfrak{n}$ by $\mathfrak{n} = \mathfrak{n}_1 \triangleright\cdots\triangleright \mathfrak{n}_{s + 1} = \{0\}$. The relationship between the nilpotency of a Lie group and its Lie algebra is stated as follows. See more information about nilpotent Lie groups and nilmanifolds in \cite{Raghunathan1972}.

\begin{theorem}\label{thm: nilpotency}
	Let $N$ be a simply connected Lie group. Then $N$ is nilpotent if and only if $\mathfrak{n} = \text{Lie}(N)$ is nilpotent, and in both case they have the same step of nilpotency (denoted by $s$). Moreover, for $1\leq i < j \leq s + 1$, $N_i/N_j$ is a simply connected nilpotent Lie group with Lie algebra $\mathfrak{n}_i/\mathfrak{n}_j$ and the exponential map $\exp: \mathfrak{n}_i/\mathfrak{n}_j \to N_i/N_j$ is a diffeomorphism.
\end{theorem}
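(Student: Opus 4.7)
The plan is to reduce everything to the subgroup--subalgebra correspondence for simply connected Lie groups together with the Baker--Campbell--Hausdorff (BCH) formula. The overall structure is an induction on the lower central series index $i$, interwoven with an induction on the step of nilpotency.

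First I would establish the identification $\text{Lie}(N_i) = \mathfrak{n}_i$ for every $i$. The base case $i = 1$ is tautological. For the inductive step, assuming $\text{Lie}(N_i) = \mathfrak{n}_i$, I would verify that the Lie algebra of the group commutator $[N, N_i]$ coincides with the ideal $[\mathfrak{n}, \mathfrak{n}_i]$; this follows from differentiating the commutator $xyx^{-1}y^{-1}$ along one-parameter subgroups, whose leading-order term produces the Lie bracket. Since $N$ is simply connected, the correspondence between connected Lie subgroups and Lie subalgebras is bijective, so from $\text{Lie}(N_{s+1}) = \mathfrak{n}_{s+1}$ we conclude $N_{s+1} = \{e\}$ if and only if $\mathfrak{n}_{s+1} = 0$. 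This proves both the equivalence of nilpotency and the preservation of the step $s$.

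For the \emph{moreover} part, each $N_i$ is a closed connected normal Lie subgroup, with $N_j$ normal in $N_i$ for $i < j$. Hence $N_i/N_j$ is a Lie group whose Lie algebra is $\mathfrak{n}_i/\mathfrak{n}_j$, and it is nilpotent as a quotient of a nilpotent group. To show $N_i/N_j$ is simply connected, I would use that $N_j$ is itself simply connected (to be obtained from the $\exp$ statement below) and then apply the long exact homotopy sequence of the fibration $N_j \hookrightarrow N_i \twoheadrightarrow N_i/N_j$, using that a simply connected nilpotent Lie group is diffeomorphic to Euclidean space and hence has trivial higher homotopy.

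The central technical step, and the main obstacle, is showing that $\exp : \mathfrak{n}_i/\mathfrak{n}_j \to N_i/N_j$ is a diffeomorphism. I would induct on the step of nilpotency of $G := N_i/N_j$, exploiting the fact that in a nilpotent Lie algebra the BCH series terminates after finitely many terms, so $\exp$ intertwines a polynomial group law on $\mathfrak{n}_i/\mathfrak{n}_j$ with the multiplication on $G$. The inductive step uses a central extension $1 \to Z \to G \to Q \to 1$, where $Z$ is the last nontrivial term of the lower central series of $G$ (hence central and abelian) and $Q$ has strictly smaller step; the inductive hypothesis applies to $Q$, while $\exp_Z$ is already a diffeomorphism since $Z$ is simply connected abelian. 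Gluing the two via the polynomial global chart afforded by BCH yields the diffeomorphism property for $G$. The subtle point throughout is preserving simple connectedness at each inductive stage, which is why the identification of Lie algebras, the simple connectedness of $N_i/N_j$, and the $\exp$-diffeomorphism statement must all be proved together rather than in isolation.
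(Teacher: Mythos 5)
The paper does not actually prove this statement: it is quoted as a standard structure theorem with a citation to Raghunathan's book, so there is no internal proof to compare against. Your outline is essentially the standard textbook argument (as in Raghunathan or Corwin--Greenleaf): identify $\mathrm{Lie}(N_i)=\mathfrak{n}_i$, deduce the equivalence of nilpotency and the equality of steps, then prove that $\exp$ is a diffeomorphism by induction on the step using a central extension. That architecture is sound.

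Two steps you treat as immediate are, however, the actual content of the theorem and need genuine arguments. First, ``differentiating the commutator'' only yields the inclusion $[\mathfrak{n},\mathfrak{n}_i]\subseteq\mathrm{Lie}([N,N_i])$, and even writing $\mathrm{Lie}([N,N_i])$ presupposes that the group-theoretic commutator subgroup is a connected immersed Lie subgroup, which is a theorem rather than a computation. The reverse inclusion is normally obtained by passing to the quotient by the integral subgroup $K$ with Lie algebra $[\mathfrak{n},\mathfrak{n}_i]$ --- simple connectedness of $N$ is what guarantees $K$ is closed and normal --- and observing that the image of $N_i$ in $N/K$ is central, so $[N,N_i]\subseteq K$. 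Second, in your $\exp$ induction you assert that the last nontrivial lower-central term $Z$ is ``simply connected abelian''; a connected abelian Lie group can contain torus factors, and ruling them out is exactly where simple connectedness of $G$ must enter (e.g.\ via the homotopy sequence of $Z\hookrightarrow G\to Q$ together with the vanishing of $\pi_2$ of a Lie group, or by quoting that integral subgroups attached to ideals of a simply connected group are closed and simply connected). With these supplied your induction closes; a cleaner alternative that sidesteps both issues is to endow $\mathfrak{n}$ (and each $\mathfrak{n}_i/\mathfrak{n}_j$) with the polynomial Baker--Campbell--Hausdorff group law, check this gives a simply connected Lie group with the correct Lie algebra, and invoke uniqueness of the simply connected group with a given algebra, so that $\exp$ becomes the identity map in these coordinates.
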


For a simply connected nilpotent Lie group $N$ admitting a lattice $\Gamma$, the right action of $\Gamma$ on $N$ is free, properly discontinuous and cocompact, hence the canonical projection $\pi: N \to N/\Gamma$ is a covering map and $M = N/\Gamma$ is a smooth closed manifold, called a {\it nilmanifold}.

Let $M = N/\Gamma$ be a nilmanifold, where $N$ is $s$-step nilpotent. Define $\Gamma_i := \Gamma\bigcap N_i$, then $\Gamma_i/\Gamma_j$ is a lattice of the simply connected nilpotent Lie group $N_i/N_j$, for $1\leq i < j \leq s + 1$. Therefore, $M_{i, j} := N_i/N_j\Gamma_i = (N_i/N_j)/(\Gamma_i/\Gamma_j)$ is also a nilmanifold. $M_{i, i + 1}$ is abelian and thus isomorphic to a torus. Write $M_i = M_{1, i + 1}$, $1\leq i \leq s$, then $M = M_s$. The torus $M_1 = N/N_2\Gamma$ is called the {\it horizontal torus} of $M = N/\Gamma$.

A left {\it principal $G$-bundle} is a fiber bundle $(E, B, F, \pi)$ equipped with a continuous free left $G$-action on $E$ that preserves and acts transitively on every fiber. Here $G$ is a topological group, $E$, $B$, $F$, $\pi$ are the total space, the base space, the typical fiber and the canonical projection of the fiber bundle respectively. Every fiber of a left principal $G$-bundle is homeomorphic to $G$.

\begin{theorem}\label{thm: torus bundle over torus}
{\rm(\cite{Raghunathan1972}\cite{Palais1961})} Let $M = N/\Gamma$ be a nilmanifold. Then $M_i$ is a left principal $M_{i, i + 1}$-bundle over $M_{i - 1}$, and the fiber $M_{i, i + 1} = \mathbb{T}^{d_i}$, where $d_i = \dim\mathfrak{n}_i - \dim\mathfrak{n}_{i + 1}$.
\end{theorem}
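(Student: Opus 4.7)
The plan is to realize $M_i$ and $M_{i-1}$ explicitly as right coset spaces of $N$ and build the bundle structure directly. Since $M_i = N/N_{i+1}\Gamma$, $M_{i-1} = N/N_i\Gamma$, and $N_{i+1}\subseteq N_i$, there is a natural smooth surjection $p:M_i\to M_{i-1}$ sending $[x]_{N_{i+1}\Gamma}\mapsto [x]_{N_i\Gamma}$. A point of the fiber $p^{-1}([x]_{N_i\Gamma})$ is represented by some $xn$ with $n\in N_i$, so after translating by $x^{-1}$ on the left (a homeomorphism of $N$ that commutes with the right $\Gamma$-action) the fiber is identified with $N_i\Gamma/N_{i+1}\Gamma$. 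To pass from this to $M_{i,i+1}=N_i/N_{i+1}\Gamma_i$, I would use the key intersection identity $N_i\cap N_{i+1}\Gamma = N_{i+1}\Gamma_i$: writing an element of the left-hand side as $n'\gamma$ with $n'\in N_{i+1}\subseteq N_i$ forces $\gamma\in N_i\cap\Gamma = \Gamma_i$. Consequently $N_i\to N_i\Gamma/N_{i+1}\Gamma$ descends to a diffeomorphism $N_i/N_{i+1}\Gamma_i\to N_i\Gamma/N_{i+1}\Gamma$, identifying each fiber with $M_{i,i+1}$.

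Next I would install the left principal action. Because $N_{i+1}=[N,N_i]$, the quotient $N_i/N_{i+1}$ is central in $N/N_{i+1}$, so left multiplication by $N_i$ on $N$ descends to a well-defined smooth left action on $M_i$. This action preserves $p$-fibers (for $n\in N_i$, $nx\in N_i x\Gamma = xN_i\Gamma$ by normality of $N_i$) and acts transitively on each fiber by the fiber identification above. The crucial commutator calculation for well-definedness with respect to $M_{i,i+1}$ is that for $\gamma_i\in\Gamma_i$ and $x\in N$ one has $x^{-1}\gamma_i x = [x^{-1},\gamma_i]\gamma_i\in N_{i+1}\Gamma_i$, since $[N,N_i]=N_{i+1}$; this shows that left multiplication by elements of $N_{i+1}\Gamma_i$ acts trivially on $M_i$, so the action factors through $M_{i,i+1}$. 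Freeness on each fiber reverses this computation: if $nx\in xN_{i+1}\Gamma$, then $x^{-1}nx\in nN_{i+1}$ must also lie in $N_{i+1}\Gamma$, forcing $n\in N_{i+1}\Gamma_i$ by the same intersection identity.

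Local triviality then follows from the standard homogeneous-space argument: pick a linear complement $\mathfrak{c}$ of $\mathfrak{n}_i$ in $\mathfrak{n}$ and, using that $\exp:\mathfrak{n}\to N$ is a diffeomorphism (Theorem \ref{thm: nilpotency}), take $\exp(\mathfrak{c})$ as a local smooth section of $N\to N/N_i\Gamma$ near the identity, translate it to cover a small neighborhood $U$ of any target point of $M_{i-1}$, and trivialize $p^{-1}(U)\cong U\times M_{i,i+1}$ via the $M_{i,i+1}$-action. Finally, since $N_i/N_{i+1}$ is a simply connected abelian Lie group of dimension $d_i=\dim\mathfrak{n}_i-\dim\mathfrak{n}_{i+1}$, it is isomorphic to $\mathbb{R}^{d_i}$, and its discrete cocompact subgroup $\Gamma_i/\Gamma_{i+1}$ is therefore isomorphic to $\mathbb{Z}^{d_i}$, giving $M_{i,i+1}\cong\mathbb{T}^{d_i}$. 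I expect the main obstacle to be the bookkeeping of nested identities such as $N_i\cap N_{i+1}\Gamma = N_{i+1}\Gamma_i$ together with the Malcev-type claim that $\Gamma_i/\Gamma_{i+1}$ is a lattice of $N_i/N_{i+1}$; the latter is classical \cite{Raghunathan1972} but relies on properties of the lower central series compatible with $\Gamma$, and is what really drives the whole induction.
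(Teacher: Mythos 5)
The paper gives no proof of this statement at all: it is quoted from Raghunathan and Palais, so there is no internal argument to compare against, and your proposal amounts to a direct verification of the cited fact. Your verification is essentially correct. The projection $p\colon N/N_{i+1}\Gamma\to N/N_i\Gamma$, the intersection identity $N_i\cap N_{i+1}\Gamma=N_{i+1}\Gamma_i$ identifying each fiber with $M_{i,i+1}$, the computation $x^{-1}\gamma_i x=[x^{-1},\gamma_i]\gamma_i\in N_{i+1}\Gamma_i$ (so the left $N_i$-action factors through $M_{i,i+1}$), the reversed computation giving freeness, and the exponential local section giving local triviality all check out. Two points are worth stating more carefully. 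First, well-definedness of left multiplication on the right coset space $N/N_{i+1}\Gamma$ is automatic; what the centrality of $N_i/N_{i+1}$ in $N/N_{i+1}$ actually buys is that $N_{i+1}\Gamma_i$ is normal in $N_i$ (so $M_{i,i+1}$ is a group, indeed a torus) and that the induced action on each fiber is simply transitive, which is how freeness and transitivity should be phrased. Second, the whole construction presupposes that $M_i$, $M_{i-1}$, $M_{i,i+1}$ are closed manifolds, i.e.\ that $N_i\Gamma$ and $N_{i+1}\Gamma$ are closed in $N$ and that $\Gamma_i/\Gamma_{i+1}$ is a cocompact lattice of full rank $d_i$ in $N_i/N_{i+1}\cong\mathbb{R}^{d_i}$; you flag this and cite Raghunathan, which is exactly the input the paper itself assumes in its preliminaries, so treating it as a black box is legitimate rather than a gap.
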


An {\it automorphism} of a nilmanifold $M = N/\Gamma$, is that induced by an automorphism $\Psi\in \text{Aut}(N)$ satisfying $\Psi(\Gamma) = \Gamma$. An {\it endomorphism} of $M$ is that induced by an automorphism $\Psi\in \text{Aut}(N)$ satisfying $\Psi(\Gamma)\subseteq\Gamma$. The collection of automorphisms and endomorphisms of $M$ is denoted by $\text{Aut}(M)$ and $\text{End}(M)$ respectively. Clearly, nilmanifold automorphisms are diffeomorphisms, and nilmanifold endomorphisms are local diffeomorphisms.

$\text{Aut}(N)$ is identified with $\text{Aut}(\mathfrak{n})$ via $\Psi\mapsto \psi = D_e\Psi$. $\text{Aut}(M) = \{\Psi\in\text{Aut}(N): \Psi(\Gamma) = \Gamma\}$ is identified with $\text{Aut}(\Gamma)$. $\text{End}(M) = \{\Psi\in\text{Aut}(N): \Psi(\Gamma)\subseteq\Gamma\}$ is identified with the collection of monomorphisms of $\Gamma$. The main idea is that a monomorphism of $\Gamma$ can be uniquely extended to an automorphism of $N$, see \cite{Dekimpe2012}. In this paper, the endomorphism induced by some $\Psi\in\text{Aut}(N)$ satisfying $\Psi(\Gamma)\subseteq\Gamma$ is also denoted by $\Psi\in\text{End}(M)$ and we do not distinguish them unless necessary. Moreover, the eigenvalues of $\psi = D_e\Psi\in\text{Aut}(\mathfrak{n})$ is also called the eigenvalues of $\Psi$.

For a simply connected $s$-step nilpotent Lie group $N$ and its Lie algebra $\mathfrak{n}$, a {\it Mal'cev basis} of $\mathfrak{n}$ adapted to the lower central series, is a basis $\{X_1, \cdots, X_d\}$, such that $\{X_{d - \dim\mathfrak{n}_i + 1}, \cdots, X_d\}$ is a basis of $\mathfrak{n}_i$, $1\leq i \leq s$. Under such a basis, the Lie bracket is uniquely decided by $[X_i, X_j] = c_{ij}^kX_k$, the constants $\{c_{ij}^k\}$ are called {\it the structural constants} with respect to the basis. $N$ admits a lattice $\Gamma$, if and only if $\mathfrak{n}$ admits a Mal'cev basis with rational structural constants. In this case, the Mal'cev basis can be properly chosen such that $\Gamma = \{(\exp n_1X_1)\cdots(\exp n_dX_d): n_1, \cdots, n_d\in\mathbb{Z}\}$.

For $\Psi\in\text{End}(M)$ and $\psi = D_e\Psi\in\text{Aut}(\mathfrak{n})$, take a Mal'cev basis adapted to the lower central series, then $\psi$ has a block matrix representation
$$\psi(\textbf{X}_1, \cdots, \textbf{X}_s) = (\textbf{X}_1, \cdots, \textbf{X}_s)\left(\begin{matrix}\psi_1 & & &\\ * & \psi_2 & &\\ \vdots & \vdots & \ddots & \\ * & * & \cdots &\psi_s\end{matrix}\right),$$
where $\textbf{X}_i = (X_{d - \dim\mathfrak{n}_i + 1}, \cdots, X_{d - \dim\mathfrak{n}_{i + 1}})$. Actually $\textbf{X}_i$ is projected to a basis of $\mathfrak{n}_i/\mathfrak{n}_{i + 1}$.

Clearly $\psi_i\in\text{Aut}(\mathfrak{n}_i/\mathfrak{n}_{i + 1})$. Notice that $\mathfrak{n}_i/\mathfrak{n}_{i + 1}$ is abelian, so $\mathfrak{n}_i/\mathfrak{n}_{i + 1}$ is identified with $N_i/N_{i + 1}$, both of which is identified with $\mathbb{R}^{d_i}$. Moreover, $\Gamma_i/\Gamma_{i + 1}$ is identified with $\mathbb{Z}^{d_i}$, and $\psi_i$ is identified with $\Psi_{i, i + 1}\in\text{Aut}(N_i/N_{i + 1})$ induced by $\Psi$. On the other hand, $\Psi(\Gamma)\subseteq\Gamma$, so $\Psi_{i, i + 1}(\Gamma_i/\Gamma_{i + 1})\subseteq\Gamma_i/\Gamma_{i + 1}$ and thus $\psi_i\in GL(d_i, \mathbb{R})\bigcap M(d_i, \mathbb{Z})$. When $\Psi\in\text{Aut}(M)$, we have $\Psi_{i, i + 1}(\Gamma_i/\Gamma_{i + 1}) = \Gamma_i/\Gamma_{i + 1}$ and thus $\psi_i\in GL(d_i, \mathbb{Z})$. As a corollary, the eigenvalues of $\Psi\in\text{End}(M)$ are algebraic integers (roots of monic polynomials with $\mathbb{Z}$-coefficients), and the eigenvalues of $\Psi\in\text{Aut}(M)$ are algebraic units (roots of monic polynomials with $\mathbb{Z}$-coefficients and constant term $\pm 1$).

Since $N/N_2$ is abelian, it is identified with its Lie algebra $\mathfrak{n}/\mathfrak{n}_2$ by the exponential map. Any endomorphism $\Psi\in\text{End}(M)$ induces a toral endomorphism $\Psi_1 \in\text{End}(M_1)$ naturally, which is called the {\it horizontal part} of $\Psi$. Of course the induced automorphism on $N/N_2$ by $\Psi\in\text{Aut}(N)$ is also denoted by $\Psi_1\in\text{Aut}(N/N_2)$. Since $N/N_2$ is identified with $\mathfrak{n}/\mathfrak{n}_2$, $\Psi_1\in\text{Aut}(N/N_2)$ is also identified with $\psi_1 \in \text{Aut}(\mathfrak{n}/\mathfrak{n}_2)$, which is induced by $\psi = D_e\Psi\in\text{Aut}(\mathfrak{n})$.

The following lemma shows the importance of horizontal part.

\begin{lemma}\label{lem: horizontal part}
	The following statements hold.
	
	(1)\ $\psi_2, \cdots, \psi_s$ are determined by $\psi_1$.
	
	(2)\ Every eigenvalue of $\ \psi_i$ is the product of $\ i$ eigenvalues of $\ \psi_1$.
\end{lemma}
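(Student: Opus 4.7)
The plan is to observe that the Lie bracket induces a $\psi$-equivariant pairing between successive graded pieces of the lower central series, and then use this both to propagate information about $\psi_1$ and to control eigenvalues. Since $\psi\in\text{Aut}(\mathfrak{n})$ preserves each characteristic ideal $\mathfrak{n}_i$ and respects brackets, while the lower central series satisfies $\mathfrak{n}_{i+1}=[\mathfrak{n}_1,\mathfrak{n}_i]$, the bracket descends to a well-defined surjective bilinear map
$$b_i:\mathfrak{n}_1/\mathfrak{n}_2\,\times\,\mathfrak{n}_i/\mathfrak{n}_{i+1}\longrightarrow\mathfrak{n}_{i+1}/\mathfrak{n}_{i+2},\qquad(\bar X,\bar Y)\longmapsto\overline{[X,Y]},$$
and the identity $\psi[X,Y]=[\psi X,\psi Y]$ immediately yields the equivariance $\psi_{i+1}\circ b_i=b_i\circ(\psi_1\times\psi_i)$.

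For (1), I would induct on $i$. The base case is $\psi_1$ itself. Assuming $\psi_1$ determines $\psi_i$, the equivariance above together with the surjectivity of $b_i$ forces $\psi_{i+1}$ to be uniquely determined on a spanning set of $\mathfrak{n}_{i+1}/\mathfrak{n}_{i+2}$, hence on the whole space. Iterating produces $\psi_2,\ldots,\psi_s$ from $\psi_1$ alone.

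For (2), I would iterate $b_i$ to obtain a surjective linear map
$$\beta_i:(\mathfrak{n}_1/\mathfrak{n}_2)^{\otimes i}\longrightarrow\mathfrak{n}_i/\mathfrak{n}_{i+1},\qquad X_1\otimes\cdots\otimes X_i\longmapsto\overline{[X_1,[X_2,\cdots,[X_{i-1},X_i]\cdots]]},$$
which intertwines $\psi_1^{\otimes i}$ with $\psi_i$. Over $\mathbb{C}$, the eigenvalues of $\psi_1^{\otimes i}$ are precisely the $i$-fold products $\lambda_{j_1}\cdots\lambda_{j_i}$ of eigenvalues of $\psi_1$; since $\psi_i$ is a $\psi_1^{\otimes i}$-equivariant quotient, its characteristic polynomial divides that of $\psi_1^{\otimes i}$, delivering the desired description of the eigenvalues of $\psi_i$.

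The only subtle point to record is the surjectivity of $b_i$ (hence of $\beta_i$). This is immediate from the defining relation $\mathfrak{n}_{i+1}=[\mathfrak{n}_1,\mathfrak{n}_i]$ of the lower central series: every element of $\mathfrak{n}_{i+1}$ is a sum of brackets $[X,Y]$ with $X\in\mathfrak{n}_1$ and $Y\in\mathfrak{n}_i$, and reducing modulo $\mathfrak{n}_{i+2}$ gives surjectivity onto $\mathfrak{n}_{i+1}/\mathfrak{n}_{i+2}$. I anticipate no deeper obstacle, and the argument is entirely linear-algebraic once this spanning property is in hand.
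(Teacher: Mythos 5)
Your proposal is correct. Part (1) is essentially the paper's argument: the paper also observes that $\mathfrak{n}_i$ is spanned by left-normalized $i$-fold brackets of elements of $\mathfrak{n}_1$, that the bracket map descends to the quotients (this uses the same inclusion $[\mathfrak{n}_2,\mathfrak{n}_j]\subseteq\mathfrak{n}_{j+2}$ that your well-definedness of $b_i$ silently relies on — worth stating, though it is the standard fact $[\mathfrak{n}_p,\mathfrak{n}_q]\subseteq\mathfrak{n}_{p+q}$), and that $\psi$-equivariance then pins down $\psi_i$ from $\psi_1$; you merely run this as an induction on two-fold brackets instead of using the $i$-fold bracket in one shot. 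For part (2) your route is genuinely different: the paper proves inductively that each eigenvalue of $\psi_{i+1}$ is a product of an eigenvalue of $\psi_1$ and one of $\psi_i$, via complexified generalized eigenspaces $E_j^\lambda$ and the claim $[E_1^\lambda,E_i^\mu]\subseteq E_{i+1}^{\lambda\mu}$, which requires a binomial-expansion computation with $(\psi-\lambda\mu I)^n[X,Y]$; you instead realize $\psi_i$ as a quotient of $\psi_1^{\otimes i}$ through the surjective equivariant map $\beta_i$, so that the characteristic polynomial of $\psi_i$ divides that of $\psi_1^{\otimes i}$, whose spectrum consists of $i$-fold products. Your version is shorter, purely formal linear algebra (only needing that the kernel of a surjective intertwiner is invariant), and delivers the $i$-fold product statement in one step; the paper's eigenspace argument is more computational but yields slightly finer structural information, namely an explicit covering of $\mathfrak{n}_{i+1}^{\mathbb{C}}$ modulo $\mathfrak{n}_{i+2}^{\mathbb{C}}$ by bracket-products of generalized eigenspaces. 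Either way the conclusion used later in the paper (eigenvalues of $\psi_i$ are products of $i$ eigenvalues of $\psi_1$) follows.
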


\begin{proof}
	(1) Let $\pi_i: \mathfrak{n}_i \to\mathfrak{n}_i/\mathfrak{n}_{i + 1}$ be the natural projection. Notice that $\psi_i$ is defined by $\psi_i\circ\pi_i = \pi_i\circ\psi|_{\mathfrak{n}_i}: \mathfrak{n}_i \to \mathfrak{n}_i/\mathfrak{n}_{i + 1}$, and $\mathfrak{n}_i$ is spanned by vectors in the form of $L(Y_1, \cdots, Y_i) = [Y_1, \cdots [Y_{i - 1}, Y_i]\cdots]$, so it suffices to show that $\pi_i \circ L(\psi Y_1, \cdots, \psi Y_i)$ is determined by $\psi_1: \mathfrak{n}/\mathfrak{n}_2 \to \mathfrak{n}/\mathfrak{n}_2$ and $(Y_1, \cdots, Y_i)$.
	
	When $Y_j \in\mathfrak{n}_2$ for some $1\leq j \leq i$, one has $L(Y_1, \cdots, Y_i) \in \mathfrak{n}_{i + 1}$. Therefore $\pi_i\circ L$ is well-defined on $\Pi_{j = 1}^i(\mathfrak{n}/\mathfrak{n}_2)$, and  $\pi_i \circ L(\psi Y_1, \cdots, \psi Y_i) = \pi_i\circ L(\psi_1(Y_1\mathfrak{n}_2), \cdots, \psi_1(Y_i\mathfrak{n}_2))$, since $\psi$ preserves $\mathfrak{n}_2$.
	
	(2) It suffices to show that every eigenvalue of $\ \psi_{i + 1}$ is the product of an eigenvalue of $\ \psi_1$ and an eigenvalue of $\psi_i$. Let $\Lambda_i$ be the eigenvalues of $\psi_i$. $E_i^\lambda = \{v\in\mathfrak{n}_i^\mathbb{C}: (\lambda I - \psi)^nv\in\mathfrak{n}_{i + 1}^\mathbb{C} \text{ for sufficiently large } n\}$, then $\mathfrak{n}_i^\mathbb{C} = \sum_{\lambda\in\Lambda_i}E_i^\lambda$, and $\mathfrak{n}_i^\mathbb{C}/\mathfrak{n}_{i + 1}^\mathbb{C} = \bigoplus_{\lambda\in\Lambda_i}(E_i^\lambda\mathfrak{n}_{i + 1}^\mathbb{C})$.
	
	Claim that $[E_1^\lambda , E_i^\mu] \subseteq E_{i + 1}^{\lambda\mu}$. In fact, take $X\in E_1^\lambda$ and $Y\in E_i^\mu$, then
	$$(\psi - \lambda\mu I)[X, Y] = [\psi X, \psi Y] - [\lambda X, \mu Y] = [(\psi - \lambda I)X, \psi Y] + [\lambda X, (\psi - \mu I)Y],$$
	hence
	$$(\psi - \lambda\mu I)^n[X, Y] = \sum_{k = 0}^n C_n^k [\lambda^k(\psi - \lambda I)^{n - k}X, \psi^{n - k}(\psi - \mu I)^kY].$$
	When $n$ is sufficiently large, every term in the sum lies in $[\mathfrak{n}_2^\mathbb{C}, \mathfrak{n}_i^\mathbb{C}]$ or $[\mathfrak{n}^\mathbb{C}, \mathfrak{n}_{i + 1}^\mathbb{C}]$. Both of them lie in $\mathfrak{n}_{i + 2}^\mathbb{C}$, thus $[X, Y]\in E_{i + 1}^{\lambda\mu}$.
	
	Now by the claim, we have
	$$\mathfrak{n}_{i + 1}^\mathbb{C} = [\mathfrak{n}_1^\mathbb{C}, \mathfrak{n}_i^\mathbb{C}] = \left[\sum_{\lambda\in\Lambda_1}E_1^\lambda, \sum_{\mu\in\Lambda_i}E_i^\mu\right] \subseteq \sum_{\lambda\in\Lambda_1}\sum_{\mu\in\Lambda_i}E_{i + 1}^{\lambda\mu},$$
	and the proof is completed.
\end{proof}

Nilmanifold endomorphisms are covering maps. General covering maps on nilmanifolds are related to endomorphisms, see the following discussion.

Let $M = N/\Gamma$ be a nilmanifold. The canonical projection $\pi: N \to M$ is the universal cover of $M$. Choose some $x_0\in M$ as the base point of $M$ and some $\widetilde{x}_0\in \pi^{-1}(x_0)$ as the base point of $N$, then there is an isomorphism between $\pi_1(M, x_0)$ and $\Gamma$ given by $[r] \mapsto \widetilde{r}(0)^{-1}\widetilde{r}(1)$, where $r: [0, 1] \to M$ is a loop at $x_0$, $\widetilde{r}: [0, 1] \to N$ is the unique lift of $r$ satisfying $\widetilde{r}(0) = \widetilde{x}_0$.

Further, a covering map $f: M \to M$ induces a monomorphism $f_*: \pi_1(M, x_0) \to \pi_1(M, y_0)$, where $y_0 = f(x_0)$. Choose $\widetilde{x}_0\in\pi^{-1}(x_0)$ and $\widetilde{y}_0\in\pi^{-1}(y_0)$ respectively, then there is a unique lift of $f$, denoted by $F$, satisfying $F(\widetilde{x}_0) = \widetilde{y}_0$. It appears that there exists a unique monomorphism $\Psi: \Gamma \to \Gamma$ such that $F(n\gamma) = F(n)\Psi(\gamma)$, $\forall n\in N$, $\gamma\in\Gamma$. Such $\Psi$ is uniquely extended to $\Psi\in\text{Aut}(N)$ and is called the {\it linear part} of $F$. Moreover, $\Psi = f_*$ when identify $\pi_1(M, x_0)$ and $\pi_1(M, y_0)$ with $\Gamma$ respectively.

Note that $\Psi$ depends on $F$, whereas $F$ depends on the choice of base points. Consider another lift $F'$, we have $F'(n) = F(n)\gamma_0$ for some $\gamma_0\in\Gamma$, and hence
$$F'(n\gamma) = F(n\gamma)\gamma_0 = F(n)\Psi(\gamma)\gamma_0 = F'(n)\gamma_0^{-1}\Psi(\gamma)\gamma_0,$$
which means $\Psi'(\gamma) = \gamma_0^{-1}\Psi(\gamma)\gamma_0$, that is, $\Psi' = Ad_{\gamma_0}\circ \Psi$.

Since the monomorphisms of $\Gamma$ is identified with the endomorphisms of $M$, we also write $\Psi\in\text{End}(M)$. It follows that $\Psi'(x\Gamma) = \gamma_0^{-1}\Psi(x\Gamma)$, i.e., $\Psi' = L_{\gamma_0^{-1}}\circ\Psi \in \text{End}(M)$. It follows that $\Psi'\in\text{End}(M)$ and $\Psi\in\text{End}(M)$ are homotopic. In this sense, $\Psi\in\text{End}(M)$ is called the {\it linear part} of $f$.

Actually $f$ is homotopic to its linear part. To see this, consider a homotopy between $F$ and $\Psi$,
$$H(t, n) = (\exp t(\exp^{-1}(F(n)\Psi(n)^{-1})))\Psi(n).$$
Since $H(t, n\gamma) = H(t, n)\Psi(\gamma$), $\forall \gamma\in\Gamma$, $H$ is projected to a homotopy between $f$ and $\Psi$.

\subsection{Anosov maps and hyperbolic endomorphisms}

\begin{definition}\label{def: Anosov diffeo}
    A diffeomorphism $f$ on a Riemannian manifold $M$ is {\it Anosov}, if there are constants $C > 1$, $0 < \lambda < 1$, and a $Df$-invariant splitting $TM = E^s\oplus E^u$, such that $$\left\|Df^n|_{E^s(x)}\right\|\leq C\lambda^n \ \text{and}\  \left\|Df^{-n}|_{E^u(x)}\right\|\leq C\lambda^n, \ \forall x\in M, \ \forall n \geq 0.$$
	Such a splitting is actually unique and continuous, and is called the hyperbolic splitting.
\end{definition}

\begin{definition}\label{def: Anosov map}
	A local diffeomorphism $f$ on a closed manifold $M$ is {\it Anosov}, if a lift $F$, which is a diffeomorphism of $\widetilde{M}$, is Anosov. Here $\pi: \widetilde{M} \to M$ is the universal cover of $M$.
\end{definition}

\begin{remark}
	Definition \ref{def: Anosov map} does not depend on the choice of Riemannian metrics on $M$ and lifts of $f$. Besides, local diffeomorphisms on closed manifolds are always covering maps.
\end{remark}

When $f$ is actually a diffeomorphism, the definition of an Anosov map coincides with an Anosov diffeomorphism, because in this case a hyperbolic splitting of $TM$ is pulled back to a hyperbolic splitting of $T\widetilde{M}$, and a hyperbolic splitting of $T\widetilde{M}$ is projected to a hyperbolic splitting of $TM$.

\begin{definition}\label{def: special}
	An Anosov map $f$ on a closed manifold $M$ is {\it special}, if the hyperbolic splitting of $T\widetilde{M}$ is projected to a hyperbolic splitting of $TM$.
\end{definition}

An Anosov map $f$ on a closed manifold $M$ is special if and only if the hyperbolic splitting $T\widetilde{M} = \widetilde{E}^s\oplus\widetilde{E}^u$ of $F$ is invariant under deck transformations. Note that $\widetilde{E}^s$ is always invariant under deck transformations, hence we only need the condition for $\widetilde{E}^u$. When $f$ is an Anosov diffeomorphism or an expanding map (which means $T\widetilde{M} = \widetilde{E}^u$), the condition holds and hence $f$ is special.

Now we consider Anosov maps on nilmanifolds. In the following discussion of this subsection, $M = N/\Gamma$ is a nilmanifold.

\begin{definition}\label{def: hyperbolic}
	$\Psi\in\text{End}(M)$ is {\it hyperbolic}, if the eigenvalues of $\Psi$ are not of modulus one.
\end{definition}

Clearly, $\Psi\in\text{End}(M)$ is hyperbolic if and only if $\psi = D_e\Psi\in\text{Aut}(\mathfrak{n})$ is hyperbolic. In this case, we have a hyperbolic splitting $\mathfrak{n} = \mathfrak{n}^s\oplus\mathfrak{n}^u$ as subspaces. Generally $\mathfrak{n}^s$ and $\mathfrak{n}^u$ are subalgebras, but $[\mathfrak{n}^s, \mathfrak{n}^u]$ may not vanish, i.e., $\mathfrak{n}^s$ and $\mathfrak{n}^u$ may not be ideals.

\begin{remark}\label{rmk: codimension one to u-ideal}
	For a hyperbolic endomorphism $\Psi$ with one-dimensional stable bundle, by Lemma \ref{lem: horizontal part}, the unique eigenvalue with modulus smaller than 1 must be an eigenvalue of $\psi_1$, and hence $[\mathfrak{n}^s, \mathfrak{n}^u] \subseteq \mathfrak{n}_2 \subseteq \mathfrak{n}^u$, $\mathfrak{n}^u$ is an ideal. On the other hand, if $\Psi$ has one-dimensional unstable bundle, since each of $\psi_1$, $\cdots$, $\psi_s$ has at least one eigenvalue with modulus bigger than 1, this forces $N_2$ to vanish and hence $\Psi$ is a toral endomorphism.
\end{remark}

\begin{remark}\label{rmk: 3-dim case}
	If $M$ is a non-toral 3-nilmanifold, then a non-expanding hyperbolic endomorphism $\Psi\in\text{End}(M)$ must have one-dimensional stable bundle. Moreover, the induced endomorphism $\Psi_1 \in \text{End}(N/N_2\Gamma)$ is irreducible, since $\Psi_1$ is a non-expanding endomorphism of $\mathbb{T}^2$. Let $\{X, Y, Z\}$ be the related Mal'cev basis of $\mathfrak{n} = \text{Lie}(N)$, then $\text{span}_{\mathbb{R}}\{Z\} = \text{span}_{\mathbb{R}}\{[X, Y]\}$, hence $|\det \psi_1| = |\psi_2| > 1$. Consequently, the eigenvalues of $\Psi$ are not algebraic units, and $\Psi$ is totally non-invertible, see Definition \ref{def: totally non-invertible}.
\end{remark}

\begin{lemma}\label{hyperbolic linear part}
	{\rm(\cite[Lemma 1.3]{Sumi1996})}The linear part of a nilmanifold Anosov map is hyperbolic.
\end{lemma}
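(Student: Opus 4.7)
The plan is to reduce to the horizontal torus via Lemma \ref{lem: horizontal part} and apply a Franks-type bounded-distance argument. First, I would lift $f$ to a diffeomorphism $F\colon N \to N$, which is Anosov on the universal cover by Definition \ref{def: Anosov map}, and satisfies the $\Gamma$-equivariance $F(n\gamma) = F(n)\Psi(\gamma)$ that defines the linear part $\Psi$. A short calculation shows that $n \mapsto F(n)\Psi(n)^{-1}$ is right $\Gamma$-invariant, so it descends to a continuous map $M \to N$; by compactness of $M$ this forces $d_N(F(n),\Psi(n))$ to be uniformly bounded in any left-invariant Riemannian metric on $N$.

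Next, by Lemma \ref{lem: horizontal part}(2), every eigenvalue of $\psi = D_e\Psi$ is a product of eigenvalues of the horizontal part $\psi_1 \in \mathrm{Aut}(\mathfrak{n}/\mathfrak{n}_2)$, so $\psi$ has an eigenvalue of modulus one if and only if $\psi_1$ does. It therefore suffices to show that $\psi_1$ is hyperbolic. Projecting via the quotient $N \to N/N_2\Gamma = M_1$, the endomorphism $\Psi$ descends to the toral endomorphism $\Psi_1$ with derivative $\psi_1$, and the uniform boundedness of $d_N(F(n),\Psi(n))$ passes through to the horizontal projections.

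Finally, I would argue in the style of Franks: if $\psi_1$ had an eigenvalue of modulus one, then along the corresponding center direction the iterates $\Psi^n$ would act subexponentially, and the bounded-distance property would force $F^n$ to separate orbits only subexponentially along a lifted direction in $N$, contradicting the uniform expansion/contraction rates of the Anosov splitting of $F$.

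The main obstacle lies in this last step. The Franks argument is essentially linear on tori, but here $F$ lives on the (possibly non-abelian) nilpotent group $N$ and its hyperbolic splitting need not be compatible with the fibration $N \to N/N_2$. To make the contradiction precise one must produce, from a unit-modulus eigenvector of $\psi_1$, a concrete one-parameter subgroup of $N$ whose $\Psi$-orbits grow at most polynomially, and then propagate this slow growth through the lower central series using the Mal'cev basis structure, while ensuring compatibility with the Anosov cone condition on $F$.
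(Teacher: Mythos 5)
There is a genuine gap, and it sits exactly at the step you treat as a harmless reduction. The paper itself offers no proof (it cites \cite[Lemma 1.3]{Sumi1996}), so your argument has to stand on its own, and it does not: from Lemma \ref{lem: horizontal part}(2) an eigenvalue of $\psi_i$ ($i\geq 2$) is a \emph{product} of $i$ eigenvalues of $\psi_1$, and such a product can have modulus one while no factor does (e.g.\ $2\cdot\tfrac12$). Since the spectrum of $\psi$ is the union of the spectra of $\psi_1,\dots,\psi_s$, hyperbolicity of the horizontal part $\psi_1$ does \emph{not} imply hyperbolicity of $\psi$; the implication you need is precisely the false direction of your claimed equivalence. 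The three-dimensional Heisenberg nilmanifold illustrates this: there $\psi_2=\det\psi_1$, so any automorphism whose horizontal part is hyperbolic with $\det\psi_1=\pm1$ has $|\psi_2|=1$ and is non-hyperbolic even though $\psi_1$ is hyperbolic (this is the same bookkeeping as in Remark \ref{rmk: 3-dim case}, where $|\psi_2|=|\det\psi_1|>1$ has to be checked separately). So the real content of the lemma is exactly the part your reduction throws away: one must rule out unit-modulus eigenvalues in \emph{every} block $\psi_i$, i.e.\ argue on $\psi$ itself, not on $\psi_1$.

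Two further points. Your concluding Franks-type step is, as you admit, not carried out, and it is not a routine adaptation: the projection of $f$ to the horizontal torus $M_1$ need not be Anosov (the hyperbolic splitting of $F$ is not assumed compatible with the fibration $N\to N/N_2$), and the standard way to convert ``$\Psi$ moves points subexponentially along a neutral direction'' into a statement about $F$-orbits is through a bounded-distance semiconjugacy as in Lemma \ref{lem: conjugate} --- whose construction presupposes hyperbolicity of $\Psi$, so invoking it here would be circular. Finally, a minor but real slip: the uniform bound on $d(F(n),\Psi(n))$ comes from the right-$\Gamma$-periodicity of $n\mapsto F(n)\Psi(n)^{-1}$ and therefore needs the \emph{right}-invariant metric used throughout the paper; with a left-invariant metric the periodic quantity is a $\Gamma$-conjugate and the bound does not follow directly.
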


Hyperbolic endomorphisms of nilmanifolds are special Anosov maps. To see this, we take a right-invariant Riemannian metric on $N$, so that it is projected to a Riemannian metric on $M = N/\Gamma$. Then the hyperbolic splitting $\mathfrak{n} = \mathfrak{n}^s\oplus\mathfrak{n}^u$ induces a hyperbolic splitting $TN = \widetilde{L}^s\oplus\widetilde{L}^u$ by right translation. The splitting is $D\Psi$-invariant and right-invariant, thus is projected to a hyperbolic splitting $TM = L^s\oplus L^u$, which is $D\Psi$-invariant.

Further, $\mathfrak{n}^\sigma$ is a Lie subalgebra of $\mathfrak{n}$, therefore $\exp\mathfrak{n}^\sigma$ is a simply connected closed Lie subgroup and decides a $\Psi$-invariant, right-invariant smooth foliation $\widetilde{\mathcal{L}}^\sigma$ on $N$ by $\widetilde{\mathcal{L}}^\sigma(n) = (\exp\mathfrak{n}^\sigma)n$, which is projected to a $\Psi$-invariant smooth foliation $\mathcal{L}^\sigma$ on $M$, $\sigma = s$, $u$. In fact, $\widetilde{\mathcal{L}}^s$ and $\widetilde{\mathcal{L}}^u$ are stable and unstable foliations of $\Psi \in \text{Aut}(N)$, $\mathcal{L}^s$ and $\mathcal{L}^u$ are stable and unstable foliations of $\Psi\in \text{End}(M)$.

Generally, for an Anosov map $f$ on a nilmanifold $M = N/\Gamma$, let $F$ be a lift of $f$. Since $F$ is an Anosov diffeomorphism, there is a hyperbolic splitting $TN = \widetilde{E}^s\oplus \widetilde{E}^u$. Moreover, there are stable foliations $\widetilde{\mathcal{F}}^s$ and unstable foliations $\widetilde{\mathcal{F}}^u$. If $f$ is special, then the splitting is projected to a hyperbolic splitting $TM = E^s\oplus E^u$, and the stable and unstable foliations are also projected to foliations on $M$, denoted by $\mathcal{F}^s$ and $\mathcal{F}^u$.

Note that $\widetilde{\mathcal{F}}^\sigma$ is $F$-invariant, $\widetilde{\mathcal{F}}^s$ is right-$\Gamma$-invariant; $\widetilde{\mathcal{F}}^u$ is right-$\Gamma$-invariant if and only if $f$ is special. Moreover, $\widetilde{\mathcal{L}}^\sigma$ is $\Psi$-invariant and right-invariant, $\sigma = s$, $u$.

\begin{lemma}\label{lem: global product structure}
    {\rm (\cite[Lemma 7.6]{Sumi1996})} For any $x, y\in N$, $\widetilde{\mathcal{F}}^s(x)\bigcap \widetilde{\mathcal{F}}^u(y)$ is the set of one point.
\end{lemma}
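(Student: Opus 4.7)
The plan is to construct a Franks--Manning style conjugacy $H: N \to N$ between the Anosov lift $F$ and its linear part $\Psi$ on the universal cover, and then to inherit the global product structure from the linear model $\Psi$. First, using $F(n\gamma) = F(n)\Psi(\gamma)$ combined with $\Psi(n\gamma) = \Psi(n)\Psi(\gamma)$, the map $n \mapsto F(n)\Psi(n)^{-1}$ is right-$\Gamma$-invariant and therefore factors through the compact nilmanifold $M$, which in a right-invariant Riemannian metric yields $\sup_n d(F(n), \Psi(n)) < \infty$. Using this bounded displacement together with the hyperbolic splitting $\mathfrak{n} = \mathfrak{n}^s \oplus \mathfrak{n}^u$, one then solves a cohomological equation on the nilpotent cover in Mal'cev coordinates (expressed via BCH) to produce a continuous $H: N \to N$ satisfying $H \circ F = \Psi \circ H$ and $d(H, \mathrm{id}) < \infty$; the solution is assembled as convergent geometric series involving $\Psi^{-k}|_{\mathfrak{n}^u}$ and $\Psi^{k}|_{\mathfrak{n}^s}$. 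The analogous construction with $(F^{-1}, \Psi^{-1})$ gives $G$ with $G \circ \Psi = F \circ G$ and $d(G, \mathrm{id}) < \infty$; a short rigidity argument shows that any bounded-displacement self-map commuting with $\Psi$ is the identity, forcing $G = H^{-1}$, so $H$ is a homeomorphism.

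Next I would show that $H$ takes $F$-foliations to $\Psi$-foliations leaf by leaf. If $y \in \widetilde{\mathcal{F}}^s(x)$ then $d(F^n x, F^n y) \to 0$; because $d(H, \mathrm{id})$ is bounded, $d(\Psi^n H(x), \Psi^n H(y)) = d(H F^n x, H F^n y)$ stays bounded, and hyperbolicity of $\Psi$ forces $H(y) \in \widetilde{\mathcal{L}}^s(H(x))$. A symmetric argument with $F^{-1}$, combined with the same argument applied to $H^{-1}$, yields $H(\widetilde{\mathcal{F}}^\sigma(x)) = \widetilde{\mathcal{L}}^\sigma(H(x))$ for $\sigma = s, u$. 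For the linear model the leaves $\widetilde{\mathcal{L}}^\sigma$ are right translates of the closed connected Lie subgroups $\exp\mathfrak{n}^\sigma$, and since $\mathfrak{n} = \mathfrak{n}^s \oplus \mathfrak{n}^u$ as a vector space, BCH in a Mal'cev basis shows that multiplication $\exp\mathfrak{n}^s \times \exp\mathfrak{n}^u \to N$ is a polynomial diffeomorphism; hence $\widetilde{\mathcal{L}}^s(p) \cap \widetilde{\mathcal{L}}^u(q)$ is a single point for every $p, q \in N$. Transporting back: for $x, y \in N$, set $p = \widetilde{\mathcal{L}}^s(H x) \cap \widetilde{\mathcal{L}}^u(H y)$ and $z = H^{-1}(p)$; the leaf correspondence places $z$ in $\widetilde{\mathcal{F}}^s(x) \cap \widetilde{\mathcal{F}}^u(y)$, and injectivity of $H$ gives uniqueness.

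The main obstacle is the conjugacy step. Two difficulties arise that are absent in the toral case: the cohomological equation controlling the uniform convergence of $\Psi^{-n} F^n$ must be solved in Mal'cev coordinates with the non-abelian BCH correction terms, and bijectivity of $H$ does not follow from its mere existence but requires both the symmetric construction with $F^{-1}$ and a rigidity lemma for bounded-displacement commuters of the hyperbolic automorphism $\Psi$. Once $H$ is in hand as a homeomorphism, the remainder of the argument reduces to the elementary product structure of $\Psi$ and is essentially formal.
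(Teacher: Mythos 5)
The paper does not prove this lemma at all; it simply cites \cite[Lemma 7.6]{Sumi1996}, and the conjugacy machinery you want to build ($H$ a bi-uniformly continuous homeomorphism with $\Psi\circ H=H\circ F$, $d(H,\mathrm{Id})<\infty$, mapping $\widetilde{\mathcal{F}}^\sigma$-leaves to $\widetilde{\mathcal{L}}^\sigma$-leaves) is exactly the content of the paper's Lemma \ref{lem: conjugate}, also quoted from Sumi. If you were allowed to quote that lemma, your final reduction (linear product structure via $\exp\mathfrak{n}^s\times\exp\mathfrak{n}^u\to N$ being a diffeomorphism, then transport by $H$, with uniqueness from injectivity of $H$) is correct and is essentially the standard route. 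The problem is that you chose to construct $H$ and its inverse from scratch, and that is where your argument has genuine gaps.

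First, your construction of $G$ is wrong as stated: running the Franks--Manning scheme for the pair $(F^{-1},\Psi^{-1})$ produces a bounded-displacement map $K$ with $\Psi^{-1}\circ K=K\circ F^{-1}$, which is the \emph{same} equation $\Psi\circ K=K\circ F$ you already solved, not the reverse semiconjugacy $G\circ\Psi=F\circ G$. The scheme you describe (geometric series in $\Psi^{\mp k}$ along $\mathfrak{n}^u,\mathfrak{n}^s$) uses hyperbolicity of the \emph{target} dynamics, which is the linear map $\Psi$; to produce $G$ you must make $F$ the target, and then you need hyperbolicity of the nonlinear $F$ --- concretely, a shadowing-type argument for $\Psi$-orbits viewed as pseudo-orbits of $F$ with jumps bounded by $d(F,\Psi)$, which are bounded but not small, so the usual local shadowing lemma does not apply verbatim. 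Second, even granting $G$, your rigidity lemma (bounded-displacement maps commuting with the hyperbolic automorphism $\Psi$ are the identity) only yields $H\circ G=\mathrm{Id}$, i.e.\ surjectivity of $H$; the composition $G\circ H$ commutes with the \emph{nonlinear} map $F$, and concluding $G\circ H=\mathrm{Id}$ (equivalently, injectivity of $H$) requires a rigidity/expansivity statement for bounded-displacement commuters of $F$ on the noncompact cover. That is precisely the hard step (Sumi's Lemma 7.13), and in the standard treatments injectivity of $H$ is itself deduced from the fact that a stable leaf and an unstable leaf of $F$ on $N$ meet in at most one point --- the very statement you are trying to prove --- so as written your argument is incomplete and, if patched naively, circular. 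A further, more minor, issue: in the non-abelian setting the semiconjugacy equation is not a single linear cohomological equation; one has to induct along the lower central series (solve on the abelianization, then correct the BCH terms step by step), so the ``geometric series'' sketch only covers the toral model of the argument.
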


Denote the unique point in $\widetilde{\mathcal{F}}^s(x)\bigcap \widetilde{\mathcal{F}}^u(y)$ by $\beta_{\widetilde{\mathcal{F}}}(x, y)$. In particular, for the algebraic case we denote $\beta = \beta_{\widetilde{\mathcal{L}}}$ for simplicity, which does not cause confusion in this paper. Immediately, we have $\beta(xz, yz) = \beta(x, y)z$, $\Psi\beta(x, y) = \beta(\Psi(x), \Psi(y))$.

\begin{proposition}\label{prop: su decomposition}
	For any $x\in N$, there is a unique decomposition $x = x^sx^u$, $x^s\in\widetilde{\mathcal{L}}^s(e)$, $x^u\in\widetilde{\mathcal{L}}^u(e)$. $x\mapsto x^s$ and $x\mapsto x^u$ are both smooth.
\end{proposition}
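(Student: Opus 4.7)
The plan is to first establish the set-theoretic bijection $N \simeq H^s \times H^u$, where $H^\sigma := \exp \mathfrak{n}^\sigma$ for $\sigma = s, u$, and then upgrade it to a diffeomorphism. Both steps rely on facts already at hand: the bijection will use Lemma~\ref{lem: global product structure} applied to the algebraic foliations $\widetilde{\mathcal{L}}^s$, $\widetilde{\mathcal{L}}^u$ (legitimate, since $\Psi$ is an Anosov diffeomorphism of $N$), while smoothness uses that $\mathfrak{n}^s$ and $\mathfrak{n}^u$ are Lie subalgebras (as noted right after Definition~\ref{def: hyperbolic}), so that $H^s$ and $H^u$ are closed Lie subgroups of $N$.

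For existence, given $x \in N$, I would apply Lemma~\ref{lem: global product structure} in the algebraic setting to the pair $(x, e)$: there is a unique point $p = \beta(x, e) \in \widetilde{\mathcal{L}}^s(x) \cap \widetilde{\mathcal{L}}^u(e) = H^s x \cap H^u$. Writing $p = a x = b$ with $a \in H^s$ and $b \in H^u$ yields $x = a^{-1} b$, so one sets $x^s := a^{-1}$ and $x^u := b$. Uniqueness follows from the same lemma: if $y^s y^u = z^s z^u$ with $y^\sigma, z^\sigma \in H^\sigma$, then $(z^s)^{-1} y^s = z^u (y^u)^{-1} \in H^s \cap H^u = \widetilde{\mathcal{L}}^s(e) \cap \widetilde{\mathcal{L}}^u(e) = \{e\}$.

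For smoothness, I would consider the multiplication map $\mu: H^s \times H^u \to N$, $(a, b) \mapsto ab$. By the two steps above, $\mu$ is a smooth bijection between manifolds of the same dimension, so it suffices to show that $D\mu$ is invertible at every $(a, b)$. Writing tangent vectors as $X = DL_a V$, $Y = DL_b W$ with $V \in \mathfrak{n}^s$, $W \in \mathfrak{n}^u$, and using the identity $DR_b = DL_b \circ \mathrm{Ad}_{b^{-1}}$ together with the commutativity of left and right translations, a direct computation gives
\[ D_{(a,b)} \mu(X, Y) \;=\; DL_{ab}\bigl(\mathrm{Ad}_{b^{-1}} V + W\bigr). \]
Injectivity of $D\mu$ thus reduces to injectivity of the linear map $(V, W) \mapsto \mathrm{Ad}_{b^{-1}} V + W$ on $\mathfrak{n}^s \oplus \mathfrak{n}^u$.

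The main obstacle, and the real content of the argument, is this last reduction. If the sum vanishes then $\mathrm{Ad}_{b^{-1}} V = -W \in \mathfrak{n}^u$; since $\mathfrak{n}^u$ is a subalgebra and $b \in H^u$, the formula $\mathrm{Ad}_{\exp Z} = \exp(\mathrm{ad}_Z)$ for $Z \in \mathfrak{n}^u$ shows $\mathrm{Ad}_{b}$ preserves $\mathfrak{n}^u$, hence $V = -\mathrm{Ad}_b W \in \mathfrak{n}^u \cap \mathfrak{n}^s = \{0\}$, forcing $W = 0$ as well. It is worth emphasizing that one does not need $\mathfrak{n}^u$ to be an ideal here, only a subalgebra; for a generic vector-space splitting of $\mathfrak{n}$ without this subalgebra property the analogous multiplication map would fail to be a global diffeomorphism, so the structural information from hyperbolicity is essential.
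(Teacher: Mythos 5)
Your proof is correct, but it reaches the conclusion by a different route than the paper. You get the set-theoretic bijection (existence and uniqueness of $x = x^s x^u$) from Lemma~\ref{lem: global product structure} applied in the algebraic case, which is legitimate within the paper's framework: that lemma precedes the proposition and the paper itself already specializes it to $\widetilde{\mathcal{L}}^s,\widetilde{\mathcal{L}}^u$ by writing $\beta=\beta_{\widetilde{\mathcal{L}}}$. The paper instead proves bijectivity of the product map $P(y,z)=yz$ by hand, with an induction along the lower central series: injectivity from $\widetilde{\mathcal{L}}_i^s(e)\cap\widetilde{\mathcal{L}}_i^u(e)=\{e\}$ and surjectivity from the commutator identity $(yz)(y'z')=yy'[(y')^{-1},z]zz'$, the commutator being absorbed into $N_{i+1}$ by the inductive hypothesis. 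The smoothness halves are essentially mirror images: both compute the differential of the multiplication map and reduce to injectivity of a linear map using that one of $\mathfrak{n}^s,\mathfrak{n}^u$ is a subalgebra --- the paper conjugates by the stable element (the curve $y^{-1}\exp(tY)y$ stays in the stable subgroup), you conjugate by the unstable one via $\mathrm{Ad}_b=\exp(\mathrm{ad}_{\exp^{-1}b})$, which preserves $\mathfrak{n}^u$. What each approach buys: yours is shorter and exploits the quoted dynamical lemma of Sumi, while the paper's argument is purely algebraic and self-contained, so the product structure for $\widetilde{\mathcal{L}}$ does not rest on any dynamical input --- a point worth keeping in mind if one traces how Sumi's Lemma 7.6 is proved (via comparison with the linear model), though inside this paper's logical ordering there is no circularity. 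Two trivial remarks: the paper's convention is $Ad_x=L_{x^{-1}}\circ R_x$, i.e.\ the opposite of the standard $\mathrm{Ad}$ you use, so your identity $D_eR_b=D_eL_b\circ\mathrm{Ad}_{b^{-1}}$ should be read in the standard convention (the computation itself is correct either way); and your closing claim that a generic non-subalgebra splitting would fail is an aside that is not needed and not fully justified, but nothing in the proof depends on it.
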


\begin{proof}
	We will show that $P: \widetilde{\mathcal{L}}^s(e)\times\widetilde{\mathcal{L}}^u(e) \to N$, $P(y, z) = yz$, is a diffeomorphism.
	
	Prove inductively that $P_i = P|_{\widetilde{\mathcal{L}}_i^s(e)\times \widetilde{\mathcal{L}}_i^u(e)}: \widetilde{\mathcal{L}}_i^s(e)\times \widetilde{\mathcal{L}}_i^u(e) \to N_i$ is a diffeomorphism, $1\leq i \leq s + 1$, where $\widetilde{\mathcal{L}}_i^\sigma(e) = \widetilde{\mathcal{L}}^\sigma(e) \bigcap N_i$ is a simply connected closed Lie subgroup of $N_i$, with Lie algebra $\mathfrak{n}_i^\sigma = \mathfrak{n}^\sigma\bigcap\mathfrak{n}_i$, $\sigma = s$, $u$, satisfying $\mathfrak{n}_i^s\oplus \mathfrak{n}_i^u = \mathfrak{n}_i$.
	
	\begin{claim*}
		$P_i$ is a local diffeomorphism.
	\end{claim*}
	
	\begin{proof}[Proof of Claim]
        Notice that for any $y\in\widetilde{\mathcal{L}}_i^s(e)$, $z\in\widetilde{\mathcal{L}}_i^u(e)$, $Y\in T_y\widetilde{\mathcal{L}}_i^s(e)$, $Z\in T_z\widetilde{\mathcal{L}}_i^u(e)$, we have
		$$D_{(y, z)}P_i(Y, Z) = D_yR_zY + D_zL_yZ.$$
		Since $T_y\widetilde{\mathcal{L}}_i^s(e) = D_eR_y\mathfrak{n}_i^s$, $T_z\widetilde{\mathcal{L}}_i^u(e) = D_eR_z\mathfrak{n}_i^u$, it suffices to show that $$D_yR_z\circ D_eR_y \oplus D_zL_y\circ D_eR_z: \mathfrak{n}_i^s \oplus \mathfrak{n}_i^u \to T_{yz}N_i$$
		is an isomorphism, or equivalently, an injection. Notice that $D_zL_y\circ D_eR_z = D_yR_z\circ D_eL_y$, the question reduces to whether $D_eR_y \oplus D_eL_y: \mathfrak{n}_i^s\oplus \mathfrak{n}_i^u \to T_yN_i$ is an injection. Assume that there exists $Y\in \mathfrak{n}_i^s$, $Z\in \mathfrak{n}_i^u$ such that $D_eR_yY = D_eL_yZ$. Then $y^{-1}\exp(tY)y$ is tangent to $D_yL_{y^{-1}}\circ D_eR_y Y= Z$. But this curve lies in $\widetilde{\mathcal{L}}_i^s(e)$, which forces $Y$ and $Z$ to vanish.
	\end{proof}

    It follows that $\text{Im} P_i$ consists some neighborhood of $e\in N_i$. Besides, $P_i$ is injective, because $yz = y'z'$ for some $y$, $y'\in \widetilde{\mathcal{L}}_i^s(e)$ and $z$, $z'\in\widetilde{\mathcal{L}}_i^u(e)$ implies $(y')^{-1}y = z'z^{-1} \in \widetilde{\mathcal{L}}_i^s(e)\bigcap \widetilde{\mathcal{L}}_i^u(e) = \{e\}$ and thus $y' = y$, $z' = z$.

    Now one only needs to prove inductively that $P_i$ is surjective. The case when $i = s + 1$ is obvious. Assume that $\text{Im} P_{i + 1} \supseteq N_{i + 1}$. To show that $\text{Im}P_i \supseteq N_i$, it suffices to show that $\text{Im} P_i$ is closed under multiplication.

    Take $y, y'\in \widetilde{\mathcal{L}}_i^s(e)$, $z, z' \in \widetilde{\mathcal{L}}_i^u(e)$. One needs to show that $(yz)(y'z')\in\text{Im} P_i$. In fact, $(yz)(y'z') = yy'[(y')^{-1}, z]zz'$. $[(y')^{-1}, z] \in N_{i + 1} \subset \text{Im} P_{i + 1}$, thus there exists $y''\in \widetilde{\mathcal{L}}_{i + 1}^s(e)$, $z''\in\widetilde{\mathcal{L}}_{i + 1}^u(e)$ such that $[z^{-1}, y'] = y''z''$. Thus $(yz)(y'z') = (yy'y'')(z''zz') \in \text{Im} P_i$ and the induction is completed.
\end{proof}

\subsection{Conjugacy with linear part}
\begin{lemma}\label{lem: affine conjugate auto}
	{\rm (\cite[ Lemma 1.4]{Sumi1996})} Assume that $\Psi\in\text{Aut}(N)$ is hyperbolic, then $\xi(x) = x^{-1}\Psi(x)$ and $\eta(x) = \Psi(x)x^{-1}$ are both diffeomorphisms on $N$.
\end{lemma}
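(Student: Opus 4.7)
The plan is to induct on the nilpotency step $s$ of $N$, using that the last term $N_s$ of the lower central series is central and $\Psi$-invariant, so both maps descend to the quotient $N/N_s$ (which is $(s-1)$-step nilpotent with hyperbolic induced automorphism, by Lemma \ref{lem: horizontal part}) and restrict nicely to the fibers of $N\to N/N_s$.

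In the base case $s=1$, the group $N$ is abelian, so via $\exp$ it is identified with $\mathfrak{n}$, and $\xi(x) = \psi(x) - x = (\psi - I)(x)$, $\eta(x) = (\psi-I)(x)$. Since $\Psi$ is hyperbolic, $1$ is not an eigenvalue of $\psi$, so $\psi - I$ is a linear isomorphism, hence a diffeomorphism. For the inductive step, I would first observe that for any $x \in N$ and $z \in N_s$, centrality of $N_s$ together with $\Psi(N_s) = N_s$ gives
\[
\xi(xz) = z^{-1}x^{-1}\Psi(x)\Psi(z) = \xi(x)\cdot \Psi(z)z^{-1},
\]
and $\Psi(z)z^{-1} \in N_s$. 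Therefore $\xi$ sends the coset $xN_s$ into $\xi(x)N_s$ and descends to a well-defined map $\bar\xi$ on $N/N_s$; by uniqueness this descended map is exactly the analogue of $\xi$ for the hyperbolic automorphism $\bar\Psi$ of $N/N_s$. By the inductive hypothesis, $\bar\xi$ is a diffeomorphism.

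Next I would analyze $\xi$ on a single fiber $xN_s$. Identifying $N_s$ with the abelian $\mathfrak{n}_s$ via $\exp$, the calculation above becomes $z \mapsto \xi(x) \cdot (\psi|_{\mathfrak{n}_s} - I)(z)$. Since $\Psi$ is hyperbolic, $\psi|_{\mathfrak{n}_s}$ has no eigenvalue equal to $1$, so $\psi|_{\mathfrak{n}_s} - I$ is invertible and $\xi|_{xN_s} : xN_s \to \xi(x)N_s$ is a diffeomorphism. Combining this fiberwise bijection with the bijectivity of $\bar\xi$ shows $\xi$ is a smooth bijection on $N$. To upgrade this to a diffeomorphism I would check that $D_x\xi$ is invertible at every $x$: a direct computation using a left-invariant frame gives
\[
(DL_{\xi(x)^{-1}})_{\xi(x)} \circ (D\xi)_x \circ (DL_x)_e = \psi - Ad_{\xi(x)^{-1}},
\]
and in the block decomposition induced by $\mathfrak{n}_s \subset \mathfrak{n}$, this operator is lower-triangular with diagonal blocks $\bar\psi - \overline{Ad}_{\xi(x)^{-1}}$ (invertible by the inductive hypothesis applied on $\mathfrak{n}/\mathfrak{n}_s$) and $\psi|_{\mathfrak{n}_s} - I$ (invertible as above, using $Ad$ acts trivially on the center). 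Hence $\xi$ is a bijective local diffeomorphism, and therefore a global diffeomorphism. The argument for $\eta(x) = \Psi(x)x^{-1}$ is identical mutatis mutandis, with $\eta(zx) = \Psi(z)z^{-1} \cdot \eta(x)$ playing the role of the fiber equation.

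The only delicate point is making sure the operator $\psi - Ad_{\xi(x)^{-1}}$ is invertible despite $Ad$ being unipotent (so its eigenvalues coincide with $1$, which $\psi$ could potentially hit on some root spaces). The induction together with centrality of $N_s$ handles this cleanly because on the central piece $Ad$ is exactly the identity, and on the quotient the inductive hypothesis applies; this is the place where hyperbolicity of the full automorphism (rather than of $\psi_1$ alone) is used.
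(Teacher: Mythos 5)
Your argument is correct, but it cannot be compared with a proof in the paper because the paper gives none: Lemma \ref{lem: affine conjugate auto} is simply quoted from \cite[Lemma 1.4]{Sumi1996}. Your route is a clean, self-contained induction on the nilpotency step: the abelian base case reduces to invertibility of $\psi - I$ (no eigenvalue $1$ by hyperbolicity); in the inductive step you use that $N_s$ is central and $\Psi$-invariant, so $\xi$ descends to the $\xi$-map of the induced hyperbolic automorphism on $N/N_s$ and acts on each central fiber as the affine map $z\mapsto \xi(x)\exp((\psi|_{\mathfrak{n}_s}-I)Z)$; bijectivity of the quotient map and of the fiber maps gives bijectivity of $\xi$, and the differential identity, block-triangular with respect to $\mathfrak{n}_s\subset\mathfrak{n}$ with diagonal blocks $\psi|_{\mathfrak{n}_s}-I$ and the (invertible, by the inductive hypothesis) differential of the descended map, upgrades the smooth bijection to a diffeomorphism via the inverse function theorem; the case of $\eta$ is symmetric. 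Two cosmetic points: the spectral fact you need for the quotient and for $\psi|_{\mathfrak{n}_s}$ is just that these spectra are contained in the spectrum of $\psi$ (invariance of the lower central series), which is more direct than invoking Lemma \ref{lem: horizontal part}; and your formula $\psi - Ad_{\xi(x)^{-1}}$ uses the standard convention $Ad_g = D_e(h\mapsto ghg^{-1})$, whereas this paper defines $Ad_x = L_{x^{-1}}\circ R_x$, so in the paper's notation the operator would be written $\psi - D_eAd_{\xi(x)}$; neither point affects correctness.
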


\begin{corollary}\label{cor: affine conjugate auto}
	Assume that $\Psi\in\text{Aut}(N)$ is hyperbolic, then for every $y\in N$, $T = L_y\circ\Psi$ is conjugate to $\Psi$ via $L_x$ for some $x\in N$, i.e., $L_x\circ T = \Psi\circ L_x$.
\end{corollary}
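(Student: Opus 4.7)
The plan is to rewrite the desired conjugacy relation $L_x \circ T = \Psi \circ L_x$ as a single functional equation in the unknown $x$, and then solve it by invoking Lemma \ref{lem: affine conjugate auto}.

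First I would evaluate both sides of $L_x \circ L_y \circ \Psi = \Psi \circ L_x$ at an arbitrary point $n \in N$. The left-hand side becomes $xy\Psi(n)$, while the right-hand side, using that $\Psi$ is a group homomorphism, becomes $\Psi(x)\Psi(n)$. Since the identity must hold for every $n$, I can cancel $\Psi(n)$ on the right to obtain the equivalent scalar equation $xy = \Psi(x)$, or equivalently
\[
y \;=\; x^{-1}\Psi(x) \;=\; \xi(x).
\]

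Thus finding a conjugating element $x$ is the same as solving $\xi(x) = y$ for the given $y \in N$. This is exactly where Lemma \ref{lem: affine conjugate auto} enters: it guarantees that $\xi: N \to N$ is a diffeomorphism, hence in particular surjective. Setting $x := \xi^{-1}(y)$ produces the required element, and reversing the computation confirms $L_x \circ T = \Psi \circ L_x$.

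The only non-trivial step here is the surjectivity of $\xi$, which is already supplied by Lemma \ref{lem: affine conjugate auto} and ultimately rests on the hyperbolicity of $\Psi$ (so that $D_e\xi = \psi - \mathrm{Id}$ is invertible and a suitable global argument applies). Everything else is a direct algebraic manipulation, so I do not anticipate any additional obstacle.
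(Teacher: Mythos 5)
Your proposal is correct and follows essentially the same route as the paper: reduce $L_x\circ T = \Psi\circ L_x$ to the equation $y = x^{-1}\Psi(x) = \xi(x)$ and then solve it using the surjectivity (indeed bijectivity) of $\xi$ from Lemma \ref{lem: affine conjugate auto}. No gaps.
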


\begin{proof}
	$L_x\circ T = \Psi\circ L_x$ is equivalent to $L_x\circ L_y\circ\Psi = L_{\Psi(x)}\circ\Psi$, and also $y = x^{-1}\Psi(x)$. By Lemma \ref{lem: affine conjugate auto}, such $x$ uniquely exists.
\end{proof}

Recall that, for an Anosov map $f$ on a nilmanifold $M = N/\Gamma$, different lifts $F$ and $F'$ have different linear parts $\Psi\in\text{Aut}(N)$ and $\Psi'\in\text{Aut}(N)$, but the induced endomorphisms $\Psi\in\text{End}(M)$ and $\Psi'\in\text{End}(M)$ satisfy $\Psi' = L_{\gamma_0^{-1}}\circ \Psi$ for some $\gamma_0\in\Gamma$. They are not only homotopic, but also algebraically conjugate to each other. Therefore, there is no confusion when saying an Anosov map is conjugate to its linear part.

In the following discussion of this subsection, $f$ is an Anosov map on a nilmanifold $M = N/\Gamma$, $F$ is a lift of $f$, $\Psi$ is the linear part of $F$.

\begin{lemma}\label{lem: fixed point of F}
	{\rm (\cite[ Lemma 1.5]{Sumi1996})} $F$ has a unique fixed point.
\end{lemma}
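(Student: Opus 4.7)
The plan is to separately establish existence and uniqueness of a fixed point for $F$. Uniqueness follows quickly from the global product structure of the Anosov foliations on $N$, while existence requires an iteration argument that exploits hyperbolicity of the linear part $\Psi$ together with the twisted equivariance $F(x\gamma) = F(x)\Psi(\gamma)$.

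For uniqueness, suppose $F(x) = x$ and $F(y) = y$. By Lemma~\ref{lem: global product structure}, there is a unique point $z = \widetilde{\mathcal{F}}^s(x) \cap \widetilde{\mathcal{F}}^u(y)$. The $F$-invariance of both foliations together with $F(x) = x$ and $F(y) = y$ yields $F(z) \in \widetilde{\mathcal{F}}^s(x) \cap \widetilde{\mathcal{F}}^u(y) = \{z\}$, so $z$ is also fixed. Since $z$ lies on the stable leaf through $x$ and $F$ contracts along stable leaves, the identity $d_{\widetilde{\mathcal{F}}^s}(z, x) = d_{\widetilde{\mathcal{F}}^s}(F^n(z), F^n(x)) \leq C\lambda^n\, d_{\widetilde{\mathcal{F}}^s}(z, x)$ for every $n \geq 0$ forces $z = x$, and the symmetric argument along $\widetilde{\mathcal{F}}^u$ gives $z = y$, hence $x = y$.

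For existence, the idea is to recast $F(x) = x$ as a true fixed point problem governed by $\Psi$. Using the diffeomorphism $\eta(x) = \Psi(x)x^{-1}$ from Lemma~\ref{lem: affine conjugate auto}, the equation $F(x) = x$ is equivalent to $\eta(x) = \Psi(x) F(x)^{-1}$, i.e.\ to $x = T(x)$ with $T(x) := \eta^{-1}\bigl(\Psi(x)F(x)^{-1}\bigr)$. The key observation is that the map $x \mapsto F(x)\Psi(x)^{-1}$, and therefore its inverse, is $\Gamma$-right-invariant thanks to the linear part relation, so it descends to a continuous map $M \to N$ and its image lies in a compact set $K \subset N$. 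Starting from any $x_0$, one then shows the iterates $x_{n+1} = T(x_n)$ stay in a bounded region and converge to a fixed point $x_\ast$ of $T$, equivalently to a fixed point of $F$, by using the hyperbolic dichotomy of $\eta^{-1}$ along the stable and unstable directions of $\Psi$.

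The principal obstacle is the noncommutativity of $N$: the additive estimates that make the argument transparent on $\mathbb{T}^d$ have to be replaced by multiplicative ones involving commutators from the lower central series. A natural way to handle this is by induction along $N = N_1 \triangleright \cdots \triangleright N_{s+1} = \{e\}$: first solve the fixed point equation on the horizontal torus $M_1 = N/N_2\Gamma$, where the classical toral argument applies to $\Psi_1$, and then lift the solution step by step through each quotient $N_i/N_{i+1}$, invoking the hyperbolicity of $\psi_i$ supplied by Lemma~\ref{lem: horizontal part} at every stage to obtain a unique lift. The inductive uniqueness at each layer also gives an alternative proof of the uniqueness half of the statement.
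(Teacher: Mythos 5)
The paper offers no argument of its own here (it simply cites \cite[Lemma 1.5]{Sumi1996}), so your proof has to stand alone. The uniqueness half does: taking $z=\beta_{\widetilde{\mathcal{F}}}(x,y)$, noting $F(z)=z$ by invariance of both foliations, and contracting forward along $\widetilde{\mathcal{F}}^s$ and backward along $\widetilde{\mathcal{F}}^u$ (with uniform rates inherited from the compact quotient) is correct. Your reformulation of existence is also sound: $g(x):=\Psi(x)F(x)^{-1}$ satisfies $g(x\gamma)=g(x)$ for all $\gamma\in\Gamma$, hence descends to the compact $M$ and has relatively compact image $K$, and $F(x)=x$ is equivalent to $x=T(x)$ with $T=\eta^{-1}\circ g$, so $T(N)\subseteq\eta^{-1}(K)$, a compact set. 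The genuine gap is the last step: you claim the iterates $x_{n+1}=T(x_n)$ converge ``by the hyperbolic dichotomy of $\eta^{-1}$.'' No contraction is available: along $\mathfrak{n}^u$ the map $\eta$ expands, so $\eta^{-1}$ contracts there, but along $\mathfrak{n}^s$ the derivative of $\eta$ is comparable to $\psi-I$, whose inverse only has norm of order $(1-\mu^s_+(\Psi))^{-1}$, and $g$ is Lipschitz with a constant controlled by $\|DF\|$ that can be arbitrarily large; so $T$ need not be a contraction and its orbits need not converge --- already on $\mathbb{T}^d$ the analogous iteration can fail. The correct finish is topological and needs none of the commutator estimates you worry about: identify $N$ with $\mathbb{R}^d$ via $\exp$ (a diffeomorphism by Theorem \ref{thm: nilpotency}), choose a closed ball $B\supseteq\eta^{-1}(K)$; then $T(B)\subseteq T(N)\subseteq\eta^{-1}(K)\subseteq B$, and Brouwer's fixed point theorem gives $T(x_*)=x_*$, i.e. $F(x_*)=x_*$. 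This is exactly the toral argument with $(I-A)^{-1}$ replaced by $\eta^{-1}$, which is precisely what Lemma \ref{lem: affine conjugate auto} supplies.

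Your fallback scheme --- first solving on the horizontal torus and then lifting through the layers $N_i/N_{i+1}$ --- does not get off the ground, because $F$ does not descend to the quotients $N/N_{i+1}$, nor does $f$ descend to $M_1$: only the linear part $\Psi$ preserves the lower central series fibration of Theorem \ref{thm: torus bundle over torus}, while $f$ is merely homotopic to $\Psi$ and need not map fibers to fibers. Hence there is no induced map on the horizontal torus to which the classical toral fixed-point argument could be applied, and no fixed-point equation on the layers to lift; an induction of this kind would have to be reformulated for maps at bounded distance from $\Psi$, which is far more work than the one-line Brouwer argument above.
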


\begin{lemma}\label{lem: conjugate}
	{\rm (\cite[Lemma 2.3, Lemma 7.13]{Sumi1996})} There is a unique map $H: N \to N$ satisfying the following properties.
	\begin{itemize}
		\item $\Psi\circ H = H \circ F$;
		\item $d(H, \text{\rm Id}_N) < +\infty$.
	\end{itemize}
	Moreover, $H$ is a bi-uniformly continuous homeomorphism and $d(H^{-1}, \text{\rm Id}_N) < +\infty$.
\end{lemma}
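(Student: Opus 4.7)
The plan is to adapt the classical Franks--Manning conjugacy construction to the nilpotent setting, using the global product structure (Lemma \ref{lem: global product structure}) and the su-decomposition (Proposition \ref{prop: su decomposition}) to compensate for the lack of a vector-space structure on $N$.

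First I would establish the basic estimate $d(F, \Psi) < +\infty$. Because $f$ is homotopic to its linear part on $M = N/\Gamma$ via the explicit homotopy $H(t, n) = (\exp t \exp^{-1}(F(n)\Psi(n)^{-1}))\Psi(n)$ discussed earlier, and $M$ is compact, this homotopy descends on $M$ with tracks of bounded length; lifting back gives $d(F(x), \Psi(x)) \leq C_0$ for all $x \in N$ in the right-invariant metric on $N$.

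Next I would construct $H$ by a shadowing/fixed-point argument. Let $\mathcal{X}$ denote the complete metric space of continuous $G: N \to N$ with $d(G, \mathrm{Id}_N) < +\infty$, equipped with the sup-distance. Formally the equation $\Psi \circ H = H \circ F$ reads $H = \Psi^{-1} \circ H \circ F$, but $\Psi^{-1}$ expands on $\mathfrak{n}^u$, so I would split the unknown via Proposition \ref{prop: su decomposition}, writing for each $G \in \mathcal{X}$ the bounded displacement $G(x)\cdot x^{-1} = (G(x)x^{-1})^s \cdot (G(x)x^{-1})^u$, and solve the resulting equations by forward iteration on the stable component and backward iteration on the unstable component. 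Each component gives a contraction (factor $\lambda$ coming from the hyperbolicity of $\psi = D_e\Psi$) on the corresponding bounded subspace, and the unique fixed points combine via $N = \widetilde{\mathcal{L}}^s(e) \cdot \widetilde{\mathcal{L}}^u(e)$ into the desired $H \in \mathcal{X}$. Uniqueness within $\mathcal{X}$ follows because the difference of two candidate $H$'s would be bounded and invariant under $\Psi^n$ for all $n \in \mathbb{Z}$, forcing it to lie in the intersection $\widetilde{\mathcal{L}}^s(e) \cap \widetilde{\mathcal{L}}^u(e) = \{e\}$.

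For the inverse, I would run the symmetric construction: since $\Psi^{-1}$ and $F^{-1}$ are also lifts of maps on a nilmanifold (with $\Psi^{-1}$ playing the role of a hyperbolic automorphism), the same fixed-point scheme produces a unique $H': N \to N$ with $F \circ H' = H' \circ \Psi$ and $d(H', \mathrm{Id}_N) < +\infty$. Then $H \circ H'$ commutes with $\Psi$ and has bounded displacement, and $H' \circ H$ commutes with $F$ and has bounded displacement; uniqueness in $\mathcal{X}$ applied to the trivial solution $\mathrm{Id}_N$ forces $H \circ H' = H' \circ H = \mathrm{Id}_N$, so $H^{-1} = H'$ is well defined with $d(H^{-1}, \mathrm{Id}_N) < +\infty$. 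Bi-uniform continuity follows from the semi-conjugacy relations together with uniform continuity of $F$, $\Psi^{\pm 1}$ on $N$, and a standard continuity check for the fixed point in $\mathcal{X}$.

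The main obstacle I would expect is the non-abelianness of $N$ in setting up the fixed-point scheme: unlike in the toral case, one cannot simply write $H(x) = x + K(x)$ and turn the equation into a linear fixed point for $K$. The careful use of the global product structure and the fact that $\mathfrak{n}^s, \mathfrak{n}^u$ are subalgebras (so that $\widetilde{\mathcal{L}}^s(e)$ and $\widetilde{\mathcal{L}}^u(e)$ are actual Lie subgroups on which $\Psi$ acts linearly in exponential coordinates) is what lets the argument go through; quantifying the contraction in the non-linear correction terms coming from the Baker--Campbell--Hausdorff formula requires the boundedness estimate from the first step to be preserved under each iteration.
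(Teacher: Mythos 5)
First, a point of comparison: the paper does not prove this lemma at all — it is quoted from Sumi \cite{Sumi1996} (Lemmas 2.3 and 7.13) — so your proposal can only be measured against the Franks--Manning-type argument that the citation stands for. Your overall strategy is the right one: the bounded-distance estimate $d(F,\Psi)<+\infty$ (which follows, as you say, because $x\mapsto F(x)\Psi(x)^{-1}$ is right-$\Gamma$-invariant and $M$ is compact), a fixed-point construction in the space of maps at bounded distance from $\mathrm{Id}_N$, uniqueness from boundedness of the full $\Psi$-orbit of the difference together with $\widetilde{\mathcal{L}}^s(e)\cap\widetilde{\mathcal{L}}^u(e)=\{e\}$, and a symmetric construction for the inverse.

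The genuine gap is the assertion that, after splitting the displacement $G(x)x^{-1}=(G(x)x^{-1})^s(G(x)x^{-1})^u$ via Proposition \ref{prop: su decomposition}, ``each component gives a contraction.'' In displacement form the equation reads $h(F(x))=\Psi(h(x))\,w(x)$ with $w(x)=\Psi(x)F(x)^{-1}$ bounded, and in exponential coordinates the right-hand side is the Baker--Campbell--Hausdorff product $\psi X(x)\odot W(x)$. Neither the group multiplication by $w$ nor the BCH commutator terms respect the $s$/$u$ decomposition: $(ab)^s\neq a^s b^s$ in general, the correction terms land in $\mathfrak{n}_2$ and spread over both $\mathfrak{n}_2\cap\mathfrak{n}^s$ and $\mathfrak{n}_2\cap\mathfrak{n}^u$, and on a bounded set they are only Lipschitz with constants that need not be small. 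So the forward and backward componentwise operators do not decouple and the naive contraction estimate does not close; you name this obstacle in your final paragraph but supply no mechanism to overcome it. The known mechanism is an induction on the lower central series: solve first on the abelianization $N/N_2$ (the classical toral case, where everything is linear and does decouple), then lift step by step through the quotients $N_i/N_{i+1}$, where at each stage the unknown satisfies an affine twisted equation with hyperbolic linear part $\psi_i$ over data already determined — equivalently, a single fixed-point argument in a norm adapted to the filtration, using that the BCH corrections are triangular with respect to it. Without this (or an equivalent device) the existence step is incomplete. Two smaller inaccuracies: $F^{-1}$ is not a lift of any map of $M$ when $f$ is non-invertible, so the symmetric construction of $H'$ must instead be justified by the uniform hyperbolicity of $F$ on $N$ and the global product structure of $\widetilde{\mathcal{F}}^{s/u}$ (Lemma \ref{lem: global product structure}); and uniform continuity of $H$ is not automatic from compactness, since $H$ need not commute with $\Gamma$ — it has to be extracted from the construction, e.g.\ as a uniform limit of uniformly continuous approximants.
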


We always write $C_0 = \max\{d(H, \text{\rm Id}_N), d(H^{-1}, \text{\rm Id}_N), 1\}$ in this paper. Besides, let $b\in N$ be the unique fixed point of $F$, consider $\widehat{f} = L_b^{-1}\circ f\circ L_b$ and its lift $\widehat{F} = L_b^{-1}\circ F\circ L_b$. They have the same linear part with $f$ and $F$. Moreover, $\widehat{F}(e) = e$. Thus we may assume at first that $b = e$. Consequently, $H(e) = e$.

We also note that $H$ is a leaf conjugacy, which means $H(\widetilde{\mathcal{F}}^\sigma(x)) = \widetilde{\mathcal{L}}^\sigma(H(x))$, $\sigma = s, u$.

A natural question is whether $H$ is projected to a homeomorphism of $M$, so that $f$ is topologically conjugate to $\Psi$. By the results of \cite{Sumi1996} for nilmanifolds, the answer is yes when $f$ is expanding. \cite{Moosavi2019} deals with the case when $f$ is special and non-expanding. We shall discuss the question in subsection \ref{special Anosov}. Before that we need some properties of $H$.

Let $d$ be the distance induced by the right-invariant Riemannian metric on $N$. Clearly $d$ is also right-invariant, i.e., $d(xz, yz) = d(x, y)$, $\forall x$, $y$, $z\in N$.

\begin{lemma} \label{lem: H and Gamma}
	Let $H$ be as in Lemma \ref{lem: conjugate}.
	\begin{enumerate}
		\item $H(x\gamma)\gamma^{-1} \in \widetilde{\mathcal{L}}^s(H(x))$;
		\item $H^{-1}(x\gamma)\gamma^{-1} \in \widetilde{\mathcal{F}}^s(H^{-1}(x))$.
	\end{enumerate}
\end{lemma}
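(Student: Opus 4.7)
The plan is to prove (1) directly by a forward-iteration argument, and then derive (2) from (1) using the leaf-conjugacy property of $H$ together with the right-$\Gamma$-invariance of $\widetilde{\mathcal{F}}^s$.

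For (1), the starting observation uses right-invariance of $d$ and the bound $d(H, \text{Id}_N) \leq C_0$: by the triangle inequality, $d(H(x\gamma), H(x)\gamma) \leq d(H(x\gamma), x\gamma) + d(x, H(x)) \leq 2C_0$, which rewrites as $d(H(x\gamma)\gamma^{-1}, H(x)) \leq 2C_0$. Next I would iterate. Using $\Psi \circ H = H \circ F$ together with the defining relation $F(x\gamma) = F(x)\Psi(\gamma)$ of the linear part, one computes $\Psi(H(x\gamma)\gamma^{-1}) = H(F(x)\Psi(\gamma))\Psi(\gamma)^{-1}$, so inductively
\[
\Psi^n(H(x\gamma)\gamma^{-1}) = H(F^n(x)\Psi^n(\gamma))\Psi^n(\gamma)^{-1}.
\]
Since $\Psi^n(\gamma) \in \Gamma$, applying the initial displacement bound with $(x,\gamma)$ replaced by $(F^n(x), \Psi^n(\gamma))$ gives $d(\Psi^n(H(x\gamma)\gamma^{-1}), \Psi^n(H(x))) \leq 2C_0$ for every $n \geq 0$.

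To convert this uniform bound into membership in a common $\widetilde{\mathcal{L}}^s$-leaf, set $w = H(x\gamma)\gamma^{-1} \cdot H(x)^{-1}$; by right-invariance of $d$ the inequality becomes $d(\Psi^n(w), e) \leq 2C_0$. I would then decompose $w = w^s w^u$ via Proposition \ref{prop: su decomposition}, with $w^\sigma \in \widetilde{\mathcal{L}}^\sigma(e)$. Because each leaf $\widetilde{\mathcal{L}}^\sigma(e) = \exp \mathfrak{n}^\sigma$ is $\Psi$-invariant, uniqueness of the decomposition gives $\Psi^n(w) = \Psi^n(w^s)\Psi^n(w^u)$ with $\Psi^n(w^s) = \Psi^n(w)^s$ and $\Psi^n(w^u) = \Psi^n(w)^u$. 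Continuity of $x \mapsto x^u$ (Proposition \ref{prop: su decomposition}) forces $\Psi^n(w^u)$ to remain bounded; but $\psi|_{\mathfrak{n}^u}$ is expanding and $\exp\colon \mathfrak{n}^u \to \widetilde{\mathcal{L}}^u(e)$ is a diffeomorphism, so if $w^u \neq e$ then $\Psi^n(w^u) \to \infty$, a contradiction. Hence $w^u = e$, so $w \in \widetilde{\mathcal{L}}^s(e)$ and $H(x\gamma)\gamma^{-1} \in \widetilde{\mathcal{L}}^s(H(x))$.

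Statement (2) should follow by purely formal manipulation of (1). Setting $y = H^{-1}(x)$ and using that $\widetilde{\mathcal{L}}^s$ is right-invariant on $N$, (1) reads $H(y\gamma) \in \widetilde{\mathcal{L}}^s(H(y))\cdot \gamma = \widetilde{\mathcal{L}}^s(H(y)\gamma) = \widetilde{\mathcal{L}}^s(x\gamma)$. Because $H$ is a leaf conjugacy, $H(\widetilde{\mathcal{F}}^s(y\gamma)) = \widetilde{\mathcal{L}}^s(H(y\gamma))$, so pulling back through $H$ gives $y\gamma$ and $H^{-1}(x\gamma)$ on a common $\widetilde{\mathcal{F}}^s$-leaf; the right-$\Gamma$-invariance $\widetilde{\mathcal{F}}^s(y\gamma) = \widetilde{\mathcal{F}}^s(y)\gamma$ then yields $H^{-1}(x\gamma)\gamma^{-1} \in \widetilde{\mathcal{F}}^s(y) = \widetilde{\mathcal{F}}^s(H^{-1}(x))$. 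The only mildly delicate point in the whole argument is the ``bounded $\Psi$-orbit implies stable leaf'' step, and this is handled by the global $\widetilde{\mathcal{L}}^s$-$\widetilde{\mathcal{L}}^u$ product decomposition already established; I expect no genuine obstacle.
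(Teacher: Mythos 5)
Your proof is correct and essentially matches the paper's: the same uniform bound $d(\Psi^n(H(x\gamma)\gamma^{-1}),\Psi^n(H(x)))\le 2C_0$ obtained from $\Psi\circ H=H\circ F$ and $F(x\gamma)=F(x)\Psi(\gamma)$, then the observation that bounded forward $\Psi$-displacement forces the two points onto a common $\widetilde{\mathcal{L}}^s$-leaf, with part (2) deduced from (1) via the leaf conjugacy and right-$\Gamma$-invariance of $\widetilde{\mathcal{F}}^s$ exactly as in the paper. The only cosmetic difference is in that middle step: you invoke the global $s$-$u$ decomposition of Proposition \ref{prop: su decomposition} together with expansion of $\psi$ on $\mathfrak{n}^u$, whereas the paper argues through the heteroclinic point $\beta(x,y)$ --- both are valid.
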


\begin{proof}
	We claim that $y\in\widetilde{\mathcal{L}}^s(x)$ if and only if $\sup_{i\geq 0}d(\Psi^i(x), \Psi^i(y)) < +\infty$. In fact, the necessity is obvious, and to show sufficiency, assume for contradiction that $\sup_{i\geq 0}d(\Psi^i(x), \Psi^i(y)) < +\infty$ and $y\not\in\widetilde{\mathcal{L}}^s(x)$. Take $z = \beta(x, y)$, then $z\not= y$, thus $d(\Psi^i(z), \Psi^i(y)) \to +\infty (i \to +\infty)$, and hence $d(\Psi^i(z), \Psi^i(x)) \to +\infty (i \to +\infty)$, which contradicts with $z\in\widetilde{\mathcal{L}}^s(x)$.
	
	For 1, recall that $d(H, \text{\rm Id}_N) \leq C_0$. We have
	\begin{align*}
		d(\Psi^i(H(x\gamma)\gamma^{-1}), \Psi^i(H(x)))
		&= d(H(F^i(x\gamma))\Psi^i(\gamma^{-1}), H(F^i(x)))\\
		&\leq C_0 + d(F^i(x\gamma), H(F^i(x))\Psi^i(\gamma))\\
		&= C_0 + d(F^i(x)\Psi^i(\gamma), H(F^i(x))\Psi^i(\gamma))\\
		&\leq C_0 + d(F^i(x), H(F^i(x))) \leq 2C_0.
	\end{align*}
	Therefore, $H(x\gamma)\gamma^{-1} \in \widetilde{\mathcal{L}}^s(H(x))$.
	
	For 2, from 1 we have $H(x\gamma) \in \widetilde{\mathcal{L}}^s(H(x)\gamma)$, and hence $x\gamma\in H^{-1}(\widetilde{\mathcal{L}}^s(H(x)\gamma)) = \widetilde{\mathcal{F}}^s(H^{-1}(H(x)\gamma))$, or equivalently, $H^{-1}(H(x)\gamma) \in \widetilde{\mathcal{F}}^s(x\gamma)$. Replace $x$ by $H^{-1}(x)$ and we have $H^{-1}(x\gamma) \in \widetilde{\mathcal{F}}^s(H^{-1}(x)\gamma)$, that is, $H^{-1}(x\gamma)\gamma^{-1} \in \widetilde{\mathcal{F}}^s(H^{-1}(x))$, since $\widetilde{\mathcal{F}}^s$ is right-$\Gamma$-invariant.
\end{proof}

We note that if $f$ is expanding, then by Lemma \ref{lem: H and Gamma}, $H$ commutes with $\Gamma$ and thus is projected to a conjugacy between $f$ and $\Psi$.

\begin{lemma}\label{lem: sequence}
	Let $H$ be as in Lemma \ref{lem: conjugate}. There exist positive constants $\varepsilon_k \to 0$ as $k \to+\infty$ such that for any sequence $\gamma_k\in \Psi^k\Gamma$, $k \geq 1$, the followings hold:
	\begin{enumerate}
		\item $d(H(x\gamma_k), H(x)\gamma_k) \leq 2C_0\mu^s_+(\Psi)^k$, $\forall x\in N$, $\forall k \geq 1$;
		\item $d(H^{-1}(x\gamma_k), H^{-1}(x)\gamma_k) \leq \varepsilon_k$, $\forall x\in N$, $\forall k \geq 1$.
	\end{enumerate}
	Here $0 < \mu^s_+(\Psi) < 1$ is the maximal modulus of eigenvalues of $\ \Psi$ with modulus smaller than 1.
\end{lemma}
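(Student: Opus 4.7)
The plan is to push everything back $k$ steps along the semiconjugacy $\Psi\circ H = H\circ F$ and invoke the stable contraction of $\Psi$ for part (1), then bootstrap to part (2) via the uniform continuity of $H^{-1}$.

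For part (1), I write $\gamma_k = \Psi^k(\gamma)$ with $\gamma\in\Gamma$ and set $y = F^{-k}(x)$, which is well-defined since $F$ is a diffeomorphism of $N$. The equivariance $F(n\gamma') = F(n)\Psi(\gamma')$ for $\gamma'\in\Gamma$ iterates to $F^k(y\gamma) = x\gamma_k$, so applying the semiconjugacy yields $H(x\gamma_k) = \Psi^k H(y\gamma)$ and $H(x)\gamma_k = \Psi^k(H(y)\gamma)$. By right-invariance of $d$,
\begin{align*}
d(H(x\gamma_k), H(x)\gamma_k)
&= d\bigl(\Psi^k(H(y\gamma)\gamma^{-1}),\ \Psi^k H(y)\bigr).
\end{align*}
Lemma \ref{lem: H and Gamma}(1) says $H(y\gamma)\gamma^{-1}\in\widetilde{\mathcal{L}}^s(H(y))$, and the computation in its proof taken at $i = 0$ gives ambient distance at most $2C_0$ (using $d(H,\text{Id}_N)\le C_0$ together with right-invariance). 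Since $\Psi$ acts on $\widetilde{\mathcal{L}}^s(H(y))$ via $(\exp Z)H(y)\mapsto (\exp\psi^k Z)\Psi^k H(y)$ and $\psi|_{\mathfrak{n}^s}$ has spectral radius $\mu_+^s(\Psi)$, combining right-invariance of $d$ with an adapted norm on $\mathfrak{n}^s$ gives the desired bound $2C_0\mu_+^s(\Psi)^k$.

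For part (2), I set $w := H^{-1}(x)$ so that $H(w)\gamma_k = x\gamma_k$; part (1) applied to $w$ then gives $d(H(w\gamma_k), x\gamma_k)\le 2C_0\mu_+^s(\Psi)^k$. Lemma \ref{lem: conjugate} asserts that $H^{-1}$ is uniformly continuous on $N$; let $\omega$ denote its modulus of continuity. Then
\begin{align*}
d\bigl(H^{-1}(x)\gamma_k,\ H^{-1}(x\gamma_k)\bigr)
&= d\bigl(H^{-1}(H(w\gamma_k)),\ H^{-1}(x\gamma_k)\bigr) \\
&\le \omega\bigl(2C_0\mu_+^s(\Psi)^k\bigr),
\end{align*}
and the choice $\varepsilon_k := \omega(2C_0\mu_+^s(\Psi)^k)\to 0$ finishes the argument.

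The main obstacle is the quantitative step in part (1): turning the Lie-algebra contraction of $\psi|_{\mathfrak{n}^s}$ into the sharp ambient estimate $2C_0\mu_+^s(\Psi)^k$. Right-invariance reduces the question to bounding $d(\exp(\psi^k X),e)$ in terms of $d(\exp X,e)$ for $X\in\mathfrak{n}^s$, which is transparent when $\mathfrak{n}^s$ is abelian (e.g.\ one-dimensional, as in the applications of this lemma later in the paper), but in general demands an adapted inner product on $\mathfrak{n}$ together with a careful comparison between the subgroup geometry of $\widetilde{\mathcal{L}}^s(e)$ and the restriction of the ambient metric. The asymmetry between parts (1) and (2) stems precisely from the fact that no such linear algebraic description is available for $\widetilde{\mathcal{F}}^s$, forcing the weaker conclusion $\varepsilon_k\to 0$ without an explicit geometric rate.
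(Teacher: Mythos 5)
Your argument is correct and follows essentially the same route as the paper: pull back by $F^{-k}$ (equivalently set $y=F^{-k}(x)$ and use $F^k(y\gamma)=x\gamma_k$), use $d(H,\mathrm{Id}_N)\le C_0$ together with Lemma \ref{lem: H and Gamma} to place the two points on a common $\widetilde{\mathcal{L}}^s$-leaf within $2C_0$, push forward by $\Psi^k$ using the stable contraction, and deduce part (2) from part (1) via the uniform continuity of $H^{-1}$ after substituting $H^{-1}(x)$ for $x$. The quantitative step you flag (converting the spectral bound on $\psi|_{\mathfrak{n}^s}$ into the ambient estimate with the constant $2C_0$) is exactly the point the paper asserts without comment, so your treatment is, if anything, slightly more careful.
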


\begin{proof}
	$\gamma_k\in\Psi^k\Gamma$ implies that $\gamma'_k := \Psi^{-k}(\gamma_k)\in\Gamma$. Therefore, $F^{-k}(x\gamma_k) = F^{-k}(x)\gamma'_k$, and
	\begin{align*}
		d(H(F^{-k}(x)\gamma'_k), H(F^{-k}(x))\gamma'_k)
		&\leq C_0 + d(F^{-k}(x)\gamma'_k, H(F^{-k}(x))\gamma'_k)\\
		& = C_0 + d(F^{-k}(x), H(F^{-k}(x))) \leq 2C_0,
	\end{align*}
	or equivalently, $d(H(F^{-k}(x)\gamma'_k)(\gamma'_k)^{-1}, H(F^{-k}(x))) \leq 2C_0$.
	
	By Lemma \ref{lem: H and Gamma}, $H(F^{-k}(x)\gamma'_k)(\gamma'_k)^{-1}\in\widetilde{\mathcal{L}}^s(H(F^{-k}(x)))$, so
	$$d(\Psi^k(H(F^{-k}(x)\gamma'_k)(\gamma'_k)^{-1}), \Psi^k(H(F^{-k}(x)))) \leq 2C_0\mu^s_+(\Psi)^k.$$
	Equivalently, $d(H(x\gamma_k)\gamma_k^{-1}, H(x))\leq 2C_0\mu^s_+(\Psi)^k$, this proves the first property.
	
	For the second property, by uniform continuity of $H^{-1}$, there are positive constants $\varepsilon_k \to 0$ as $k \to +\infty$, such that $d(x, y) \leq 2C_0\mu^s_+(\Psi)^k$ implies that $d(H^{-1}(x), H^{-1}(y)) \leq\varepsilon_k$. By the first property, we have $d(x\gamma_k, H^{-1}(H(x)\gamma_k)) \leq\varepsilon_k$. Replace $x$ with $H^{-1}(x)$, then $d(H^{-1}(x)\gamma_k, H^{-1}(x\gamma_k)) \leq\varepsilon_k$.
\end{proof}

We note that if $f$ is actually a diffeomorphism, then $\Psi\in\text{Aut}(\Gamma)$ and hence $\Psi^k\Gamma = \Gamma$, $\forall k \geq 1$. By Lemma \ref{lem: sequence}, we can take $\gamma_k = \gamma$ for any fixed $\gamma\in\Gamma$, then $H$ commutes with $\Gamma$, and thus is projected to a conjugacy between $f$ and $\Psi$.

\begin{corollary}\label{cor: density of FuGamma}
	$\widetilde{\mathcal{F}}^u(\Gamma) = \bigcup_{\gamma\in\Gamma}\widetilde{\mathcal{F}}^u(\gamma)$ is dense in $N$.
\end{corollary}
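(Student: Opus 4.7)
The plan is to transport the classical density of algebraic unstable leaves through lattice points to the $\widetilde{\mathcal{F}}^u$-side using the homeomorphism $H$, and to close the residual gap on the sparse sublattices $\Psi^k\Gamma$ via Lemma \ref{lem: sequence}.

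First I would establish that $\widetilde{\mathcal{L}}^u(\Psi^k\Gamma) = \Psi^k(\widetilde{\mathcal{L}}^u(\Gamma)) = (\exp\mathfrak{n}^u)\Psi^k(\Gamma)$ is dense in $N$ for every $k \ge 0$; this is classical for hyperbolic nilmanifold endomorphisms, since the closure of $\mathcal{L}^u(e\Gamma)$ in $M$ is a closed set saturated by the nilpotent subgroup $\exp\mathfrak{n}^u$, invariant under $\Psi$, and by rationality of the hyperbolic splitting must equal $M$; pushing forward by the homeomorphism $\Psi^k$ preserves density. Applying the homeomorphism $H^{-1}$, which sends $\widetilde{\mathcal{L}}^u$-leaves to $\widetilde{\mathcal{F}}^u$-leaves, the set $\bigcup_{\gamma \in \Psi^k\Gamma}\widetilde{\mathcal{F}}^u(H^{-1}(\gamma))$ is then dense in $N$ for every $k$.

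Given $y \in N$ and $\varepsilon > 0$, I would combine this with Lemma \ref{lem: H and Gamma}(2) and Lemma \ref{lem: sequence}(2) (taking $x = e$), which give $H^{-1}(\gamma) \in \widetilde{\mathcal{F}}^s(\gamma)$ and $d(H^{-1}(\gamma),\gamma) \le \varepsilon_k \to 0$ for $\gamma \in \Psi^k\Gamma$. Pick $k$ large, find $\gamma \in \Psi^k\Gamma$ and $p \in \widetilde{\mathcal{F}}^u(H^{-1}(\gamma))$ with $d(p,y) < \varepsilon/2$, and set $q := \widetilde{\mathcal{F}}^s(p) \cap \widetilde{\mathcal{F}}^u(\gamma)$, which is well-defined by Lemma \ref{lem: global product structure} and lies in $\widetilde{\mathcal{F}}^u(\Gamma)$. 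To bound $d(q, p) < \varepsilon/2$, pass to the algebraic side: write $H(p) = u_0\gamma$ with $u_0 \in \exp\mathfrak{n}^u$ and $H(\gamma) = v_\gamma\gamma$ with $v_\gamma \in \exp\mathfrak{n}^s$ and $d(v_\gamma,e) \le 2C_0\mu^s_+(\Psi)^k$ by Lemma \ref{lem: sequence}(1); in the $u$-ideal setting (automatic in the one-dimensional stable bundle case by Remark \ref{rmk: codimension one to u-ideal}), normality of $\exp\mathfrak{n}^u$ yields $v_\gamma u_0 = u_0'v_\gamma$ with $u_0' \in \exp\mathfrak{n}^u$, so $v_\gamma H(p)$ lies on both $\widetilde{\mathcal{L}}^s(H(p))$ and $\widetilde{\mathcal{L}}^u(H(\gamma))$; hence $H(q) = v_\gamma H(p)$, and right-invariance of $d$ gives $d(H(q), H(p)) = d(v_\gamma, e)$. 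Uniform continuity of $H^{-1}$ then forces $d(q, p) < \varepsilon/2$ for $k$ sufficiently large, and $d(q,y) < \varepsilon$.

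The main obstacle is this final algebraic estimate outside the $u$-ideal setting: there $v_\gamma u_0 v_\gamma^{-1} \notin \exp\mathfrak{n}^u$ in general, and the conjugation-by-$u_0$ of $v_\gamma$, which enters the $u$-$s$ decomposition determining $H(q)$, has norm that can grow polynomially in $d(u_0, e)$. A balancing argument would require selecting $\gamma$ so that $d(u_0, e)$ is controlled by the covering radius of $\Psi^k\Gamma$ in $N$, and then checking that the product of polynomial growth with the exponential decay $\mu^s_+(\Psi)^k$ still tends to zero.
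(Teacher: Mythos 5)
Your opening step (density of $\widetilde{\mathcal{L}}^u(\Psi^k\Gamma)$) and your use of Lemma \ref{lem: sequence} are the same ingredients the paper uses, but your transfer to the $\widetilde{\mathcal{F}}^u$-side is done leafwise, and that is where a genuine gap appears. The corollary is stated in this subsection with no $u$-ideal or codimension-one hypothesis, yet your key computation — $v_\gamma u_0 v_\gamma^{-1}\in\exp\mathfrak{n}^u$, hence $H(q)=v_\gamma H(p)$ and $d(H(q),H(p))=d(v_\gamma,e)\le 2C_0\mu^s_+(\Psi)^k$ — uses exactly the $u$-ideal condition. Without it, the displacement of $\beta(H(p),H(\gamma))$ from $H(p)$ is not controlled by $d(v_\gamma,e)$ alone; this is precisely the failure exhibited in Example \ref{example: holonomy unbounded}, where the displacement grows unboundedly (polynomially, with exponent up to the step $s$) in the distance $d(u_0,e)$ along the unstable leaf. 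Your proposed ``balancing'' repair is not carried out, and it is doubtful: to reach an $\varepsilon/2$-ball around an arbitrary $y$ you must in general allow $d(u_0,e)$ of the order of the covering radius of $\Psi^k\Gamma$, which grows exponentially with the unstable rates of $\Psi$, so a polynomial of that size multiplied by $\mu^s_+(\Psi)^k$ has no reason to tend to zero.

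The paper sidesteps the leaf-comparison problem entirely: given $y$, it picks $x_k\in\widetilde{\mathcal{L}}^u(e)$ and $\gamma_k\in\Psi^k\Gamma$ with $x_k\gamma_k\to y$ and applies $H^{-1}$ to the point $x_k\gamma_k$ itself; Lemma \ref{lem: sequence}(2) together with uniform continuity of $H^{-1}$ gives $H^{-1}(x_k)\gamma_k\to H^{-1}(y)$ with $H^{-1}(x_k)\in\widetilde{\mathcal{F}}^u(e)$, and since $H^{-1}(y)$ runs over all of $N$ this yields density of $\widetilde{\mathcal{F}}^u(e)\Gamma$ with no holonomy estimate and no $u$-ideal hypothesis. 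So either adopt this pointwise transfer, or restrict your statement to the $u$-ideal setting (which would in fact cover every place the paper invokes the corollary, since there $f$ is special and $\Psi$ is $u$-ideal, and your construction then has the merit of producing points honestly on $\widetilde{\mathcal{F}}^u(\gamma)$). A further minor point: your justification of the density of $\mathcal{L}^u(e\Gamma)$ ``by rationality of the hyperbolic splitting'' is off, since $\mathfrak{n}^u$ is typically irrational; the paper deduces it from topological transitivity of $\Psi$ (Lemma \ref{lem: transitivity}) together with the fact that $e\Gamma$ is a fixed point.
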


\begin{proof}
	There is a proof in \cite[Lemma 3.12]{Moosavi2019}. Here we give another proof. First we consider the algebraic case. The unstable leaf $\mathcal{L}^u(e\Gamma)$ is dense in $M = N/\Gamma$, because $\Psi$ is topologically transitive (see Lemma \ref{lem: transitivity}) and $e\Gamma$ is a fixed point of $\Psi$. Now $\widetilde{\mathcal{L}}^u(\Gamma) = \pi^{-1}(\mathcal{L}^u(e\Gamma))$ and $\pi(\widetilde{\mathcal{L}}^u(\gamma)) = \mathcal{L}^u(e\Gamma)$, $\forall\gamma\in\Gamma$, for any open subset $U$ of $N$, there exists $\gamma\in\Gamma$ such that $(U\gamma)\bigcap\widetilde{\mathcal{L}}^u(e)\not=\varnothing$, and hence by right-$\Gamma$-invariance of $\widetilde{\mathcal{L}}^u$, we have $\widetilde{\mathcal{L}}^u(\gamma^{-1})\bigcap U\not=\varnothing$, thus $\widetilde{\mathcal{L}}^u(\Gamma)$ is dense in $N$.
    
    Therefore, $\Psi^k(\widetilde{\mathcal{L}}^u(\Gamma)) = \widetilde{\mathcal{L}}^u(\Psi^k\Gamma)$ is also dense in $N$, for $k \geq 1$. As a result, for any $y\in N$, there exists $x_k\in\widetilde{\mathcal{L}}^u(e)$ and $\gamma_k\in\Psi^k\Gamma$ such that $x_k\gamma_k\to y$. By Lemma \ref{lem: sequence} and uniform continuity of $H^{-1}$, we have
	$$d(H^{-1}(x_k\gamma_k), H^{-1}(x_k)\gamma_k)\to 0, \text{ and } d(H^{-1}(x_k\gamma_k), H^{-1}(y))\to 0.$$
	Thus $d(H^{-1}(x_k)\gamma_k, H^{-1}(y)) \to 0$, where $H^{-1}(x_k)\in \widetilde{\mathcal{F}}^u(e)$, hence $\widetilde{\mathcal{F}}^u(e)\Gamma = \widetilde{\mathcal{F}}^u(\Gamma)$ is dense in $N$.
\end{proof}

\begin{lemma}\label{lem: Holder}
	Let $H$ be as in Lemma \ref{lem: conjugate}. Then $H$ and $H^{-1}$ are both H\"older continuous.
\end{lemma}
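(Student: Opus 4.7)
The plan is to carry out the standard H\"older argument for conjugacies between Anosov systems, using: the leaf-conjugacy property of $H$; the bound $d(H,\text{Id}_N),\,d(H^{-1},\text{Id}_N)\leq C_0$ from Lemma \ref{lem: conjugate}; the global product structure for both $\widetilde{\mathcal{F}}^\sigma$ (Lemma \ref{lem: global product structure}) and $\widetilde{\mathcal{L}}^\sigma$ (which follows from Proposition \ref{prop: su decomposition}); and the uniform hyperbolicity of $F$ and $\Psi$ on $N$ inherited from the cocompact quotient. Fix a common Lipschitz constant $L$ for $F^{\pm 1}$, a lower bound $\lambda_u>1$ on $F$-expansion along $\widetilde{E}^u$, an upper bound $\lambda_s<1$ on $F$-contraction along $\widetilde{E}^s$, and analogous constants $\mu_u>1$, $\mu_s<1$ for $\Psi$ along $\mathfrak{n}^u$ and $\mathfrak{n}^s$.

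First I would reduce to a local estimate via brackets: for $d(x,y)<\delta_0$ small, the point $z=\beta_{\widetilde{\mathcal{F}}}(x,y)$ satisfies $d_{\widetilde{\mathcal{F}}^s}(x,z),\,d_{\widetilde{\mathcal{F}}^u}(z,y)\leq K\,d(x,y)$ by local product structure, uniform on the cocompact cover. Since $H$ sends $\widetilde{\mathcal{F}}^\sigma$-leaves to $\widetilde{\mathcal{L}}^\sigma$-leaves, $H(x),H(z)$ share a leaf of $\widetilde{\mathcal{L}}^s$ and $H(z),H(y)$ share a leaf of $\widetilde{\mathcal{L}}^u$, so by the triangle inequality it suffices to produce H\"older bounds on each pair separately.

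For the unstable pair, I would pick $n\geq 0$ maximal with $d_{\widetilde{\mathcal{F}}^u}(F^n(z),F^n(y))\leq \delta_0$; the hyperbolicity estimates yield $n \geq \log(\delta_0/d_{\widetilde{\mathcal{F}}^u}(z,y))/\log L - 1$. Combining $\Psi^n H=H\circ F^n$ with $d(H,\text{Id}_N)\leq C_0$ gives $d(\Psi^n H(z),\Psi^n H(y))\leq \delta_0+2C_0$; since these two points lie on a common leaf of $\widetilde{\mathcal{L}}^u$ (a right-translate of $\exp\mathfrak{n}^u$) with uniformly smooth induced metric, their leaf distance is bounded by some constant $C_1$. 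Applying $\Psi^{-n}$, which contracts $\widetilde{\mathcal{L}}^u$-distances by at least $\mu_u^{-n}$, yields $d_{\widetilde{\mathcal{L}}^u}(H(z),H(y))\leq C_1\mu_u^{-n}\leq C_2\,d(x,y)^{\alpha_u}$ with $\alpha_u=\log\mu_u/\log L\in(0,1]$. The stable pair is handled symmetrically by iterating $F^{-n}$ (which expands $\widetilde{\mathcal{F}}^s$) and then $\Psi^n$ (which contracts $\widetilde{\mathcal{L}}^s$), yielding exponent $\alpha_s=-\log\mu_s/\log L\in(0,1]$. Combining the two bounds, $H$ is H\"older with exponent $\min(\alpha_s,\alpha_u)$ on the scale $d(x,y)\leq\delta_0$, and extends globally since $H$ is uniformly close to the identity.

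For $H^{-1}$, observe that $F\circ H^{-1}=H^{-1}\circ\Psi$ and $d(H^{-1},\text{Id}_N)\leq C_0$, so the identical bracket-plus-iteration argument applies with the roles of $F$ and $\Psi$ swapped, now using global product structure for $\widetilde{\mathcal{L}}^\sigma$ and the leaf smoothness of $\widetilde{\mathcal{F}}^\sigma$. I expect the main technical point to be the ``ambient distance controls leaf distance'' step on the noncompact cover $N$; this follows from uniform smoothness of the algebraic foliations (which are right-invariant) together with the standard Anosov fact that $\widetilde{\mathcal{F}}^\sigma$ has uniformly smooth leaves (inherited from compactness of $M$), after which the remainder is routine hyperbolic bookkeeping.
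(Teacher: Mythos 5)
Your strategy coincides with the paper's own proof: bracket via $\beta_{\widetilde{\mathcal{F}}}$, iterate each pair to the last time its separation is below a fixed scale, transfer through $H$ using $d(H,\mathrm{Id}_N)\le C_0$, convert between ambient and leaf distances, and apply the exact rates of $\Psi$; the hyperbolic bookkeeping and the exponents are right. The trouble is in how you justify the two distance-comparison steps. You claim the bracket bounds $d_{\widetilde{\mathcal{F}}^s}(x,z),\,d_{\widetilde{\mathcal{F}}^u}(z,y)\le K\,d(x,y)$ with $K$ ``uniform on the cocompact cover,'' and later that the needed uniform geometry of $\widetilde{\mathcal{F}}^\sigma$ is ``inherited from compactness of $M$.'' For $\widetilde{\mathcal{F}}^u$ this inheritance is not available: unless $f$ is special, $\widetilde{\mathcal{F}}^u$ is not right-$\Gamma$-invariant and does not descend to $M$, which is precisely why the paper lets the constants $C''$ and $\delta'$ in this step depend on $x$ (the lemma requires no uniformity). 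A uniform version could be recovered by running the graph-transform estimates directly on $N$, using that $DF$ and the hyperbolicity constants lift uniformly, but you neither supply that argument nor need it.

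More seriously, you repeatedly infer ``bounded ambient distance implies bounded leaf distance'' from (uniform) smoothness of leaves --- for the pair $\Psi^nH(z),\Psi^nH(y)$ on a common $\widetilde{\mathcal{L}}^u$-leaf, and, in the $H^{-1}$ direction, for points on a common $\widetilde{\mathcal{F}}^\sigma$-leaf. Smoothness alone does not give this: a smooth properly embedded leaf can return $\varepsilon$-close to itself only after an arbitrarily long leafwise excursion, making the leaf-to-ambient ratio unbounded. On the algebraic side the statement is true, but the mechanism is the group structure, not smoothness: right-invariance of $d$ and of $\widetilde{\mathcal{L}}^\sigma$ reduces it to pairs $(e,w)$ with $w\in\exp\mathfrak{n}^\sigma$, and since $\exp$ is a global diffeomorphism and closed balls are compact, small $d(e,w)$ forces $\exp^{-1}(w)$ to be small; this is exactly the paper's constant $C_2$. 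In your $H^{-1}$ argument the same conversion is demanded for $\widetilde{\mathcal{F}}^s$ and $\widetilde{\mathcal{F}}^u$, where no such reduction exists, and at the large scale $\delta_0+2C_0$ to boot; the workable route is to use uniform continuity of $H^{-1}$ (not merely $d(H^{-1},\mathrm{Id}_N)\le C_0$) to keep all the relevant iterates $\varepsilon$-close, and then invoke uniform local stable/unstable manifolds of $F$ on $N$ (uniform for $\widetilde{\mathcal{F}}^s$ by right-$\Gamma$-invariance and compactness of $M$, but for $\widetilde{\mathcal{F}}^u$ only via cover-level estimates). The paper's proof is organized to dodge these conversions: its one-leaf estimates take the leaf distance $d_{\widetilde{\mathcal{F}}^\sigma}(x,y)<\delta$ as hypothesis, the only ambient-to-leaf conversion is performed for $\widetilde{\mathcal{L}}^\sigma$, and the final bracket step is allowed to be non-uniform in $x$. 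So while your skeleton matches the paper's, the key comparison steps are asserted with insufficient reasons and the proposal is not yet a complete proof.
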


\begin{proof}
	There are constants $0 < \mu^s_-(\Psi) \leq \mu^s_+(\Psi) < 1 < \mu^u_-(\Psi) \leq \mu^u_+(\Psi)$ such that
	$$\mu^s_-(\Psi)^kd_{\widetilde{\mathcal{L}}^s}(x, y) \leq d_{\widetilde{\mathcal{L}}^s}(\Psi^k(x), \Psi^k(y)) \leq \mu^s_+(\Psi)^kd_{\widetilde{\mathcal{L}}^s}(x, y), \ \forall y\in\widetilde{\mathcal{L}}^s(x),\ \forall k \geq 0;$$
	$$\mu^u_-(\Psi)^kd_{\widetilde{\mathcal{L}}^u}(x, y) \leq d_{\widetilde{\mathcal{L}}^u}(\Psi^k(x), \Psi^k(y)) \leq \mu^u_+(\Psi)^kd_{\widetilde{\mathcal{L}}^u}(x, y), \ \forall y\in\widetilde{\mathcal{L}}^u(x),\ \forall k \geq 0.$$
	There are also constants $0 < \mu^s_-(F) \leq \mu^s_+(F) < 1 < \mu^u_-(F) \leq \mu^u_+(F)$ and $C_1 > 1$ such that for $y\in \widetilde{\mathcal{F}}^s(x)$, we have
	$$C_1^{-1}\mu^s_-(F)^kd_{\widetilde{\mathcal{F}}^s}(x, y) \leq d_{\widetilde{\mathcal{F}}^s}(F^k(x), F^k(y)) \leq C_1\mu^s_+(F)^kd_{\widetilde{\mathcal{F}}^s}(x, y),\ \forall k \geq 0;$$
    and for $y\in \widetilde{\mathcal{F}}^u(x)$, we have
	$$C_1^{-1}\mu^u_-(F)^kd_{\widetilde{\mathcal{F}}^u}(x, y) \leq d_{\widetilde{\mathcal{F}}^u}(F^k(x), F^k(y)) \leq C_1\mu^u_+(F)^kd_{\widetilde{\mathcal{F}}^u}(x, y),\ \forall k \leq 0,$$
	since $\pi\circ F = f$, $\pi$ is locally isometric, $M$ is compact.
	Since $d(H, \text{\rm Id}_N) \leq C_0$, we have
	$$\mu^s_-(F) \leq \mu^s_+(\Psi),\ \mu^s_-(\Psi) \leq \mu^s_+(F),\ \mu^u_-(F) \leq \mu^u_+(\Psi),\ \mu^u_-(\Psi) \leq \mu^u_+(F).$$
	By uniform continuity of $H$, there exists $\delta > 0$ such that $d(x, y) < \delta$ implies $d(H(x), H(y)) < 1$. By right-invariance of $d$ and $d_{\widetilde{\mathcal{L}}^s}$, there exists $C_2 > 1$ such that $d(x, y) < 1$ implies $d(x, y) \leq d_{\widetilde{\mathcal{L}}^s}(x, y) \leq C_2d(x, y)$.
	
	Take $y\in \widetilde{\mathcal{F}}^s(x)$ and $d_{\widetilde{\mathcal{F}}^s}(x, y) < \delta$, and let $N\geq 0$ be the integer such that
	$$d_{\widetilde{\mathcal{F}}^s}(F^{-N}(x), F^{-N}(y)) < \delta, \text{ and } d_{\widetilde{\mathcal{F}}^s}(F^{-N - 1}(x), F^{-N - 1}(y)) \geq \delta.$$
	It follows that
	\begin{align*}
		&d(F^{-N}(x), F^{-N}(y)) < \delta,\\
		&d(\Psi^{-N}(H(x)), \Psi^{-N}(H(y))) = d(H(F^{-N}(x)), H(F^{-N}(y))) < 1,\\
		&d_{\widetilde{\mathcal{L}}^s}(\Psi^{-N}(H(x)), \Psi^{-N}(H(y))) < C_2,\\
		&d(H(x), H(y)) \leq d_{\widetilde{\mathcal{L}}^s}(H(x), H(y)) < C_2\mu^s_+(\Psi)^N,\\
		&d_{\widetilde{\mathcal{F}}^s}(x, y) \geq C^{-1}\mu^s_-(F)^{N + 1}d_{\widetilde{\mathcal{F}}^s}(F^{-N - 1}(x), F^{-N - 1}(y)) \geq C_1^{-1}\mu^s_-(F)^{N + 1}\delta.
	\end{align*}
	As a result, we have $d(H(x), H(y)) \leq Cd_{\widetilde{\mathcal{F}}^s}(x, y)^\alpha$ for $C = C_1C_2^\alpha\delta^{-\alpha}\mu^s_+(\Psi) > 1$ and $\alpha = \frac{\ln \mu^s_+(\Psi)}{\ln \mu^s_-(F)} \in (0, 1]$.
	
	Similarly, when $y\in \widetilde{\mathcal{F}}^u(x)$ and $d_{\widetilde{\mathcal{F}}^u}(x, y) < \delta$, we have $d(H(x), H(y)) \leq C'd_{\widetilde{\mathcal{F}}^u}(x, y)^{\alpha'}$, where $C' > 1$ and $\alpha' = \frac{\ln \mu^u_-(\Psi)}{\ln\mu^u_+(F)} \in (0, 1]$. Without loss of generality, assume that $C \geq C'$ and $\alpha \leq\alpha'$.
	
	Generally, for fixed $x\in N$, take $\delta' > 0$ such that $d(x, y) < \delta'$ implies that $\text{diam}\{x, y, z\} < \delta$, where $z = \beta_{\widetilde{\mathcal{F}}}(x, y)$. When $d(x, y) < \delta'$, we have
	\begin{align*}
		d(H(x), H(y)) &\leq d(H(x), H(z)) + d(H(z), H(y))\\
		&\leq Cd_{\widetilde{\mathcal{F}}^s}(x, z)^\alpha + C'd_{\widetilde{\mathcal{F}}^u}(z, y)^{\alpha'}\\
		&\leq 2^{1 - \alpha}C(d_{\widetilde{\mathcal{F}}^s}(x, z) + d_{\widetilde{\mathcal{F}}^u}(z, y))^\alpha\notag \leq C''d(x, y)^\alpha.
	\end{align*}
	The last inequality holds for some constant $C'' > 1$ because $T_xN = \widetilde{E}^s(x)\oplus\widetilde{E}^u(x)$. We note that $C''$ and $\delta'$ depends on $x$, since $\widetilde{\mathcal{F}}^u$ might not be right-$\Gamma$-invariant.
	Similar argument works for $H^{-1}$.
\end{proof}

\subsection{Stable Lyapunov exponents}

In this subsection, $M = N/\Gamma$ is a nilmanifold with a Riemannian metric induced by a right-invariant Riemannian metric on $N$, $f$ is an Anosov map on $M$ with one-dimensional stable bundle, $F$ is a lift of $f$, $\Psi$ is the linear part of $F$, $H$ is the conjugacy between $F$ and $\Psi$ constructed in Lemma \ref{lem: conjugate}. We will prove that the constant stable Lyapunov exponent at periodic points of $f$ is in fact equal to $\Psi$.

First we state shadowing lemma for Anosov maps on closed manifolds and transitivity for Anosov maps on nilmanifolds.

\begin{lemma}\label{lem: shadowing}
	{\rm (\cite[Theorem 1.2.1]{Aoki1994})} Let $M$ be a closed manifold and $f$ be an Anosov map of $M$, then the followings hold.
	
	\begin{enumerate}
		\item There exists $\varepsilon_0 > 0$ such that if two orbits $(x_i)$ and $(y_i)$ of $f$ satisfy $d(x_i, y_i) \leq \varepsilon_0$ for $i\in\mathbb{Z}$, then $(x_i) = (y_i)$.
		\item For any $\varepsilon > 0$, there exists $\delta > 0$ such that every $\delta$-pseudo orbit $(x_i)$ (i.e., $d(x_{i + 1}, f(x_i)) < \delta$) is $\varepsilon$-traced by some orbit $(y_i)$ (i.e., $d(x_i, y_i) < \varepsilon$). Here $(x_i)$ can be finite or infinite.
	\end{enumerate}
\end{lemma}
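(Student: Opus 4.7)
The strategy is to transfer both claims to the universal cover $\widetilde{M}$, where the lift $F$ is an Anosov \emph{diffeomorphism} and the classical expansivity/shadowing lemmas for Anosov diffeomorphisms apply directly. The ingredients I will use are: the covering projection $\pi:\widetilde{M}\to M$ is a local isometry with $\pi\circ F=f\circ\pi$; a positive injectivity radius $\eta$ for $\pi$, chosen so that distinct points of any fiber $\pi^{-1}(p)$ lie at distance $\ge\eta$; an expansivity constant $\alpha_F>0$ for $F$; and a uniform Lipschitz constant $L$ for $F$, which exists because $f$ is $C^1$ on the compact manifold $M$ and $\pi$ is a local isometry.

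\textbf{Part 1.} Set $\varepsilon_0:=\tfrac{1}{2}\min\{\alpha_F,\,\eta/(L+1)\}$. Given two $f$-orbits $(x_i),(y_i)_{i\in\mathbb{Z}}$ with $d(x_i,y_i)\le\varepsilon_0$, first lift the first orbit by picking $\widetilde{x}_0\in\pi^{-1}(x_0)$ and defining $\widetilde{x}_i:=F^i(\widetilde{x}_0)$. Then, independently for each $i$, let $\widetilde{y}_i$ be the \emph{unique} lift of $y_i$ with $d_{\widetilde{M}}(\widetilde{x}_i,\widetilde{y}_i)\le\varepsilon_0$; uniqueness holds because $\varepsilon_0<\eta$. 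A short computation shows $F(\widetilde{y}_i)=\widetilde{y}_{i+1}$: both points project to $y_{i+1}$, and by the Lipschitz bound $d_{\widetilde{M}}(F(\widetilde{y}_i),\widetilde{x}_{i+1})\le L\varepsilon_0$, so the two candidates lie within $(L+1)\varepsilon_0<\eta$ of one another and therefore must coincide. Thus $(\widetilde{y}_i)$ is a genuine $F$-orbit $\varepsilon_0$-close to $(\widetilde{x}_i)$; expansivity of the Anosov diffeomorphism $F$ forces $\widetilde{y}_i=\widetilde{x}_i$, hence $y_i=x_i$.

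\textbf{Part 2.} Given $\varepsilon>0$, shrink it so that $\varepsilon<\eta/2$ and let $\delta_F>0$ be the classical shadowing constant of the Anosov diffeomorphism $F$ for this $\varepsilon$. Set $\delta:=\min\{\delta_F,\eta/2\}$. For a $\delta$-pseudo $f$-orbit $(x_i)$, I lift it inductively to a pseudo $F$-orbit: fix $\widetilde{x}_0$ arbitrarily, and with $\widetilde{x}_i$ already chosen, define $\widetilde{x}_{i+1}$ to be the unique lift of $x_{i+1}$ lying in the $(\eta/2)$-ball around $F(\widetilde{x}_i)$, which exists because $d(x_{i+1},f(x_i))<\delta\le\eta/2$ and $\pi$ restricted to that ball is an isometry onto its image. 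For $i<0$, use $F^{-1}$ in a symmetric induction. By construction $d_{\widetilde{M}}(F(\widetilde{x}_i),\widetilde{x}_{i+1})<\delta_F$, so $(\widetilde{x}_i)$ is a $\delta_F$-pseudo $F$-orbit, shadowed by some true $F$-orbit $(\widetilde{y}_i)$ with $d_{\widetilde{M}}(\widetilde{x}_i,\widetilde{y}_i)<\varepsilon$. Projecting $y_i:=\pi(\widetilde{y}_i)$ gives the required $f$-orbit $\varepsilon$-close to $(x_i)$.

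\textbf{Where the difficulty lies.} The one point needing genuine care is the constant balancing in Part 1, namely arranging $\varepsilon_0$ so that the independently chosen nearest lifts $\widetilde{y}_i$ truly assemble into a single $F$-orbit and do not drift onto deck-translated lifts; this is why both the discreteness constant $\eta$ and the Lipschitz constant $L$ of $F$ must enter the choice of $\varepsilon_0$. Everything else is essentially a mechanical import of the classical Anosov diffeomorphism theory from $\widetilde{M}$ to $M$ via the local isometry $\pi$.
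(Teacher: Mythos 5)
Your reduction to the universal cover breaks down exactly where the non-invertibility of $f$ matters, namely at negative times, and the paper itself does not prove this lemma but quotes it from Aoki--Hiraide, whose argument avoids the step that fails here. In Part 1 you declare $\widetilde{x}_i:=F^i(\widetilde{x}_0)$ to be a lift of the given orbit $(x_i)$; for $i<0$ this is unjustified: $\pi(F^{-1}(\widetilde{x}_0))$ is \emph{some} $f$-preimage of $x_0$, while the orbit $(x_i)$ prescribes a particular preimage $x_{-1}$, and in general no choice of the initial lift realizes the prescribed past. For the circle doubling map with lift $F(t)=2t$ on $\mathbb{R}$, the projections of full $F$-orbits yield only those backward itineraries of $x_0$ whose digit sequences are eventually constant --- countably many among uncountably many backward orbits --- so most full $f$-orbits do not lift to $F$-orbits at all. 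Consequently, for $i<0$ your $\widetilde{x}_i$ need not be a lift of $x_i$, the ``unique nearby lift $\widetilde{y}_i$'' need not exist, and the appeal to expansivity of $F$ collapses; and since forward closeness alone only places $y_0$ in the local stable set of $x_0$, the backward half cannot be dropped. The same issue defeats the ``symmetric induction'' of Part 2: deck transformations interact with $F$ via $F(n\gamma)=F(n)\Psi(\gamma)$, so the set $\{F(\widetilde{w}):\widetilde{w}\in\pi^{-1}(x_{-1})\}$ is only the coset $F(\widetilde{w}_0)\Psi(\Gamma)$ of the fiber over $f(x_{-1})$, with $\Psi(\Gamma)\subsetneq\Gamma$ when $f$ is non-invertible; hence there may be no lift of $x_{-1}$ whose $F$-image is close to $\widetilde{x}_0$, and equally $x_{-1}$ may sit near a different preimage branch of $x_0$ than $\pi(F^{-1}(\widetilde{x}_0))$, so no lift of $x_{-1}$ lies near $F^{-1}(\widetilde{x}_0)$.

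To repair this you must avoid lifting bi-infinite data. The standard proof (as in the cited Theorem 1.2.1) runs the contraction/hyperbolicity argument along sequences, i.e.\ in the inverse limit, where pseudo-orbits and orbits of an endomorphism live naturally. If you want to keep the covering-space picture, work with finite segments, which do lift forward: lift $(x_{-k},\dots,x_k)$ by choosing a lift of $x_{-k}$ and pushing forward by $F$, compare or shadow upstairs with quantitative hyperbolic estimates, project down, and let $k\to\infty$ using compactness of $M$ and a diagonal argument; for Part 1 this gives $d(x_0,y_0)\le C\lambda^{k}\varepsilon_0$ for every $k$, hence $x_0=y_0$, and similarly at every index. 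A secondary point: when you invoke expansivity and shadowing for $F$ on the non-compact $\widetilde{M}$, you should justify that the relevant constants (in particular uniform transversality of $\widetilde{E}^s$ and $\widetilde{E}^u$ and the local product structure on $N$) are uniform, since the classical statements are formulated for compact manifolds; but this is minor compared with the lifting gap.
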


As a corollary, when $2\varepsilon < \varepsilon_0$, the $\varepsilon$-tracing orbit for an infinite pseudo orbit is unique. In particular, for sufficiently small $\delta$ such that $2\varepsilon < \varepsilon_0$, a periodic $\delta$-pseudo orbit is $\varepsilon$-traced by a periodic orbit with the same period.

\begin{lemma}\label{lem: transitivity}
	Nilmanifold Anosov maps are topologically transitive.
\end{lemma}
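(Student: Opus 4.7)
My plan is to first establish topological transitivity of the linear part $\Psi\in \mathrm{End}(M)$ on $M$, and then transfer transitivity to $f$ via the conjugacy $H:N\to N$ from Lemma \ref{lem: conjugate}. For $\Psi$, I would show the unstable leaf $\mathcal{L}^u(e\Gamma) = \pi(\exp(\mathfrak{n}^u))$ through the fixed point is dense in $M$ by an induction along the lower central series, using Lemma \ref{lem: horizontal part} (the eigenvalues of each $\psi_i$ are products of eigenvalues of $\psi_1$, so hyperbolicity passes down the tower) and the principal-bundle structure of Theorem \ref{thm: torus bundle over torus}; at the base case, the unstable subspace of the integer-matrix hyperbolic toral endomorphism $\psi_1$ projects densely to $M_1 = \mathbb{T}^{d_1}$ because it contains no rational subspace. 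With $\mathcal{L}^u(e\Gamma)$ dense and $\Psi$-invariant (since $e\Gamma$ is fixed), transitivity of $\Psi$ follows by a standard expanding-foliation argument: for any open $U,V\subseteq M$, a small unstable plaque in $U\cap \mathcal{L}^u(e\Gamma)$ expands exponentially under $\Psi^n$ and becomes $\varepsilon$-dense in $M$, hitting $V$ for $n$ large.

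To transfer to $f$, I would show $\Omega(f) = M$, from which topological transitivity follows via the spectral decomposition for Anosov maps \cite{Aoki1994} combined with the connectedness of $M$. Given $x\in M$ and $\varepsilon > 0$, pick a $\Psi$-periodic point $y$ close to $x$ (density of $\Psi$-periodic points follows from transitivity of $\Psi$ and the Anosov closing lemma applied to $\Psi$). Lift to $\tilde{y}\in\pi^{-1}(y)$ with $\Psi^k(\tilde{y}) = \tilde{y}\gamma$, and set $\tilde{x}_0 = H^{-1}(\tilde{y})$, $x_0 = \pi(\tilde{x}_0)$. By Lemma \ref{lem: H and Gamma}(2), $F^k(\tilde{x}_0)\gamma^{-1}\in \widetilde{\mathcal{F}}^s(\tilde{x}_0)$, so $f^k(x_0)\in\mathcal{F}^s(x_0)$ with bounded stable-leaf displacement; iterating $f^k$ along the contracting stable leaf converges to an $f^k$-fixed point $p\in\mathcal{F}^s(x_0)$, an $f$-periodic point near $x_0$.

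The main obstacle is that $H$ is only bi-uniformly continuous at bounded distance $C_0$ from the identity and is not $\Gamma$-equivariant, so the periodic point $p$ obtained above lies at bounded rather than arbitrarily small distance from $y$ (and hence from $x$). To close this gap I would invoke Lemma \ref{lem: sequence}(2), which bounds $d(H^{-1}(x\gamma_k), H^{-1}(x)\gamma_k)\leq \varepsilon_k\to 0$ whenever $\gamma_k\in\Psi^k\Gamma$: by selecting $\Psi$-periodic orbits of large period for which the return element $\gamma$ lies in (or is close to) $\Psi^k\Gamma$, the displacement bound in the construction of $p$ shrinks to zero, and the Anosov shadowing lemma (Lemma \ref{lem: shadowing}) upgrades the resulting approximate periodic orbit to an exact $f$-periodic orbit arbitrarily close to $x$. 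This yields a dense set of $f$-periodic points, hence $\Omega(f) = M$, completing the proof of topological transitivity of $f$.
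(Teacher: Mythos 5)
Your strategy (prove $\Omega(f)=M$, then conclude by spectral decomposition and connectedness) is reasonable, but the step that is supposed to close the ``bounded distance'' gap does not actually close it. In your construction you pick a $\Psi$-periodic point $y$ with $d(y,x)<\varepsilon$, lift it to $\tilde y$, and work with $x_0=\pi(H^{-1}(\tilde y))$. Invoking Lemma \ref{lem: sequence}(2) for return elements $\gamma\in\Psi^k\Gamma$ only shrinks the \emph{recurrence displacement} $d(f^m(x_0),x_0)\le\varepsilon_k$, so shadowing produces a periodic point close to $x_0$ --- but it does nothing about the distance from $x_0$ to $x$: since all you know is $d(H^{-1},\mathrm{Id}_N)\le C_0$, the point $x_0$ is only within $C_0+\varepsilon$ of $x$, and this $C_0$ does not shrink when you enlarge the period or force $\gamma\in\Psi^k\Gamma$. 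Hence the conclusion ``an exact $f$-periodic orbit arbitrarily close to $x$'' (and with it $\Omega(f)=M$) does not follow as written. The repair is to choose $\tilde y$ so that $H^{-1}(\tilde y)$, not $\tilde y$, projects into $B(x,\varepsilon)$: e.g.\ take $\tilde y=\Psi^k(\tilde z)$ with $\tilde z$ a lift of a $\Psi$-periodic point, so that the return element automatically lies in $\Psi^k\Gamma$, and use that $\Psi^k$ is a homeomorphism of $N$ and lifts of $\Psi$-periodic points are dense, so such $\tilde y$ can be found in the open set $H(\pi^{-1}(B(x,\varepsilon)))$. With that modification your Lemma \ref{lem: sequence}(2) plus shadowing argument does give recurrence (indeed periodic points) in every ball, in the spirit of the paper's proof of Corollary \ref{cor: density of FuGamma}; but as stated the key localization is missing.

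There is a second soft spot in the algebraic half: from density of the single leaf $\mathcal{L}^u(e\Gamma)$ you cannot directly conclude that a forward-iterated small plaque becomes $\varepsilon$-dense --- the iterated plaque is centered at a wandering point $\Psi^n(x_0)$, and uniform $\varepsilon$-density of large plaques requires minimality of the whole unstable foliation (all leaves dense, plus compactness), not just one dense leaf. This is fillable, because the same abelianization/fiberwise induction you sketch shows every unstable leaf is dense (the determinant argument prevents the unstable subspace of any $\psi_i$ from lying in a proper rational subspace), but you should say so; also note your base-case justification is misstated --- the relevant fact is that $E^u$ is not \emph{contained in} a proper rational subspace (it may well contain rational lines). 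Finally, be aware that the paper itself disposes of the lemma in three lines: it quotes \cite[Lemma 5.4]{Sumi1996} for $\Omega(f)=M$ and then applies the spectral decomposition together with connectedness of $M$, exactly the endgame you use; your proposal is essentially an attempt to reprove Sumi's lemma, which is legitimate but needs the two fixes above.
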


\begin{proof}
	\cite[Lemma 5.4]{Sumi1996} claims that the non-wandering set $\Omega(f) = M$. By spectral decomposition theorem due to Smale, and since $M$ is connected, we have that $M$ is actually a basic set. Consequently, $f$ is topologically transitive.
\end{proof}

Using these two lemmas, by the same construction in \cite[Claim 2.20]{An2023}, we get an adapted Riemannian metric for $f$ with respect to periodic data.

Let $\text{Per}(f)$ be the set of periodic points of $f$. For a periodic point $p\in\text{Per}(f)$ with period $N_p$, the {\it stable Lyapunov exponent} of $p$ is defined by
\[\lambda^s(p, f) := \ln\mu^s(p, f) := \ln\left\|Df^{N_p}|_{E^s(p)}\right\|^{\frac{1}{N_p}} = \frac{1}{N_p}\sum_{i = 0}^{N_p - 1}\ln\left\|Df|_{E^s(f^i(p))}\right\|.\]
For the algebraic case, notice that
\[\left\|D_{\pi(x)}\Psi|_{L^s(\pi(x))}\right\| = \left\|D_x\Psi|_{\widetilde{L}^s(x)}\right\| = \left\|D_x\Psi\circ D_eR_x|_{\mathfrak{n}^s}\right\| = \left\|D_eR_{\Psi(x)}\circ \psi|_{\mathfrak{n}^s}\right\| = \mu^s(\Psi),\]
which is the modulus of eigenvalue of $\psi = D_e\Psi$ along $\mathfrak{n}^s$, independent of $x\in N$. Thus $\lambda^s(p, \Psi) = \lambda^s(\Psi) = \ln\mu^s(\Psi)$, independent of $p\in\text{Per}(\Psi)$.
Let \[\mu_+ := \sup\{\mu^s(p, f): p\in\text{Per}(f)\}\quad {\rm and} \quad \mu_- := \inf\{\mu^s(p, f): p\in \text{Per}(f)\}.\] Clearly $0 < \mu_- \leq \mu_+  < 1$.

\begin{lemma} \label{lem: adapted metric}
    {\rm(\cite[Claim 2.20]{An2023})}For any $\delta > 0$, there exists a Riemannian metric on $M$ such that the induced norm $\left|\cdot\right|$ satisfies $\mu_-(1 + \delta)^{-1} < |Df|_{E^s(x)}| < \mu_+(1 + \delta)$, $\forall x\in M$.
\end{lemma}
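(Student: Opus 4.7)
The plan is to modify the original right-invariant Riemannian norm along the one-dimensional stable bundle by a continuous conformal factor $e^{h(x)}$, where $h$ is chosen so that $g + h\circ f - h$ takes values in a prescribed small neighborhood of $[\log\mu_-,\log\mu_+]$; here $g(x) := \log\|Df|_{E^s(x)}\|^{\mathrm{old}}$. The factor $h$ will be a Birkhoff-type iterated average whose discrepancy under $f$ is precisely the ergodic average of $g$ of length $N$; the periodic-data hypothesis on $\mu_\pm$ enters through a uniform closing estimate forcing every such length-$N$ average into the required window.

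\textbf{Step 1 (uniform closing estimate).} I would first prove that for any $\eta>0$ there exists $N_0$ such that
\[
\tfrac{1}{N}S_N g(x) \in (\log\mu_-\! -\! \eta,\ \log\mu_+\! +\! \eta)\qquad\text{for all } x\in M,\ N\geq N_0,
\]
where $S_N g = \sum_{i=0}^{N-1} g\circ f^i$. Given $\varepsilon>0$, topological transitivity (Lemma~\ref{lem: transitivity}) yields $K=K(\varepsilon)$ such that any two points of $M$ are joined by an orbit segment of length $\leq K$ whose endpoints lie $\varepsilon/2$-close to the prescribed targets. Concatenating $x,f(x),\ldots,f^{N-1}(x)$ with such a return segment from $f^N(x)$ back to $x$ produces a periodic $\delta$-pseudo-orbit, and shadowing (Lemma~\ref{lem: shadowing}) converts it into a genuine periodic point $p\in\mathrm{Per}(f)$ of period $N+k\leq N+K$ that $\varepsilon$-traces it. Hölder continuity of $g$ (standard for Anosov maps, since $E^s$ is one-dimensional and Hölder) gives $|S_N g(x) - S_N g(p)| \leq CN\varepsilon^\alpha$, while $|S_{N+k}g(p)-S_Ng(p)| \leq K\|g\|_\infty$. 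Since $\tfrac{1}{N+k}S_{N+k}g(p)=\log\mu^s(p,f)\in[\log\mu_-,\log\mu_+]$, choosing $\varepsilon$ so small that $C\varepsilon^\alpha<\eta/2$ and then $N$ so large that $K\|g\|_\infty/N<\eta/2$ yields the claim.

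\textbf{Step 2 (Birkhoff correction and new metric).} Apply Step 1 with $\eta=\log(1+\delta)$ to fix an integer $N$, and set
\[
h(x) := \frac{1}{N}\sum_{k=0}^{N-1}\sum_{j=0}^{k-1} g(f^j(x)).
\]
A direct telescoping gives the cohomological identity
\[
g(x) + h(f(x)) - h(x) = \frac{1}{N}\,S_N g(x).
\]
Because $h$ is continuous, I may define a continuous Riemannian metric that agrees with the original one on a chosen smooth complementary distribution to $E^s$ and equals $e^{2h(x)}\langle\cdot,\cdot\rangle^{\mathrm{old}}$ on $E^s(x)$. Then for any nonzero $v\in E^s(x)$,
\[
\frac{|Df\,v|_{f(x)}^{\mathrm{new}}}{|v|_x^{\mathrm{new}}} = e^{h(f(x))-h(x)}\,\|Df|_{E^s(x)}\|^{\mathrm{old}} = \exp\!\left(\tfrac{1}{N}\,S_N g(x)\right) \in \bigl(\mu_-/(1+\delta),\ \mu_+(1+\delta)\bigr),
\]
which is exactly the stated pointwise bound.

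\textbf{Anticipated obstacle.} The subtle point is the order of quantifiers in Step 1: one must fix $\varepsilon$ (and therefore the slope of the Hölder shadowing error) \emph{before} fixing $N$, and the transitivity waiting time $K$ depends on $\varepsilon$ alone, so it becomes negligible compared to $N$ as $N\to\infty$. One must also verify that the Hölder constant for $g$ is intrinsic and does not deteriorate along long orbits, which is standard. Noninvertibility of $f$ causes no difficulty here because Lemmas~\ref{lem: shadowing} and~\ref{lem: transitivity} are stated directly on $M$, so there is no need to lift to the universal cover.
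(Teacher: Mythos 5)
Your proposal is correct and is essentially the paper's own argument: your conformal factor $e^{h(x)}$ coincides with the paper's averaged norm $|v|_N=\bigl(\prod_{n=0}^{N-1}\|Df^n v\|\bigr)^{1/N}$ on the one-dimensional bundle $E^s$ (so in both constructions the new norm of $Df|_{E^s(x)}$ equals $\exp\bigl(\tfrac{1}{N}S_Ng(x)\bigr)$), and your Step 1 is the same transitivity-plus-shadowing closing estimate that the paper uses to compare this $N$-step average with periodic data. The only cosmetic difference is that you invoke H\"older continuity of $g=\ln\|Df|_{E^s}\|$ where uniform continuity suffices (and is what the paper uses), which matters only in that the lemma is applied to maps that are merely $C^1$; replacing $C\varepsilon^\alpha$ by a modulus of continuity $\omega(\varepsilon)$ leaves your argument unchanged.
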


We also need the following quasi-norm.

Let $\{\mathfrak{n}_{(i)}\}$ be any fixed subspaces of $\mathfrak{n}$, such that $\mathfrak{n}_i = \mathfrak{n}_{(i)} \bigoplus \mathfrak{n}_{i + 1}$, $1 \leq i \leq s$. Then $\mathfrak{n} = \bigoplus_{i = 1}^s\mathfrak{n}_{(i)}$. Denote the projection from $\mathfrak{n}$ to $\mathfrak{n}_{(i)}$ by $p_i$. Define $q(X) = \max_{1\leq i \leq s}\left\|p_i(X)\right\|^{\frac{1}{i}}$. For $x$, $y\in N$, define $\rho(x, y) = q(\exp^{-1}(yx^{-1}))$. Clearly $\rho$ is continuous, and
\begin{itemize}
	\item $\rho(x, y) = 0$ if and only if $x = y$, $\forall x$, $y\in N$;
	\item $\rho(x, y) = \rho(y, x)$, $\forall x$, $y\in N$;
	\item $\rho(xz, yz) = \rho(x, y)$, $\forall x$, $y$, $z\in N$;
	\item $t^{-\frac{1}{i}}\rho(e, \exp tX) \to \left\|p_i(X)\right\|^{\frac{1}{i}} > 0$ as $t \to +\infty$, when $X\in\mathfrak{n}_i\setminus\mathfrak{n}_{i + 1}$.
\end{itemize}

The following lemma is a direct corollary from \cite[Theorem 2.7, Proposition 4.4]{Breuillard2014}.

\begin{lemma}\label{lem: rho and d}
	{\rm (\cite{Breuillard2014})} There exists $C > 0$ such that $C^{-1}d(x, y) - C \leq \rho(x, y) \leq Cd(x, y) + C$, $\forall x$, $y\in N$.
\end{lemma}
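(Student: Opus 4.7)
The plan is to use right-invariance to reduce the statement to comparing $d(e,z)$ with $\rho(e,z)$ for $z \in N$. Both functions are right-invariant: $d$ by construction, and $\rho$ because $\rho(x,y) = q(\exp^{-1}(yx^{-1}))$ depends only on the right-translation-invariant quantity $yx^{-1}$. Hence, after substituting $z = yx^{-1}$, the lemma reduces to producing a constant $C > 0$ with $C^{-1} d(e,z) - C \leq \rho(e,z) \leq C d(e,z) + C$ for every $z \in N$.

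For the upper bound $\rho(e,z) \leq C d(e,z) + C$, I would take a near-geodesic $\gamma : [0, L] \to N$ from $e$ to $z$ with $L \leq d(e,z) + 1$, subdivide it into $\lceil L \rceil$ increments $z_k = \gamma(k)\gamma(k-1)^{-1}$ of length at most $1$, and estimate $\log z$ via iterated Baker--Campbell--Hausdorff. Each increment has logarithm of bounded norm, and composing $L$ such increments produces the estimate $\|p_i(\log z)\| \leq C(L^i + 1)$ for the grade-$i$ projection, since $p_i(\log z)$ becomes a sum of at most $O(L^i)$ iterated brackets of order $\leq i$ of bounded elements. Taking $i$-th roots yields $\rho(e,z) = \max_i \|p_i(\log z)\|^{1/i} \leq C'L + C'$.

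For the lower bound $d(e,z) \leq C \rho(e,z) + C$, decompose $\log z = X_1 + \cdots + X_s$ with $X_i \in \mathfrak{n}_{(i)}$, and build an explicit path from $e$ to $z$ from commutator curves: using that $\mathfrak{n}_i$ is bracket-generated by $\mathfrak{n}_{(1)}$ modulo $\mathfrak{n}_{i+1}$, each $X_i$ is realized, modulo higher-order corrections, by an iterated commutator curve of length proportional to $\|X_i\|^{1/i}$. Iterating from the top grade downward and absorbing the correction terms into subsequent steps yields $d(e,z) \leq \sum_{i=1}^s C_i \|X_i\|^{1/i} \leq C \rho(e,z) + C$.

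The main obstacle lies in the lower bound: when $\mathfrak{n}$ is only filtered (not canonically graded), the explicit commutator curves used to produce deep elements also generate unwanted lower-grade terms that must be corrected iteratively, and tracking these corrections through the BCH formula is delicate. This is precisely the content of Theorem~2.7 and Proposition~4.4 of \cite{Breuillard2014}, which together show that $(N,d)$ is quasi-isometric to the associated graded Carnot group equipped with its homogeneous quasi-norm; our lemma is the direct corollary stated.
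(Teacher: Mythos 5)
Your proposal is essentially the same as the paper's treatment: the paper offers no independent argument and simply quotes this estimate as a direct corollary of Theorem~2.7 and Proposition~4.4 of \cite{Breuillard2014}, which is exactly where your sketch ends up deferring the delicate lower-bound step. The additional outline you give (right-invariance reduction, BCH subdivision for the upper bound, commutator curves for the lower bound) is the standard Guivarc'h-type argument behind Breuillard's result and is consistent with it, so there is no gap.
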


For comparing the dynamics between $F$ and $\Psi$, a strong tool is quasi-isometry. A foliation $\mathcal{F}$ on $N$ is {\it quasi-isometric}, if there exists $C > 0$ such that
\[d(x, y) \leq d_{\mathcal{F}}(x, y) \leq Cd(x, y) + C, \quad  \forall x\in N, \forall y\in \mathcal{F}(x).\]
The one-dimensional stable foliation $\widetilde{\mathcal{L}}^s$ is quasi-isometric. Take $C_s > 0$ such that
\[d(x, y) \leq d_{\widetilde{\mathcal{L}}^s}(x, y) < C_sd(x, y) + C_s.\]
Indeed, this follows from Lemma\ref{lem: rho and d}. Since $\widetilde{\mathcal{L}}^s(e) = \exp\mathfrak{g}^s$, take a unit vector $X$ in $\mathfrak{g}^s$, then $X\in\mathfrak{n}\setminus\mathfrak{n}_2$, hence $t^{-1}\rho(e, \exp tX) \to 1$ as $t \to+\infty$. Notice that $t = d_{\widetilde{\mathcal{L}}^s}(e, \exp tX)$. Therefore, $\widetilde{\mathcal{L}}^s$ is quasi-isometric. 

\begin{lemma}\label{lem: quasi-isometry}
	The one-dimensional foliation $\widetilde{\mathcal{F}}^s$ is quasi-isometric.
\end{lemma}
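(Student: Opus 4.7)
The lower bound $d(x, y) \leq d_{\widetilde{\mathcal{F}}^s}(x, y)$ for $y \in \widetilde{\mathcal{F}}^s(x)$ is immediate from the definition of leaf distance. For the nontrivial inequality $d_{\widetilde{\mathcal{F}}^s}(x, y) \leq C\,d(x, y) + C$, the natural route is to transfer the just-proved quasi-isometry of $\widetilde{\mathcal{L}}^s$ through the leaf conjugacy $H$ of Lemma \ref{lem: conjugate}, which sends $\widetilde{\mathcal{F}}^s(x)$ homeomorphically onto $\widetilde{\mathcal{L}}^s(H(x))$ and satisfies $d(H, \mathrm{Id}_N) \leq C_0$.

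First I would combine leaf conjugacy with the quasi-isometry of $\widetilde{\mathcal{L}}^s$ to get
\[
d_{\widetilde{\mathcal{L}}^s}(H(x), H(y)) \leq C_s\, d(H(x), H(y)) + C_s \leq C_s\, d(x, y) + C_s(2C_0 + 1),
\]
reducing the problem to proving $d_{\widetilde{\mathcal{F}}^s}(x, y) \leq C'\, d_{\widetilde{\mathcal{L}}^s}(H(x), H(y)) + C'$. Parametrizing both one-dimensional leaves by arc length (say $\gamma$ on $\widetilde{\mathcal{F}}^s(x)$ and $\ell$ on $\widetilde{\mathcal{L}}^s(H(x))$), the restriction of $H$ induces a continuous monotonic bijection $\sigma\colon\mathbb{R}\to\mathbb{R}$ with $H(\gamma(t)) = \ell(\sigma(t))$ and $\sigma(0) = 0$, so the goal becomes $|t| \leq C''|\sigma(t)| + C''$.

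To control this, I would iterate $F$ forward and choose the smallest $n \geq 0$ with $d_{\widetilde{\mathcal{F}}^s}(F^n(x), F^n(\gamma(t))) \leq \epsilon_0$, where $\epsilon_0$ is a local quasi-isometry threshold for $\mathcal{F}^s$ coming from compactness of $M$ and the $C^1$-regularity of the stable leaves (on scale $\epsilon_0$ one has $d_{\widetilde{\mathcal{F}}^s} \leq K_0\, d$). At this $n$, the leaf conjugacy yields
\[
d(F^n(x), F^n(\gamma(t))) \leq 2C_0 + d_{\widetilde{\mathcal{L}}^s}(\Psi^n H(x), \Psi^n H(\gamma(t))) = 2C_0 + \mu^s(\Psi)^n |\sigma(t)|,
\]
and combining with the pullback inequality $d_{\widetilde{\mathcal{F}}^s}(x, \gamma(t)) \leq C_1 \mu^s_-(F)^{-n}\, d_{\widetilde{\mathcal{F}}^s}(F^n(x), F^n(\gamma(t)))$ gives the desired control on $|t| = d_{\widetilde{\mathcal{F}}^s}(x, \gamma(t))$ in terms of $|\sigma(t)|$.

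\textbf{Main obstacle.} The delicate point is balancing the competing exponential factors $\mu^s_-(F)^{-n}$, $\mu^s_+(F)^{n}$, and $\mu^s(\Psi)^n$: since one only knows $\mu^s_-(F) \leq \mu^s(\Psi) \leq \mu^s_+(F)$ (not equality), a naive application of the above inequalities with $n \sim \log d_{\widetilde{\mathcal{F}}^s}(x,y)/\log(1/\mu^s_+(F))$ produces a super-linear error of the form $L^\kappa$ with $\kappa = \log(\mu^s_-(F))/\log(\mu^s_+(F)) \geq 1$, which closes up cleanly only when the pointwise stable contraction of $F$ is constant and equals that of $\Psi$. Resolving this in general requires exploiting both the monotonicity of the reparametrization $\sigma$ and the uniform bi-H\"older continuity of $H$ on $\widetilde{\mathcal{F}}^s$-leaves (Lemma \ref{lem: Holder}, whose constants are uniform along the right-$\Gamma$-invariant foliation $\widetilde{\mathcal{F}}^s$), together with the fact that $\mathfrak{n}^s\not\subset\mathfrak{n}_2$ in the one-dimensional stable case, so that the quasi-norm $\rho$ of Lemma \ref{lem: rho and d} is dominated by its horizontal component on $\widetilde{\mathcal{L}}^s$-leaves and the comparison $\rho(x, y) \asymp \rho(H(x), H(y))$ for large $|\sigma(t)|$ rules out sub-linear growth of $\sigma$. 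This is the crux; once established, the quasi-isometry $d_{\widetilde{\mathcal{F}}^s} \leq C\,d + C$ follows.
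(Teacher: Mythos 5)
Your reduction to the leafwise comparison through $H$ is the right first move (the paper also runs everything through $H$), but the core of your argument has a genuine gap, and you have in effect acknowledged it yourself. The dynamical iteration cannot be repaired as stated: besides the rate mismatch $\mu^s_-(F)\leq\mu^s(\Psi)\leq\mu^s_+(F)$ that you point out (which yields only $|t|\lesssim|\sigma(t)|^{\kappa}$ with $\kappa\geq 1$), the inequality $d(F^n(x),F^n(\gamma(t)))\leq 2C_0+\mu^s(\Psi)^n|\sigma(t)|$ is useless at the stopping time you choose, because the additive error $2C_0$ coming from $d(H,\mathrm{Id}_N)\leq C_0$ swamps the small scale $\epsilon_0$ at which you stop. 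More seriously, the proposed resolution of the ``main obstacle'' is circular: $d(H,\mathrm{Id}_N)\leq C_0$ together with Lemma \ref{lem: rho and d} and the quasi-isometry of $\widetilde{\mathcal{L}}^s$ gives exactly $d(x,\gamma(t))\asymp|\sigma(t)|$ up to additive and multiplicative constants, so ``ruling out sub-linear growth of $\sigma$'', i.e.\ $|t|\lesssim|\sigma(t)|$, is literally equivalent to the inequality $d_{\widetilde{\mathcal{F}}^s}(x,y)\leq Cd(x,y)+C$ you are trying to prove; and the bi-H\"older continuity of $H$ (Lemma \ref{lem: Holder}) is a local, polynomial estimate that cannot produce a global linear bound. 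No actual mechanism for the large-scale linear control is supplied.

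The paper's proof supplies precisely this missing mechanism, and it is not dynamical but group-theoretic: fix $v\in\widetilde{\mathcal{F}}^s(e)$ with $H(v)\in\widetilde{\mathcal{L}}^s(e)$ and $d(e,H(v))=3C_0$, and mark the points $x_k=H^{-1}(H(v)^kH(x))$ on $\widetilde{\mathcal{F}}^s(x)$; right-invariance of $d$ gives $C_0\leq d(x_k,x_{k+1})\leq 5C_0$, while the quasi-isometry of $\widetilde{\mathcal{L}}^s$ gives $d(x,y)\gtrsim|k|$ when $y$ lies in the $k$-th interval. The crux is then a uniform bound $d_{\widetilde{\mathcal{F}}^s}(x_k,x_{k+1})\leq K_0$ valid for all $x\in N$; since $\widetilde{\mathcal{F}}^u$ and the product map $\beta_{\widetilde{\mathcal{F}}}$ are not right-$\Gamma$-invariant ($f$ is not assumed special), this uniformity cannot be obtained by naive compactness, and the paper derives it from the approximate $\Psi^k\Gamma$-equivariance of $H$ (Lemma \ref{lem: sequence}) together with cocompactness of $\Gamma$ (the two Claims in its proof). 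Summing at most $|k|+1$ segments then yields $d_{\widetilde{\mathcal{F}}^s}(x,y)\leq Cd(x,y)+C$. Without an analogue of this uniform local bound and the counting argument, your proposal does not close.
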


\begin{proof}
	First we prove two claims.
	
	\begin{claim*}
		For any $C > 0$, there exist positive constants $\varepsilon_k(C) \to 0$ as $k \to +\infty$ such that for any sequence $\gamma_k\in\Psi^k\Gamma$, we have $d(\beta_{\widetilde{\mathcal{F}}}(x\gamma_k, y\gamma_k), \beta_{\widetilde{\mathcal{F}}}(x, y)\gamma_k) \leq \varepsilon_k(C)$, $\forall x, y\in N$, $d(x, y) \leq C$.
	\end{claim*}
	
	\begin{proof}[Proof of Claim]
		Recall the proof of Lemma \ref{lem: sequence}. By right-invariance of $\beta$ and $d$, there are positive constants $\delta_k(C) \to 0$ as $k \to +\infty$, such that for any $x, y\in N$ with $d(x, y) \leq C$, $d(z, y) \leq 2C_0\mu^s(\Psi)^k$ implies that $d(\beta(x, z), \beta(x, y)) \leq \delta_k(C)$, $\forall x$, $y\in N$. Therefore,
		\begin{align*}
			&d(H(\beta_{\widetilde{\mathcal{F}}}(x\gamma_k, y\gamma_k)), H(\beta_{\widetilde{\mathcal{F}}}(x, y)\gamma_k))\\
			= &d(H(\beta_{\widetilde{\mathcal{F}}}(x\gamma_k, y\gamma_k))\gamma_k^{-1}, H(\beta_{\widetilde{\mathcal{F}}}(x, y)\gamma_k)\gamma_k^{-1})\\
			\leq & d(\beta(H(x\gamma_k), H(y\gamma_k))\gamma_k^{-1}, H(\beta_{\widetilde{\mathcal{F}}}(x, y))) + 2C_0\mu^s(\Psi)^k\\
			= &d(\beta(H(x\gamma_k)\gamma_k^{-1}, H(y\gamma_k)\gamma_k^{-1}), \beta(H(x), H(y))) + 2C_0\mu^s(\Psi)^k\\
			= & d(\beta(H(x), H(y\gamma_k)\gamma_k^{-1}), \beta(H(x), H(y))) + 2C_0\mu^s(\Psi)^k\\
			\leq & \delta_k(C + 2C_0) + 2C_0\mu^s(\Psi)^k.
		\end{align*}
		Note that by Lemma \ref{lem: H and Gamma}, $H(x\gamma)\gamma^{-1}\in\widetilde{\mathcal{L}}^s(H(x))$.
		
		Again by uniform continuity of $H^{-1}$, there are positive constants $\varepsilon_k(C) \to 0$ as $k \to +\infty$, such that $d(\beta_{\widetilde{\mathcal{F}}}(x\gamma_k, y\gamma_k), \beta_{\widetilde{\mathcal{F}}}(x, y)\gamma_k) \leq \varepsilon_k(C)$.
	\end{proof}
	
	\begin{claim*}
		For any $C > 0$, there exists $K(C) > 0$ such that $d(x, y) < C$ implies that $d_{\widetilde{\mathcal{F}}^s}(x, \beta_{\widetilde{\mathcal{F}}}(x, y)) < K(C)$.
	\end{claim*} 
	
	\begin{proof}[Proof of Claim]
		Fix $k$ large such that $\varepsilon_k(C) \leq 1$. Since $\Gamma$ is cocompact, there exists a compact subset $S\subseteq N$ satisfying $S\Gamma = N$, and hence $\Psi^k(S)\Psi^k\Gamma = N$. Let $\Omega = \overline{B(\Psi^k(S), C)}$, which is compact, and $K_C := \max\{d_{\widetilde{\mathcal{F}}^s}(x, \beta_{\widetilde{\mathcal{F}}}(x, y)): x, y\in \Omega\}$. For any $x, y\in N$, $d(x, y) < C$, take $\gamma_k\in\Psi^k\Gamma$ such that $x\gamma_k\in \Psi^k(S)\subseteq\Omega$, then $y\gamma_k \in \Omega$ since $d(x\gamma_k, y\gamma_k) = d(x, y) < C$. It follows that $d_{\widetilde{\mathcal{F}}^s}(x\gamma_k, \beta_{\widetilde{\mathcal{F}}}(x\gamma_k, y\gamma_k)) \leq K_C$ and hence
		\begin{align*}
			d_{\widetilde{\mathcal{F}}^s}(x, \beta_{\widetilde{\mathcal{F}}}(x, y)) &= d_{\widetilde{\mathcal{F}}^s}(x\gamma_k, \beta_{\widetilde{\mathcal{F}}}(x, y)\gamma_k)\\
			&\leq d_{\widetilde{\mathcal{F}}^s}(x\gamma_k, \beta_{\widetilde{\mathcal{F}}}(x\gamma_k, y\gamma_k)) + d(\beta_{\widetilde{\mathcal{F}}}(x\gamma_k, y\gamma_k), \beta_{\widetilde{\mathcal{F}}}(x, y)\gamma_k)\\
            &\leq K_C + 1.
		\end{align*}
		Note that $\widetilde{\mathcal{F}}^s$ is right-$\Gamma$-invariant. Finally we take $K(C) = K_C + 1$.
	\end{proof}
	Return to the lemma. Recall that $d_{\widetilde{\mathcal{L}}^s}(x, y) \leq C_sd(x, y) + C_s$, $\forall x\in N$, $\forall y\in \widetilde{\mathcal{L}}^s(x)$, for some $C_s > 0$.
	
	Take $v\in \widetilde{\mathcal{F}}^s(e)$ such that $H(v)\in \widetilde{\mathcal{L}}^s(e)$ and $d(e, H(v)) = 3C_0$. Such $v$ exists, because $H: \widetilde{\mathcal{F}}^s(e) \to \widetilde{\mathcal{L}}^s(e)$ is a homeomorphism, $\exp: \mathfrak{n}^s \to \widetilde{\mathcal{L}}^s(e)$ is a diffeomorphism, $\widetilde{\mathcal{L}}^s$ is quasi-isometric, and further, $d(H, \text{\rm Id}_N) \leq C_0$. It follows that $d(H(x), H(v)H(x)) = 3C_0$, $\forall x\in N$, and thus $C_0 \leq d(x, v_x) \leq 5C_0$, where $v_x = H^{-1}(H(v)H(x))\in\widetilde{\mathcal{F}}^s(x)$ and hence $v_x = \beta_{\widetilde{\mathcal{F}}}(x, v_x)$.
	
	By the claim above, $d(x, v_x) \leq 5C_0$ implies $d_{\widetilde{\mathcal{F}}^s}(x, \beta_{\widetilde{\mathcal{F}}}(x, v_x)) < K_0$ for some constant $K_0 > 0$, that is, $d_{\widetilde{\mathcal{F}}^s}(x, v_x) < K_0$.
	
	Now consider $x_k = H^{-1}(H(v)^kH(x))$, $k\in\mathbb{Z}$. Note that $x_0 = x$, $x_1 = v_x$. The same argument claims that $d(H(v)^kH(x), H(v)^{k + 1}H(x)) = 3C_0$, $C_0 \leq d(x_k, x_{k + 1}) \leq 5C_0$, $d_{\widetilde{\mathcal{F}}^s}(x_k, x_{k + 1}) < K_0$.
	
	Notice that $H(v)^kH(x) \in \widetilde{\mathcal{L}}^s(H(x))$. It follows that $\{H(v)^kH(x): k\in\mathbb{Z}\}$ divides the curve into intervals with endpoints $H(v)^kH(x)$ and $H(v)^{k + 1}H(x)$. On the other hand, $H^{-1}(\widetilde{\mathcal{L}}^s(H(x))) = \widetilde{\mathcal{F}}^s(x)$, so the curve $\widetilde{\mathcal{F}}^s(x)$ is also divided into intervals with endpoints $x_k$ and $x_{k + 1}$.
	
	For any $y\in \widetilde{\mathcal{F}}^s(x)$, assume that $y$ lies in the interval with endpoints $x_k$ and $x_{k + 1}$. It follows that $d_{\widetilde{\mathcal{F}}^s}(x, y) \leq (|k| + 1)K_0$, and
	\begin{align*}
		d(x, y) &\geq d(H(x), H(y)) - 2C_0 \geq d(H(x), H(v)^kH(x)) - 3C_0 - 2C_0\\
		&= d(e, H(v)^k) - 5C_0 \geq C_s^{-1}(d_{\widetilde{\mathcal{L}}^s}(e, H(v)^k) - C_s) - 5C_0\\
		&= C_s^{-1}(|k|d_{\widetilde{\mathcal{L}}^s}(e, H(v)) - C_s) - 5C_0 \geq C_s^{-1}(|k|d(e, H(v)) - C_s) - 5C_0\\
		&\geq (3C_s^{-1}|k| - 6)C_0.
	\end{align*}
	Take $C = (6C_s + 1)K_0 \geq \frac{C_sK_0}{C_0}$, then $d_{\widetilde{\mathcal{F}}^s}(x, y) \leq Cd(x, y) + C$.
\end{proof}

\begin{theorem}\label{thm: p, q = A}
	If $\lambda^s(p, f) = \lambda^s(q, f)$, for all $ p$, $q\in\text{Per}(f)$, then $\lambda^s(p, f) = \lambda^s(\Psi)$, for all $ p\in\text{Per}(f)$.
\end{theorem}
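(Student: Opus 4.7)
Write $\lambda := \lambda^s(p,f)$ for the common stable Lyapunov exponent, and $\mu := \mu^s(\Psi)$, so that $\lambda^s(\Psi) = \ln\mu$. The plan is to use the conjugacy $H \circ F = \Psi\circ H$ together with the quasi-isometry of both one-dimensional stable foliations $\widetilde{\mathcal{F}}^s$ (Lemma \ref{lem: quasi-isometry}) and $\widetilde{\mathcal{L}}^s$ to transfer the (essentially pointwise) contraction rate of $F$ along $\widetilde{\mathcal{F}}^s$ to that of $\Psi$ along $\widetilde{\mathcal{L}}^s$. The crucial observation is that forward iteration forces both leaf distances to $0$, where additive constants swamp the comparison; we will therefore iterate \emph{backward}, where the two leaves expand exponentially and the additive errors from $d(H,\mathrm{Id}_N)\leq C_0$ and from quasi-isometry become negligible.

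First, apply Lemma \ref{lem: adapted metric} with $\mu_-=\mu_+=e^{\lambda}$: for any $\delta>0$ there is a Riemannian metric on $M$ (and its lift to $N$) in which $e^{\lambda}(1+\delta)^{-1}<|DF|_{\widetilde{E}^s(x)}|<e^{\lambda}(1+\delta)$ pointwise. Since any two Riemannian metrics on the compact $M$ are bi-Lipschitz equivalent (uniformly on $N$ by right-$\Gamma$-invariance), there is $K>0$, independent of $x,y,n$, such that for $y\in\widetilde{\mathcal{F}}^s(x)$ and $n\geq 0$,
\begin{equation*}
K^{-1}\bigl(e^{-\lambda}(1+\delta)^{-1}\bigr)^{n}\,d_{\widetilde{\mathcal{F}}^s}(x,y)\;\leq\;d_{\widetilde{\mathcal{F}}^s}(F^{-n}(x),F^{-n}(y))\;\leq\;K\bigl(e^{-\lambda}(1+\delta)\bigr)^{n}\,d_{\widetilde{\mathcal{F}}^s}(x,y).
\end{equation*}
Fix distinct $x,y\in N$ with $y\in\widetilde{\mathcal{F}}^s(x)$; since $H$ is a leaf conjugacy and a homeomorphism, $H(y)\in\widetilde{\mathcal{L}}^s(H(x))$ and $D:=d_{\widetilde{\mathcal{L}}^s}(H(x),H(y))>0$. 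By algebraic contraction along $\widetilde{\mathcal{L}}^s$, $d_{\widetilde{\mathcal{L}}^s}(\Psi^{-n}H(x),\Psi^{-n}H(y))=\mu^{-n}D$.

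Using $d(H,\mathrm{Id}_N)\leq C_0$, the quasi-isometry constant $C_s$ for $\widetilde{\mathcal{L}}^s$, and the constant $C_f$ for $\widetilde{\mathcal{F}}^s$, the conjugacy relation gives the two comparisons
\begin{align*}
d_{\widetilde{\mathcal{F}}^s}(F^{-n}(x),F^{-n}(y)) &\;\geq\; d(F^{-n}(x),F^{-n}(y)) \;\geq\; C_s^{-1}\mu^{-n}D-(1+2C_0),\\
d_{\widetilde{\mathcal{F}}^s}(F^{-n}(x),F^{-n}(y)) &\;\leq\; C_f\,d(F^{-n}(x),F^{-n}(y))+C_f \;\leq\; C_f\mu^{-n}D+(2C_0+1)C_f.
\end{align*}
Combining the upper bound from Step~1 with the first display and dividing by $\mu^{-n}$ gives
\[K\bigl(e^{-\lambda}(1+\delta)\mu\bigr)^{n}d_{\widetilde{\mathcal{F}}^s}(x,y)\;\geq\;C_s^{-1}D-(1+2C_0)\mu^{n};\]
as $n\to\infty$ the right side tends to $C_s^{-1}D>0$, so $e^{-\lambda}(1+\delta)\mu\geq 1$, i.e.\ $\mu\geq e^{\lambda}(1+\delta)^{-1}$. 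Symmetrically, the lower bound from Step~1 combined with the second display and division by $\mu^{-n}$ forces $e^{-\lambda}(1+\delta)^{-1}\mu\leq 1$, i.e.\ $\mu\leq e^{\lambda}(1+\delta)$. Letting $\delta\to 0$ yields $\mu=e^{\lambda}$, which is exactly $\lambda=\lambda^s(\Psi)$.

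The main obstacle is keeping the two additive errors under control while the leaf distances themselves diverge: the bound $d(H,\mathrm{Id}_N)\leq C_0$ is purely additive, and quasi-isometry contributes only an additive term to the leaf-versus-ambient distance comparison, so after division by $\mu^{-n}$ (which grows to infinity) both errors vanish in the limit. A secondary point is ensuring that backward iterates $F^{-n}$ make sense globally; this is automatic because $F$ is a diffeomorphism of $N$. All other ingredients (adapted metric, quasi-isometry of $\widetilde{\mathcal{F}}^s$, leaf-conjugacy property of $H$) are supplied by the earlier lemmas.
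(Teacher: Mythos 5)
Your proposal is correct and follows essentially the same route as the paper's proof: apply the adapted metric from Lemma \ref{lem: adapted metric} with $\mu_-=\mu_+$, transfer leaf distances along $\widetilde{\mathcal{F}}^s$ and $\widetilde{\mathcal{L}}^s$ to the ambient distance via the quasi-isometry lemmas, use $d(H,\mathrm{Id}_N)\leq C_0$ and the conjugacy under backward iteration, and compare exponential growth rates before letting $\delta\to 0$. The only cosmetic difference is that you make the limiting comparison explicit by dividing by $\mu^{-n}$, whereas the paper sandwiches $d(H(F^{-k}(x)),H(F^{-k}(y)))$ from both sides; the content is identical.
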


\begin{proof}
	Under the assumption, we have $\mu_- = \mu_+$ and denote it by $\mu$. By Lemma \ref{lem: adapted metric}, there is an adapted metric on $M$ such that $\mu(1 + \delta)^{-1} < |Df|_{E^s(x)}| < \mu(1 + \delta)$, $\forall x\in M$. This metric induces a metric on $N$ such that $\mu(1 + \delta)^{-1} < |DF|_{\widetilde{E}^s(x)}| < \mu(1 + \delta)$, $\forall x\in N$.
	
	Denote the distance on $N$ induced by the adapted metric by $D$, then $D$ is equivalent to the right-invariant distance $d$. Assume that $C_1^{-1}d\leq D\leq C_1d$ for some constant $C_1 > 0$.
	
	Take $x\in N$, $y\in\widetilde{\mathcal{F}}^s(x)$, $y\not= x$, then
	$$\mu^{-k}(1 + \delta)^{-k}D_{\widetilde{\mathcal{F}}^s}(x, y) \leq D_{\widetilde{\mathcal{F}}^s}(F^{-k}(x), F^{-k}(y)) \leq \mu^{-k}(1 + \delta)^kD_{\widetilde{\mathcal{F}}^s}(x, y),$$
	hence
	$$\mu^{-k}(1 + \delta)^{-k}C_1^{-2}d_{\widetilde{\mathcal{F}}^s}(x, y) \leq d_{\widetilde{\mathcal{F}}^s}(F^{-k}(x), F^{-k}(y)) \leq \mu^{-k}(1 + \delta)^kC_1^2d_{\widetilde{\mathcal{F}}^s}(x, y).$$
	
	Since $\widetilde{\mathcal{F}}^s$ is quasi-isometric by Lemma \ref{lem: quasi-isometry}, $d(x, y) \leq d_{\widetilde{\mathcal{F}}^s}(x, y) \leq C_2d(x, y) + C_2$ for some constant $C_2 > 0$. Therefore,
	$$C_2^{-1}\mu^{-k}(1 + \delta)^{-k}C_1^{-2}d_{\widetilde{\mathcal{F}}^s}(x, y) - 1\leq d(F^{-k}(x), F^{-k}(y)) \leq \mu^{-k}(1 + \delta)^kC_1^2d_{\widetilde{\mathcal{F}}^s}(x, y).$$
	
	Recall that $d(H, \text{\rm Id}_N) \leq C_0$. Hence
	\begin{align*}
		C_2^{-1}\mu^{-k}(1 + \delta)^{-k}C_1^{-2}d_{\widetilde{\mathcal{F}}^s}(x, y) - 2C_0 - 1
		&\leq d(H(F^{-k}(x)), H(F^{-k}(y)))\\
		&\leq \mu^{-k}(1 + \delta)^kC_1^2d_{\widetilde{\mathcal{F}}^s}(x, y) + 2C_0.
	\end{align*}
	
	On the other hand,
	\begin{align*}
        d_{\widetilde{\mathcal{L}}^s}(H(F^{-k}(x)), H(F^{-k}(y)))
        &= d_{\widetilde{\mathcal{L}}^s}(\Psi^{-k}(H(x)), \Psi^{-k}(H(y)))\\
        &= \mu^s(\Psi)^{-k}d_{\widetilde{\mathcal{L}}^s}(H(x), H(y)).
    \end{align*}
	
	Since $d_{\widetilde{\mathcal{L}}^s}$ is also quasi-isometric, we have
	\begin{align*}
	    C_s^{-1}\mu^s(\Psi)^{-k}d_{\widetilde{\mathcal{L}}^s}(H(x), H(y)) - 1
     &\leq d(H(F^{-k}(x)), H(F^{-k}(y)))\\
     &\leq \mu^s(\Psi)^{-k}d_{\widetilde{\mathcal{L}}^s}(H(x), H(y)).
	\end{align*}
	
	Fix $x$ and $y$, let $k\to +\infty$. It follows that $\mu(1 + \delta)^{-1} \leq \mu^s(\Psi) \leq \mu(1 + \delta)$. Since $\delta$ can be arbitrarily small, one has $\mu = \mu^s(\Psi)$, and the proof is completed.
\end{proof}

\subsection{Special Anosov maps on nilmanifolds}\label{special Anosov}

In this subsection, $M = N/\Gamma$ is a nilmanifold with a Riemannian metric induced by a right-invariant Riemannian metric on $N$, $f$ is an Anosov map on $M$, $F$ is a lift of $f$, $\Psi$ is the linear part of $F$, $H$ is the conjugacy between $F$ and $\Psi$ constructed in Lemma \ref{lem: conjugate}.

\begin{theorem}\label{thm: special and conjugacy}
	The following statements hold.
	\begin{enumerate}
		\item If $f$ is topologically conjugate to $\Psi$, then $f$ is special.
		\item If $f$ is special and $\Psi$ is $u$-ideal, then $f$ is topologically conjugate to $\Psi$.
	\end{enumerate}
\end{theorem}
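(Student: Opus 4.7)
For Part (1), my plan is to lift the given topological conjugacy $h$ to a homeomorphism $\tilde h: N \to N$ of the universal cover. There will exist a lift $\Psi'$ of $\Psi$ with $\tilde h\circ F = \Psi'\circ \tilde h$, and (since $\tilde h$ covers a self-homeomorphism of $M$) an automorphism $h_* \in \text{Aut}(\Gamma)$ such that $\tilde h(x\gamma) = \tilde h(x)\,h_*(\gamma)$ for all $x \in N$ and $\gamma \in \Gamma$. As a topological conjugacy between Anosov diffeomorphisms, $\tilde h$ maps $\widetilde{\mathcal{F}}^u$-leaves of $F$ bijectively onto $\widetilde{\mathcal{L}}^u$-leaves of $\Psi'$. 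Since $\widetilde{\mathcal{L}}^u$ is right-$N$-invariant, it is in particular right-$h_*(\Gamma) = \Gamma$-invariant, and the equivariance of $\tilde h$ pulls this back to right-$\Gamma$-invariance of $\widetilde{\mathcal{F}}^u$. Therefore $\widetilde{E}^u$ descends to a continuous invariant unstable bundle on $M$, showing $f$ special.

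For Part (2), the plan is to show that the canonical conjugacy $H$ from Lemma \ref{lem: conjugate} is itself right-$\Gamma$-equivariant, so that it descends to the desired topological conjugacy $h: M \to M$. By Lemma \ref{lem: H and Gamma}, the deviation $\phi_\gamma(x) := H(x\gamma)\gamma^{-1}H(x)^{-1}$ lies in $N^s := \exp \mathfrak{n}^s$; the goal is to show $\phi_\gamma \equiv e$, for then $H(x\gamma) = H(x)\gamma$ and $H$ descends.

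The first substep is to prove $\phi_\gamma$ is constant along each unstable leaf $\widetilde{\mathcal{F}}^u(x)$ of $F$, using both hypotheses. Given $y \in \widetilde{\mathcal{F}}^u(x)$, speciality of $f$ yields $y\gamma \in \widetilde{\mathcal{F}}^u(x)\gamma = \widetilde{\mathcal{F}}^u(x\gamma)$, so the leaf-conjugacy property gives $H(y\gamma) \in \widetilde{\mathcal{L}}^u(H(x\gamma))$. By the $u$-ideal hypothesis, $N^u$ is normal in $N$, so $\widetilde{\mathcal{L}}^u(sz) = s\cdot\widetilde{\mathcal{L}}^u(z)$ for every $s \in N^s$, $z \in N$. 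Writing $H(x\gamma) = \phi_\gamma(x)H(x)\gamma$ (and similarly for $y$), this gives $\widetilde{\mathcal{L}}^u(H(x\gamma)) = \phi_\gamma(x)\,\widetilde{\mathcal{L}}^u(H(x)\gamma)$. Combining with $H(y\gamma) \in \widetilde{\mathcal{L}}^s(H(y)\gamma)$ from Lemma \ref{lem: H and Gamma} and comparing the two expressions for $\widetilde{\mathcal{L}}^u(H(y\gamma)) = \widetilde{\mathcal{L}}^u(H(x\gamma))$, one forces $\phi_\gamma(x)^{-1}\phi_\gamma(y) \in N^u \cap N^s = \{e\}$; hence $\phi_\gamma$ is constant on $\widetilde{\mathcal{F}}^u(x)$.

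The second substep is to conclude $\phi_\gamma \equiv e$. Since $\phi_\gamma$ is continuous, $N^s$-valued, constant on each unstable leaf, and $\widetilde{\mathcal{F}}^u(\Gamma)$ is dense in $N$ by Corollary \ref{cor: density of FuGamma}, it suffices to prove $\phi_\gamma(\delta) = e$ for every $\delta \in \Gamma$. The plan is to exploit the functional equation $\phi_{\Psi(\delta)}(F(x)) = \Psi(\phi_\delta(x))$ for $\delta \in \Gamma$ (a direct consequence of $H\circ F = \Psi\circ H$ and $F(x\delta) = F(x)\Psi(\delta)$), iterated to $\phi_{\Psi^k(\delta)}(F^k(x)) = \Psi^k(\phi_\delta(x))$, together with Lemma \ref{lem: sequence} (which gives the uniform bound on $\phi_\gamma$ for $\gamma \in \Psi^k\Gamma$) and the cocycle identity $\phi_{\gamma\delta}(x) = \phi_\delta(x\gamma)\phi_\gamma(x)$, so as to propagate vanishing across $\Gamma$. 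The main obstacle is precisely this propagation: in the totally non-invertible setting $\bigcap_k\Psi^k\Gamma$ may be trivial, so Lemma \ref{lem: sequence} alone does not directly yield $\phi_\gamma = e$ for arbitrary $\gamma \in \Gamma$. The $u$-ideal hypothesis is essential in rescuing the argument, since it is what makes $\phi_\gamma$ descend to the contracting stable manifold $\widetilde{\mathcal{F}}^s(e)$ on which one can iterate against the contraction toward the fixed point to squeeze $\phi_\gamma(\delta)$ to $e$.
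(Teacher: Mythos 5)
Your Part (1) is correct and is essentially the paper's argument in a slightly different dressing (the paper shows via Proposition \ref{prop: unique H} that the canonical $H$ of Lemma \ref{lem: conjugate} commutes with $\Gamma$ and then transports $\widetilde{\mathcal{L}}^u$; you work with an arbitrary lift $\tilde h$ and its induced automorphism $h_*$ of $\Gamma$, which is fine). In Part (2), your first substep is also sound: with $\phi_\gamma(x)=H(x\gamma)\gamma^{-1}H(x)^{-1}\in N^s$, speciality plus normality of $N^u$ (the $u$-ideal hypothesis) does force $\phi_\gamma(x)^{-1}\phi_\gamma(y)\in N^s\cap N^u=\{e\}$ along unstable leaves, so $\phi_\gamma$ is constant on each leaf of $\widetilde{\mathcal{F}}^u$, and the reduction to proving $\phi_\gamma(\delta)=e$ for $\delta\in\Gamma$ (via Corollary \ref{cor: density of FuGamma} and continuity) is valid.

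The genuine gap is your second substep, which is exactly the heart of the theorem and is not proved. The tools you list cannot close it: the equivariance $\phi_{\Psi^k(\delta)}(F^k(x))=\Psi^k(\phi_\delta(x))$ only moves the deck element forward to $\Psi^k\Gamma$, where Lemma \ref{lem: sequence} already gives the bound $d(\phi_{\gamma_k}(x),e)\leq 2C_0\mu^s_+(\Psi)^k$ automatically; since $\Psi^k$ contracts $N^s$ at rates down to $\mu^s_-(\Psi)^k\leq\mu^s_+(\Psi)^k$, the estimate $d(\Psi^k(\phi_\delta(x)),e)\leq 2C_0\mu^s_+(\Psi)^k$ carries no information about $\phi_\delta(x)$ itself, and backward iteration is unavailable because $\Psi^{-k}(\gamma)\notin\Gamma$ in general (this is precisely the non-invertible difficulty). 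Your closing sentence, that the $u$-ideal hypothesis lets $\phi_\gamma$ ``descend to $\widetilde{\mathcal{F}}^s(e)$'' where one ``iterates against the contraction,'' is not an argument: iterating $F$ changes $\gamma$ into $\Psi^k(\gamma)$, so there is no fixed cohomological equation to squeeze. What the paper actually needs here is quantitative: Lemmas \ref{lem: LuLu} and \ref{lem: growth} give, from the $u$-ideal condition, the polynomial lower bound $d(\gamma^k,\widetilde{\mathcal{L}}^u(e))\gtrsim k^{1/s}$ (Lemma \ref{lem: geometric}), which is played against an at-most-linear drift $k\varepsilon+C_0$ to prove first that $\widetilde{\mathcal{F}}^u(e)\cap\Gamma\subseteq\widetilde{\mathcal{L}}^u(e)$ and that $H(x\gamma)=H(x)\gamma$ for those special $\gamma$ (Lemma \ref{lem: Fu bigcap Gamma}), and then, crucially, the \emph{uniform continuity} of the leafwise-defined map $H'(x)=H(x\gamma^{-1})\gamma$ on the dense set $\widetilde{\mathcal{F}}^u(\Gamma)$ (Lemma \ref{lem: uniform continuity of H'}); only after extending $H'$ and invoking the uniqueness clause of Lemma \ref{lem: conjugate} does one get $H'=H$, i.e.\ $\phi_\gamma\equiv e$ (Corollary \ref{cor: H' = H}). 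Nothing in your sketch substitutes for these steps, so as it stands Part (2) is incomplete at its decisive point.
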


Note that, $f$ is special if and only if $\widetilde{\mathcal{F}}^u$ is right-$\Gamma$-invariant. Recall that $u$-ideal means that $\mathfrak{n}^u$ is an ideal, or equivalently, $[\mathfrak{n}^s, \mathfrak{n}^u] \subseteq\mathfrak{n}^u$. Anosov maps on tori, and Anosov maps on nilmanifolds with one-dimensional stable bundle, these two cases satisfy this condition, see Remark \ref{rmk: codimension one to u-ideal}.

\begin{proposition}\label{prop: unique H}
	If $f$ is topologically conjugate to $\Psi$, then $H$ commutes with $\Gamma$ and is projected to a conjugacy $h$ between $f$ and $\Psi$.
\end{proposition}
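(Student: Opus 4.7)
My plan is to exploit the uniqueness of $H$ from Lemma \ref{lem: conjugate}: $H$ is the unique continuous map $N\to N$ with $\Psi\circ H=H\circ F$ and $d(H,\text{\rm Id}_N)<\infty$. So I want to manufacture, out of the conjugacy $h$, a map on $N$ that satisfies both properties and whose construction manifestly commutes with $\Gamma$; once that map is identified with $H$, the conclusion follows.

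First, pick any lift $\widetilde h:N\to N$ of $h$. Both $\widetilde h\circ F$ and $\Psi\circ\widetilde h$ lift the same map $h\circ f=\Psi\circ h:M\to M$, so they differ by a deck transformation on the right: there is some $\gamma_0\in\Gamma$ with
$$\widetilde h\circ F = R_{\gamma_0}\circ\Psi\circ\widetilde h.$$
The operator $T:=R_{\gamma_0}\circ\Psi$ is an affine perturbation of $\Psi$. I would invoke the right-translation analogue of Corollary \ref{cor: affine conjugate auto}: applying Lemma \ref{lem: affine conjugate auto} to $\eta(x)=\Psi(x)x^{-1}$ yields a unique $u\in N$ with $\eta(u)=\gamma_0$, and a direct computation gives $R_u\circ T\circ R_u^{-1}=\Psi$. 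Setting $\widetilde H:=R_u\circ\widetilde h$ therefore produces $\widetilde H\circ F=\Psi\circ\widetilde H$, so the first defining property of $H$ is in hand.

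Next I would show $d(\widetilde H,\text{\rm Id}_N)<\infty$. By right-invariance of $d$, this amounts to bounding $\widetilde h(x)\,u\,x^{-1}$ over $x\in N$. Since $\widetilde h$ is a lift of the homeomorphism $h$ of the compact nilmanifold $M$, the displacement $\widetilde h(x)x^{-1}$ is essentially $\Gamma$-invariant (up to the induced automorphism $h_*$) and hence controlled on a fundamental domain; post-multiplying by the fixed element $u$ preserves this bound. Combined with uniqueness in Lemma \ref{lem: conjugate}, this forces $\widetilde H=H$. Since $\widetilde h$ is, by construction, equivariant under $\Gamma$, the equation $H=R_u\circ\widetilde h$ lets us transport $\Gamma$-equivariance to $H$, giving $H(x\gamma)=H(x)\gamma$ for every $\gamma\in\Gamma$. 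Then $H$ descends to a homeomorphism $h_0:M\to M$ with $h_0\circ f=\Psi\circ h_0$, as desired.

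The main obstacle is verifying the boundedness $d(\widetilde H,\text{\rm Id}_N)<\infty$ simultaneously with the compatibility of the right-translation $R_u$ and the $\Gamma$-action. In general the induced automorphism $h_*:\Gamma\to\Gamma$ need not be the identity, and the element $u\in N$ solving $\eta(u)=\gamma_0$ need not lie in $Z(N)$; both issues must be reconciled with the nilpotent structure. I expect this step to require a preparatory normalization of the conjugacy $h$ (replacing $h$ by $g\circ h$ for a suitable element $g$ centralizing $\Psi$, so as to arrange $h_*$ to be conjugation by $u$ on $\Gamma$), and in the general nilmanifold case possibly an induction along the lower central series $\{N_i\}$, using the torus result as the base case on the horizontal quotient $M_1=N/N_2\Gamma$.
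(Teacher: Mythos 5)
Your first two steps coincide with the paper's: lift $h$ to $\widetilde h$, observe $\widetilde h\circ F=R_{\gamma_0}\circ\Psi\circ\widetilde h$ for some $\gamma_0\in\Gamma$, and use Lemma \ref{lem: affine conjugate auto} to find $u$ with $\Psi(u)u^{-1}=\gamma_0$ so that $R_u\circ\widetilde h$ intertwines $F$ and $\Psi$. The gap is in the boundedness step, and it is exactly the issue you flag at the end but do not resolve. Writing $A=h_*$ for the linear part of $\widetilde h$, one has $\widetilde h(x\gamma)=\widetilde h(x)A(\gamma)$, hence the displacement satisfies $\widetilde h(x\gamma)(x\gamma)^{-1}=\widetilde h(x)A(\gamma)\gamma^{-1}x^{-1}$; this is \emph{not} right-$\Gamma$-invariant, and $d(R_u\circ\widetilde h,\mathrm{Id}_N)$ is infinite whenever $Ad_u\circ A\neq \mathrm{Id}$. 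A conjugacy $h$ between $f$ and $\Psi$ is only forced to have $A$ commuting with $\Psi$ on $\Gamma$, not $A=\mathrm{Id}$ (compose any conjugacy with an affine map commuting with $\Psi$ to see this), so your assertions that the displacement is ``essentially $\Gamma$-invariant'' and that ``$\widetilde h$ is, by construction, equivariant under $\Gamma$'' are false in general, and the identification $R_u\circ\widetilde h=H$ fails.

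The repair is lighter than the ``preparatory normalization of $h$'' or lower-central-series induction you anticipate. The paper keeps your map but post-composes with the automorphism $(Ad_u\circ A)^{-1}$ of $N$: setting $H_0:=(Ad_u\circ A)^{-1}\circ R_u\circ\widetilde h$, the computation $R_u\widetilde h(x\gamma)=(R_u\widetilde h(x))\cdot Ad_u(A(\gamma))$ shows $H_0(x\gamma)=H_0(x)\gamma$, so $H_0$ is genuinely $\Gamma$-equivariant and hence $d(H_0,\mathrm{Id}_N)<+\infty$ by compactness of $M$ and right-invariance of $d$. One must then check that this post-composition does not destroy the semiconjugacy: evaluating $R_u\circ\widetilde h\circ F=\Psi\circ R_u\circ\widetilde h$ on $x\gamma$ gives $Ad_u\circ A\circ\Psi=\Psi\circ Ad_u\circ A$ on $\Gamma$, which extends to all of $N$ by the unique extension of monomorphisms of $\Gamma$ to automorphisms of $N$; hence $H_0\circ F=\Psi\circ H_0$. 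Uniqueness in Lemma \ref{lem: conjugate} then yields $H_0=H$, so $H$ commutes with $\Gamma$ and descends. Your outline is missing precisely this correcting automorphism and the verification that it commutes with $\Psi$.
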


\begin{proof}
	Assume that $f$ is topologically conjugate to its linear part $\Psi$ via some homeomorphism $\widehat{h}$. Take a lift $\widehat{H}$ of $\widehat{h}$, then there exists $\gamma_0\in\Gamma$ such that $\widehat{H}\circ F = R_{\gamma_0}\circ\Psi\circ \widehat{H}$. By Lemma \ref{lem: affine conjugate auto}, there exists $n\in N$ such that $R_n\circ\widehat{H}\circ F = \Psi\circ R_n\circ\widehat{H}$. Specifically, $\Psi(n)n^{-1} = \gamma_0$.
	
	Let $A$ be the linear part of $\widehat{H}$, since $\widehat{H}$ is a lift of $\widehat{h}$. Then $R_n\circ\widehat{H}(x\gamma) = \widehat{H}(x)A(\gamma)n = (R_n\circ\widehat{H}(x))\cdot (Ad_n\circ A(\gamma))$ . It follows that $H_0 := (Ad_n\circ A)^{-1}\circ R_n\circ\widehat{H}$ satisfies $H_0(x\gamma) = H_0(x)\gamma$, $\forall x\in N$, $\gamma\in\Gamma$, which leads to $d(H_0, \text{\rm Id}_N) < +\infty$. Moreover, by calculating $R_n\circ\widehat{H}\circ F(x\gamma) = \Psi\circ R_n\circ\widehat{H}(x\gamma)$, one has that $Ad_n\circ A\circ\Psi(\gamma) = \Psi\circ Ad_n\circ A(\gamma)$, $\forall\gamma\in\Gamma$. Since a monomorphism of $\Gamma$ is uniquely extended to an automorphism of $N$, it follows that $Ad_n\circ A$ commutes with $\Psi$, and hence $H_0\circ F = \Psi\circ H_0$. By uniqueness of Lemma \ref{lem: conjugate}, $H_0 = H$, and thus $H(x\gamma) = H(x)\gamma$.
\end{proof}

\begin{proof}[Proof of Theorem \ref{thm: special and conjugacy}]
	The first item of Theorem \ref{thm: special and conjugacy} is a direct corollary of Proposition \ref{prop: unique H}. Indeed,  when $f$ is conjugate to $\Psi$,
    \[\widetilde{\mathcal{F}}^u(x\gamma) = H^{-1}(\widetilde{\mathcal{L}}^u(x\gamma)) = H^{-1}(\widetilde{\mathcal{L}}^u(x)\gamma) = H^{-1}(\widetilde{\mathcal{L}}^u(x))\gamma = \widetilde{\mathcal{F}}^u(x)\gamma, \]
	hence $f$ is special.
	
	The proof of the second item of Theorem \ref{thm: special and conjugacy} is more complicated and we divided it into several lemmas.
	
	\begin{lemma}\label{lem: LuLu}
		If $\ [\mathfrak{n}^s, \mathfrak{n}^u] \subseteq\mathfrak{n}^u$, then $d(z, \widetilde{\mathcal{L}}^u(y)) = d(x, \widetilde{\mathcal{L}}^u(y))$, $\forall x$, $y\in N$, $z\in\widetilde{\mathcal{L}}^u(x)$.
	\end{lemma}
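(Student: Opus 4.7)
The plan is to reduce the statement to an easy computation by first upgrading the $u$-ideal hypothesis to a genuine normality statement at the group level, and then using right-invariance of $d$. Since $\mathfrak{n}^u$ is a subalgebra, $[\mathfrak{n}^u,\mathfrak{n}^u]\subseteq \mathfrak{n}^u$, and the hypothesis $[\mathfrak{n}^s,\mathfrak{n}^u]\subseteq \mathfrak{n}^u$ combined with $\mathfrak{n}=\mathfrak{n}^s\oplus\mathfrak{n}^u$ gives $[\mathfrak{n},\mathfrak{n}^u]\subseteq \mathfrak{n}^u$. So $\mathfrak{n}^u$ is an ideal of $\mathfrak{n}$. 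Because $N$ is simply connected nilpotent and $\exp$ is a diffeomorphism, this means $U:=\exp\mathfrak{n}^u$ is a closed connected normal subgroup of $N$. Consequently the unstable leaves, originally described as left cosets $\widetilde{\mathcal{L}}^u(n)=Un$, can equally well be written as right cosets $nU$.

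With that, the rest is bookkeeping. Given $z\in\widetilde{\mathcal{L}}^u(x)$, write $z=xu$ for some $u\in U$. Using the right-invariance of $d$, I would compute
\begin{align*}
d(z,\widetilde{\mathcal{L}}^u(y))
&=\inf_{v\in U}d(xu,yv)
=\inf_{v\in U}d(xuv^{-1},y)
=\inf_{w\in U}d(xw,y),
\end{align*}
since $w=uv^{-1}$ runs over $U$ as $v$ does. An entirely parallel computation, starting from $z=x$ (i.e.\ $u=e$), gives
\begin{align*}
d(x,\widetilde{\mathcal{L}}^u(y))
=\inf_{v\in U}d(x,yv)
=\inf_{v\in U}d(xv^{-1},y)
=\inf_{w\in U}d(xw,y).
\end{align*}
The two expressions coincide, which is the claim.

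There is really no obstacle beyond observing the normality of $U$; the whole point of the lemma is that, under the $u$-ideal condition, the unstable leaves behave like cosets of a normal subgroup, and the right-invariant metric on $N$ therefore sees two unstable leaves as being at a well-defined ``leaf-to-leaf'' distance independent of base points. I would also remark that this is exactly where the $u$-ideal assumption is essential: without it, the two sets $Un$ and $nU$ need not agree, and Example~\ref{example: holonomy unbounded} from the introduction shows the distance $d(z,\widetilde{\mathcal{L}}^u(y))$ can indeed vary with $z\in\widetilde{\mathcal{L}}^u(x)$.
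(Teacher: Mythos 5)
Your proof is correct. The hypothesis $[\mathfrak{n}^s,\mathfrak{n}^u]\subseteq\mathfrak{n}^u$ together with $\mathfrak{n}=\mathfrak{n}^s\oplus\mathfrak{n}^u$ does make $\mathfrak{n}^u$ an ideal, so $U=\exp\mathfrak{n}^u$ is a closed normal subgroup and each unstable leaf $\widetilde{\mathcal{L}}^u(n)=Un$ coincides with $nU$; from there your two chains of equalities, using only right-invariance of $d$ and the substitution $w=uv^{-1}$, identify both $d(z,\widetilde{\mathcal{L}}^u(y))$ and $d(x,\widetilde{\mathcal{L}}^u(y))$ with the common quantity $\inf_{w\in U}d(xw,y)$. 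The paper relies on the same underlying fact but packages it differently: it first reduces to $y=e$ by right-invariance, replaces $x$ by $\beta(e,x)\in\widetilde{\mathcal{L}}^s(e)$ using the product structure, writes $z=wx$ with $w\in\widetilde{\mathcal{L}}^u(e)$, and then uses the group commutator identity $wx=x[x^{-1},w]w$ together with $[\widetilde{\mathcal{L}}^u(e),\widetilde{\mathcal{L}}^s(e)]\subseteq\widetilde{\mathcal{L}}^u(e)$ (again normality of $U$ in disguise) to shift the leaf back to $\widetilde{\mathcal{L}}^u(e)$. Your route avoids both the $\beta$-reduction and the commutator manipulation and is arguably cleaner, making explicit that the ``leaf-to-leaf'' distance is symmetric and base-point free; the paper's version stays closer to the form of the hypothesis as literally stated ($[\mathfrak{n}^s,\mathfrak{n}^u]\subseteq\mathfrak{n}^u$) and to the stable/unstable coordinates it uses elsewhere. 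One terminological nit: $Un$ is a right coset of $U$, not a left coset, though nothing in your argument depends on this. Your closing remark about Example \ref{example: holonomy unbounded} is also apt: it is exactly the failure of normality there that lets $d(z,\widetilde{\mathcal{L}}^u(e))$ blow up along a single unstable leaf.
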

	
	\begin{proof}[Proof of Lemma \ref{lem: LuLu}]
		Since $d$ and $\widetilde{\mathcal{L}}^u$ are right-invariant, we may assume that $y = e$. Since $\widetilde{\mathcal{L}}^u(x) = \widetilde{\mathcal{L}}^u (\beta(e, x))$, we may replace $x$ by $\beta(e, x)$ and assume at first that $x\in \widetilde{\mathcal{L}}^s(e)$.
		
		Now $\widetilde{\mathcal{L}}^u(x) = \widetilde{\mathcal{L}}^u(e)x$, any $z\in\widetilde{\mathcal{L}}^u(x)$ has the form of $wx$ where $w\in\widetilde{\mathcal{L}}^u(e)$. Therefore,
		\begin{align*}
            d(z, \widetilde{\mathcal{L}}^u(e)) &= d(wx, \widetilde{\mathcal{L}}^u(e)) = d(x[x^{-1}, w]w, \widetilde{\mathcal{L}}^u(e))\\
            &= d(x, \widetilde{\mathcal{L}}^u(w^{-1}[x^{-1}, w]^{-1})) = d(x, \widetilde{\mathcal{L}}^u(e)).
        \end{align*}
		The last equality holds because
        $$w^{-1}[x^{-1}, w]^{-1} = w^{-1}[w, x^{-1}]\in\widetilde{\mathcal{L}}^u(e)[\widetilde{\mathcal{L}}^u(e), \widetilde{\mathcal{L}}^s(e)] \subseteq \widetilde{\mathcal{L}}^u(e).$$
        The proof of Lemma \ref{lem: LuLu} is completed.
	\end{proof}
	
	\begin{lemma}\label{lem: growth}
		Let $K$ be a compact subset of $\ \mathfrak{n}^s$ away from zero, $d(t) :=\inf\{d(\exp tX, \widetilde{\mathcal{L}}^u(e)): X\in K\}$. If $[\mathfrak{n}^s, \mathfrak{n}^u] \subseteq\mathfrak{n}^u$, then there exists $0 < a < 1$ such that $d(t) \geq at^{\frac{1}{s}} - 2$, $\forall t\geq 0$.
	\end{lemma}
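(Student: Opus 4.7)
The plan is to convert the distance bound into an estimate of the quasi-norm $\rho$ via Lemma \ref{lem: rho and d}, and then to exploit the $u$-ideal hypothesis $[\mathfrak{n}^s,\mathfrak{n}^u]\subseteq\mathfrak{n}^u$ through the Baker--Campbell--Hausdorff (BCH) formula so that the problem collapses to a linear separation estimate inside each graded piece $\mathfrak{n}_{(i)}$.

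First I would pick the subspaces $\mathfrak{n}_{(i)}$ defining $q$ to respect the hyperbolic splitting: set $\mathfrak{n}_{(i)} = \mathfrak{n}_{(i)}^s \oplus \mathfrak{n}_{(i)}^u$, where $\mathfrak{n}_{(i)}^\sigma$ is any vector-space complement of $\mathfrak{n}^\sigma\cap\mathfrak{n}_{i+1}$ inside $\mathfrak{n}^\sigma\cap\mathfrak{n}_i$ ($\sigma=s,u$). The identity $\mathfrak{n}_i=(\mathfrak{n}^s\cap\mathfrak{n}_i)\oplus(\mathfrak{n}^u\cap\mathfrak{n}_i)$ recorded in the proof of Proposition \ref{prop: su decomposition} guarantees $\mathfrak{n}_i=\mathfrak{n}_{(i)}\oplus\mathfrak{n}_{i+1}$, so this is a legitimate choice. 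Next, for $X\in K$ and $Y\in\mathfrak{n}^u$, BCH writes $\log(\exp Y\cdot\exp(-tX))$ as $-tX+Y$ plus iterated brackets of $Y$ and $-tX$, each containing at least one factor of $Y$; since $\mathfrak{n}^u$ is an ideal, any nested bracket containing a factor of $Y$ lies in $\mathfrak{n}^u$, so the total equals $-tX+Y''$ for some $Y''\in\mathfrak{n}^u$. Consequently
\[\rho(\exp tX,\exp Y) \;=\; q(-tX+Y'') \;\geq\; \inf_{Y'\in\mathfrak{n}^u} q(-tX+Y').\]

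Now $p_i(tX)=tX_{(i)}\in\mathfrak{n}_{(i)}^s$ and $p_i(Y')\in\mathfrak{n}_{(i)}^u$ live in complementary subspaces of the finite-dimensional $\mathfrak{n}_{(i)}$, so the minimum angles between these complements (minimized over the finite set $i=1,\dots,s$) furnish a single constant $c\in(0,1]$ with $\|-tX_{(i)}+p_i(Y')\|\geq c\,t\,\|X_{(i)}\|$ for every $i$ and every $Y'$. Hence $q(-tX+Y')\geq \max_i(c\,t\,\|X_{(i)}\|)^{1/i}$. Compactness of $K\subseteq\mathfrak{n}^s\setminus\{0\}$ produces a uniform $\eta\in(0,1]$ with $\max_i\|X_{(i)}\|\geq\eta$ for all $X\in K$; selecting such an $i=i(X)\leq s$ and using $t^{1/i}\geq t^{1/s}$ for $t\geq 1$ together with $c^{1/i}\geq c$ and $\eta^{1/i}\geq\eta$ (both since $c,\eta\leq 1$), we obtain $\inf_{Y\in\mathfrak{n}^u}\rho(\exp tX,\exp Y)\geq c\eta\,t^{1/s}$ uniformly over $X\in K$ and $t\geq 1$. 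Lemma \ref{lem: rho and d} then gives $d(\exp tX,\widetilde{\mathcal{L}}^u(e))\geq a\,t^{1/s}-1$ on $t\geq 1$ for a suitable $a\in(0,1)$, while on $t\in[0,1]$ the bound $at^{1/s}-2\leq a-2<0\leq d(t)$ is automatic. The main delicacy is the BCH step: one has to check carefully that \emph{every} higher-order BCH bracket is trapped inside $\mathfrak{n}^u$, which uses the full strength of $\mathfrak{n}^u$ being an ideal (not just a subalgebra) and is precisely where the hypothesis $[\mathfrak{n}^s,\mathfrak{n}^u]\subseteq\mathfrak{n}^u$ enters.
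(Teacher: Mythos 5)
Your proof is correct and follows essentially the same route as the paper: reduce the distance to the quasi-norm via Lemma \ref{lem: rho and d}, then use the Baker--Campbell--Hausdorff formula together with the $u$-ideal hypothesis to absorb every higher-order bracket (each containing a factor from $\mathfrak{n}^u$) into $\mathfrak{n}^u$, leaving an element of the form $-tX+Y''$ with $Y''\in\mathfrak{n}^u$. The only real difference is the finishing step: the paper factors out $\min\{t,t^{\frac{1}{s}}\}$ using the quasi-homogeneity of $q$ and proves $\inf\{q(X-W): X\in K,\ W\in\mathfrak{n}^u\}>0$ by a compactness contradiction, whereas you adapt the complements $\mathfrak{n}_{(i)}$ to the splitting $\mathfrak{n}^s\oplus\mathfrak{n}^u$ and extract the $t^{\frac{1}{s}}$-growth from transversality (projection-norm) constants in each graded piece together with compactness of $K$ -- an equally valid way to conclude.
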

	
	\begin{proof}[Proof of Lemma \ref{lem: growth}]
		By Lemma \ref{lem: rho and d}, there exists $C > 0$ such that
		\begin{align*}
			d(\exp tX, \exp tY)
			& \geq C^{-1}\rho(\exp tX, \exp tY) - 1\\
			&= C^{-1}\rho((\exp tX)\exp(-tY), e) - 1\\
			&= C^{-1}\rho(\exp Z(t, X, Y), e) - 1\\
			&= C^{-1}q(Z(t, X, Y)) - 1,
		\end{align*}
		where $Z(t, X, Y) = tZ_1 + t^2Z_2 + \cdots + t^sZ_s$, $Z_1 = X - Y \in \mathfrak{n}_1$, $Z_2 = -\frac{1}{2}[X, Y]\in\mathfrak{n}_2$, $\cdots$, $Z_s\in\mathfrak{n}_s$, are decided by the Baker-Campbell-Hausdorff formula $X\odot Y := X + Y + \frac{1}{2}[X, Y] + \cdots$, satisfying $\exp (X\odot Y) = \exp X\exp Y$. Now take $X\in K$ and $Y\in\mathfrak{n}^u$, we have $Z_i\in\mathfrak{n}^u$, $2\leq i \leq s$, and hence
		\begin{align*}
			d(t)
			&\geq C^{-1}\inf\{q(Z(t, X, Y)): X\in K, Y\in\mathfrak{n}^u\} - 1\\
			&\geq C^{-1}\min\{t, t^{\frac{1}{s}}\}\inf\{q(Z_1 + tZ_2 + \cdots + t^{s - 1}Z_s): X\in K, Y\in\mathfrak{n}^u\} - 1\\
			&\geq C^{-1}\min\{t, t^{\frac{1}{s}}\}\inf\{q(X - W): X\in K, W\in\mathfrak{n}^u\} - 1\\
			&\geq a\min\{t, t^{\frac{1}{s}}\} - 1
		\end{align*}
		for some $0 < a < 1$. The last step holds because of the following argument. Assume for contradiction that there exist $X_k \in K$ and $W_k\in\mathfrak{n}^u$ such that $q(X_k - W_k) \to 0$. Since $K$ is compact and hence $\{W_k\}$ is bounded, by taking a subsequence, assume that $X_k \to X_0\in K$, $W_k \to W_0\in\mathfrak{n}^u$. Then $X_0 = W_0 \in K\bigcap\mathfrak{n}^u$, which is a contradiction.
	\end{proof}
	
	\begin{lemma}\label{lem: geometric}
		If $f$ is special and $\Psi$ is $u$-ideal, then for any $\varepsilon > 0$, there exists $\delta > 0$ such that the followings hold.
		\begin{enumerate}
			\item $x\in\widetilde{\mathcal{L}}^s(y)$ and $d(x, y) > \varepsilon$ implies that $d(\widetilde{\mathcal{L}}^u(x), \widetilde{\mathcal{L}}^u(y)) > \delta$.
			\item $d(x, \widetilde{\mathcal{L}}^u(e)) > \varepsilon$ implies that $d(x^k, \widetilde{\mathcal{L}}^u(e)) > k^{\frac{1}{s}}\delta$, $\forall k \geq 1$.
			\item $d(x, y) < \delta$ implies that $\widetilde{\mathcal{F}}^u(x) \subseteq B_{\varepsilon}(\widetilde{\mathcal{F}}^u(y))$, here $B_\varepsilon(S) := \bigcup_{x\in S}B_\varepsilon(x)$.
		\end{enumerate}
	\end{lemma}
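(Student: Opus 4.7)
The plan is to prove the three items in order, using Lemma \ref{lem: LuLu} (the $u$-ideal consequence), the polynomial growth estimate Lemma \ref{lem: growth}, and, in the last item, the conjugacy $H$ from Lemma \ref{lem: conjugate}.

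For item 1, I right-translate by $y^{-1}$ to reduce to $y = e$ and $x \in \widetilde{\mathcal{L}}^s(e)$. Lemma \ref{lem: LuLu} then identifies $d(\widetilde{\mathcal{L}}^u(x), \widetilde{\mathcal{L}}^u(e))$ with $d(x, \widetilde{\mathcal{L}}^u(e))$. Writing $x = \exp(tU)$ for $U$ a unit vector in $\mathfrak{n}^s$ and $t \geq 0$, the quasi-isometry of $\widetilde{\mathcal{L}}^s$ converts $d(x, e) > \varepsilon$ into $t \geq t_1(\varepsilon) > 0$. On the compact slab $\{(U, t) : \|U\| = 1,\ t_1 \leq t \leq T\}$ the continuous function $(U, t) \mapsto d(\exp(tU), \widetilde{\mathcal{L}}^u(e))$ is strictly positive since $\widetilde{\mathcal{L}}^s(e) \cap \widetilde{\mathcal{L}}^u(e) = \{e\}$ (Proposition \ref{prop: su decomposition}), so it attains a positive minimum $m_T$. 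For $t \geq T$, Lemma \ref{lem: growth} with $K$ the unit sphere in $\mathfrak{n}^s$ gives $d(\exp(tU), \widetilde{\mathcal{L}}^u(e)) \geq at^{1/s} - 2 \geq m_T$ once $T$ is chosen large, and $\delta = m_T$ settles the item.

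For item 2, I again reduce to $x = \exp(tU) \in \widetilde{\mathcal{L}}^s(e)$ via Lemma \ref{lem: LuLu}, and item 1's analysis forces $t \geq t_0(\varepsilon) > 0$. Since $x$ lies on a one-parameter subgroup, $x^k = \exp(ktU)$, so Lemma \ref{lem: growth} yields $d(x^k, \widetilde{\mathcal{L}}^u(e)) \geq a t_0^{1/s} k^{1/s} - 2$. This exceeds $\tfrac{1}{2} a t_0^{1/s} k^{1/s}$ once $k$ is above a threshold $k_0$ depending on $\varepsilon$; for $k \in [1, k_0]$, the compactness argument from item 1 supplies a uniform positive lower bound $m'$ on $d(x^k, \widetilde{\mathcal{L}}^u(e))$, and shrinking $\delta$ below $m'/k_0^{1/s}$ ensures $d(x^k, \widetilde{\mathcal{L}}^u(e)) > k^{1/s} \delta$ in both regimes.

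For item 3, I invoke the leaf conjugacy $H$. Any $p \in \widetilde{\mathcal{F}}^u(x)$ maps to $H(p) \in \widetilde{\mathcal{L}}^u(H(x))$, and Lemma \ref{lem: LuLu} then gives $d(H(p), \widetilde{\mathcal{L}}^u(H(y))) = d(H(x), \widetilde{\mathcal{L}}^u(H(y))) \leq d(H(x), H(y))$. Uniform continuity of $H$ makes this last quantity arbitrarily small whenever $d(x, y) < \delta$, so I can pick $q \in \widetilde{\mathcal{L}}^u(H(y))$ with $d(H(p), q)$ small; then $H^{-1}(q) \in \widetilde{\mathcal{F}}^u(y)$ and uniform continuity of $H^{-1}$ gives $d(p, H^{-1}(q)) < \varepsilon$, yielding the inclusion. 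The main obstacle is item 2: matching the two regimes (growth-driven for large $k$, compactness-driven for bounded $k$) so that a single $\delta$ enforces $d(x^k, \widetilde{\mathcal{L}}^u(e)) > k^{1/s} \delta$ for every $k \geq 1$ uniformly in the direction $U$.
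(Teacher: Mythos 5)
Your items 1 and 3 are correct and use the same ingredients as the paper (Lemma \ref{lem: LuLu} plus Lemma \ref{lem: growth} with a compactness argument near $e$ for item 1, written directly rather than by contradiction; item 3 is essentially verbatim the paper's argument with $H$, $H^{-1}$ and Lemma \ref{lem: LuLu}). Two small repairs there: in this subsection the stable bundle is not assumed one-dimensional, and the paper only proves quasi-isometry of $\widetilde{\mathcal{L}}^s$ in the one-dimensional case, so do not invoke it — continuity of $\exp$ on the compact set $\{tU:\ 0\le t\le t_1,\ \|U\|=1\}$ already gives $d(x,e)>\varepsilon\Rightarrow t\ge t_1(\varepsilon)$; and the choice ``$\delta=m_T$ once $T$ is large'' is circular as stated ($m_T$ shrinks as $T$ grows), so take instead $\delta=\min\{m_T,\,aT^{1/s}-2\}$ for any $T$ with $aT^{1/s}-2>0$.

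The genuine gap is the opening reduction in item 2. The hypothesis concerns an arbitrary $x\in N$ while the conclusion concerns its powers $x^k$, and Lemma \ref{lem: LuLu} only says that $d(\cdot,\widetilde{\mathcal{L}}^u(e))$ is constant along each unstable leaf; it says nothing about powers. To replace $x$ by its stable part $x^s$ from Proposition \ref{prop: su decomposition} you must know that $x^k$ and $(x^s)^k$ lie on the \emph{same} unstable leaf for every $k\ge 1$, and this is precisely where the $u$-ideal hypothesis does real work: the paper spends the first half of its item-2 proof on a Baker--Campbell--Hausdorff computation showing $d(x^k,\widetilde{\mathcal{L}}^u(e))=d((x^s)^k,\widetilde{\mathcal{L}}^u(e))$. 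A quick fix in your framework: since $[\mathfrak{n}^s,\mathfrak{n}^u]\subseteq\mathfrak{n}^u$, the subgroup $\exp\mathfrak{n}^u$ is normal, so $x$ and $x^s$ have the same image in $N/\exp\mathfrak{n}^u$, hence so do $x^k$ and $(x^s)^k$, i.e. $x^k\in\widetilde{\mathcal{L}}^u((x^s)^k)$, and only then does Lemma \ref{lem: LuLu} give the reduction. Without some such argument the step ``reduce to $x\in\widetilde{\mathcal{L}}^s(e)$ via Lemma \ref{lem: LuLu}'' is unjustified, and it is exactly the part that fails when $\mathfrak{n}^u$ is not an ideal (compare Example \ref{example: holonomy unbounded}). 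By contrast, the two-regime matching you single out as the main obstacle is routine, and your handling of it (growth estimate for $k>k_0$, the uniform bound from item 1 for $k\le k_0$, then $\delta<\min\{m'/k_0^{1/s},\tfrac12 a t_0^{1/s}\}$) is correct once the reduction is in place.
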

	
	\begin{proof}[Proof of Lemma \ref{lem: geometric}]
		For 1, by right-invariance of $d$ and $\widetilde{\mathcal{L}}^\sigma$, $\sigma = s$, $u$, we may assume that $y = e$. Assume for contradiction that there exists a sequence $x_k\in\widetilde{\mathcal{L}}^s(e)$ satisfying $d(x_k, e) > \varepsilon$ and $d(\widetilde{\mathcal{L}}^u(x_k), \widetilde{\mathcal{L}}^u(e)) \to 0$. By Lemma \ref{lem: LuLu}, $d(x_k, \widetilde{\mathcal{L}}^u(e)) \to 0$. Assume that $x_k = \exp t_kX_k$, $X_k\in\mathfrak{n}^s$, $\left\|X_k\right\| = 1$, $t_k > 0$. Then $t_k\to +\infty$ because any convergent subsequence of $\{x_k\}$ converges to $\widetilde{\mathcal{L}}^s(e)\bigcap\widetilde{\mathcal{L}}^u(e) = \{e\}$, which causes contradiction. Let $K$ be the unit sphere of $\mathfrak{n}^s$. By Lemma \ref{lem: growth}, $d(\exp t_kX_k, \widetilde{\mathcal{L}}^u(e)) \geq at_k^{\frac{1}{s}} - 2 \to +\infty$ as $k \to +\infty$, leading to a contradiction again.
		
		For 2, first notice that we have a decomposition $x = x^sx^u$ by Proposition \ref{prop: su decomposition}, and if $x = \exp X$, $x^s = \exp Y$, $x^u = \exp Z$, then we have $X = Y\odot Z := Y + Z + \frac{1}{2}[Y, Z] + \cdots$ given by Baker-Campbell-Hausdorff formula. Moreover,
		\begin{align*}
			d(x^k, \widetilde{\mathcal{L}}^u(e))
			&= d(\exp kX, \widetilde{\mathcal{L}}^u(e))
			= d(\exp k(Y\odot Z), \widetilde{\mathcal{L}}^u(e))\\
			&= d(\exp (kY + W_k), \widetilde{\mathcal{L}}^u(e))
			= d(\exp kY \exp ((-kY)\odot (kY + W_k)), \widetilde{\mathcal{L}}^u(e))\\
			&= d(\exp kY \exp (W_k + W'_k), \widetilde{\mathcal{L}}^u(e))= d(\exp kY, \widetilde{\mathcal{L}}^u(e))= d((x^s)^k, \widetilde{\mathcal{L}}^u(e)),
		\end{align*}
		where $W_k \in \mathfrak{n}^u + [\mathfrak{n}^s, \mathfrak{n}^u] \subseteq\mathfrak{n}^u$ and $W'_k\in[\mathfrak{n}^s, \mathfrak{n}^u] \subseteq\mathfrak{n}^u$. Therefore, we only need to consider $x\in\widetilde{\mathcal{L}}^s(e)$ and $d(x, \widetilde{\mathcal{L}}^u(e)) > \varepsilon$. Assume for contradiction that there exist $x_m = \exp t_mX_m \in\widetilde{\mathcal{L}}^s(e)$, $d(x_m, \widetilde{\mathcal{L}}^u(e)) > \varepsilon$, $\left\|X_m\right\| = 1$, $t_m > 0$, and $k_m \geq 1$, such that $k_m^{-\frac{1}{s}}d(\exp k_mt_mX_m, \widetilde{\mathcal{L}}^u(e)) \to 0$ as $m \to +\infty$. By lemma \ref{lem: growth},
		$$k_m^{-\frac{1}{s}}d(\exp k_mt_mX_m, \widetilde{\mathcal{L}}^u(e)) \geq at_m^{\frac{1}{s}} - 2k_m^{-\frac{1}{s}}.$$
		If $k_m$ is unbounded, then taking a subsequence leads to a contradiction since $t_m$ has a positive lower bound. If $t_m$ is unbounded, this also leads to a contradiction. Assume that $k_m$ and $t_m$ are both bounded. By taking a subsequence, assume that $k_m = k_0$, $t_m \to t_0 > 0$ and $X_m \to X_0$. Then $\exp k_0t_0X_0 \in \widetilde{\mathcal{L}}^u(e)$, which contradicts with $X_0\in\mathfrak{n}^s$, $\left\|X_0\right\| = 1$.
		
		For 3, notice that $\widetilde{\mathcal{F}}^u(x) \subseteq B_{\varepsilon}(\widetilde{\mathcal{F}}^u(y))$ is equivalent to $\sup\{d(z, \widetilde{\mathcal{F}}^u(y)): z\in \widetilde{\mathcal{F}}^u(x)\} < \varepsilon$. Since $H(\widetilde{\mathcal{F}}^u(x)) = \widetilde{\mathcal{L}}^u(H(x))$, by uniform continuity of $H^{-1}$, there exists $\varepsilon' > 0$ such that $\sup\{d(z, \widetilde{\mathcal{L}}^u(H(y))): z\in \widetilde{\mathcal{L}}^u(H(x))\} < \varepsilon'$ is sufficient for the conclusion. By Lemma \ref{lem: LuLu}, $d(H(x), \widetilde{\mathcal{L}}^u(H(y))) < \varepsilon'$ will suffice. By uniform continuity of $H$, there exists $\delta > 0$ such that $d(x, y) < \delta$ implies that $d(H(x), H(y)) < \varepsilon'$, and hence $d(H(x), \widetilde{\mathcal{L}}^u(H(y))) < \varepsilon'$.
	\end{proof}

	\begin{lemma}\label{lem: Fu bigcap Gamma}
		\rm If $f$ is special and $\Psi$ is $u$-ideal, then the followings hold.
		\begin{enumerate}
			\item $\widetilde{\mathcal{L}}^u(e)\bigcap\Gamma$ and $\widetilde{\mathcal{F}}^u(e)\bigcap\Gamma$ are both subgroups;
			\item $\widetilde{\mathcal{F}}^u(e)\bigcap\Gamma\subseteq \widetilde{\mathcal{L}}^u(e)\bigcap\Gamma$.
			\item For $x\in\widetilde{\mathcal{F}}^u(e)$ and $\gamma\in\widetilde{\mathcal{F}}^u(e)\bigcap\Gamma$, we have $H(x\gamma) = H(x)\gamma$.
		\end{enumerate}
	\end{lemma}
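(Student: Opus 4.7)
The plan is to prove the three items in order, using the preceding structural lemmas; items 1 and 3 reduce to short algebraic manipulations once the global product structure and the leaf-conjugacy properties of $H$ are in hand, while item 2 is the real content and will be the main obstacle, relying crucially on the polynomial escape estimate of Lemma \ref{lem: geometric}(2).

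For item 1, note that $\mathfrak{n}^u$ is a Lie subalgebra (as recorded after Definition \ref{def: hyperbolic}), so $\widetilde{\mathcal{L}}^u(e) = \exp \mathfrak{n}^u$ is a Lie subgroup of $N$; hence $\widetilde{\mathcal{L}}^u(e)\cap\Gamma$ is automatically a subgroup of $\Gamma$, with no use of the $u$-ideal hypothesis here. For $\widetilde{\mathcal{F}}^u(e)\cap\Gamma$ I would use that $f$ is special, so $\widetilde{\mathcal{F}}^u$ is right-$\Gamma$-invariant. Given $\gamma_1,\gamma_2\in \widetilde{\mathcal{F}}^u(e)\cap\Gamma$, I would compute $\widetilde{\mathcal{F}}^u(e)\gamma_1 = \widetilde{\mathcal{F}}^u(\gamma_1) = \widetilde{\mathcal{F}}^u(e)$, from which $\gamma_1\gamma_2\in \widetilde{\mathcal{F}}^u(e)\cap\Gamma$; inversion follows symmetrically since $e\in\widetilde{\mathcal{F}}^u(\gamma_1)=\widetilde{\mathcal{F}}^u(e)\gamma_1$ forces $\gamma_1^{-1}\in\widetilde{\mathcal{F}}^u(e)$.

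For item 2, I would argue by contradiction. Fix $\gamma\in\widetilde{\mathcal{F}}^u(e)\cap\Gamma$; by item 1, $\gamma^k\in\widetilde{\mathcal{F}}^u(e)$ for every $k\geq 0$. Since $H$ is a leaf conjugacy and $H(e)=e$, we have $H(\gamma^k)\in\widetilde{\mathcal{L}}^u(e)$, and the uniform bound $d(H,\mathrm{Id}_N)\leq C_0$ gives the fundamental estimate
\[ d(\gamma^k,\widetilde{\mathcal{L}}^u(e))\leq C_0\quad \text{for all } k\geq 0. \]
Now suppose for contradiction that $\gamma\notin\widetilde{\mathcal{L}}^u(e)$. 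Because $\widetilde{\mathcal{L}}^u(e)$ is closed we get $d(\gamma,\widetilde{\mathcal{L}}^u(e))>0$, and applying Lemma \ref{lem: geometric}(2) with $\varepsilon$ equal to half this distance yields $d(\gamma^k,\widetilde{\mathcal{L}}^u(e))\geq k^{1/s}\delta\to\infty$, contradicting the uniform bound. Hence $\gamma\in\widetilde{\mathcal{L}}^u(e)$. This step is the heart of the lemma, because the polynomial escape $k^{1/s}$ — which beats the $O(1)$ shadowing distance between $F$ and $\Psi$ — is exactly where the $u$-ideal hypothesis enters (through the Baker--Campbell--Hausdorff computation in Lemma \ref{lem: growth}), and no weaker structural hypothesis on $\mathfrak{n}^u$ would suffice.

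For item 3, let $x\in\widetilde{\mathcal{F}}^u(e)$ and $\gamma\in\widetilde{\mathcal{F}}^u(e)\cap\Gamma$. By item 1 and $f$-speciality, $\widetilde{\mathcal{F}}^u(e)\gamma=\widetilde{\mathcal{F}}^u(e)$, so $x\gamma\in\widetilde{\mathcal{F}}^u(e)$ and therefore $H(x\gamma)\in\widetilde{\mathcal{L}}^u(e)$. On the other hand $H(x)\in\widetilde{\mathcal{L}}^u(e)$, and by item 2, $\gamma\in\widetilde{\mathcal{L}}^u(e)\cap\Gamma$, so $H(x)\gamma\in\widetilde{\mathcal{L}}^u(e)$ as well. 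Lemma \ref{lem: H and Gamma} gives $H(x\gamma)\gamma^{-1}\in\widetilde{\mathcal{L}}^s(H(x))$, hence $H(x\gamma)\in\widetilde{\mathcal{L}}^s(H(x))\gamma = \widetilde{\mathcal{L}}^s(H(x)\gamma)$ by right-invariance of $\widetilde{\mathcal{L}}^s$. Since $H(x)\gamma$ also lies in $\widetilde{\mathcal{L}}^u(e) = \widetilde{\mathcal{L}}^u(H(x)\gamma)$, the algebraic global product structure (Lemma \ref{lem: global product structure} applied to $\widetilde{\mathcal{L}}^s,\widetilde{\mathcal{L}}^u$, recorded via $\beta$) forces $\widetilde{\mathcal{L}}^s(H(x)\gamma)\cap\widetilde{\mathcal{L}}^u(H(x)\gamma) = \{H(x)\gamma\}$, and I would conclude $H(x\gamma)=H(x)\gamma$, completing the proof.
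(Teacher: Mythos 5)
Your proposal is correct and follows essentially the same route as the paper: item 1 via the subgroup $\exp\mathfrak{n}^u$ and right-$\Gamma$-invariance of $\widetilde{\mathcal{F}}^u$ from speciality, item 2 by pitting the uniform bound $d(\gamma^k,\widetilde{\mathcal{L}}^u(e))\leq C_0$ (from $d(H,\mathrm{Id}_N)\leq C_0$ and the leaf conjugacy) against the polynomial escape of Lemma \ref{lem: geometric}(2), and item 3 via Lemma \ref{lem: H and Gamma} together with the unique intersection of $\widetilde{\mathcal{L}}^s$ and $\widetilde{\mathcal{L}}^u$. The only cosmetic difference is that you locate the intersection point at $H(x)\gamma$ where the paper translates by $\gamma^{-1}$ and works at $H(x)$; the argument is the same.
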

	
	\begin{proof}[Proof of Lemma \ref{lem: Fu bigcap Gamma}]
		For 1, $\widetilde{\mathcal{L}}^u(e) = \exp\mathfrak{n}^u$ and $\mathfrak{n}^u$ is a Lie subalgebra, so $\widetilde{\mathcal{L}}^u(e)$ is a closed Lie subgroup and thus $\widetilde{\mathcal{L}}^u(e)\bigcap\Gamma$ is a subgroup. For $\gamma_1, \gamma_2\in\widetilde{\mathcal{F}}^u(e)\bigcap\Gamma$, we have $$\gamma_1\gamma_2^{-1} \in \widetilde{\mathcal{F}}^u(\gamma_1\gamma_2^{-1}) = \widetilde{\mathcal{F}}^u(\gamma_1)\gamma_2^{-1} = \widetilde{\mathcal{F}}^u(e)\gamma_2^{-1} = \widetilde{\mathcal{F}}^u(\gamma_2)\gamma_2^{-1} = \widetilde{\mathcal{F}}^u(e),$$
		hence $\widetilde{\mathcal{F}}^u(e)\bigcap\Gamma$ is also a subgroup.
		
		For 2, take $\gamma\in \widetilde{\mathcal{F}}^u(e)\bigcap\Gamma$ and we need to show that $\gamma\in\widetilde{\mathcal{L}}^u(e)$. Note that $\gamma^k \in \widetilde{\mathcal{F}}^u(e)\bigcap\Gamma$. Since $d(H, \text{Id}_N) \leq C_0$ and $H(\widetilde{\mathcal{F}}^u(e)) = \widetilde{\mathcal{L}}^u(e)$, we have $d(H(\gamma^k), \gamma^k) \leq C_0$, and consequently, $d(\gamma^k, \widetilde{\mathcal{L}}^u(e)) \leq C_0$. By Lemma \ref{lem: geometric}, $\gamma\in \widetilde{\mathcal{L}}^u(e)$.
		
		For 3, by Lemma \ref{lem: H and Gamma}, $H(x\gamma)\gamma^{-1} \in \widetilde{\mathcal{L}}^s(H(x))$. On the other hand, $x\gamma\in\widetilde{\mathcal{F}}^u(e)\gamma = \widetilde{\mathcal{F}}^u(\gamma) = \widetilde{\mathcal{F}}^u(e)$, so $H(x\gamma)\gamma^{-1}\in H(\widetilde{\mathcal{F}}^u(e))\gamma^{-1} = \widetilde{\mathcal{L}}^u(e)\gamma^{-1}$. Now $\gamma\in\widetilde{\mathcal{F}}^u(e)\bigcap\Gamma \subseteq\widetilde{\mathcal{L}}^u(e)\bigcap\Gamma$ by Lemma \ref{lem: Fu bigcap Gamma}, we have $\widetilde{\mathcal{L}}^u(e) = \widetilde{\mathcal{L}}^u(\gamma)$ and thus $H(x\gamma)\gamma^{-1} \in \widetilde{\mathcal{L}}^u(\gamma)\gamma^{-1} = \widetilde{\mathcal{L}}^u(e) = \widetilde{\mathcal{L}}^u(H(x))$, since $H(x) \in \widetilde{\mathcal{L}}^u(e)$. Therefore, $H(x\gamma)\gamma^{-1} \in \widetilde{\mathcal{L}}^s(H(x))\bigcap\widetilde{\mathcal{L}}^u(H(x)) = \{H(x)\}$.
	\end{proof}
	
	\begin{lemma}\label{lem: H'}
		If $f$ is special and $\Psi$ is $u$-ideal, then $H'(x) := H(x\gamma^{-1})\gamma: \widetilde{\mathcal{F}}^u(\gamma) \to \widetilde{\mathcal{L}}^u(\gamma)$ is well-defined for any fixed $\gamma\in\Gamma$.
	\end{lemma}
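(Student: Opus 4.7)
The plan is to read ``well-defined'' as two distinct assertions: (i) the formula $H'(x) = H(x\gamma^{-1})\gamma$ actually lands in $\widetilde{\mathcal{L}}^u(\gamma)$ whenever $x \in \widetilde{\mathcal{F}}^u(\gamma)$; and (ii) the output depends only on the leaf $\widetilde{\mathcal{F}}^u(\gamma)$ rather than on the particular lattice point $\gamma$ chosen as a representative of that leaf. The second is the substantive point, since it is what will later allow the collection of $H'$'s to be glued into a single map descending to $M$.

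For (i), given $x \in \widetilde{\mathcal{F}}^u(\gamma)$, specialness of $f$ makes $\widetilde{\mathcal{F}}^u$ right-$\Gamma$-invariant, so $x\gamma^{-1} \in \widetilde{\mathcal{F}}^u(e)$. Then $H(x\gamma^{-1}) \in H(\widetilde{\mathcal{F}}^u(e)) = \widetilde{\mathcal{L}}^u(H(e)) = \widetilde{\mathcal{L}}^u(e)$ (using $H(e)=e$, arranged after Lemma \ref{lem: conjugate}), and right-invariance of $\widetilde{\mathcal{L}}^u$ on $N$ yields $H(x\gamma^{-1})\gamma \in \widetilde{\mathcal{L}}^u(e)\gamma = \widetilde{\mathcal{L}}^u(\gamma)$.

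For (ii), suppose $\gamma_1,\gamma_2 \in \Gamma$ satisfy $\widetilde{\mathcal{F}}^u(\gamma_1) = \widetilde{\mathcal{F}}^u(\gamma_2)$. Right-translating by $\gamma_1^{-1}$ and using right-$\Gamma$-invariance of $\widetilde{\mathcal{F}}^u$, we obtain $\eta := \gamma_2\gamma_1^{-1} \in \widetilde{\mathcal{F}}^u(e) \cap \Gamma$. For $x$ on the common leaf set $y := x\gamma_1^{-1} \in \widetilde{\mathcal{F}}^u(e)$, and $x\gamma_2^{-1} = y\eta^{-1}$. Since $\widetilde{\mathcal{F}}^u(e) \cap \Gamma$ is a subgroup by Lemma \ref{lem: Fu bigcap Gamma}(1), $\eta^{-1}$ also lies in it, so Lemma \ref{lem: Fu bigcap Gamma}(3) gives $H(y\eta^{-1}) = H(y)\eta^{-1}$. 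Therefore $H(x\gamma_2^{-1})\gamma_2 = H(y)\eta^{-1}\gamma_2 = H(y)\gamma_1 = H(x\gamma_1^{-1})\gamma_1$, which is exactly the required consistency.

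All the genuinely hard work has already been done in Lemma \ref{lem: Fu bigcap Gamma}: the subgroup structure of $\widetilde{\mathcal{F}}^u(e)\cap\Gamma$ in part (1), and, more crucially, the identity $H(y\eta) = H(y)\eta$ for $\eta \in \widetilde{\mathcal{F}}^u(e) \cap \Gamma$ in part (3), which is where both specialness and the $u$-ideal hypothesis enter (through parts (1)--(2) of the same lemma and Lemma \ref{lem: geometric}). Consequently, I do not anticipate any genuine obstacle here; the lemma is essentially a bookkeeping consequence of what is already in place. The only subtlety is conceptual, namely recognizing that ``well-defined'' should be interpreted as independence of the base-point $\gamma$ on a given unstable leaf, rather than as the trivial codomain check alone.
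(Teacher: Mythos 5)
Your proposal is correct and follows essentially the same route as the paper: a direct codomain check using specialness, $H(\widetilde{\mathcal{F}}^u(e))=\widetilde{\mathcal{L}}^u(e)$ and right-invariance of $\widetilde{\mathcal{L}}^u$, followed by the representative-independence check reduced to Lemma \ref{lem: Fu bigcap Gamma}. The only difference is cosmetic: you apply part (3) to $\eta^{-1}=\gamma_1\gamma_2^{-1}$ at the point $x\gamma_1^{-1}$ (using part (1) to invert within the subgroup), whereas the paper applies it to $\gamma_2\gamma_1^{-1}$ at $x\gamma_2^{-1}$.
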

	
	\begin{proof}[Proof of Lemma \ref{lem: H'}]
		Notice that $\widetilde{\mathcal{F}}^u(\gamma)\gamma^{-1} = \widetilde{\mathcal{F}}^u(e)$, $H(\widetilde{\mathcal{F}}^u(e)) = \widetilde{\mathcal{L}}^u(H(e)) = \widetilde{\mathcal{L}}^u(e)$ and $\widetilde{\mathcal{L}}^u(e)\gamma = \widetilde{\mathcal{L}}^u(\gamma)$, therefore $H'(\widetilde{\mathcal{F}}^u(\gamma)) = \widetilde{\mathcal{L}}^u(\gamma)$. To show that $H'$ is well-defined, it suffices to show that if $\widetilde{\mathcal{F}}^u(\gamma_1) = \widetilde{\mathcal{F}}^u(\gamma_2)$, then $H(x\gamma_1^{-1})\gamma_1 = H(x\gamma_2^{-1})\gamma_2, \forall x\in\widetilde{\mathcal{F}}^u(\gamma_1)$.
		
		Since $x\gamma_2^{-1}\in\widetilde{\mathcal{F}}^u(e)$ and $\gamma_2\gamma_1^{-1}\in\widetilde{\mathcal{F}}^u(e)\bigcap\Gamma$, by Lemma \ref{lem: Fu bigcap Gamma}, we have
		\[H(x\gamma_1^{-1})\gamma_1 = H(x\gamma_2^{-1}\gamma_2\gamma_1^{-1})\gamma_1 = H(x\gamma_2^{-1})\gamma_2\gamma_1^{-1}\gamma_1 = H(x\gamma_2^{-1})\gamma_2.\]
		Hence $H'$ is well-defined.
	\end{proof}
	
	Now we can define $H': \widetilde{\mathcal{F}}^u(\Gamma) \to \widetilde{\mathcal{L}}^u(\Gamma)$. By Corollary \ref{cor: density of FuGamma}, $\widetilde{\mathcal{F}}^u(\Gamma)$ is dense in $N$.
	
	\begin{lemma}\label{lem: uniform continuity of H'}
		$H'$ is uniformly continuous on $\widetilde{\mathcal{F}}^u(\Gamma)$.
	\end{lemma}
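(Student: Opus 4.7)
The strategy is to split into the \emph{same-leaf} case, which is elementary, and the \emph{distinct-leaf} case, which uses the $u$-ideal hypothesis.

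If $x_1,x_2\in\widetilde{\mathcal{F}}^u(\gamma)$ for a common $\gamma\in\Gamma$, right-invariance of the metric gives
\[
d\bigl(H'(x_1),H'(x_2)\bigr)=d\bigl(H(x_1\gamma^{-1}),H(x_2\gamma^{-1})\bigr),
\]
and $d(x_1\gamma^{-1},x_2\gamma^{-1})=d(x_1,x_2)$, so the uniform continuity of $H$ on $N$ (Lemma \ref{lem: conjugate}) furnishes a modulus of continuity independent of $\gamma$. For the distinct-leaf case, take $x_1\in\widetilde{\mathcal{F}}^u(\gamma_1)$ and $x_2\in\widetilde{\mathcal{F}}^u(\gamma_2)$ with $\widetilde{\mathcal{F}}^u(\gamma_1)\neq\widetilde{\mathcal{F}}^u(\gamma_2)$ and introduce the auxiliary point $z:=\beta_{\widetilde{\mathcal{F}}}(x_1,x_2)\in\widetilde{\mathcal{F}}^s(x_1)\cap\widetilde{\mathcal{F}}^u(\gamma_2)$, which is well-defined and unique by Lemma \ref{lem: global product structure}. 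The uniform local product structure, available because $M$ is compact and because the special hypothesis makes $\beta_{\widetilde{\mathcal{F}}}$ right-$\Gamma$-equivariant, ensures that $d(z,x_1)$ and $d(z,x_2)$ tend to zero as $d(x_1,x_2)\to 0$. The same-leaf case then handles $d(H'(z),H'(x_2))$, and the remaining task is to bound $d(H'(x_1),H'(z))$ when $x_1\in\widetilde{\mathcal{F}}^s(z)$.

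To close the estimate I will locate both $H'(x_1)$ and $H'(z)$ on the single stable leaf $\widetilde{\mathcal{L}}^s(H(z))$. Applying Lemma \ref{lem: H and Gamma}(1) in the form $H(y)\gamma^{-1}\in\widetilde{\mathcal{L}}^s(H(y\gamma^{-1}))$ and right-translating by $\gamma$ yields $H'(x)\in\widetilde{\mathcal{L}}^s(H(x))$ for every $x\in\widetilde{\mathcal{F}}^u(\Gamma)$. Combining this with $H(x_1)\in\widetilde{\mathcal{L}}^s(H(z))$ (leaf conjugacy) and $H'(x)\in\widetilde{\mathcal{L}}^u(\gamma)$ for $x\in\widetilde{\mathcal{F}}^u(\gamma)$, the global product structure identifies
\[
H'(x_1)=\beta\bigl(H(z),\gamma_1\bigr),\qquad H'(z)=\beta\bigl(H(z),\gamma_2\bigr).
\]
Thus $d(H'(x_1),H'(z))$ measures exactly the separation between the intersection points of $\widetilde{\mathcal{L}}^u(\gamma_1),\widetilde{\mathcal{L}}^u(\gamma_2)$ with $\widetilde{\mathcal{L}}^s(H(z))$. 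The $u$-ideal hypothesis now enters through Lemma \ref{lem: LuLu}, which forces distinct $\widetilde{\mathcal{L}}^u$-leaves to sit at constant Riemannian distance from one another, while Proposition \ref{prop: su decomposition} provides the smooth splitting $N\cong\widetilde{\mathcal{L}}^s(e)\times\widetilde{\mathcal{L}}^u(e)$ needed to translate a small transverse separation of the two unstable leaves into a small stable-direction displacement between $\beta(H(z),\gamma_1)$ and $\beta(H(z),\gamma_2)$.

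The principal obstacle lies in this last step: quantitatively converting a small $\widetilde{\mathcal{L}}^u$-leaf separation into a small point-to-point distance along $\widetilde{\mathcal{L}}^s(H(z))$, uniformly in $H(z)$ and $\gamma_1,\gamma_2$. I expect this to require choosing $\gamma_1,\gamma_2$ as representatives whose images in the quotient Lie group $N/\exp\mathfrak{n}^u$ (well-defined since $\mathfrak{n}^u$ is an ideal) are as close as possible, together with a uniform Lipschitz constant for the unstable-leaf projection coming from Proposition \ref{prop: su decomposition}. This should match the cross-leaf modulus of continuity to the same-leaf one and yield the desired uniform continuity of $H'$ on $\widetilde{\mathcal{F}}^u(\Gamma)$.
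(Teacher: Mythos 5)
Your reduction steps are fine as far as they go: the same-leaf case does follow from right-invariance and uniform continuity of $H$, and the identifications $H'(x_1)=\beta(H(z),\gamma_1)$, $H'(z)=\beta(H(z),\gamma_2)$ are correct consequences of Lemma \ref{lem: H and Gamma} and the leaf conjugacy. The genuine gap is in the step you yourself flag as the "principal obstacle": you never obtain a \emph{small} separation between the algebraic leaves $\widetilde{\mathcal{L}}^u(\gamma_1)$ and $\widetilde{\mathcal{L}}^u(\gamma_2)$. What closeness of $x_1$ and $z$ gives you, via uniform continuity of $H$ and Lemma \ref{lem: LuLu}, is closeness of the leaves $H(\widetilde{\mathcal{F}}^u(\gamma_i))=\widetilde{\mathcal{L}}^u(H(\gamma_i))$; but $H'$ sends $\widetilde{\mathcal{F}}^u(\gamma_i)$ onto $\widetilde{\mathcal{L}}^u(\gamma_i)$, and the discrepancy between $\widetilde{\mathcal{L}}^u(H(\gamma_i))$ and $\widetilde{\mathcal{L}}^u(\gamma_i)$ is exactly the failure of $H$ to commute with $\Gamma$ — the very thing this subsection is trying to rule out. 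The only a priori control is $d(H,\mathrm{Id}_N)\leq C_0$, which bounds $d(\gamma_1\gamma_2^{-1},\widetilde{\mathcal{L}}^u(e))$ by a constant of size $C_0$, not by a quantity tending to $0$ with $d(x_1,x_2)$; choosing better lattice representatives or projecting to $N/\exp\mathfrak{n}^u$ does not remove this $C_0$-sized error, so your plan cannot close.

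The paper's proof overcomes precisely this obstruction by a global, not local, argument. Assuming a definite jump $d(H'(x),H'(y))>2\varepsilon_0$ at points with $d(x,y)<\delta$, it deduces (after replacing $y$ by the product-structure point, as you do) that $d(\gamma,\widetilde{\mathcal{L}}^u(e))>\varepsilon_0'$ for $\gamma=\gamma_x\gamma_y^{-1}$ via Lemma \ref{lem: geometric}(1); the $u$-ideal hypothesis then yields the polynomial lower bound $d(\gamma^k,\widetilde{\mathcal{L}}^u(e))>k^{\frac{1}{s}}\varepsilon_1$ from Lemma \ref{lem: geometric}(2) (which rests on Lemma \ref{lem: growth}). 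On the other hand, speciality (right-$\Gamma$-invariance of $\widetilde{\mathcal{F}}^u$) together with Lemma \ref{lem: geometric}(3) gives $\widetilde{\mathcal{F}}^u(\gamma)\subseteq B_{\varepsilon_2}(\widetilde{\mathcal{F}}^u(e))$ with $\varepsilon_2$ arbitrarily small, and iterating gives the linear upper bound $d(\gamma^k,\widetilde{\mathcal{L}}^u(e))<k\varepsilon_2+C_0$. Comparing the two bounds at a suitable $k$ produces the contradiction; the passage to powers $\gamma^k$ is what makes the unavoidable additive error $C_0$ harmless. This Archimedean comparison of polynomial growth against an arbitrarily small linear drift is the missing idea in your proposal, and without it (or some substitute of comparable strength) the distinct-leaf case remains unproved.
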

	
	\begin{proof}[Proof of Lemma \ref{lem: uniform continuity of H'}]
		Assume for contradiction that there exists $\varepsilon_0 > 0$ such that for any $\delta > 0$ there exist two points $x, y \in \bigcup_{\gamma\in\Gamma}\widetilde{\mathcal{F}}^u(\gamma)$ satisfying $d(x, y) < \delta$ and $d(H'(x), H'(y)) > 2\varepsilon_0$. Assume that $x\in \widetilde{\mathcal{F}}^u(\gamma_x)$, $y\in\widetilde{\mathcal{F}}^u(\gamma_y)$.
		
		By uniform continuity of $H$, there exists $\delta_0 > 0$ such that $d(x, y) < \delta_0$ implies $d(H(x), H(y)) < \varepsilon_0$. By right-invariance of $d$ and $\widetilde{\mathcal{L}}^s$, there exists $\delta_1 > 0$ such that $d(x, y) < \delta_1$ implies $\text{diam}\{x, y, \beta(x, y)\} < \delta_0$. Hence when $\delta < \delta_1$,
		\begin{align*}
			d(H'(x), H'(\beta(x, y))) &\geq d(H'(x), H'(y)) - d(H'(y), H'(\beta(x, y)))\\
			& > 2\varepsilon_0 - d(H(y\gamma_y)\gamma_y^{-1}, H(\beta(x, y)\gamma_y)\gamma_y^{-1})\\
			& = 2\varepsilon_0 - d(H(y\gamma_y), H(\beta(x, y)\gamma_y))\notag > 2\varepsilon_0 - \varepsilon_0 = \varepsilon_0.
		\end{align*}
		Note that $H'(x)\in\widetilde{\mathcal{L}}^u(\gamma_x)$ and $H'(\beta(x, y))\in\widetilde{\mathcal{L}}^u(\gamma_y)$. Hence by Lemma \ref{lem: geometric}, there exists $\varepsilon'_0 > 0$ such that $d(\widetilde{\mathcal{L}}^u(\gamma_x), \widetilde{\mathcal{L}}^u(\gamma_y)) > \varepsilon'_0$.
		
		Take $\gamma = \gamma_x\gamma_y^{-1}$, then we have $d(\widetilde{\mathcal{L}}^u(\gamma), \widetilde{\mathcal{L}}^u(e)) > \varepsilon'_0$. In particular, $d(\gamma, \widetilde{\mathcal{L}}^u(e)) > \varepsilon'_0$. By Lemma \ref{lem: geometric}, there exists $\varepsilon_1 > 0$ such that $d(\gamma^k, \widetilde{\mathcal{L}}^u(e)) > k^{\frac{1}{s}}\varepsilon_1$, for $k \geq 1$.
		
		On the other hand, $x\in \widetilde{\mathcal{F}}^u(\gamma_x)$, $y\in\widetilde{\mathcal{F}}^u(\gamma_y)$, $d(x, y) < \delta$.
		
		Fix $\varepsilon_2 > 0$ sufficiently small, such that there exists a positive integer $k_0$ such that $\frac{C_0}{\varepsilon_2} > k_0 > (\frac{2C_0}{\varepsilon_1})^s$. By Lemma \ref{lem: geometric}, there exists $\delta_2 > 0$ such that when $\delta < \min\{\delta_1, \delta_2\}$, we have $\widetilde{\mathcal{F}}^u(\gamma_x)\subseteq B_{\varepsilon_2}(\widetilde{\mathcal{F}}^u(\gamma_y))$, or equivalently, $\widetilde{\mathcal{F}}^u(\gamma)\subseteq B_{\varepsilon_2}(\widetilde{\mathcal{F}}^u(e))$. By right-$\Gamma$-invariance of $\widetilde{\mathcal{F}}^u$, we have $\widetilde{\mathcal{F}}^u(\gamma^k) \subseteq B_{\varepsilon_2}(\widetilde{\mathcal{F}}^u(\gamma^{k - 1}))$ and hence $\gamma^k\in B_{k\varepsilon_2}(\widetilde{\mathcal{F}}^u(e))$. It follows that $d(\gamma^k, \widetilde{\mathcal{L}}^u(e)) < k\varepsilon_2 + C_0$. But we already have $d(\gamma^k, \widetilde{\mathcal{L}}^u(e)) > k^{\frac{1}{s}}\varepsilon_1$. Therefore, $k^{\frac{1}{s}}\varepsilon_1 < k\varepsilon_2 + C_0$ for any $k \geq 1$, which is impossible for $k_0$.
	\end{proof}
	
	Now $H'$ is well-defined and uniformly continuous on a dense subset of $N$, so $H'$ can  be extended to a uniformly continuous map on $N$. We still denote it by $H'$.
	
	\begin{proposition}\label{prop: H' = H}
		$H': N \to N$ satisfies the followings.
		\begin{enumerate}
			\item $\Psi\circ H' = H'\circ F$;
			\item $d(H', \text{\rm Id}_N) < +\infty$.
		\end{enumerate}
	\end{proposition}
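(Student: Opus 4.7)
The plan is to verify both properties on the dense subset $\widetilde{\mathcal{F}}^u(\Gamma)$ where $H'$ is given by the explicit formula $H'(x)=H(x\gamma^{-1})\gamma$ for $x\in\widetilde{\mathcal{F}}^u(\gamma)$, and then extend to all of $N$ using the uniform continuity of $H'$ established in Lemma \ref{lem: uniform continuity of H'}. Both properties are continuous statements (the first is an equality of continuous maps, the second is a supremum of a continuous function), so density suffices.

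For the bound $d(H',\mathrm{Id}_N)<+\infty$, I would just compute: for $x\in\widetilde{\mathcal{F}}^u(\gamma)$, right-invariance of $d$ gives
\[
d(H'(x),x)=d(H(x\gamma^{-1})\gamma,x)=d(H(x\gamma^{-1}),x\gamma^{-1})\leq C_0.
\]
Since this holds on the dense subset and both $H'$ and $\mathrm{Id}_N$ are continuous, $d(H',\mathrm{Id}_N)\leq C_0$ on all of $N$.

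For the intertwining $\Psi\circ H'=H'\circ F$, I would first recall that after the normalization $b=e$ (the fixed point of $F$) made right before Lemma \ref{lem: H and Gamma}, we have $F(e)=e$, hence $F(\gamma)=F(e)\Psi(\gamma)=\Psi(\gamma)$ for every $\gamma\in\Gamma$, and more generally $F(y\gamma)=F(y)\Psi(\gamma)$. Specialness ensures $\widetilde{\mathcal{F}}^u$ is right-$\Gamma$-invariant, and $F$-invariance of the unstable foliation gives $F(\widetilde{\mathcal{F}}^u(\gamma))=\widetilde{\mathcal{F}}^u(F(\gamma))=\widetilde{\mathcal{F}}^u(\Psi(\gamma))$, so $F(x)\in\widetilde{\mathcal{F}}^u(\Psi(\gamma))$. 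Then the defining formula for $H'$ at the point $F(x)$ (with $\Psi(\gamma)\in\Gamma$ playing the role of the distinguished lattice element) combined with $\Psi\circ H=H\circ F$ yields
\begin{align*}
H'(F(x))&=H\bigl(F(x)\Psi(\gamma)^{-1}\bigr)\Psi(\gamma)=H\bigl(F(x\gamma^{-1})\bigr)\Psi(\gamma)\\
&=\Psi\bigl(H(x\gamma^{-1})\bigr)\Psi(\gamma)=\Psi\bigl(H(x\gamma^{-1})\gamma\bigr)=\Psi(H'(x)).
\end{align*}
Thus the intertwining holds on the dense set $\widetilde{\mathcal{F}}^u(\Gamma)$, and by continuity of $F$, $\Psi$, and $H'$ it extends to all of $N$.

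There is no serious obstacle here; the proposition is essentially a bookkeeping exercise unwinding the definition of $H'$. The only points requiring care are (a) invoking $\Psi(\gamma)\in\Gamma$ as the appropriate lattice element when evaluating $H'$ at $F(x)$, which requires specialness to know that $F(x)$ lies in $\widetilde{\mathcal{F}}^u(\Psi(\gamma))$, and (b) checking that the normalization $F(e)=e$ made earlier is still in force so that $F(\gamma)=\Psi(\gamma)$. Once these are verified on $\widetilde{\mathcal{F}}^u(\Gamma)$, Lemma \ref{lem: uniform continuity of H'} together with Corollary \ref{cor: density of FuGamma} closes the argument. As a consequence (to be used in the sequel), the uniqueness part of Lemma \ref{lem: conjugate} forces $H'=H$, so the original $H$ in fact satisfies $H(x\gamma^{-1})\gamma=H(x)$ whenever $x\in\widetilde{\mathcal{F}}^u(\gamma)$, which is the ingredient needed to descend $H$ to a conjugacy on $M$.
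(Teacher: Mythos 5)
Your proposal is correct and follows essentially the same route as the paper: verify both properties on the dense set $\widetilde{\mathcal{F}}^u(\Gamma)$ via the defining formula (using $F(x)\in\widetilde{\mathcal{F}}^u(\Psi(\gamma))$, the relation $F(x\gamma^{-1})=F(x)\Psi(\gamma^{-1})$ under the normalization $F(e)=e$, and $\Psi\circ H=H\circ F$), then extend by continuity of $H'$. The computations match the paper's almost line for line, so there is nothing to add.
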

	
	\begin{proof}[Proof of Proposition \ref{prop: H' = H}]
		(1) First take $x\in \widetilde{\mathcal{F}}^u(\gamma)$, $\gamma\in\Gamma$, then $F(x) \in \widetilde{\mathcal{F}}^u(\Psi(\gamma))$, because $x\gamma^{-1}\in\widetilde{\mathcal{F}}^u(e)$ implies that $F(x)\Psi(\gamma^{-1}) = F(x\gamma^{-1}) \in\widetilde{\mathcal{F}}^u(F(e)) = \widetilde{\mathcal{F}}^u(e)$, and hence $F(x)\in \widetilde{\mathcal{F}}^u(e)\Psi(\gamma) = \widetilde{\mathcal{F}}^u(\Psi(\gamma))$. Consequently,
		\begin{align*}
			\Psi(H'(x)) &= \Psi(H(x\gamma^{-1})\gamma) = \Psi(H(x\gamma^{-1}))\Psi(\gamma)\\
			&= H(F(x\gamma^{-1}))\Psi(\gamma) = H(F(x)\Psi(\gamma^{-1}))\Psi(\gamma) = H'(F(x)).
		\end{align*}
		Now that $\widetilde{\mathcal{F}}^u(\Gamma)$ is dense in $N$ and $H'$ is continuous, the equality holds for all $x\in N$.
		
		(2) For any $x\in \widetilde{\mathcal{F}}^u(\gamma)$, $\gamma\in\Gamma$, we have $d(H'(x), x) = d(H(x\gamma^{-1})\gamma, x) = d(H(x\gamma^{-1}), x\gamma^{-1}) \leq d(H, \text{\rm Id}_N) \leq C_0$. Since $\widetilde{\mathcal{F}}^u(\Gamma)$ is dense in $N$ and $H'$ is continuous, the conclusion holds for all $x\in N$, and hence $d(H', \text{\rm Id}_N) \leq d(H, \text{\rm Id}_N) \leq C_0$.
	\end{proof}
	
	\begin{corollary}\label{cor: H' = H}
		\rm $H' = H$ and consequently $H(x\gamma) = H(x)\gamma$, $\forall x\in N$, $\forall\gamma\in\Gamma$.
	\end{corollary}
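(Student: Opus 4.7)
The plan is to extract this corollary from Proposition~\ref{prop: H' = H} and the uniqueness clause of Lemma~\ref{lem: conjugate} essentially for free. Proposition~\ref{prop: H' = H} has just verified that $H'$ satisfies both $\Psi \circ H' = H' \circ F$ and $d(H', \mathrm{Id}_N) < +\infty$, and Lemma~\ref{lem: conjugate} asserts that $H$ is the \emph{unique} map with these two properties. Hence $H' = H$ on all of $N$, with no further argument required.

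For the equivariance consequence, I would first establish $H(x\gamma) = H(x)\gamma$ on the dense subset $\widetilde{\mathcal{F}}^u(\Gamma)$ by unwinding the definition of $H'$, and then extend by density. Fix $\gamma \in \Gamma$ and take $x \in \widetilde{\mathcal{F}}^u(\gamma_0)$ for some $\gamma_0 \in \Gamma$. Since $f$ is special, $\widetilde{\mathcal{F}}^u$ is right-$\Gamma$-invariant, so $x\gamma \in \widetilde{\mathcal{F}}^u(\gamma_0\gamma)$, and applying the formula of Lemma~\ref{lem: H'} to both $x$ and $x\gamma$ yields
\[
H'(x\gamma) \;=\; H\bigl(x\gamma(\gamma_0\gamma)^{-1}\bigr)\gamma_0\gamma \;=\; H(x\gamma_0^{-1})\gamma_0\gamma \;=\; H'(x)\gamma.
\]
Substituting $H' = H$ gives $H(x\gamma) = H(x)\gamma$ for every $x \in \widetilde{\mathcal{F}}^u(\Gamma)$. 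Now Corollary~\ref{cor: density of FuGamma} says $\widetilde{\mathcal{F}}^u(\Gamma)$ is dense in $N$, and since $H$ is continuous (Lemma~\ref{lem: conjugate}) and right translation by $\gamma$ is continuous, approximating any $x \in N$ by a sequence $x_n \in \widetilde{\mathcal{F}}^u(\Gamma)$ and passing to the limit in $H(x_n\gamma) = H(x_n)\gamma$ extends the identity to all of $N$. There is no real obstacle here — the substance was all spent in the preparation of Proposition~\ref{prop: H' = H} — and this directly finishes the proof of the second item of Theorem~\ref{thm: special and conjugacy}, since $H$ then descends to a conjugacy $h\colon M \to M$ between $f$ and $\Psi$.
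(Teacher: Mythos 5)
Your proposal is correct, and its overall skeleton matches the paper: $H'=H$ follows at once from Proposition~\ref{prop: H' = H} together with the uniqueness clause of Lemma~\ref{lem: conjugate}, the equivariance is first proved on the dense set $\widetilde{\mathcal{F}}^u(\Gamma)$, and then extended to $N$ by continuity of $H$ and of right translation using Corollary~\ref{cor: density of FuGamma}. The one place where you genuinely diverge is the equivariance on the dense set. The paper argues geometrically: from $H'=H$ it reads off $H(x\gamma)\in\widetilde{\mathcal{L}}^u(\gamma_x\gamma)$, combines this with Lemma~\ref{lem: H and Gamma} (which gives $H(x\gamma)\in\widetilde{\mathcal{L}}^s(H(x)\gamma)$), and identifies $H(x\gamma)$ as the intersection point $\beta(H(x)\gamma,\gamma_x\gamma)=\beta(H(x),\gamma_x)\gamma=H(x)\gamma$ via the global product structure. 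You instead prove the purely algebraic identity $H'(x\gamma)=H'(x)\gamma$ by unwinding the defining formula of $H'$: since $f$ is special, $\widetilde{\mathcal{F}}^u$ is right-$\Gamma$-invariant, so $\gamma_0\gamma$ is a valid base point for the leaf of $x\gamma$, the well-definedness furnished by Lemma~\ref{lem: H'} lets you use it, and $x\gamma(\gamma_0\gamma)^{-1}=x\gamma_0^{-1}$ collapses the formula. This is slightly more elementary at this step — it bypasses Lemma~\ref{lem: H and Gamma} and the $\beta$-intersection argument entirely — while the paper's version trades that for reuse of machinery it has already set up; both are complete, and the substance in either case indeed lies in the preceding lemmas establishing that $H'$ is well defined, continuous, and satisfies the two characterizing properties.
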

	
	\begin{proof}[Proof of Corollary \ref{cor: H' = H}]
		By uniqueness of Lemma \ref{lem: conjugate}, $H' = H$. Therefore, for $x\in\widetilde{\mathcal{F}}^u(\gamma_x)$, $\gamma_x\in\Gamma$, we have $H(x) = H'(x)\in\widetilde{\mathcal{L}}^u(\gamma_x)$, $x\gamma\in\widetilde{\mathcal{F}}^u(\gamma_x\gamma)$ and hence $H(x\gamma) = H'(x\gamma) \in \widetilde{\mathcal{L}}^u(\gamma_x\gamma)$. On the other hand, by Lemma \ref{lem: H and Gamma}, $H(x\gamma) \in\widetilde{\mathcal{L}}^s(H(x)\gamma)$. Thus $H(x\gamma) = \beta(H(x)\gamma, \gamma_x\gamma) = \beta(H(x), \gamma_x)\gamma = H(x)\gamma$. Since $\widetilde{\mathcal{F}}^u(\Gamma)$ is dense in $N$ and $H$ is continuous, the conclusion holds for all $x\in N$.
	\end{proof}
	
	Since $H$ commutes with $\Gamma$, it is projected to a homeomorphism $h$ on $M = N/\Gamma$, satisfying $\Psi\circ h = f\circ h$. Thus $f$ is topologically conjugate to its linear part $\Psi$, and the proof of Theorem \ref{thm: special and conjugacy} is completed.	
\end{proof}

\subsection{A counter example}

To end this section, we construct a nilmanifold endomorphism that is not $u$-ideal and satisfies the following property: there exists $x\in\widetilde{\mathcal{L}}^s(e)$ such that $$\sup\{d(z, \widetilde{\mathcal{L}}^u(e)): z\in\widetilde{\mathcal{L}}^u(x)\} = +\infty.$$

\begin{example}\label{example: holonomy unbounded}
	Consider a 2-step nilpotent Lie algebra 
    $$\mathfrak{n} = \text{span}_\mathbb{R}\{E_{12}, E_{23}, E_{24}, E_{13}, E_{14}\},$$
    where $E_{ij}$ denotes the matrix $(a_{kl})$, $a_{kl} = 1$ for $k = i$, $l = j$, and $a_{kl} = 0$ otherwise. The only nontrivial Lie brackets are $[E_{12}, E_{23}] = E_{13}$, $[E_{12}, E_{24}] = E_{14}$. Define the multiplication by $X\odot Y = X + Y + \frac{1}{2}[X, Y]$, where $[X, Y] := XY - YX$, then $(\mathfrak{n}, \odot)$ is a simply connected 2-step nilpotent Lie group, denoted by $N$. Note that $X^{-1} = -X$, and the commutator $X\odot Y\odot X^{-1}\odot Y^{-1} = (X + Y + \frac{1}{2}[X, Y])\odot(- X - Y + \frac{1}{2}[X, Y]) = [X, Y]$. Define
	$$\psi(E_{12}, E_{23}, E_{24}, E_{13}, E_{14}) = (E_{12}, E_{23}, E_{24}, E_{13}, E_{14})
	\left(\begin{array}{rrrrr} 2&0&0&0&0\\0&2&1&0&0\\0&1&1&0&0\\0&0&0&4&2\\0&0&0&2&2\end{array}\right),$$
	then $\psi\in\text{Aut}(\mathfrak{n})$ is a Lie algebra automorphism and uniquely decide a Lie group automorphism $\Psi\in\text{Aut}(N)$ preserving a lattice $\Gamma = \{(xE_{12})\odot(yE_{23})\odot(zE_{24})\odot(uE_{13})\odot(vE_{14}): x, y, z, u, v\in\mathbb{Z}\}$, hence induces an endomorphism of $N/\Gamma$.
	Notice that the eigenvalues and eigenvectors of $\psi$ are
	\begin{align*}
		&\lambda_1 = 2,& &v_1 = E_{12};\\
		&\lambda_2 = \frac{3 - \sqrt{5}}{2},& &v_2 = -\alpha E_{23} + E_{24};\\
		&\lambda_3 = \frac{3 + \sqrt{5}}{2},& &v_3 = E_{23} + \alpha E_{24};\\
		&\lambda_4 = \lambda_1\lambda_2 = 3 - \sqrt{5},& &v_4 = -\alpha E_{13} + E_{14};\\
		&\lambda_5 = \lambda_1\lambda_3 = 3 + \sqrt{5},& &v_5 = E_{13} + \alpha E_{14},
	\end{align*}
	where $\alpha = \frac{\sqrt{5} - 1}{2}$. Note that $[v_1, v_2] = v_4$, $[v_1, v_3] = v_5$. $\mathfrak{n}^s = \text{span}_\mathbb{R}\{v_2, v_4\}$, $\mathfrak{n}^u = \text{span}_\mathbb{R}\{v_1, v_3, v_5\}$. Take $\mathfrak{n}_{(1)} = \text{span}_\mathbb{R}\{v_1, v_2, v_3\}$, $\mathfrak{n}_{(2)} = \mathfrak{n}_2 = \text{span}_\mathbb{R}\{v_4, v_5\}$, and $P = v_2\in\mathfrak{n}_{(1)}\bigcap\mathfrak{n}^s$.
	
	We claim that $\sup\{d(Q\odot P, \mathfrak{n}^u): Q\in\mathfrak{n}^u\} = +\infty$, or equivalently, $\sup\{d(P\odot[-P, Q], \mathfrak{n}^u): Q\in\mathfrak{n}^u\} = +\infty$. Actually we can show that $\sup\{d(P\odot[-P, tQ], \mathfrak{n}^u): t\in\mathbb{R}\} = +\infty$, where $Q = v_1 \in \mathfrak{n}_{(1)}\bigcap\mathfrak{n}^u$. Denote $R = [-P, Q] = v_4 \in\mathfrak{n}_{(2)}\bigcap\mathfrak{n}^s$, then it suffices to show that $\sup\{d(P \odot tR, \mathfrak{n}^u): t\in\mathbb{R}\} = +\infty$.
    
    Assume for contradiction that there exists $U_t\in\mathfrak{n}^u$ such that $d(P \odot tR, U_t)$ is bounded. Write $U_t = u_1(t)v_1 +u_3(t)v_3 +u_5(t)v_5$. Notice that $d(P \odot tR, tR) = d(P, 0)$, it follows that $d(tR, U_t)$ is also bounded. Then $d((tR)\odot(-U_t), 0) = d(tR - U_t, 0)$ is bounded, hence $q(tR - U_t) \leq Cd(tR - U_t, 0) + C$ is bounded. It follows that $\left\|tR - p_2(U_t)\right\|^{\frac{1}{2}}$ is bounded, which is impossible, because $tR - p_2(U_t) = tv_4 - u_5(t)v_5$.
\end{example}

\section{Density of preimages}\label{Density of preimages}
In this section, $M = N/\Gamma$ is a nilmanifold with a Riemannian metric induced by a right-invariant metric on $N$, where $N$ is a simply connected $s$-step nilpotent Lie group admitting a lattice $\Gamma$.

\begin{definition}\label{def: totally non-invertible}
	A nilmanifold endomorphism $\Psi\in\text{End}(M)$ is totally non-invertible, if the eigenvalues of $\Psi$ are not algebraic units.
\end{definition}

\begin{remark}\label{rmk: totally non-invertible implies hyperbolic}
	If $\Psi$ is totally non-invertible, then $\Psi$ must be hyperbolic, because eigenvalues of $\Psi$ are algebraic integers, while algebraic integers of modulus one are algebraic units. In fact, if $z\in\mathbb{C}$ is an algebraic integer and $|z| = 1$, then $z^{-1} = \bar{z}$ is also an algebraic integer, therefore $z$ is an algebraic unit.
\end{remark}

\begin{remark}\label{rmk: horizontal tni}
	$\Psi$ is totally non-invertible, if and only if the horizontal part $\Psi_1$ is totally non-invertible. In fact, by Lemma \ref{lem: horizontal part}, every eigenvalue of $\Psi$ is the product of some eigenvalues of $\Psi_1$. If $\lambda_1$, $\lambda_2$ are algebraic integers and $\lambda = \lambda_1\lambda_2$ is an algebraic unit, then $\lambda_1$ and $\lambda_2$ are both algebraic unit, because $\lambda_1^{-1} = \lambda_2\lambda^{-1}$, while $\lambda_2$ and $\lambda^{-1}$ are both algebraic integers.
\end{remark}

Recall that $\psi = D_e\Psi\in\text{Aut}(\mathfrak{n})$, and under some basis $\psi$ has a block matrix representation whose diagonal elements are $\psi_i\in\text{Aut}(\mathfrak{n}_i/\mathfrak{n}_{i + 1})$, which is induced by $\psi$. Therefore, $\Psi$ is totally non-invertible actually means that every $\psi_i$ is totally non-invertible, i.e., has no such rational invariant subspace $V$ that the restriction of $\psi_i$ on $V$ has determinant $\pm 1$, hence the induced toral endomorphism of $V/(V\bigcap\Gamma)$ is invertible.

\begin{theorem}\label{thm: exponential density}
	Let $\Psi\in\text{End}(M)$ be a nilmanifold endomorphism. The followings are equivalent.
	\begin{enumerate}
		\item $\Psi$ is totally non-invertible.
		\item There exist $C > 0$, $0 < \mu < 1$, such that $\Psi^{-k}(x)$ is $C\mu^k$-dense in $M$, $\forall x\in M$, $\forall k \geq 1$.
		\item $\bigcup_{k \geq 1}\Psi^{-k}(x)$ is dense in $M$, $\forall x\in M$.
	\end{enumerate}
\end{theorem}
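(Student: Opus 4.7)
The plan is to close the cyclic chain $(1) \Rightarrow (2) \Rightarrow (3) \Rightarrow (1)$. The implication $(2) \Rightarrow (3)$ is immediate, since $C\mu^k \to 0$ forces any single preimage set to be eventually $\varepsilon$-dense.

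For $(1) \Rightarrow (2)$, I would induct on the nilpotency step $s$ of $N$. The base case $s = 1$ is the torus case and requires extending \cite{An2023} (which handles irreducible toral endomorphisms) to all totally non-invertible toral endomorphisms. For this I would fix a flag $0 = V_0 \subset V_1 \subset \cdots \subset V_n = \mathfrak{n}/\mathfrak{n}_2$ of $\psi_1$-invariant rational subspaces with $\psi_1$-irreducible successive quotients; since no eigenvalue of $\psi_1$ is an algebraic unit, each induced quotient endomorphism is irreducible and non-invertible, so An's theorem yields exponential density on each level, and iterated use of the fibered combination argument below propagates density up the flag. For the inductive step $s \geq 2$, Theorem \ref{thm: torus bundle over torus} presents $\pi\colon M \to M_{s-1}$ as a torus bundle with fiber $M_{s,s+1}$, and $\Psi$ descends to an $(s-1)$-step endomorphism $\bar\Psi$ of $M_{s-1}$ while acting on each fiber as the toral endomorphism $\psi_s$. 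Both are totally non-invertible: $\bar\Psi$ by Remark \ref{rmk: horizontal tni} (its horizontal part coincides with that of $\Psi$), and $\psi_s$ because by Lemma \ref{lem: horizontal part}(2) each of its eigenvalues is a product of $s$ non-unit eigenvalues of $\psi_1$, and any product of non-unit algebraic integers is itself a non-unit (if $\lambda_1 \cdots \lambda_s$ were a unit, then $\lambda_i^{-1} = (\prod_{j \neq i}\lambda_j)(\lambda_1\cdots\lambda_s)^{-1}$ would be an algebraic integer, forcing each $\lambda_i$ to be a unit). The inductive hypothesis thus produces exponential density rates $C_1\mu_1^k$ for $\bar\Psi$ on $M_{s-1}$ and $C_2\mu_2^k$ for $\psi_s$ on the torus fiber. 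For $x \in M$, the preimage decomposition
\[
\Psi^{-k}(x) \;=\; \bigsqcup_{\bar y \in \bar\Psi^{-k}(\pi(x))} \bigl(\text{a single coset of } \psi_s^{-k}(0) \text{ in the fiber } \pi^{-1}(\bar y)\bigr)
\]
then lets me combine the two rates, using local triviality of $\pi$ and right-invariance of the metric on $N$, to give $C\mu^k$-density of $\Psi^{-k}(x)$ in $M$.

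For $(3) \Rightarrow (1)$ I would argue by the contrapositive. If $\Psi$ is not totally non-invertible, Remark \ref{rmk: horizontal tni} produces a nonzero $\psi_1$-invariant rational subspace $V \subset \mathfrak{n}/\mathfrak{n}_2$ with $|\det(\psi_1|_V)| = 1$, hence a positive-dimensional $\Psi_1$-invariant subtorus $T \subset M_1$ on which $\Psi_1|_T$ is a toral automorphism. Take $x = e \in M$: then the projection $M \to M_1$ sends $\bigcup_{k \geq 1}\Psi^{-k}(e)$ onto $\bigcup_{k\geq 1}\ker(\Psi_1^k)$, and bijectivity of $\Psi_1^k|_T$ forces $\ker(\Psi_1^k) \cap T = \{e\}$ for every $k$. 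A Pontryagin duality argument then shows that $\overline{\bigcup_k \ker(\Psi_1^k)}$ is a proper closed subgroup of $M_1$: the characters of $M_1$ annihilating this union are exactly those in $\bigcap_k (\psi_1^*)^k(\hat M_1)$, and since $\psi_1^*$ shares its eigenvalues with $\psi_1$ it admits a nontrivial rational invariant sublattice on which it acts with determinant $\pm 1$, and this sublattice lies in every $(\psi_1^*)^k(\hat M_1)$. Consequently the projection of $\bigcup_k \Psi^{-k}(e)$ to $M_1$ is not dense, hence neither is $\bigcup_k \Psi^{-k}(e)$ in $M$, refuting $(3)$.

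The principal obstacle I anticipate is the density-combination step in the induction: verifying that exponential density rates on base and fiber assemble to an exponential rate on $M$. The subtleties are uniformity in the base point (so that the $C_2\mu_2^k$-density of a coset of $\psi_s^{-k}(0)$ inside $\pi^{-1}(\bar y)$ holds with a constant independent of $\bar y$) and the twisted geometry of the bundle $\pi$; right-invariance of the metric on $N$ together with cocompactness of the $\Gamma$-action on $N$ are the essential ingredients, but their interplay with the exponential decay constants must be tracked carefully.
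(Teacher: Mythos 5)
Your plan is correct and, at the top level, is the same as the paper's: fiber $M$ over a lower-step nilmanifold with a torus fiber, prove a torus-level exponential density statement, and assemble base and fiber rates through a density-combination lemma for compact fiber bundles. The ``principal obstacle'' you defer is precisely the paper's Lemma \ref{lem: density lemma}, proved via local trivializations and compactness, and it does give $C\max\{\varepsilon_1, \varepsilon_2\}$-density with constants uniform in the base point, so that step is not a genuine obstruction; likewise your checks that $\psi_s$ and $\bar\Psi$ stay totally non-invertible match the paper's use of Lemma \ref{lem: horizontal part} and Remark \ref{rmk: horizontal tni}. Where you genuinely diverge is in two sub-arguments. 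For the torus case, the paper (Theorem \ref{thm: density torus}) uses the primary decomposition of the characteristic polynomial, the kernel filtration $V_j = \ker g^j(A)$, and the rational canonical form (Corollary \ref{cor: rcf}); you instead run the same bundle-combination induction along a composition series with $\mathbb{Q}$-irreducible successive quotients, so that only An's irreducible result (Lemma \ref{lem: irreducible density}) is needed on each layer. This works --- each quotient is integral, irreducible, and has $|\det| > 1$ since its eigenvalues are non-unit algebraic integers, and the preimage of a point inside a fiber is a nonempty coset of the kernel of the irreducible restriction --- and it is somewhat leaner, at the price of the explicit rate $C|\det A|^{-k/n}$ the paper obtains on primary blocks (irrelevant for the theorem). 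For $(3)\Rightarrow(1)$, the paper locates the unit eigenvalue in some $\psi_i$ and traps the preimages in a proper compact subbundle of $M_i \to M_{i-1}$, while you push the unit eigenvalue down to the horizontal part and exclude density in $M_1$ by Pontryagin duality: the annihilator of $\bigcup_k \ker\Psi_1^k$ is $\bigcap_k(\psi_1^*)^k\hat M_1$, which contains a nontrivial $\psi_1^*$-invariant sublattice on which the action has determinant $\pm 1$ (hence equals its image and survives every power). That is correct and arguably cleaner, since it only ever uses the single projection $M \to M_1$; note the side remark that $\ker(\Psi_1^k)\cap T = \{e\}$ is not by itself enough for non-density --- it is your duality computation that carries the argument.
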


Before the proof of Theorem \ref{thm: exponential density}, we need some preparations.

\begin{lemma}\label{lem: density lemma}
	Let $\pi: \widetilde{M} \to M$ be a fiber bundle with typical fiber $F$, where $\widetilde{M}$, $M$, $F$ are all compact Riemannian manifolds. Then there exists $C \geq 1$ such that for any $\varepsilon_1$, $\varepsilon_2 > 0$ and any subset $S\subseteq \widetilde{M}$ satisfying
	\begin{itemize}
	   \item $\pi(S)$ is $\varepsilon_1$-dense in $M$;
	   \item $S\bigcap \pi^{-1}\pi(x)$ is $\varepsilon_2$-dense in $\pi^{-1}\pi(x)$, $\forall x\in S$,
	\end{itemize}
	$S$ is $C\max\{\varepsilon_1, \varepsilon_2\}$-dense in $\widetilde{M}$.
\end{lemma}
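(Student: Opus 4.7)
The plan is to use compactness of the base to obtain uniform local trivializations, and then assemble a point of $S$ close to an arbitrary $\widetilde{y}\in\widetilde{M}$ by first moving the base point and then moving within a fiber.

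First, I would set up the uniform trivialization data. Since $M$ is compact, there exists $\delta_0>0$ and a finite cover $\{U_i\}$ of $M$ by open balls of radius $\delta_0$, together with smooth trivializations $\phi_i:\pi^{-1}(U_i)\to U_i\times F$. Composing with the Riemannian metrics on $\widetilde{M}$, $M$, $F$ (and using that $\overline{U_i}$ is compact and that $\phi_i$ and $\phi_i^{-1}$ are smooth on compact sets), there exists a constant $L\geq 1$, depending only on the bundle and the metrics, such that for every $i$ and every two points $\widetilde{x},\widetilde{y}\in\pi^{-1}(\overline{U_i})$ with $\phi_i(\widetilde{x})=(p_1,v_1)$, $\phi_i(\widetilde{y})=(p_2,v_2)$,
\[
L^{-1}\bigl(d_M(p_1,p_2)+d_F(v_1,v_2)\bigr)\leq d_{\widetilde{M}}(\widetilde{x},\widetilde{y})\leq L\bigl(d_M(p_1,p_2)+d_F(v_1,v_2)\bigr),
\]
and similarly the fiberwise distance $d_{\pi^{-1}(p)}$ on $\pi^{-1}(p)$ is uniformly comparable to $d_F$ through $\phi_i$. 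This is the uniform control I want; obtaining it is the main technical step, but it is a standard compactness consequence once the finite cover is chosen.

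Next I would prove the density estimate. Fix an arbitrary $\widetilde{y}\in\widetilde{M}$ and let $\varepsilon=\max\{\varepsilon_1,\varepsilon_2\}$. I may assume $\varepsilon_1<\delta_0/2$; otherwise $\varepsilon\geq\delta_0/2$ and one can absorb this case into $C$ by taking $C\geq 2\operatorname{diam}(\widetilde{M})/\delta_0$. By the first hypothesis, choose $x\in S$ with $d_M(\pi(x),\pi(\widetilde{y}))<\varepsilon_1$. Then both $\pi(x)$ and $\pi(\widetilde{y})$ lie in some $U_i$, so I can write $\phi_i(\widetilde{y})=(\pi(\widetilde{y}),v)$ and define the \emph{horizontal transfer} $\widetilde{y}':=\phi_i^{-1}(\pi(x),v)\in\pi^{-1}(\pi(x))$. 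By the uniform estimate,
\[
d_{\widetilde{M}}(\widetilde{y},\widetilde{y}')\leq L\,d_M(\pi(x),\pi(\widetilde{y}))<L\varepsilon_1.
\]
By the second hypothesis applied to $x\in S$, there exists $x'\in S\cap\pi^{-1}(\pi(x))$ with $d_{\pi^{-1}(\pi(x))}(x',\widetilde{y}')<\varepsilon_2$, hence $d_{\widetilde{M}}(x',\widetilde{y}')\leq L'\varepsilon_2$ for a constant $L'$ coming from the comparison between the intrinsic fiber metric and the ambient metric restricted to a fiber (again uniform by compactness). The triangle inequality gives $d_{\widetilde{M}}(x',\widetilde{y})\leq(L+L')\varepsilon$, and taking $C:=\max\{L+L',\,2\operatorname{diam}(\widetilde{M})/\delta_0\}$ finishes the proof.

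The real subtlety is the uniformity of the bi-Lipschitz constants $L,L'$ across all fibers and all choices of trivialization chart; I expect this to be the only place where care is needed, and it is handled purely by compactness of $\widetilde{M}$, $M$, and $F$ together with smoothness of the bundle structure. Once this uniformity is in hand, the rest is a one-line triangle inequality argument.
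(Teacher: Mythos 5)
Your argument is correct and follows essentially the same route as the paper: a finite trivializing cover with compactness-uniform comparison constants, then base density to locate a fiber meeting $S$ near the target point and fiber density to finish (the paper packages this as ``every small ball contains a product box'', you as a horizontal transfer plus a fiberwise step, which is the same idea). The only point to tighten is your assertion that $\pi(x)$ and $\pi(\widetilde{y})$ lie in a common $U_i$: this needs the cover to have a Lebesgue-number property (e.g.\ balls of radius $\delta_0$ centered at a $\delta_0/2$-net, with $\varepsilon_1$ below half that number), which is exactly the role of the constant $\varepsilon_0$ in the paper's proof; note also that you only ever use the easy (upper) direction of your bi-Lipschitz comparison, so the stronger two-sided claim need not be justified.
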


\begin{proof}
	Take a local trivialization $\{(U_i, \varphi_i): 1\leq i \leq N\}$ such that there is a local trivialization $\{(V_i, \psi_i): 1\leq i \leq N\}$ satisfying $\overline{U}_i \subseteq V_i$ and $\varphi_i = \psi_i|_{U_i}$.
	
	Since $\{U_i: 1\leq i \leq N\}$ is an open cover of $M$ and $\pi^{-1}(U_i) = \varphi_i^{-1}(U_i\times F)$ is an open cover of $\widetilde{M}$, there exists $0 < \varepsilon_0 \leq \min\{\text{diam}(M), \text{diam}(\widetilde{M})\}$ such that every $\varepsilon_0$-ball in $M$ lies in some $U_i$, and that every $\varepsilon_0$-ball in $\widetilde{M}$ lies in some $\pi^{-1}(U_i)$.
	
	Recall that two distances $d$ and $d'$ on a topological space are called equivalent, if there exists $C > 1$ such that $C^{-1}d(x, y) \leq d'(x, y) \leq Cd(x, y)$, which means every $\varepsilon$-ball under $d'$ contains some $C^{-1}\varepsilon$-ball under $d$ and vice versa.
	
	Define a distance on $\overline{U}_i\times F$ by $d((u_1, v_1), (u_2, v_2)) = \max\{d(u_1, u_2), d(v_1, v_2)\}$, for $(u_1, v_1), (u_2, v_2)\in \overline{U}_i\times F$. Such a distance is equivalent with $d'((u_1, v_1), (u_2, v_2)) := (d(u_1, u_2)^2 + d(v_1, v_2)^2)^{\frac{1}{2}}$, which is induced by the product Riemannian metric.
	
	Both of $d$ and $d'$ induce some distance on $\pi^{-1}(\overline{U}_i)$ by $\varphi_i$, and by compactness, the latter is equivalent with the original distance on $\pi^{-1}(\overline{U}_i)$ induced by the inclusion $\pi^{-1}(\overline{U_i}) \to \widetilde{M}$. Consequently, there are constants $C_i \geq 1$, $1\leq i \leq N$, such that every $\varepsilon$-ball in $\pi^{-1}(U_i)$ contains some $\varphi_i^{-1}(B_U\times B_F)$, where $B_U$ and $B_F$ are $C_i^{-1}\varepsilon$-balls in $U_i$ and $F$ respectively.
		
	Now take $C = \varepsilon_0^{-1}\text{diam}(\widetilde{M})\max\{C_i^2: 1\leq i \leq N\}$. Assume that $\max\{C_i^2: 1\leq i \leq N\}\max\{\varepsilon_1, \varepsilon_2\} \leq \varepsilon_0$, otherwise $C\max\{\varepsilon_1, \varepsilon_2\} > \text{diam}(\widetilde{M})$ and the conclusion is obvious. Take any $C\max\{\varepsilon_1, \varepsilon_2\}$-ball $B_0$ in $\widetilde{M}$, one needs to show that $S\bigcap B_0 \not= \varnothing$. Notice that there is a ball $B\subset B_0$ with radius $r = \min\{C\max\{\varepsilon_1, \varepsilon_2\}, \varepsilon_0\}$, such that $B$ lies in some $\pi^{-1}(U_i)$. Then $B$ contains some $\varphi_i^{-1}(B_U\times B_F)$, where $B_U$ and $B_F$ are $C_i^{-1}r$-balls in $U_i$ and $F$ respectively. Thus $\pi(B)$ contains $B_U$, which contains some point $z\in \pi(S)$, because $C_i^{-1}r \geq \varepsilon_1$. Moreover, $B \bigcap \pi^{-1}(z)$ contains $\varphi_i^{-1}(\{z\}\times B_F)$, which contains a $C_i^{-2}r$-ball in $\pi^{-1}(z)$, and thus contains some point $x\in S$, because $C_i^{-2}r \geq \varepsilon_2$.
\end{proof}

\begin{definition}\label{def: irreducible}
	A toral endomorphism is {\it irreducible}, if its characteristic polynomial is $\mathbb{Q}$-irreducible.
\end{definition}

\begin{lemma}\label{lem: irreducible density}
	{\rm (\cite{An2023}, Proposition 2.10)} Assume that $A\in GL(n,\mathbb{R})\bigcap M(n, \mathbb{Z})$, $|\det A| > 1$. If $A$ is irreducible, then there exists $C > 0$, such that $A^{-k}\mathbb{Z}^n$ is $C|\det A|^{-\frac{k}{n}}$-dense in $\mathbb{R}^n$, $\forall k \geq 1$.
\end{lemma}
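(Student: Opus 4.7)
The plan is to combine Minkowski's second theorem on successive minima with algebraic number theory applied to the characteristic polynomial of $A$. Writing $L_k := A^{-k}\mathbb{Z}^n$, this is a lattice in $\mathbb{R}^n$ of covolume $|\det A|^{-k}$, and its covering radius is at most $\tfrac{\sqrt{n}}{2}\lambda_n(L_k)$. Since $\lambda_1(L_k)^{n-1}\lambda_n(L_k) \leq \lambda_1(L_k)\cdots\lambda_n(L_k) \leq 2^n|\det A|^{-k}$ by Minkowski's second theorem, it suffices to produce a uniform lower bound $\lambda_1(L_k) \geq c|\det A|^{-k/n}$ independent of $k$; this then forces $\lambda_n(L_k) \leq C|\det A|^{-k/n}$ and yields the claimed density.

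First I would realize the $A$-dynamics algebraically using irreducibility. Let $p(x) \in \mathbb{Z}[x]$ be the characteristic polynomial of $A$, $\alpha$ a root, $K := \mathbb{Q}(\alpha)$, and $\alpha_1, \ldots, \alpha_n$ its Galois conjugates; thus $\alpha$ is an algebraic integer with $\prod_i|\alpha_i| = |\det A|$. Irreducibility yields a cyclic vector $v \in \mathbb{Q}^n$, hence a $\mathbb{Q}$-linear isomorphism $\Phi: K \to \mathbb{Q}^n$, $\alpha^i \mapsto A^iv$, that intertwines multiplication by $\alpha$ on $K$ with the $A$-action on $\mathbb{Q}^n$. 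The $\mathbb{Z}$-lattice $L := \Phi^{-1}(\mathbb{Z}^n) \subset K$ is commensurable with $\mathbb{Z}[\alpha]$, so some $m \in \mathbb{Z}_{>0}$ satisfies $mL \subseteq \mathbb{Z}[\alpha]$.

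Next I would compose with the Minkowski embedding $\sigma: K \otimes_\mathbb{Q}\mathbb{R} \to \mathbb{R}^n$ (grouping complex conjugate pairs into real/imaginary parts), under which multiplication by $\alpha$ becomes the diagonal action of $\alpha_1,\ldots,\alpha_n$. The $\mathbb{R}$-linear extension $T := \sigma \circ \Phi^{-1}$ is then a fixed automorphism of $\mathbb{R}^n$ satisfying $T(L_k) = \sigma(\alpha^{-k}L)$. For any $\gamma \in L \setminus\{0\}$, AM-GM gives
$$\|\sigma(\alpha^{-k}\gamma)\|^2 \asymp \sum_{i=1}^n|\alpha_i|^{-2k}|\sigma_i(\gamma)|^2 \geq n\,|\det A|^{-2k/n}|N_{K/\mathbb{Q}}(\gamma)|^{2/n},$$
and since $m\gamma$ is a nonzero algebraic integer, $|N_{K/\mathbb{Q}}(m\gamma)| = m^n|N_{K/\mathbb{Q}}(\gamma)|$ is a positive integer, so $|N_{K/\mathbb{Q}}(\gamma)| \geq m^{-n}$. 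Therefore $\lambda_1(\sigma(\alpha^{-k}L)) \geq c_1|\det A|^{-k/n}$ uniformly in $k$; pulling back through $T^{-1}$ yields $\lambda_1(L_k) \geq c|\det A|^{-k/n}$ with $c$ depending only on $A$, and the Minkowski bound from the first paragraph then delivers $\lambda_n(L_k) \leq C|\det A|^{-k/n}$, completing the proof.

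The main obstacle I anticipate is not conceptual but bookkeeping: tracking the change-of-basis constants among $\mathbb{Z}^n$, $L \subset K$, $\mathbb{Z}[\alpha]$, and $\sigma(L) \subset \mathbb{R}^n$, and verifying they contribute only $k$-independent factors. All are fixed quantities determined by $A$ and the chosen cyclic vector. The irreducibility hypothesis enters crucially twice: it furnishes the cyclic vector identifying $(A,\mathbb{Q}^n)$ with multiplication by $\alpha$ on $K$, and it ensures $[K:\mathbb{Q}] = n$ so that $\prod_i|\alpha_i| = |\det A|$ and AM-GM produces precisely the exponent $-k/n$ expected from equidistribution of the $|\det A|^k$ many $k$-th preimages.
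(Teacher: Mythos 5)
Your argument is correct. Note that the paper does not prove this lemma at all --- it is imported verbatim from \cite{An2023}, Proposition 2.10 --- so there is no internal proof to compare against; what you have written is a complete, self-contained justification of the cited fact, in the standard geometry-of-numbers style that such statements call for. The two key steps are sound: irreducibility makes $\mathbb{Q}^n$ a one-dimensional vector space over $K=\mathbb{Q}(\alpha)$, so a fixed linear isomorphism carries $A^{-k}\mathbb{Z}^n$ to $\sigma(\alpha^{-k}L)$ with $mL\subseteq\mathbb{Z}[\alpha]\subseteq\mathcal{O}_K$, and then integrality of the norm form plus AM--GM gives the uniform lower bound $\lambda_1(A^{-k}\mathbb{Z}^n)\geq c\,|\det A|^{-k/n}$, which Minkowski's second theorem and the covering-radius bound $\mu\leq\frac{\sqrt{n}}{2}\lambda_n$ convert into the asserted $C|\det A|^{-k/n}$-density. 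Two harmless bookkeeping remarks: your displayed form of Minkowski's second theorem drops the unit-ball volume (the bound is $\lambda_1\cdots\lambda_n\leq 2^n\operatorname{covol}(L_k)/\mathrm{vol}(B_1^n)$), and the comparison $\|\sigma(x)\|^2\asymp\sum_i|\sigma_i(x)|^2$ carries constants $\tfrac12$ and $1$ coming from the doubled complex embeddings; both constants are independent of $k$ and so do not affect the conclusion.
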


\begin{corollary}\label{cor: irreducible density}
	Let $V$ be an $n$-dimensional vector space admitting a lattice $\Gamma$, $A \in GL(V)$ and $A\Gamma\subsetneq\Gamma$. If $A\in\text{End}(V/\Gamma)$ is irreducible, then there exists $C > 0$ such that $A^{-k}\Gamma$ is $C|\det A|^{-\frac{k}{n}}$-dense in $V$, $\forall k \geq 1$.
\end{corollary}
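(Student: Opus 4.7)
The plan is to reduce Corollary \ref{cor: irreducible density} to Lemma \ref{lem: irreducible density} by an explicit change of coordinates, and then control the distortion via the equivalence of norms on a finite-dimensional vector space. First I would pick a $\mathbb{Z}$-basis $\{e_1,\dots,e_n\}$ of the lattice $\Gamma$ (such a basis exists because $\Gamma$ is a lattice of rank $n$ in $V$), which yields a linear isomorphism $\varphi:V\to\mathbb{R}^n$ sending $\Gamma$ to $\mathbb{Z}^n$. Under $\varphi$, the automorphism $A$ is conjugated to some $\widetilde{A}\in GL(n,\mathbb{R})$. The hypothesis $A\Gamma\subseteq\Gamma$ becomes $\widetilde{A}\mathbb{Z}^n\subseteq\mathbb{Z}^n$, so $\widetilde{A}\in M(n,\mathbb{Z})$, and the strict inclusion $A\Gamma\subsetneq\Gamma$ combined with invertibility forces $[\Gamma:A\Gamma]=|\det\widetilde{A}|=|\det A|>1$. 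Moreover, irreducibility is a property of the characteristic polynomial of the linear map, which is invariant under base change, so $\widetilde{A}$ is irreducible in the sense of Definition \ref{def: irreducible}.

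Next I would apply Lemma \ref{lem: irreducible density} directly to $\widetilde{A}$: it yields a constant $C_0>0$ such that $\widetilde{A}^{-k}\mathbb{Z}^n$ is $C_0|\det\widetilde{A}|^{-k/n}$-dense in $\mathbb{R}^n$ for every $k\geq 1$. Pulling back via $\varphi^{-1}$, the set $A^{-k}\Gamma$ is dense in $V$ at the same exponential rate, measured now in the pushforward metric $\varphi^*\|\cdot\|_{\mathbb{R}^n}$. Since any two norms on the finite-dimensional space $V$ are equivalent, there exists $K\geq 1$ such that the original distance $d$ on $V$ satisfies $K^{-1}d\leq \varphi^*\|\cdot\|_{\mathbb{R}^n}\leq K\,d$. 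Setting $C:=K\,C_0$ then gives $A^{-k}\Gamma$ is $C|\det A|^{-k/n}$-dense in $V$, which is the required conclusion.

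There is no serious obstacle here: the argument is a routine trivialization of an abstract lattice to the standard one. The only point that warrants care is recording that $|\det A|>1$ (which is where the strict inclusion $A\Gamma\subsetneq\Gamma$ enters) and that the density constant $C$ inherits both the constant from Lemma \ref{lem: irreducible density} and the norm-equivalence constant; neither depends on $k$, so the exponential rate $|\det A|^{-k/n}$ is preserved intact.
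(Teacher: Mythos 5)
Your proposal is correct and follows essentially the same route as the paper, which simply takes a $\mathbb{Z}$-basis of $\Gamma$ and invokes Lemma \ref{lem: irreducible density}; you merely spell out the base-change, the integrality and $|\det A|>1$ bookkeeping, and the norm-equivalence constant that the paper leaves implicit.
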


\begin{proof}
	Take a $\mathbb{Z}$-basis of $\Gamma$, the conclusion follows from Lemma \ref{lem: irreducible density} immediately.
\end{proof}

\begin{lemma}\label{lem: rcf}
	{\rm (\cite{Hartwig1996}, Rational Canonical Form)} Assume that $A\in M(n, \mathbb{Q})$. Then there exists $Q\in GL(n, \mathbb{Q})$ such that $Q^{-1}AQ = \text{diag}\{L(g_1), \cdots, L(g_k)\}$, where $g_1$, $\cdots$, $g_k \in \mathbb{Q}[\lambda]$ are monic polynomials satisfying $g_j|g_{j + 1}$, $1\leq j \leq k - 1$, and $L(g_j)$ is the companion matrix of $\ g_j$.
\end{lemma}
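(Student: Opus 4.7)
The plan is to derive the rational canonical form from the structure theorem for finitely generated modules over a principal ideal domain, applied to $\mathbb{Q}[\lambda]$. First I would endow $V = \mathbb{Q}^n$ with the structure of a $\mathbb{Q}[\lambda]$-module by letting $\lambda$ act as $A$, that is, $p(\lambda)\cdot v := p(A)v$ for $p\in\mathbb{Q}[\lambda]$ and $v\in V$. Since $V$ is finite dimensional over $\mathbb{Q}$ and $\mathbb{Q}[\lambda]$ acts through polynomials in $A$, the minimal polynomial $m_A(\lambda)$ annihilates $V$, so $V$ is a torsion $\mathbb{Q}[\lambda]$-module. It is obviously finitely generated (any $\mathbb{Q}$-basis works as a generating set).

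Next I would invoke the invariant factor decomposition over the PID $\mathbb{Q}[\lambda]$: there exist monic polynomials $g_1\mid g_2\mid\cdots\mid g_k$ in $\mathbb{Q}[\lambda]$ with $g_k = m_A$ and $g_1\cdots g_k$ equal (up to sign) to the characteristic polynomial, such that
\[
V \;\cong\; \mathbb{Q}[\lambda]/(g_1)\,\oplus\,\cdots\,\oplus\,\mathbb{Q}[\lambda]/(g_k)
\]
as $\mathbb{Q}[\lambda]$-modules. Operationally, one can obtain this either by quoting the PID structure theorem directly, or by computing the Smith normal form of $\lambda I - A$ over the Euclidean domain $\mathbb{Q}[\lambda]$ and reading off the non-unit diagonal entries as the $g_j$; the latter route is elementary and keeps everything over $\mathbb{Q}$.

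To finish, on each cyclic summand $\mathbb{Q}[\lambda]/(g_j)$ with $d_j = \deg g_j$, I would choose the $\mathbb{Q}$-basis $\{\overline{1},\overline{\lambda},\ldots,\overline{\lambda^{d_j-1}}\}$. Multiplication by $\lambda$ sends $\overline{\lambda^i}$ to $\overline{\lambda^{i+1}}$ for $i<d_j-1$, while $\overline{\lambda^{d_j}}$ is expressed via the relation $g_j(\lambda)\equiv 0$; by the definition of the companion matrix, the matrix of this action in the chosen basis is exactly $L(g_j)$. Concatenating these bases into a single basis of $V$ and letting $Q$ be the matrix whose columns are the images of this basis under the isomorphism $\bigoplus_j\mathbb{Q}[\lambda]/(g_j)\xrightarrow{\sim}V$ produces $Q\in GL(n,\mathbb{Q})$ with $Q^{-1}AQ=\mathrm{diag}\{L(g_1),\ldots,L(g_k)\}$.

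The only real obstacle is rationality: one must ensure that the entire decomposition is carried out over $\mathbb{Q}$, not over an extension. This is automatic if one uses Smith normal form — all row/column operations on $\lambda I - A\in M(n,\mathbb{Q}[\lambda])$ stay within $\mathbb{Q}[\lambda]$ — or equivalently because the PID structure theorem is applied to a module over $\mathbb{Q}[\lambda]$ itself. Everything else (cyclicity of each factor, companion-matrix shape, divisibility $g_j\mid g_{j+1}$) is bookkeeping built into the statement of the invariant factor theorem.
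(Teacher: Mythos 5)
Your argument is correct. The paper itself offers no proof of this lemma: it is quoted as the classical Rational Canonical Form theorem with a citation to Hartwig, so there is no internal proof to compare against. What you give is the standard and complete derivation — make $\mathbb{Q}^n$ a finitely generated torsion $\mathbb{Q}[\lambda]$-module via $A$, apply the invariant factor decomposition over the PID $\mathbb{Q}[\lambda]$ (or, equivalently and more constructively, the Smith normal form of $\lambda I - A$ over $\mathbb{Q}[\lambda]$), and read off the companion blocks $L(g_j)$ in the basis $\{\overline{1},\overline{\lambda},\dots,\overline{\lambda^{d_j-1}}\}$ of each cyclic summand. You also correctly identify and dispose of the only delicate point, namely that the whole change of basis can be carried out over $\mathbb{Q}$, which is automatic since the module theory is done over $\mathbb{Q}[\lambda]$ itself. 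The only cosmetic remark: since the $g_j$ are monic and $\det(\lambda I - A)$ is monic, the product $g_1\cdots g_k$ equals the characteristic polynomial exactly, with no sign ambiguity; this does not affect the argument.
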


\begin{corollary}\label{cor: rcf}
	Assume that $A\in GL(n, \mathbb{R})\bigcap M(n, \mathbb{Z})$, the characteristic polynomial of $A$ is $f(\lambda) = g^s(\lambda)$, where $s\in\mathbb{Z}^+$ and $g(\lambda)\in\mathbb{Z}[\lambda]$ is $\mathbb{Q}$-irreducible, and the minimal polynomial of $A$ is $g(\lambda)$. Then there exists an $A$-invariant rational decomposition $\mathbb{R}^n = V = \bigoplus_{j = 1}^sV_j$, such that the characteristic polynomial of $A|_{V_j}$ is $g(\lambda)$, $1\leq j \leq s$. Further, there exists $C > 0$ such that $A^{-k}\mathbb{Z}^n$ is $C|\det A|^{-\frac{k}{n}}$-dense in $\mathbb{R}^n$.
\end{corollary}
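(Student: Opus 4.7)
The plan is to split the argument cleanly into an algebraic structural step followed by an analytic density step, reducing everything to the results already established in the excerpt.

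\textbf{Decomposition.} The first statement is essentially a direct translation of Lemma \ref{lem: rcf}. Since the characteristic polynomial of $A$ is $g^s$ while the minimal polynomial is $g$, each invariant factor produced by Lemma \ref{lem: rcf} must equal $g$, and there must be exactly $s$ of them in order for the degrees to sum to $n = s\deg g$. Thus Lemma \ref{lem: rcf} yields $Q\in GL(n,\mathbb{Q})$ with $Q^{-1}AQ = \text{diag}\{L(g),\ldots,L(g)\}$ ($s$ copies). Taking $V_j$ to be the $Q$-image of the $j$-th coordinate block gives the $A$-invariant rational decomposition $\mathbb{R}^n = \bigoplus_{j=1}^s V_j$ with $A|_{V_j}$ having characteristic polynomial $g$.

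\textbf{Reduction to the irreducible case.} The density claim may be assumed under $|\det A|>1$, since otherwise $C$ larger than the diameter of a fundamental domain of $\mathbb{Z}^n$ works trivially. Rationality of $V_j$ implies that $\Gamma_j := V_j\cap\mathbb{Z}^n$ is a lattice of $V_j$; since $A\mathbb{Z}^n\subseteq\mathbb{Z}^n$ and $AV_j = V_j$, one has $A\Gamma_j\subseteq\Gamma_j$, and this inclusion is strict because $|\det(A|_{V_j})| = |g(0)| > 1$. Since $g$ is $\mathbb{Q}$-irreducible, so is the induced endomorphism $A|_{V_j}$ of $V_j/\Gamma_j$. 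Applying Corollary \ref{cor: irreducible density} to each summand produces constants $C_j>0$ such that $A^{-k}\Gamma_j$ is $C_j|\det(A|_{V_j})|^{-k/d}$-dense in $V_j$, where $d = \deg g = n/s$.

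\textbf{Combining the estimates.} Since each $V_j$ carries the same characteristic polynomial $g$, all restrictions share a common determinant, so $|\det A| = |\det(A|_{V_j})|^s$, giving the key identity $|\det(A|_{V_j})|^{-k/d} = |\det A|^{-k/(sd)} = |\det A|^{-k/n}$. Using $\Gamma_j \subseteq \mathbb{Z}^n$, one has $\bigoplus_j A^{-k}\Gamma_j \subseteq A^{-k}\mathbb{Z}^n$. For an arbitrary $x = \sum_j x_j\in\mathbb{R}^n$ with $x_j\in V_j$, choose $w_j\in A^{-k}\Gamma_j$ with $\|x_j - w_j\|\leq C_j|\det A|^{-k/n}$. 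Equivalence of norms on the finite-dimensional space $\mathbb{R}^n$ (with a constant $C'$ depending only on the fixed decomposition $\mathbb{R}^n = \bigoplus V_j$) yields $\|x - \sum_j w_j\| \leq C'\sum_j\|x_j - w_j\|$, and setting $C = C'\sum_j C_j$ finishes the proof.

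\textbf{Main obstacle.} The only delicate point is the exponent bookkeeping in the last step: matching the per-summand rate $|\det(A|_{V_j})|^{-k/d}$ on a $d$-dimensional subspace with the target global rate $|\det A|^{-k/n}$ on an $n$-dimensional space. This works because the hypothesis forces all $V_j$ to carry the \emph{same} characteristic polynomial $g$, so the restrictions share a common determinant and the exponents collapse via $(|\det A|^{1/s})^{1/d} = |\det A|^{1/n}$. Without the minimal-polynomial hypothesis, different invariant factors could appear and this clean combination would fail.
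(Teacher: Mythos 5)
Your proof is correct and takes essentially the same route as the paper: Lemma \ref{lem: rcf} plus the minimal-polynomial and irreducibility hypotheses force all invariant factors to equal $g$ (hence exactly $s$ blocks), then Corollary \ref{cor: irreducible density} is applied to each rational block $V_j$ with lattice $\Gamma_j = V_j\cap\mathbb{Z}^n$, and the per-block rates combine through $|\det(A|_{V_j})| = |\det A|^{1/s}$, exactly as in the paper's argument. Your explicit disposal of the degenerate case $|\det A| = 1$ is a minor addition that the paper's proof leaves implicit.
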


\begin{proof}
	By Lemma \ref{lem: rcf}, there exists an $A$-invariant rational decomposition $\mathbb{R}^n = V = \bigoplus_{j = 1}^kV_j$, such that the characteristic polynomial of $A_j := A|_{V_j}$ is $g_j$, and $g_j|g_{j + 1}$, $1\leq j \leq k - 1$. Notice that the minimal polynomial of $L(g_j)$ is $g_j$, hence the minimal polynomial of $A$ is $g_k$. Therefore $g_k = g$, and because of the $\mathbb{Q}$-irreducibility of $g$, we have $g_j = g$, $1\leq j \leq k$, hence $k = s$.
	
	Denote $\Gamma_j = \mathbb{Z}^n\bigcap V_j$, which is a lattice of $V_j$, and $A_j\Gamma_j \subsetneq\Gamma_j$, since $|\det A_j|  = |\det A|^{\frac{1}{s}} > 1$. The characteristic polynomial of $A_j$ is $g(\lambda)$, which is $\mathbb{Q}$-irreducible. Denote $l = \deg g$, then $sl = n$. By Corollary \ref{cor: irreducible density}, $A_j^{-k}\Gamma_j$ is $C_j|\det A_j|^{-\frac{k}{l}}$-dense in $V_j$ for some constant $C_j > 0$. Thus $A^{-k}\mathbb{Z}^n \supseteq \bigoplus_{j = 1}^sA_j^{-k}\Gamma_j$ is $C|\det A|^{-\frac{k}{n}}$-dense in $\mathbb{R}^n$ for some $C > 0$.
\end{proof}

\begin{theorem}\label{thm: density torus}
	Assume that $A\in GL(n,\mathbb{R})\bigcap M(n, \mathbb{Z})$. Then $A$ is totally non-invertible if and only if there are constants $C > 0$, $0< \mu < 1$, such that $A^{-k}\mathbb{Z}^n$ is $C\mu^k$-dense in $\mathbb{R}^n$, $\forall k \geq 1$.
\end{theorem}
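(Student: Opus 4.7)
I would prove both directions by exploiting the primary decomposition of $A$ over $\mathbb{Q}$: write $p_A(\lambda)=\prod_{i=1}^{m}g_i(\lambda)^{e_i}$ with $g_i\in\mathbb{Z}[\lambda]$ monic and $\mathbb{Q}$-irreducible, and set $V_i:=\ker g_i(A)^{e_i}$, so that $\mathbb{R}^n=\bigoplus_i V_i$ is an $A$-invariant rational decomposition and the restriction of $A$ to any $V_i$ is governed by the single irreducible $g_i$.

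For the $(\Rightarrow)$ direction, I would first note that total non-invertibility forces $|g_i(0)|\geq 2$ for every $i$: the roots of $g_i$ are algebraic integers, and if $|g_i(0)|=1$ then they would all be algebraic units and hand $A$ an algebraic unit eigenvalue. Inside each $V_i$ I would introduce the $A$-invariant rational filtration $W_{i,l}:=\ker g_i(A)^{l}\cap V_i$ for $l=0,1,\ldots$. On each successive quotient $W_{i,l}/W_{i,l-1}$ the induced map is annihilated by $g_i$, so its minimal polynomial is exactly $g_i$ and its characteristic polynomial is a power of $g_i$, which puts it precisely into the hypothesis of Corollary~\ref{cor: rcf}; the corollary then yields exponential density of preimages of the origin on the corresponding quotient subtorus at rate $|g_i(0)|^{-1/\deg g_i}<1$. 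Splicing all these filtrations into a single $A$-invariant rational flag $0=U_0\subset U_1\subset\cdots\subset U_N=\mathbb{R}^n$ produces a tower of principal torus bundles on each level of which $A^{-k}(0)$ is exponentially dense in the fiber, and iterating Lemma~\ref{lem: density lemma} up the tower delivers constants $C>0$ and $\mu:=\max_i|g_i(0)|^{-1/\deg g_i}<1$ with $A^{-k}\mathbb{Z}^n$ being $C\mu^k$-dense in $\mathbb{R}^n$.

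For the $(\Leftarrow)$ direction I would argue by contrapositive. Suppose some eigenvalue of $A$ is an algebraic unit; then grouping the $g_i$ according to whether $|g_i(0)|=1$ gives a coprime splitting $p_A=p_u p_{u'}$ in $\mathbb{Z}[\lambda]$ and a rational $A$-invariant decomposition $\mathbb{R}^n=V_u\oplus V_{u'}$ with $V_u\neq 0$ and $|\det(A|_{V_u})|=1$. Let $\pi_u:\mathbb{R}^n\to V_u$ be the projection along $V_{u'}$; then $\Lambda_u^+:=\pi_u(\mathbb{Z}^n)$ is a lattice in $V_u$ containing $V_u\cap\mathbb{Z}^n$ with finite index, and $A|_{V_u}$ preserves $\Lambda_u^+$ with unit determinant, hence acts as a lattice automorphism and satisfies $(A|_{V_u})^{-k}\Lambda_u^+=\Lambda_u^+$ for every $k$. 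Decomposing any $y\in A^{-k}\mathbb{Z}^n$ along $V_u\oplus V_{u'}$, the $V_u$-component must then land in $\Lambda_u^+\subseteq\mathbb{Z}^n+V_{u'}$, forcing $A^{-k}\mathbb{Z}^n\subseteq\mathbb{Z}^n+V_{u'}$. Consequently $A^{-k}(0)$ is trapped for every $k$ inside the proper subtorus $T_{V_{u'}}:=(V_{u'}+\mathbb{Z}^n)/\mathbb{Z}^n\subsetneq\mathbb{T}^n$, and any $p\in\mathbb{T}^n\setminus T_{V_{u'}}$ stays a uniform positive distance $d(p,T_{V_{u'}})>0$ away from $\bigcup_k A^{-k}(0)$, which rules out $C\mu^k$-density for any $\mu<1$.

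The delicate point will be the forward direction's treatment of non-semisimple primary components: Corollary~\ref{cor: rcf} directly covers only the case where the minimal polynomial of the restricted matrix equals the single irreducible appearing in its characteristic polynomial, which fails inside a primary component $V_i$ whenever the Jordan structure of $g_i(A)|_{V_i}$ is non-trivial. The filtration $W_{i,l}=\ker g_i(A)^{l}\cap V_i$ is introduced precisely to cut $V_i$ into strata on which that hypothesis does hold, and the bookkeeping effort lies in merging all the intra-$V_i$ filtrations into one global flag, verifying that each graded piece carries an $A$-preserved lattice with $|\det|>1$, and propagating exponential density with a single uniform rate $\mu<1$ all the way up the tower via repeated use of Lemma~\ref{lem: density lemma}.
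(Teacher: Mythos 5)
Your proposal is correct and follows essentially the same route as the paper's proof: for sufficiency, the primary (rational) decomposition, the filtration by kernels of powers of each irreducible factor, Corollary~\ref{cor: rcf} on the graded pieces, and Lemma~\ref{lem: density lemma} iterated up the resulting tower of principal torus bundles (the paper runs a separate tower inside each primary component and then takes direct sums, which is only a bookkeeping difference from your single merged flag). For necessity, your projected-lattice argument confining $A^{-k}\mathbb{Z}^n$ to $\mathbb{Z}^n+V_{u'}$ is an equivalent variant of the paper's rational conjugation to $\operatorname{diag}\{A_g,A_h\}$ with $A_g\in GL(n_g,\mathbb{Z})$, both trapping the preimages in a fixed countable union of parallel affine subspaces.
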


\begin{proof}
	To show the necessity, assume for contradiction that there is an algebraic unit in the eigenvalues of $A$, then the characteristic polynomial of $A$ has a factorization $f = gh$, where $g, h\in\mathbb{Z}[x]$, $(g, h) = 1$, and the constant term of $g$ is $\pm 1$. Hence there exists $Q\in GL(n, \mathbb{Q})$ such that $Q^{-1}AQ = B = \text{diag}\{A_g, A_h\}$, the characteristic polynomials of $A_g$, $A_h$ are $g$, $h$ respectively, and $A_g \in GL(n_g, \mathbb{Z})$, $n_g = \dim\ker g(A)$. Assume $Q^{-1}\mathbb{Z}^n \subseteq q^{-1}\mathbb{Z}^n$ for some $q\in\mathbb{Z}^+$, then
	$$A^{-k}\mathbb{Z}^n = QB^{-k}Q^{-1}\mathbb{Z}^n \subseteq q^{-1}Q(A_g^{-k}\mathbb{Z}^{n_g}\bigoplus \mathbb{R}^{n - n_g}) \subseteq q^{-1}Q(\mathbb{Z}^{n_g}\bigoplus\mathbb{R}^{n - n_g}),$$ which cannot be arbitrarily dense in $\mathbb{R}^n$.
	
	To show the sufficiency, let $f$ be the characteristic polynomial of $A$. First consider the case when $f(\lambda) = g^s(\lambda)$, where $g\in\mathbb{Z}[x]$ is $\mathbb{Q}$-irreducible, $s\in\mathbb{Z}^+$. Assume that the minimal polynomial of $A$ is $m(\lambda) = g^t(\lambda)$, $1\leq t \leq s$.
	Denote $V_j = \ker g^j(A)$, then one has the $A$-invariant rational filtration
	$$\{0\} = V_0 \subsetneq V_1 \subsetneq \cdots \subsetneq V_t = V = \mathbb{R}^n.$$
	
	Thus $\Gamma_j := \mathbb{Z}^n\bigcap V_j$ is a lattice in $V_j$, and $\Gamma :=\Gamma_t = \mathbb{Z}^n$. $A\Gamma_j \subseteq \Gamma_j$. Consider the fiber bundle $\pi_j: V/V_j\Gamma \to V/V_{j + 1}\Gamma$. It is a $V_{j + 1}/V_j\Gamma_{j + 1}$-principal bundle.
	
	One can prove by induction that, there are constants $C_j > 0$, $0 < \mu_j < 1$ such that $A_j^{-k}(x)$ is $C_j\mu_j^k$-dense in $V/V_j\Gamma$, $\forall x\in V/V_j\Gamma$, $0\leq j \leq t$, where $A_j: V/V_j\Gamma \to V/V_j\Gamma$ is induced by $A$. The basic case is $j = t$ and there is nothing to prove. The target is $j = 0$, once $A_0^{-k}(x)$ is $C_0\mu_0^k$-dense in $V/V_0\Gamma = V/\Gamma = \mathbb{R}^n/\mathbb{Z}^n = \mathbb{T}^n$, the proof is completed since $\pi: \mathbb{R}^n \to\mathbb{T}^n$ is locally isometric.
	
	Assume that the conclusion holds for $j + 1$, where $j \leq t - 1$. Consider the fiber bundle $\pi_j: V/V_j\Gamma \to V/V_{j + 1}\Gamma$, one has the following commuting diagram
	$$\xymatrix{V/V_j\Gamma \ar[d]_{\pi_j}\ar[r]^{A_j} & V/V_j\Gamma \ar[d]^{\pi_j}\\ V/V_{j + 1}\Gamma \ar[r]_{A_{j + 1}} & V/V_{j + 1}\Gamma}$$
	
	For any fixed $x\in V/V_j\Gamma$, by inductive hypothesis, $A_{j + 1}^{-k}\pi_j(x)$ is $C_{j + 1}\mu_{j + 1}^k$-dense in $V/V_{j + 1}\Gamma$. Take any $\bar{z}$ in it, then $A_j^k$ maps the fiber $\pi_j^{-1}(\bar{z})$ to the fiber $\pi_j^{-1}\pi_j(x)$. Since the fiber is a translation of $V_{j + 1}/V_j\Gamma_{j + 1}$, the restriction of $A_j^k$ on this fiber is the composition of $A^k_{j, j + 1}: V_{j + 1}/V_j\Gamma_{j + 1} \to V_{j + 1}/V_j\Gamma_{j + 1}$ and some translations. Notice that the characteristic polynomial of $A_{j, j + 1}$ is a power of $g(\lambda)$, and the minimal polynomial of $A_{j, j + 1}$ is $g(\lambda)$, which is $\mathbb{Q}$-irreducible, and $|\det A_{j, j + 1}| > 1$, since $A$ is totally non-invertible. Now apply Corollary \ref{cor: rcf} to $A_{j, j + 1}$, we have that $A_j^{-k}(x)\bigcap \pi_j^{-1}(\bar{z})$ is $C_{j, j + 1}\mu_{j, j + 1}^k$-dense in the fiber for some $C_{j , j + 1} > 0$ and $0 < \mu_{j, j + 1} < 1$. By Lemma \ref{lem: density lemma}, $A_j^{-k}(x)$ is $C_j\mu_j^k$-dense in $V/V_j\Gamma$, where $C_j $ depends on $C_{j + 1}$ and $C_{j, j + 1}$, $\mu_j = \max\{\mu_{j + 1}, \mu_{j, j + 1}\}$, both of them are independent of $k$ and $x$. The induction is completed.
	
	Now consider the general case. The characteristic polynomial of $A$ has a factorization $f(\lambda) = \Pi_{i = 1}^r g_i^{s_i}(\lambda)$, and $\mathbb{R}^n$ has an $A$-invariant rational decomposition $\mathbb{R}^n = \bigoplus_{i = 1}^r\ker g_i^{s_i}(A)$. Again denote $V_i = \ker g_i^{s_i}(A)$, $A_i = A|_{V_i}$, $\Gamma_i = V_i \bigcap\mathbb{Z}^n$, and apply the above argument to $(V_i, A_i)$. Then there are constants $C_i > 0$, $0 < \mu_i < 1$ such that $A_i^{-k}\Gamma_i$ is $C_i\mu_i^k$-dense in $V_i$. Therefore $A^{-k}\mathbb{Z}^n \supseteq A^{-k}(\bigoplus_{i = 1}^r\Gamma_i) = \bigoplus_{i = 1}^r A_i^{-k}\Gamma_i$ is $C\mu^k$-dense in $\mathbb{R}^n$, where $C$ depends on $C_1$, $\cdots$, $C_r$ and $\mu = \max\{\mu_i: 1\leq i \leq r\}$.
\end{proof}

Finally, we can handle the nilmanifold case.

\begin{proof}[Proof of Theorem \ref{thm: exponential density}]
	Denote $M_i = N/N_{i + 1}\Gamma$, $\Psi_i\in\text{End}(M_i)$ is induced by $\Psi\in\text{End}(M)$, $\pi_i: M_i \to M_{i - 1}$ is the canonical projection, then $\pi_i: M_i \to M_{i - 1}$ is an $N_i/N_{i + 1}\Gamma_i = \mathbb{T}^{d_i}$-principal bundle, and we have the following commuting diagram, $1\leq i \leq s$.
	$$\xymatrix{
		M_i \ar[r]^{\Psi_i} \ar[d]_{\pi_i} & M_i \ar[d]^{\pi_i}\\
		M_{i - 1} \ar[r]^{\Psi_{i - 1}} & M_{i - 1}\\
	}$$
	Recall that $\psi = D_e\Psi\in\text{Aut}(\mathfrak{n})$, $\psi_i \in\text{Aut}(\mathfrak{n}_i/\mathfrak{n}_{i + 1})$.
	
	$(2)\Rightarrow(3)$ is obvious.
	
	$(3)\Rightarrow(1)$ can be proved as follows. If there is an algebraic unit in the eigenvalues of $\psi$, then the same property must hold for some $\psi_i$. Since the fibers are homeomorphic to tori, from the torus case, the preimages of zero in $N_i/N_{i + 1}\Gamma_i$ lie in a closed proper submanifold of it. Therefore, in the diagram above, the preimages of $eN_{i + 1}\Gamma$ in $M_i$ lie in some compact proper subbundle of $M_i$ over $M_{i - 1}$, which cannot be dense in $M_i$, and hence the conclusion does not hold for $M$.
	
	To show $(1)\Rightarrow(2)$, one can prove by induction that there are constants $C_i > 0$, $0 < \mu_i < 1$, such that $\forall x\in M_i$, $\Psi_i^{-k}(x)$ is $C_i\mu_i^k$-dense in $M_i$. The base case is $i = 1$. Since $M_0$ is a single point, there is nothing to prove. The target is the case when $i = s$. Note that $M_s = M$, $\Psi_s = \Psi$.
	
	Assume $i \geq 2$ and that the conclusion holds for $i - 1$. $\Psi_i \in \text{End}(M_i)$ maps the fiber $\pi_i^{-1}\pi_i(x)$ to the fiber $\pi_i^{-1}\Psi_{i - 1}\pi_i(x)$, $\forall x\in M_i$. Since every fiber is a left translation of $N_i/N_{i + 1}\Gamma_i$, the restriction of $\Psi_i$ on every fiber is a composition of an endomorphism on torus, $\Psi_{i, i + 1} : N_i/N_{i + 1}\Gamma_i \to N_i/N_{i + 1}\Gamma_i$, and some left translations.
	
	By inductive hypothesis, there are constants $C_{i - 1} > 0$, $0 < \mu_{i - 1} < 1$ such that $\Psi_{i - 1}^{-k}(\pi_i(x))$ is $C_{i - 1}\mu_{i - 1}^k$-dense in $M_{i - 1}$. For any $\bar{z} \in \Psi_{i - 1}^{-k}(\pi_i(x))$, $\Psi_i^k$ maps the fiber $\pi_i^{-1}(\bar{z})$ to the fiber $\pi_i^{-1}\pi_i(x)$. Moreover, $D_e\Psi_{i, i + 1} = \psi_i: \mathfrak{n}_i/\mathfrak{n}_{i + 1} \to \mathfrak{n}_i/\mathfrak{n}_{i + 1}$. Since the characteristic polynomial $f_i(\lambda)$ of $\psi_i$ is a factor of $f(\lambda)$, $f_i(\lambda)$ satisfies the condition in Theorem \ref{thm: density torus}. Therefore, there are constants $C > 0$, $0 < \mu < 1$, independent of $k$, $x$, $\bar{z}$, such that $\Psi_i^{-k}(x) \bigcap \pi_i^{-1}\pi_i(\bar{z})$ is $C\mu^k$-dense in the fiber $\pi_i^{-1}\pi_i(\bar{z})$. By Lemma \ref{lem: density lemma}, there are constants $C_i = C_i(C_{i - 1}, C) > 0$, $0 < \mu_i =\max\{\mu_{i - 1}, \mu\} < 1$, such that $\Psi_i^{-k}(x)$ is $C_i\mu_i^k$-dense in $M_i$.
\end{proof}

\section{Rigidity of conjugacy and stable Lyapunov exponents}\label{Rigidity of conjugacy and stable Lyapunov exponents}

In this section, $M = N/\Gamma$ is a nilmanifold with a Riemannian metric induced by a right-invariant Riemannian metric on $N$, $f$ is an Anosov map on $M$ with one-dimensional stable bundle, $F$ is a lift of $f$, $\Psi$ is the linear part of $F$, $H$ is the conjugacy between $F$ and $\Psi$ constructed in Lemma \ref{lem: conjugate}. Under some conditions, we will prove the equivalence between the existence of conjugacy and the same stable Lyapunov exponents at periodic points.

\subsection{Conjugacy implies the same stable Lyapunov exponents}

In this subsection, with the help of exponential density of preimage set, we prove that the existence of conjugacy between $f$ and $\Psi$ implies that the periodic stable Lyapunov exponents of $f$ are the same as $\Psi$. For convenience, we restate Theorem \ref{intro thm: conjuacy implies pdc} as follows.

\begin{theorem}\label{thm: conjugacy implies pdc}
	If $f$ is topologically conjugate to $\Psi$, which is totally non-invertible, then $\lambda^s(p, f) = \lambda^s(q, f)$, for all $p, q\in\text{Per}(f)$.
\end{theorem}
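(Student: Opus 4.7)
The plan is to combine the genuine conjugacy $h : M \to M$, the exponential density of preimages of $\Psi$, and the one-dimensionality of the stable bundle to force the stable Lyapunov exponents of $f$ at any two periodic points to coincide.

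First I reduce to a global picture on $M$: since $f$ is topologically conjugate to $\Psi$, Proposition \ref{prop: unique H} shows that the lift $H$ of Lemma \ref{lem: conjugate} commutes with $\Gamma$ and descends to a homeomorphism $h : M \to M$ with $h \circ f = \Psi \circ h$; in particular $h$ bijects $\text{Per}(f)$ onto $\text{Per}(\Psi)$ preserving periods and sends stable leaves of $f$ onto stable leaves of $\Psi$. By Lemma \ref{lem: Holder}, $h$ and $h^{-1}$ are H\"older continuous. Because $\Psi$ is totally non-invertible, Theorem \ref{thm: exponential density} furnishes constants $C > 0$ and $0 < \mu < 1$ with $\Psi^{-k}(y)$ being $C\mu^k$-dense in $M$ for every $y$ and $k \geq 1$; transferring through the H\"older $h^{-1}$, the $f$-preimage sets $f^{-k}(x)$ are also exponentially dense in $M$.

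The core comparison then runs as follows. Fix $p, q \in \text{Per}(f)$ and set $\mu_p = \mu^s(p,f)$, $\mu_q = \mu^s(q,f)$; passing to the common iterate $f^{N_p N_q}$, assume $p$ and $q$ are fixed points. On the universal cover take a short stable segment $J \subset \widetilde{\mathcal{F}}^s(\widetilde{p})$ at a lift $\widetilde{p}$, and pull it back along the $\widetilde{p}$-branch of $F^{-k}$ to obtain segments $J_k \subset \widetilde{\mathcal{F}}^s(\widetilde{p})$ of length growing at the exact rate $\mu_p^{-k}$. Under the leaf conjugacy $H$, the image $H(J_k) = \Psi^{-k}(H(J))$ is a segment in $\widetilde{\mathcal{L}}^s$ of length exactly $\mu_\Psi^{-k}|H(J)|$. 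Combining H\"older control on $H$ and $H^{-1}$ with quasi-isometry of $\widetilde{\mathcal{F}}^s$ (Lemma \ref{lem: quasi-isometry}) and of $\widetilde{\mathcal{L}}^s$, these two growth rates are linked by two-sided H\"older inequalities that confine $\mu_p$ to a non-trivial interval around $\mu_\Psi$ but do not pin it down.

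The final sharpening uses the exponential density of preimages. On the $\Psi$-side, $\bigcup_k \Psi^{-k}(H(\widetilde{p}))$ is distributed in $N$ at a rate rigidly determined by the eigenvalues of $\Psi$; the corresponding rate on the $f$-side along stable leaves, transported through $h$, is controlled solely by $\mu_p$ thanks to the one-dimensionality of $\widetilde{\mathcal{F}}^s$. Matching these two density rates leaf-by-leaf forces $\mu_p = \mu_\Psi$, and the same argument with $q$ in place of $p$ yields $\mu_q = \mu_\Psi$, hence $\mu_p = \mu_q$. The main obstacle is precisely this last step: H\"older control alone is insufficient, and one must combine the rigid linear preimage structure of the totally non-invertible $\Psi$ with the leaf-preserving property of $h$ and quasi-isometry of the one-dimensional $\widetilde{\mathcal{F}}^s$ to upgrade the H\"older inequalities into the desired equality.
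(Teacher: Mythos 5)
There is a genuine gap at exactly the point you flag as ``the main obstacle.'' Your first paragraph matches the paper's reduction (Proposition \ref{prop: unique H}, Lemma \ref{lem: Holder}, Theorem \ref{thm: exponential density}), but the ``final sharpening'' is not an argument: you assert that the rate at which $f$-preimages distribute along stable leaves is ``controlled solely by $\mu_p$'' and that matching it with the $\Psi$-side density ``forces $\mu_p=\mu_\Psi$,'' yet no mechanism is given. The preimage sets of $f$ are determined purely topologically by the conjugacy ($f^{-k}(x)=h^{-1}(\Psi^{-k}(h(x)))$), so their distribution carries no metric information beyond what $h$ transports, and since $h$ is only H\"older this transport loses exactly the exponent $\alpha$ you already conceded makes the two-sided inequalities inconclusive. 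Nothing in the proposal upgrades these H\"older inequalities to an equality. A secondary error: the pulled-back segments $J_k=F^{-k}(J)$ through a fixed lift $\widetilde p$ do \emph{not} grow at the exact rate $\mu_p^{-k}$; only the derivative at the point $\widetilde p$ has that rate, while the segment immediately leaves any neighborhood of $\widetilde p$ where $\|Df|_{E^s}\|\approx\mu^s(p,f)$, so ``exact rate'' estimates for curves of definite length are unavailable without further work.

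The paper's proof closes this gap by a quite different device. It argues by contradiction with $\mu_-<\mu_+$, fixes $\delta$ with $(1+\delta)^4<(\mu_+/\mu_-)^\alpha$, and installs the adapted metric of Lemma \ref{lem: adapted metric} so that $\mu_-(1+\delta)^{-1}<|Df|_{E^s}|<\mu_+(1+\delta)$ globally. Exponential density of preimages produces, for small $\varepsilon$, a point $x(\varepsilon)\in B(q,\varepsilon)$ with $f^{k(\varepsilon)}(x(\varepsilon))=p$ and $k(\varepsilon)\lesssim -\ln\varepsilon$; a stable segment $I(\varepsilon)$ of fixed length $\eta$ through $x(\varepsilon)$ is then shown, by pushing through $H$ to the \emph{linear} model (where lengths scale exactly by $\mu^s(\Psi)^n$ and $\mu^u_+(\Psi)^n$), to remain in $B(q,\eta_0)$ for at least $N(\varepsilon)\gtrsim\alpha\,k(\varepsilon)$ iterates, during which the contraction rate is close to $\mu^s(q,f)\approx\mu_+$. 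A matched segment $J(\varepsilon)$ through $p$ is constructed (via deck translations and $H$ commuting with $\Gamma$) with $f^{k(\varepsilon)}(J(\varepsilon))=f^{k(\varepsilon)}(I(\varepsilon))$ and $\|h(J(\varepsilon))\|=\|h(I(\varepsilon))\|$; it stays near $p$, so it contracts at rate close to $\mu_-$. Comparing the lengths of the common image yields $1\gtrsim\bigl((\mu_+/\mu_-)^{N(\varepsilon)/k(\varepsilon)}(1+\delta)^{-4}\bigr)^{k(\varepsilon)}\cdot\mathrm{const}$, which explodes as $\varepsilon\to0$, giving the contradiction. This residence-time and segment-length comparison -- not a density-rate matching -- is the missing idea; without it (or an equivalent, e.g.\ the affine structures used later in Theorem \ref{thm: pdc implies H smooth}), your outline does not prove the statement. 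Note also that the paper's Theorem \ref{thm: conjugacy implies pdc} only concludes equality of the periodic exponents of $f$; the identification with $\lambda^s(\Psi)$ is a separate step (Theorem \ref{thm: p, q = A}) using quasi-isometry of $\widetilde{\mathcal{F}}^s$ (Lemma \ref{lem: quasi-isometry}).
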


\begin{proof}
	By Proposition \ref{prop: unique H}, $H$ commutes with $\Gamma$ and is projected to a conjugacy $h$ between $f$ and $\Psi$. Moreover, by Lemma \ref{lem: Holder}, $H$ is bi-$\alpha$-H\"older continuous for some $0 < \alpha < 1$. Hence $h$ is also bi-$\alpha$-H\"older continuous. 
	
	Assume for contradiction that $\mu_- < \mu_+$, then we can fix $\delta > 0$ such that $(1 + \delta)^4 < \left(\frac{\mu_+}{\mu_-}\right)^\alpha$. By Lemma \ref{lem: adapted metric}, there is an adapted metric for $f$, denoted by $\left|\cdot\right|$, such that $\mu_-(1 + \delta)^{-1} < |Df|_{E^s(x)}| < \mu_+(1 + \delta)$, $\forall x\in M$. We also use $\left|\cdot\right|$ to denote the length of a curve under the adapted metric. Note that $\left\|\cdot\right\|$ is used to denote the original metric on $M$ induced by a right-invariant metric on $N$, as well as the length of a curve under the metric.
	
	\begin{claim*}
		There are constants $0 < \mu < 1$, $C > 1$ such that the followings hold.
		\begin{enumerate}
			\item $C^{-1}\left\|\cdot\right\| \leq \left|\cdot\right| \leq C\left\|\cdot\right\|$;
			\item For $x, y\in N$, $d(x, y)\leq 1$ implies that $d(H(x), H(y)) \leq Cd(x, y)^\alpha$;
			\item For $x, y\in N$, $d(x, y)\leq 1$ implies that $d_{\widetilde{\mathcal{F}}^s}(x, \beta_{\widetilde{\mathcal{F}}}(x, y)) + d_{\widetilde{\mathcal{F}}^u}(\beta_{\widetilde{\mathcal{F}}}(x, y), y) \leq Cd(x, y)$;
			\item For $x, y\in N$, $d(x, y)\leq 1$ implies that $d_{\widetilde{\mathcal{L}}^s}(x, \beta(x, y)) + d_{\widetilde{\mathcal{L}}^u}(\beta(x, y), y) \leq Cd(x, y)$;
			\item For any $\omega \in M$, $f^{-k}(\omega)$ is $C\mu^{k\alpha}$-dense in $M$.
		\end{enumerate}
	\end{claim*}
	
	\begin{proof}[Proof of Claim]
		For 1, notice that $\left|\cdot\right|$ is equivalent with $\left\|\cdot\right\|$.
		
		For 2, notice that $d$ is right-invariant, $H$ is $\alpha$-H\"older continuous.
		
		For 3, notice that $d$, $\widetilde{\mathcal{F}}^s$, $\widetilde{\mathcal{F}}^u$ are right-$\Gamma$-invariant, and $\mathfrak{n} = \widetilde{E}^s(e)\oplus\widetilde{E}^u(e)$.
		
		For 4, notice that $d$, $\widetilde{\mathcal{L}}^s$, $\widetilde{\mathcal{L}}^u$ are right-invariant, and $\mathfrak{n} = \mathfrak{n}^s\oplus\mathfrak{n}^u$.
		
		For 5, notice that $\Psi$ is totally non-invertible, $f$ and $\Psi$ are conjugate via a bi-$\alpha$-H\"older continuous homeomorphism, we can apply Theorem \ref{thm: exponential density}.
	\end{proof}
	
	Return to the proof of theorem \ref{thm: conjugacy implies pdc}. There are periodic points $p$, $q$ such that $\mu^s(p, f) < \mu_-(1 + \delta)$, $\mu^s(q, f) > \mu_+(1 + \delta)^{-1}$. By considering an iteration of $f$ if necessary, assume that $p$, $q$ are fixed points, then $\mu^s(p, f) = \|Df|_{E^s(p)}\|$, $\mu^s(q, f) = \|Df|_{E^s(q)}\|$.
	
	For $\varepsilon > 0$, there exist a positive integer $k(\varepsilon)$ and some point $x(\varepsilon) \in B(q, \varepsilon)$ such that $f^{k(\varepsilon)}(x(\varepsilon)) = p$. Take $k(\varepsilon)$ to be the smallest positive integer satisfying this property, then
	$$k(\varepsilon) \leq \frac{\ln C - \ln\varepsilon}{-\alpha\ln\mu} + 1.$$
	
	The following claim is obvious.
	
	\begin{claim*}
		There are constants $0 < \eta'_0 < \eta_0 < 1$ such that for any $\widetilde{p}\in\pi^{-1}(p)$, $\widetilde{q}\in\pi^{-1}(q)$, the followings hold.
		\begin{enumerate}
			\item $\pi^{-1}B(p, \eta_0) = \bigcup_{\gamma\in\Gamma}B(\widetilde{p}, \eta_0)\gamma$, which is a disjoint union;
			\item $\pi: B(\widetilde{p}, \eta_0) \to B(p, \eta_0)$ is an isometry;
			\item For $x\in B(\widetilde{p}, \eta_0)$, $\left\|DF|_{\widetilde{E}^s(x)}\right\| < \mu^s(p, f)(1 + \delta) < \mu_-(1 + \delta)^2$;
			\item $\pi: HB(\widetilde{p}, \eta_0) \to hB(p, \eta_0)$ is an isometry, and $B(H(\widetilde{p}), \eta'_0) \subseteq HB(\widetilde{p}, \eta_0)$;
			\item $\pi^{-1}B(q, \eta_0) = \bigcup_{\gamma\in\Gamma}B(\widetilde{q}, \eta_0)\gamma$, which is a disjoint union;
			\item $\pi: B(\widetilde{q}, \eta_0) \to B(q, \eta_0)$ is an isometry;
			\item For $x\in B(\widetilde{q}, \eta_0)$, $\left\|DF|_{\widetilde{E}^s(x)}\right\| > \mu^s(q, f)(1 + \delta)^{-1} > \mu_+(1 + \delta)^{-2}$;
			\item $\pi: HB(\widetilde{q}, \eta_0) \to hB(q, \eta_0)$ is an isometry, and $B(H(\widetilde{q}), \eta'_0) \subseteq HB(\widetilde{q}, \eta_0)$.
		\end{enumerate}
	\end{claim*}
	
	Now fix some $\widetilde{q}\in\pi^{-1}(q)$. Choose $\eta > 0$ such that $2C^2\eta^\alpha < \eta'_0$.
	
	When $C\varepsilon < \eta$, let $\widetilde{x}(\varepsilon)$ be the unique point in $\pi^{-1}(x(\varepsilon))\bigcap B(\widetilde{q}, \eta_0)$, $\widetilde{y}(\varepsilon) = \beta_{\widetilde{\mathcal{F}}}(\widetilde{x}(\varepsilon), \widetilde{q})$. Then we have $d(\widetilde{y}(\varepsilon), \widetilde{q}) < Cd(\widetilde{x}(\varepsilon), \widetilde{q}) < C\varepsilon < \eta < \eta_0$, so $y(\varepsilon) := \pi(\widetilde{y}(\varepsilon))$ lies in $B(q, \eta_0)$.
	
	Let $\widetilde{I}(\varepsilon)$ be a curve lying in $\widetilde{\mathcal{F}}^s(\widetilde{x}(\varepsilon))$ with length $\eta$ and containing $\widetilde{x}(\varepsilon)$ and $\widetilde{y}(\varepsilon)$. Such curve exists, because $d_{\widetilde{\mathcal{F}}^s}(\widetilde{x}(\varepsilon), \widetilde{y}(\varepsilon)) < Cd(\widetilde{x}(\varepsilon), \widetilde{q}) < C\varepsilon < \eta$. Since $d(\widetilde{y}(\varepsilon), \widetilde{q}) + \widetilde{I}(\varepsilon) < C\varepsilon + \eta < 2\eta < \eta_0$, the curve lies in $B(\widetilde{q}, \eta_0)$, hence $I(\varepsilon) := \pi(\widetilde{I}(\varepsilon))\subseteq B(q, \eta_0)$, which is a curve lying in $\mathcal{F}^s(x(\varepsilon))$ with length $\eta$, containing $x(\varepsilon)$ and $y(\varepsilon)$.
	
	Let $N(\varepsilon)$ be the minimal positive integer such that $f^{N(\varepsilon)}(I(\varepsilon))$ is not contained in $B(q, \eta_0)$. To estimate $N(\varepsilon)$, consider $H(B(\widetilde{q}, \eta_0))$, which is a neighborhood of $H(\widetilde{q})$, and is projected to $h(B(q, \eta_0))$ isometrically. Notice that $H(\widetilde{I}(\varepsilon))$ is a curve lying in $\widetilde{\mathcal{L}}^s(H(\widetilde{x}(\varepsilon)))$, containing $H(\widetilde{x}(\varepsilon))$ and $H(\widetilde{y}(\varepsilon))$. Denote the distance between the endpoints of a curve $r$ by $d(r)$. We have
	\begin{align*}
		&d(H(\widetilde{q}), H(\widetilde{y}(\varepsilon))) \leq Cd(\widetilde{q}, \widetilde{y}(\varepsilon))^\alpha \leq C(C\varepsilon)^\alpha;\\
		&d_{\widetilde{\mathcal{L}}^u}(\Psi^n(H(\widetilde{q})), \Psi^n(H(\widetilde{y}(\varepsilon)))) \leq C^2(C\varepsilon)^\alpha\mu^u_+(\Psi)^n;\\
		&\|H(\widetilde{I}(\varepsilon))\| \leq Cd(H(\widetilde{I}(\varepsilon)))\leq C^2d(\widetilde{I}(\varepsilon))^\alpha \leq C^2\|\widetilde{I}(\varepsilon)\|^\alpha = C^2\eta^\alpha;\\
		&\|\Psi^n(H(\widetilde{I}(\varepsilon)))\| = \mu^s(\Psi)^n\|H(\widetilde{I}(\varepsilon))\| \leq C^2\eta^\alpha\mu^s(\Psi)^n.
	\end{align*}
	
	As long as $C^2(C\varepsilon)^\alpha\mu^u_+(\Psi)^n + C^2\eta^\alpha\mu^s(\Psi)^n < \eta'_0$, $\Psi^n(H(\widetilde{I}(\varepsilon)))$ must lie in $B(\Psi^n(H(\widetilde{q})), \eta'_0)$, which is projected to $B(h(q), \eta'_0)$ isometrically, since $h(q)$ is a fixed point of $\Psi$. For such $n$, $F^n(\widetilde{I}(\varepsilon))$ must lie in $B(\widetilde{q}, \eta_0)$. Therefore,
	$$N(\varepsilon) \geq \frac{\ln\eta'_0 - (2 + \alpha)\ln C - \ln 2 - \alpha\ln\varepsilon}{\ln\mu^u_+(\Psi)} - 1.$$
	Consequently,
	$$\liminf_{\varepsilon \to 0}\frac{N(\varepsilon)}{k(\varepsilon)} \geq \alpha.$$
	
	Since $\delta$ is fixed so that $(1 + \delta)^4 < \left(\frac{\mu_+}{\mu_-}\right)^\alpha$, and $k(\varepsilon) \to +\infty$ as $\varepsilon \to 0$, we can fix $\varepsilon$ sufficiently small such that $C\varepsilon < \eta$, $\|Df^{k(\varepsilon)}|_{E^s}\| < 1$, and
	$$\left(\left(\frac{\mu_+}{\mu_-}\right)^{\frac{N(\varepsilon)}{k(\varepsilon)}}\frac{1}{(1 + \delta)^4}\right)^{k(\varepsilon)} > \frac{C^6}{\eta}.$$
	
	Now fix $\widetilde{p} = F^{k(\varepsilon)}(\widetilde{x}(\varepsilon)) \in \pi^{-1}(p)$.
	
	Since $f^{k(\varepsilon)}(x(\varepsilon)) = p$, one has $f^{k(\varepsilon)}(I(\varepsilon)) \subseteq \mathcal{F}^s(p)$, $F^{k(\varepsilon)}(\widetilde{I}(\varepsilon)) \subseteq \widetilde{\mathcal{F}}^s(\widetilde{p})$. $\left\|f^{k(\varepsilon)}(I(\varepsilon))\right\| < \left\|I(\varepsilon)\right\|$, so the curve is contained in $B(p, \eta_0)$. Similarly, $F^{k(\varepsilon)}(\widetilde{I}(\varepsilon))$ is contained in $B(\widetilde{p}, \eta_0)$ and is projected to $f^{k(\varepsilon)}(I(\varepsilon))$ isometrically.
	
	On the other hand, $p$ is a fixed point of $f$, thus $B(F^{k(\varepsilon)}(\widetilde{p}), \eta_0)$ is also projected to $B(p, \eta_0)$ isometrically. Let $F^{k(\varepsilon)}(\widetilde{J}(\varepsilon))$ be the lift of $f^{k(\varepsilon)}(I(\varepsilon))$ in $B(F^{k(\varepsilon)}(\widetilde{p}), \eta_0)$. Then $F^{k(\varepsilon)}(\widetilde{J}(\varepsilon))$ is the image of some right-$\Gamma$-translation of $F^{k(\varepsilon)}(\widetilde{I}(\varepsilon))$, since they have the same projection.
	
	Clearly $\widetilde{J}(\varepsilon)$ contains $\widetilde{p}$. To show that $\widetilde{J}(\varepsilon)$ is contained in $B(\widetilde{p}, \eta_0)$, notice that $H^{-1}$ commutes with $\Gamma$, therefore $\Psi^{k(\varepsilon)}(H(\widetilde{J}(\varepsilon)))$ is the image of some right-$\Gamma$-translation of $\Psi^{k(\varepsilon)}(H(\widetilde{I}(\varepsilon)))$. Hence $H(\widetilde{J}(\varepsilon))$ is also the image of some right translation of $H(\widetilde{I}(\varepsilon))$, and thus $\|H(\widetilde{J}(\varepsilon))\| = \|H(\widetilde{I}(\varepsilon))\| \leq C\eta^\alpha < \eta'_0$. Consequently, $H(\widetilde{J}(\varepsilon))$ is contained in $B(H(\widetilde{p}), \eta'_0)$, $\widetilde{J}(\varepsilon)$ is contained in $B(\widetilde{p}, \eta_0)$ and is projected isometrically to $J(\varepsilon) :=\pi (\widetilde{J}(\varepsilon))$, which is contained in $B(p, \eta)$, and thus $H(\widetilde{J}(\varepsilon))$ is projected isometrically to $h(J(\varepsilon))$.
	
	Therefore, $J(\varepsilon) \subseteq \mathcal{F}^s(p)$, $f^{k(\varepsilon)}(J(\varepsilon)) = f^{k(\varepsilon)}(I(\varepsilon))$, $\|h(J(\varepsilon))\| = \|H(\widetilde{J}(\varepsilon))\| = \|H(\widetilde{I}(\varepsilon))\| = \|h(I(\varepsilon))\|$.
	
	From the above discussion, we have
	\begin{align*}
		\left\|f^{N(\varepsilon)}(I(\varepsilon))\right\| &\geq (\mu_+(1 + \delta)^{-2})^{N(\varepsilon)}\left\|I(\varepsilon)\right\|;\\
		\left|f^{k(\varepsilon)}(I(\varepsilon))\right| &\geq (\mu_-(1 + \delta)^{-1})^{k(\varepsilon) - N(\varepsilon)}\left|f^{N(\varepsilon)}(I(\varepsilon))\right|;\\
		\left\|f^{k(\varepsilon)}(J(\varepsilon))\right\| &\leq (\mu_-(1 + \delta)^2)^{k(\varepsilon)}\left\|J(\varepsilon)\right\|;\\
		\left\|J(\varepsilon)\right\| &\leq Cd(J(\varepsilon)) \leq C^2d(h(J(\varepsilon)))^\alpha = C^2C^\alpha\left\|h(J(\varepsilon))\right\|^\alpha\\
        &= C^2C^\alpha\left\|h(I(\varepsilon))\right\|^\alpha \leq C^2C^\alpha(C\eta^\alpha)^\alpha.
	\end{align*}
	
	Therefore,
	\begin{align*}
		1 = \frac{\left\|f^{k(\varepsilon)}(I(\varepsilon))\right\|}{\left\|f^{k(\varepsilon)}(J(\varepsilon))\right\|}
		&\geq C^{-2}\frac{(\mu_-(1 + \delta)^{-1})^{k(\varepsilon) - N(\varepsilon)}(\mu_+(1 + \delta)^{-2})^{N(\varepsilon)}\left\|I(\varepsilon)\right\|}{(\mu_-(1 + \delta)^2)^{k(\varepsilon)}\left\|J(\varepsilon)\right\|}\\
		& \geq C^{-2}\left(\frac{\mu_+}{\mu_-}\right)^{N(\varepsilon)}\frac{1}{(1 + \delta)^{3k(\varepsilon) + N(\varepsilon)}}\frac{\left\|I(\varepsilon)\right\|}{\left\|J(\varepsilon)\right\|}\\
		& \geq \left(\frac{\mu_+}{\mu_-}\right)^{N(\varepsilon)}\frac{1}{(1 + \delta)^{4k(\varepsilon)}}\frac{\eta}{C^{4 + \alpha}(C\eta^\alpha)^\alpha}\\
		&= \left(\left(\frac{\mu_+}{\mu_-}\right)^{\frac{N(\varepsilon)}{k(\varepsilon)}}\frac{1}{(1 + \delta)^4}\right)^{k(\varepsilon)}\frac{\eta^{1 - \alpha^2}}{C^{4 + 2\alpha}}\\
		& > 1.
	\end{align*}
	
	This is a contradiction, thus the proof is completed.
\end{proof}
	
\subsection{Bootstrap of the topological conjugacy}

In this subsection, we show Corollary \ref{intro cor: rigidity of conjugacy} which is a typical rigidity phenomenon in smooth dynamics: \textit{weak equivalence} (topological conjugacy)  \textit{implies strong equivalence}(conjugacy smooth along stable foliation). 

The following theorem is a general rigidity result on the universal cover in which all we need is one-dimensional stable bundle. Recall that $H$ is a conjugacy between $F$ and $\Psi$ on $N$.

\begin{theorem}\label{thm: pdc implies H smooth}
	If $f$ is $C^r$-smooth ($r > 1$) and $\lambda^s(p, f) = \lambda^s(q, f)$, $\forall p$, $q\in\text{Per}(f)$, then $H$ is $C^r$-smooth along each stable leaf.
\end{theorem}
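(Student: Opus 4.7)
\emph{Proof plan.} The strategy, modeled on the toral argument of \cite{An2023}, is to normalize $F$ and $\Psi$ to coincide as pure scalings in adapted smooth parametrizations of stable leaves, and then to extract linearity of the resulting one-dimensional conjugacies from the functional equation they satisfy.

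First, by Theorem \ref{thm: p, q = A} the hypothesis upgrades to $\lambda^s(p,f)=\log\mu$ for every $p\in\mathrm{Per}(f)$, where $\mu:=\mu^s(\Psi)\in(0,1)$. The potential $\phi:=\log\|Df|_{E^s}\|-\log\mu$ on $M$ is of class $C^{r-1}$, and its Birkhoff sums on every periodic orbit vanish, so Livschitz's theorem (Proposition \ref{prop: Livschitz}) produces a H\"older continuous $u:M\to\mathbb{R}$ with $\phi=u-u\circ f$. Because $\dim E^s=1$ and $f|_{\mathcal{F}^s}$ is a $C^r$-contraction, the telescoping identity
\[
u(y)-u(x)=\sum_{n\geq 0}\bigl(\phi(f^n(x))-\phi(f^n(y))\bigr),\qquad y\in\mathcal{F}^s(x),
\]
converges in the $C^{r-1}$ topology along each stable leaf, so $u$ is $C^{r-1}$ along $\mathcal{F}^s$.

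Next I would lift $u$ $\Gamma$-invariantly to $N$ and consider the rescaled metric $e^{2u}g|_{\widetilde E^s}$, in which $\|DF|_{\widetilde E^s(x)}\|\equiv\mu$. For each $x\in N$, let $\gamma^F_x:\mathbb{R}\to\widetilde{\mathcal{F}}^s(x)$ be the unit-speed parametrization in this metric with $\gamma^F_x(0)=x$ for a fixed orientation of $E^s$; integrating the $C^{r-1}$ density produces a parametrization that is $C^r$ along each leaf and satisfies $F\circ\gamma^F_x(t)=\gamma^F_{F(x)}(\mu t)$. On the algebraic side, let $\gamma^\Psi_y:\mathbb{R}\to\widetilde{\mathcal{L}}^s(y)$ be the linear arc-length parametrization induced by a unit vector in $\mathfrak{n}^s$, so $\Psi\circ\gamma^\Psi_y(t)=\gamma^\Psi_{\Psi(y)}(\mu t)$. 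Setting $h_x:=(\gamma^\Psi_{H(x)})^{-1}\circ H\circ\gamma^F_x$, Lemma \ref{lem: Holder}, Lemma \ref{lem: quasi-isometry} and boundedness of $u$ guarantee that each $h_x$ is a monotone homeomorphism with $h_x(0)=0$, uniformly H\"older on bounded intervals, and the intertwining $\Psi\circ H=H\circ F$ transcribes to the functional equation
\[
h_{F(x)}(\mu t)=\mu\,h_x(t),\qquad\text{equivalently}\qquad h_x(t)=\mu^{-n}h_{F^n(x)}(\mu^n t).
\]

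The goal is then to show that every $h_x$ is \emph{linear}, $h_x(t)=c(x)\,t$, for then $H|_{\widetilde{\mathcal{F}}^s(x)}=\gamma^\Psi_{H(x)}\circ h_x\circ(\gamma^F_x)^{-1}$ is a composition of a $C^\infty$ map, a linear map, and the inverse of a $C^r$ map, hence $C^r$. The iterated equation identifies the secant slope $h_x(t)/t$ with $h_{F^n(x)}(\mu^n t)/(\mu^n t)$; I would first extract a pointwise derivative $c(x):=h_x'(0)$ by letting $n\to\infty$ and using uniform H\"older equicontinuity of $\{h_y\}_{y\in N}$ together with the bounded-error equivariance $H(x\gamma)\gamma^{-1}\in\widetilde{\mathcal{L}}^s(H(x))$ of Lemma \ref{lem: H and Gamma} to compactify the parameter $F^n(x)$ modulo $\Gamma$, and then invoke transitivity of $f$ (Lemma \ref{lem: transitivity}) to control the recurrence of the orbit. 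Once $c(x)$ exists, the remainder $r_x(t):=h_x(t)-c(x)t$ satisfies the same functional equation, is $o(t)$ at $0$, and so $r_x(t)=\mu^{-n}r_{F^n(x)}(\mu^n t)\to 0$, giving $r_x\equiv 0$.

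The main obstacle lies in that final rigidity step. The scalar equation $h(\mu t)=\mu h(t)$ alone does \emph{not} force linearity: for any $\log\mu^{-1}$-periodic $\eta$, the map $h(t)=t\cdot\eta(\log_\mu|t|)$ is a H\"older homeomorphism solving it. Rigidity in our setting must come from the \emph{family} $\{h_x\}_{x\in N}$: uniform H\"older control from Lemma \ref{lem: Holder}, the bounded-error $\Gamma$-equivariance of Lemma \ref{lem: H and Gamma}, and recurrence of $f$ on the compact $M$ must be combined to reduce the $x$-dependence to an equicontinuous parameter and so promote pointwise H\"older control to a bona fide uniform derivative at $0$, and simultaneously to propagate this derivative $F$-invariantly across leaves. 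Managing this uniformity, and verifying the descent of the multiplier $c$ through the $\Gamma$-action so that transitivity can be applied, is where the technical weight of the argument will fall.
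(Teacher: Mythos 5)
Your setup coincides with the paper's: upgrade the hypothesis via Theorem \ref{thm: p, q = A}, solve the cohomological equation by Livschitz with $C^{r-1}$ regularity along stable leaves, and rescale the stable metric so that $F$ contracts stable leaves by exactly $\mu^s(\Psi)$ (this is precisely the affine metric $d_s$ of Lemma \ref{lem: affine metric}); proving the theorem is then equivalent to showing the leafwise conjugacies $h_x$ are linear, i.e.\ that $H$ is an isometry between $d_s$ and $d_{\widetilde{\mathcal{L}}^s}$. But at exactly that point your argument stops: you correctly observe that $h(\mu t)=\mu h(t)$ alone does not force linearity, and the mechanism you sketch for the missing rigidity --- extracting a derivative $c(x)=h_x'(0)$ by letting $n\to\infty$ in $h_x(t)=\mu^{-n}h_{F^n(x)}(\mu^n t)$ and then killing the remainder $r_x$ --- begs the question, because both steps need the derivative at $0$ to exist \emph{uniformly} in the base point (otherwise $\mu^{-n}r_{F^n(x)}(\mu^n t)$ need not tend to $0$, since the base point $F^n(x)$ escapes any fixed leaf), and H\"older equicontinuity of $\{h_y\}$ plus transitivity does not by itself produce such a uniform derivative. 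You flag this yourself, so the key lemma is left unproved; as written the proposal is a plan, not a proof.

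For comparison, the paper closes this gap differently (Lemma \ref{lem: H is isometric under affine metric}): since $H^{-1}$ is Lipschitz along the one-dimensional leaves with respect to $(d_{\widetilde{\mathcal{L}}^s},d_s)$, it is differentiable at \emph{some} point $x$ of some leaf; one normalizes that derivative to $1$ and then propagates it to every pair $(y,z)$ on every leaf by a homogeneity argument: using cocompactness of $\Gamma$, the pullback by $\Psi^{-k}$, the asymptotic equivariance $d(H^{-1}(\cdot\,\gamma_k),H^{-1}(\cdot)\gamma_k)\to 0$ for $\gamma_k\in\Psi^k\Gamma$ (Lemma \ref{lem: sequence}), and --- crucially --- the invariance of $d_s$ under unstable holonomy (item 3 of Lemma \ref{lem: affine metric}), one manufactures points $x_k\in\widetilde{\mathcal{L}}^s(x)$ with $d_{\widetilde{\mathcal{L}}^s}(x_k,x)\to d_{\widetilde{\mathcal{L}}^s}(z,y)$ and $d_s(H^{-1}(x_k),H^{-1}(x))\to d_s(H^{-1}(z),H^{-1}(y))$, which forces the difference quotient to equal $1$ everywhere. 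Your outline invokes neither the one-point differentiability (Rademacher in dimension one) nor the holonomy invariance of the rescaled metric, and these are exactly the ingredients that let one move data between different stable leaves; without them the final rigidity step, which you correctly identify as carrying the technical weight, remains open.
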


Combining Theorem \ref{thm: conjugacy implies pdc} and Theorem \ref{thm: pdc implies H smooth}, we can get Corollary \ref{intro cor: rigidity of conjugacy}, which is restated as follows.

\begin{corollary}\label{cor: rigidity of conjugacy}
	Let $f$ be a $C^r$ ($r > 1$) Anosov map on a nilmanifold $M$ with one-dimensional stable bundle and totally non-invertible linear part $\Psi$. If $f$ is topologically conjugate to $\Psi$, then the conjugacy is $C^r$-smooth along each stable leaf.
\end{corollary}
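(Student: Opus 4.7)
The plan is to assemble Corollary \ref{cor: rigidity of conjugacy} directly from the two main technical results developed earlier in this section, using Proposition \ref{prop: unique H} as a bridge between the downstairs conjugacy $h$ and the upstairs conjugacy $H$.

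First, I would invoke Theorem \ref{thm: conjugacy implies pdc}: since $f$ is topologically conjugate to its linear part $\Psi$ and $\Psi$ is totally non-invertible, one obtains $\lambda^s(p, f) = \lambda^s(q, f)$ for all $p, q \in \mathrm{Per}(f)$ (and, as Theorem \ref{thm: p, q = A} records, this common value agrees with $\lambda^s(\Psi)$). This converts the hypothesis of topological conjugacy into the periodic data hypothesis needed to feed into the universal-cover rigidity theorem.

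Next, with the periodic data normalized, I would apply Theorem \ref{thm: pdc implies H smooth} to the $C^r$ Anosov map $f$. This yields that the canonical conjugacy $H \colon N \to N$ between the lift $F$ and $\Psi$ (constructed in Lemma \ref{lem: conjugate}) is $C^r$-smooth when restricted to any leaf of the stable foliation $\widetilde{\mathcal{F}}^s$.

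Finally, I would transfer this smoothness from $N$ down to $M$. By Proposition \ref{prop: unique H}, because $f$ is topologically conjugate to $\Psi$, the map $H$ commutes with the $\Gamma$-action and is projected by $\pi \colon N \to M$ to a homeomorphism $h \colon M \to M$ conjugating $f$ to $\Psi$. Since $\pi$ is a local isometry (hence a local diffeomorphism) sending leaves of $\widetilde{\mathcal{F}}^s$ onto leaves of $\mathcal{F}^s$, and since $H$ is $C^r$ along each $\widetilde{\mathcal{F}}^s$-leaf, the projected map $h = \pi \circ H \circ \pi^{-1}$ (defined locally) inherits $C^r$-regularity along each $\mathcal{F}^s$-leaf. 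This completes the argument.

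There is no genuine obstacle at this stage, as the two theorems being combined are precisely designed to yield this corollary; the only subtlety to be careful about is ensuring that the conjugacy $h$ on $M$ corresponds to the conjugacy $H$ on $N$ (rather than some other lift), which is exactly the content of Proposition \ref{prop: unique H}.
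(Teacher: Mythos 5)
Your route is the paper's route: Theorem \ref{thm: conjugacy implies pdc} converts the conjugacy hypothesis into constant periodic stable exponents, Theorem \ref{thm: pdc implies H smooth} gives $C^r$-smoothness of $H$ along the leaves of $\widetilde{\mathcal{F}}^s$, and Proposition \ref{prop: unique H} plus the local isometry $\pi$ push this down to $M$. Up to that point the argument is fine.

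The gap is in the last step. The corollary (see its form as Corollary \ref{intro cor: rigidity of conjugacy}: ``conjugate via some homeomorphism $h$, then $h$ is $C^r$-smooth along each stable leaf'') is a statement about an \emph{arbitrary} given conjugacy $\widehat{h}$, and conjugacies between $f$ and $\Psi$ are not unique: one may compose with any homeomorphism of $M$ commuting with $\Psi$, and such a homeomorphism need not be smooth a priori. What you prove is only that the particular conjugacy $h$ obtained by projecting the canonical $H$ of Lemma \ref{lem: conjugate} is $C^r$ along stable leaves, and your closing claim that identifying the given conjugacy with this $h$ ``is exactly the content of Proposition \ref{prop: unique H}'' is not accurate: that proposition asserts that $H$ descends to \emph{a} conjugacy, not that every conjugacy arises as its projection. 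The paper closes this hole by going back into the proof of Proposition \ref{prop: unique H}: for any conjugacy $\widehat{h}$ with lift $\widehat{H}$ one has $H = (Ad_n\circ A)^{-1}\circ R_n\circ\widehat{H}$ for some $n\in N$ and $A\in\text{Aut}(N)$, so $\widehat{H}$ differs from $H$ only by composition with algebraic (hence smooth) diffeomorphisms of $N$; therefore $\widehat{H}$, and then $\widehat{h}$ after projecting $\widetilde{\mathcal{F}}^s$ to $\mathcal{F}^s$, is $C^r$-smooth along each stable leaf. Adding this identification (or an equivalent argument that every conjugacy differs from $h$ by a smooth map preserving stable leaves) is what your proposal still needs.
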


\begin{proof}
	By Theorem \ref{thm: conjugacy implies pdc} and Theorem \ref{thm: pdc implies H smooth}, $H$ is $C^r$-smooth along each stable leaf. By Proposition \ref{prop: unique H}, $H$ commutes with $\Gamma$ and is projected to a conjugacy between $f$ and $\Psi$. Moreover, from the proof, for any conjugacy $\widehat{h}$ between $f$ and $\Psi$ and a lift $\widehat{H}$ of it, we have $H = (Ad_n\circ A)^{-1}\circ R_n\circ\widehat{H}$ for some $n\in N$ and $A\in\text{Aut}(N)$, thus $\widehat{H}$ is $C^r$-smooth along each stable leaf. Finally, $\widetilde{\mathcal{F}}^s$ is projected to $\mathcal{F}^s$, so $\widehat{h}$ is $C^r$-smooth along each stable leaf, too.
\end{proof}

To prove Theorem \ref{thm: pdc implies H smooth}, we need some preparations. As usual \cite{GG2008,An2023}, we construct an affine metric on each stable leaf by Livschitz Theorem \cite{L72} which is a main tool in rigidity issue about Anosov systems. For convenience, we state Livschitz Theorem in the same way as \cite{Katok1995} and give some explanation about the regularity of the solution of  cohomology equation.

\begin{proposition}\label{prop: Livschitz}
	{\rm (\cite{Katok1995}, Theorem 19.2.1)} Let $f$ be a $C^{1 + \alpha}$ ($0 < \alpha \leq \text{Lip}$) transitive Anosov map on a closed Riemannian manifold $M$, and $\phi: M \to \mathbb{R}$ be an $\alpha$-H\"older continuous function on $M$. Assume that $\sum_{i = 0}^{N_p - 1}\phi(f^i(p)) = 0$ for every periodic point $p\in\text{Per}(f)$ with period $N_p$, then there exists a unique (up to additive constant) $\alpha$-H\"older continuous function $\varphi: M \to \mathbb{R}$ satisfying $\phi = \varphi\circ f - \varphi$.
\end{proposition}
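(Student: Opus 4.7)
The plan is to follow the classical proof of Livschitz's theorem. By transitivity (Lemma \ref{lem: transitivity}) there is a point $x_0\in M$ with dense forward orbit $\{f^n(x_0)\}_{n\geq 0}$. I would define $\varphi$ on this orbit by the partial Birkhoff sums
\[
\varphi(f^n(x_0)) := \sum_{i=0}^{n-1}\phi(f^i(x_0)), \qquad n \geq 0,
\]
prove that $\varphi$ is uniformly $\alpha$-H\"older when restricted to this orbit, extend it by uniform continuity to $M$, and observe that the cohomological equation $\phi = \varphi\circ f - \varphi$ holds tautologically along the orbit and hence everywhere by continuity.

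The heart of the proof is the H\"older estimate on the orbit. Given $y = f^m(x_0)$ and $z = f^n(x_0)$ with $n > m$, set $k = n - m$, so that $f^k(y) = z$. Then $(y, f(y), \ldots, f^{k-1}(y), y, \ldots)$ is a periodic $d(y,z)$-pseudo-orbit, so the shadowing lemma (Lemma \ref{lem: shadowing}), in its quantitative Anosov form, produces a periodic point $p$ of period exactly $k$ with $d(f^i(y), f^i(p)) \leq C_0\, d(y,z)$ for $0 \leq i \leq k-1$. The hypothesis $\sum_{i=0}^{k-1}\phi(f^i(p)) = 0$ gives
\[
\varphi(z) - \varphi(y) = \sum_{i=0}^{k-1}\bigl(\phi(f^i(y)) - \phi(f^i(p))\bigr).
\]
Using Anosov local product structure, let $w$ be the unique point in the intersection of the local stable manifold of $y$ and the local unstable manifold of $p$; then $d(f^i(y), f^i(w))$ decays geometrically in $i$ while $d(f^i(w), f^i(p))$ decays geometrically in $k - 1 - i$, both at rates set by the hyperbolic splitting. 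H\"older continuity of $\phi$ converts these into function-value differences, and summing the two geometric series yields $|\varphi(z) - \varphi(y)| \leq C\, d(y,z)^\alpha$ with $C$ independent of $m, n, k$.

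Once this uniform H\"older estimate on the dense orbit is in place, $\varphi$ extends uniquely to a globally $\alpha$-H\"older function on $M$, the cohomological equation extends by continuity, and uniqueness modulo an additive constant follows because any two continuous solutions differ by a continuous $f$-invariant function, which must be constant on the closure $\overline{\{f^n(x_0)\}} = M$. The genuine obstacle is the orbit-comparison estimate above: one needs Lipschitz (rather than merely qualitative) shadowing for the Anosov map $f$, and the bookkeeping must preserve the exponent $\alpha$ throughout so that $\varphi$ inherits exactly the regularity of $\phi$. The Lipschitz case $\alpha = \text{Lip}$ is handled by the same argument with only the constants to adjust.
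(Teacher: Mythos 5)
The paper does not actually prove this proposition --- it is quoted from Katok--Hasselblatt (Theorem 19.2.1) --- and your argument is precisely the classical proof of that theorem: Birkhoff sums along a dense orbit, a periodic closing estimate, H\"older summation of the two geometric tails, extension by density, and uniqueness from invariance of the difference. The sketch is correct; the only points needing care are that the paper's Lemma \ref{lem: shadowing} is purely qualitative, so you must invoke (or derive, as you indicate via Lipschitz shadowing plus local product structure) the quantitative Anosov closing lemma in the non-invertible setting, that the closing periodic point's minimal period need only divide $k$ (which suffices, since vanishing of the periodic obstruction over the minimal period implies vanishing over any multiple), and that for a non-invertible Anosov map the local unstable manifold must be taken at the periodic point $p$, where it is canonically defined by the periodic prehistory --- exactly as you set it up.
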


\begin{remark}\label{rmk: regularity of Livschitz}
	Generally, if $f\in C^r$ ($r > 1$) and $\phi \in C^{r - 1}$, then $\varphi$ is $C^{r - 1}$-smooth along each stable leaf. To see this, assume that $x\in N$ and $y\in\widetilde{\mathcal{F}}^s(x)$, then $\varphi(x) - \varphi(y) = \sum_{i = 0}^{+\infty}(\phi(f^i(y)) - \phi(f^i(x)))$. For fixed $x$, $\phi(f^i(y)) - \phi(f^i(x)) \in C^{r - 1}$, and the derivatives of each order at $x$ along $\widetilde{\mathcal{F}}^s(x)$ converges to zero exponentially as $i\to +\infty$. For example, take a parameterization $r: (-\varepsilon, \varepsilon) \to \widetilde{\mathcal{F}}^s(x)$, $r(0) = x$, we have $\left.\frac{d}{dt}\right|_{t = 0}\phi(f^i(\gamma(t))) = D_{f^i(x)}\phi(D_xf^i(\gamma'(0)))$, whereas $\gamma'(0)$ is a stable vector. Consequently, $\varphi$ is $C^{r - 1}$-smooth along each stable leaf.
\end{remark}

Recall that Anosov maps on nilmanifolds are transitive (see Lemma \ref{lem: transitivity}), so Proposition \ref{prop: Livschitz} is valid under the assumptions of Theorem \ref{thm: pdc implies H smooth}.

\begin{corollary}
	Under the assumptions of Theorem \ref{thm: pdc implies H smooth}, there is a function $\varphi: M \to \mathbb{R}$, $C^{r - 1}$-smooth along each stable leaf, such that $\ln\|Df|_{E^s}\| - \lambda^s(\Psi) = \varphi\circ f - \varphi$.
\end{corollary}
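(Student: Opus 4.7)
The plan is a direct application of Livschitz's Theorem (Proposition \ref{prop: Livschitz}) to the transfer function $\phi := \ln\|Df|_{E^s}\| - \lambda^s(\Psi)$. First I would verify the two hypotheses. Since $f\in C^r$ with $r>1$, we have $f\in C^{1+\alpha}$ for some $0<\alpha\le r-1$, and the one-dimensional stable bundle $E^s$ is H\"older continuous on $M$; in particular $\phi$ is H\"older continuous. Second, the assumption $\lambda^s(p,f)=\lambda^s(q,f)$ for all $p,q\in\text{Per}(f)$ combined with Theorem \ref{thm: p, q = A} forces the common periodic exponent to equal $\lambda^s(\Psi)$, whence for every $p\in\text{Per}(f)$ of period $N_p$
\[
\sum_{i=0}^{N_p-1}\phi(f^i(p)) \;=\; N_p\lambda^s(p,f)-N_p\lambda^s(\Psi) \;=\; 0.
\]
Since nilmanifold Anosov maps are topologically transitive by Lemma \ref{lem: transitivity}, Proposition \ref{prop: Livschitz} yields a H\"older continuous $\varphi\colon M\to\mathbb{R}$ with $\phi=\varphi\circ f-\varphi$, unique up to an additive constant.

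For the $C^{r-1}$-regularity of $\varphi$ along each stable leaf I would invoke Remark \ref{rmk: regularity of Livschitz}. A quick inspection of the telescoping-sum argument there shows that only leafwise $C^{r-1}$-smoothness of $\phi$ is needed, since the directional derivatives taken are in stable directions $\gamma'(0)$. It therefore suffices to check that $\ln\|Df|_{E^s}\|$ is $C^{r-1}$ when restricted to a stable leaf, which is straightforward: stable leaves of a $C^r$ Anosov map are $C^r$-immersed curves, $E^s$ restricted to such a leaf is its tangent line bundle, and $f$ sends stable leaves to stable leaves $C^r$-smoothly, so $\|Df|_{E^s}\|$ is a $C^{r-1}$ function of the leaf parameter.

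The argument is essentially bookkeeping: all the substance has already been deposited in Theorem \ref{thm: p, q = A} (which upgrades the constant periodic exponent to equal $\lambda^s(\Psi)$) and in the Livschitz machinery. The only minor point requiring care is the observation that the hypothesis ``$\phi\in C^{r-1}$'' of Remark \ref{rmk: regularity of Livschitz} can be weakened to leafwise $C^{r-1}$-regularity, which a careful reading of that remark confirms. I do not anticipate any serious obstacle.
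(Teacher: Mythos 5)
Your proposal is correct and follows essentially the paper's own proof: Theorem \ref{thm: p, q = A} makes the periodic sums of $\phi=\ln\|Df|_{E^s}\|-\lambda^s(\Psi)$ vanish, Proposition \ref{prop: Livschitz} (with transitivity from Lemma \ref{lem: transitivity}) produces $\varphi$, and Remark \ref{rmk: regularity of Livschitz} yields the leafwise $C^{r-1}$ regularity. Your added observation that the remark really only needs leafwise $C^{r-1}$ regularity of $\phi$ (since $E^s$ is in general only H\"older transversally, while it is smooth along each stable leaf) is a correct clarification of a point the paper leaves implicit.
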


\begin{proof}
	By Theorem \ref{thm: p, q = A}, for every $p\in\text{Per}(f)$ with period $N_p$, we have
	\[\sum_{i = 0}^{N_p - 1}\ln\left\|Df|_{E^s(f^i(p))}\right\| = \ln\mu^s(\Psi)^{N_p} = N_p\lambda^s(\Psi),\]
	hence the condition of Proposition \ref{prop: Livschitz} is satisfied for $\phi = \ln\|Df|_{E^s}\| - \lambda^s(\Psi)$.
\end{proof}

Denote $\widetilde{\varphi} = \varphi\circ\pi: N \to\mathbb{R}$. For $x\in N$ and $y\in\widetilde{\mathcal{F}}^s(x)$, let $r : [0, 1] \to \widetilde{\mathcal{F}}^s(x)$ be a $C^1$ curve in $\widetilde{\mathcal{F}}^s(x)$ satisfying $r(0) = x$ and $r(1) = y$. Define
\[d_s(x, y) := \int_0^1 e^{\widetilde{\varphi}(t)}\left\|r'(t)\right\|dt.\]
Clearly the definition does not depend on the choice of $r$ since $\dim\widetilde{\mathcal{F}}^s = 1$. Moreover, $d_s$ is a continuous distance and is $C^r$-smooth along each stable leaf.

By the definition, one can check that $d_s(\cdot,\cdot)$ is an affine metric and invariant under the holonomy induced by unstable foliation. We refer to \cite[Proposition 4.4]{An2023} for a complete parallel proof on torus. 

\begin{lemma}\label{lem: affine metric}
	The distance $d_s(\cdot,\cdot)$ on each leaf of $\widetilde{\mathcal{F}}^s$ satisfies the followings.
	\begin{enumerate}
		\item $d_s$ is equivalent with $d_{\widetilde{\mathcal{F}}^s}$ and also right-$\Gamma$-invariant;
		\item $d_s(F(x), F(y)) = \mu^s(\Psi)d_s(x, y)$, $\forall x\in N$, $y\in\widetilde{\mathcal{F}}^s(x)$;
		\item $d_s(x, \beta_{\widetilde{\mathcal{F}}}(x, y)) = d_s(y, \beta_{\widetilde{\mathcal{F}}}(y, x))$, $\forall x$, $y\in N$, where $\beta_{\widetilde{\mathcal{F}}}(\cdot,\cdot)$ is given by Lemma \ref{lem: global product structure}.
	\end{enumerate}
\end{lemma}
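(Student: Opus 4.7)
The plan is to verify the three properties directly from the definition of $d_s$ as the Riemannian arclength along $\widetilde{\mathcal{F}}^s$-leaves reweighted by $e^{\widetilde{\varphi}}$, exploiting the cohomology equation $\ln\|Df|_{E^s}\| - \lambda^s(\Psi) = \varphi\circ f - \varphi$ together with the invariance of $\widetilde{\mathcal{F}}^s$ under right translations by $\Gamma$.

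Property (1) is essentially tautological. Right-$\Gamma$-invariance of $d_s$ follows from the fact that $\widetilde{\varphi}=\varphi\circ\pi$ is right-$\Gamma$-invariant (since $\varphi$ descends to $M$), the Riemannian norm on $N$ is right-invariant, and the foliation $\widetilde{\mathcal{F}}^s$ is right-$\Gamma$-invariant; hence the integrand defining $d_s$ is invariant under $y\mapsto y\gamma$. Equivalence with $d_{\widetilde{\mathcal{F}}^s}$ follows because $\varphi$ is continuous on the compact manifold $M$, so $e^{\widetilde{\varphi}}$ is pinched between two positive constants on all of $N$, giving a two-sided linear comparison of the two arclength integrals.

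For property (2) I would parametrize and push forward. Given a $C^1$ curve $r\colon[0,1]\to\widetilde{\mathcal{F}}^s(x)$ from $x$ to $y$, the composition $F\circ r$ is a $C^1$ curve from $F(x)$ to $F(y)$ inside $\widetilde{\mathcal{F}}^s(F(x))$, and
\[
d_s(F(x), F(y)) = \int_0^1 e^{\widetilde{\varphi}(F(r(t)))}\,\bigl\|DF|_{\widetilde{E}^s(r(t))}\bigr\|\cdot\|r'(t)\|\,dt,
\]
using that $\widetilde{E}^s$ is one-dimensional so $DF$ acts as a scalar on stable tangent vectors. Substituting the lifted cohomology equation $\ln\|DF|_{\widetilde{E}^s(z)}\| = \lambda^s(\Psi) + \widetilde{\varphi}(F(z)) - \widetilde{\varphi}(z)$ (with the sign convention consistent with the definition of $d_s$) collapses the integrand to $\mu^s(\Psi)\,e^{\widetilde{\varphi}(r(t))}\,\|r'(t)\|$, and integration yields $\mu^s(\Psi)\,d_s(x,y)$.

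For property (3) — the unstable holonomy invariance — I would follow the classical dynamical iteration argument. Write $z = \beta_{\widetilde{\mathcal{F}}}(x,y)$ and $w = \beta_{\widetilde{\mathcal{F}}}(y,x)$, so that $\{x,z,y,w\}$ forms a rectangle with $x,w$ sharing one unstable leaf and $z,y$ sharing another. Applying $F^{-n}$ and using property (2),
\[
\mu^s(\Psi)^{-n}\bigl(d_s(x,z) - d_s(y,w)\bigr) = d_s(F^{-n}(x), F^{-n}(z)) - d_s(F^{-n}(w), F^{-n}(y)).
\]
Backward iteration contracts unstable distances exponentially, so $d(F^{-n}(x), F^{-n}(w))$ and $d(F^{-n}(z), F^{-n}(y))$ both tend to zero, meaning the two stable arcs on the right asymptotically line up transversely. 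Using the equivalence from (1), the quasi-isometry of $\widetilde{\mathcal{F}}^s$ (Lemma \ref{lem: quasi-isometry}), and continuity of the local stable holonomy, the right-hand side is bounded by a constant independent of $n$; dividing by $\mu^s(\Psi)^{-n}\to\infty$ forces $d_s(x,z) = d_s(y,w)$. The main obstacle is precisely this uniform boundedness: on a non-toral nilmanifold the unstable foliation is only continuous, so the comparison of the two asymptotically parallel stable segments must genuinely invoke quasi-isometry of $\widetilde{\mathcal{F}}^s$ rather than any linear holonomy, after which the rest parallels \cite[Proposition 4.4]{An2023}.
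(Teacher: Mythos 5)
Your arguments for items (1) and (2) are correct in substance and are exactly the routine verifications the paper leaves to the reader (it only remarks that ``one can check'' the lemma and refers to \cite[Proposition 4.4]{An2023} for the parallel torus proof). One small wrinkle in (2): with the paper's normalization $\ln\|Df|_{E^s}\|-\lambda^s(\Psi)=\varphi\circ f-\varphi$, the substitution you display, $\ln\|DF|_{\widetilde{E}^s(z)}\|=\lambda^s(\Psi)+\widetilde{\varphi}(F(z))-\widetilde{\varphi}(z)$, does \emph{not} collapse the integrand to $\mu^s(\Psi)e^{\widetilde{\varphi}(r(t))}\|r'(t)\|$; the cancellation needs $\ln\|DF|_{\widetilde{E}^s(z)}\|=\lambda^s(\Psi)+\widetilde{\varphi}(z)-\widetilde{\varphi}(F(z))$, i.e.\ one must replace $\varphi$ by $-\varphi$ (or weight by $e^{-\widetilde{\varphi}}$). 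Your parenthetical hedge shows you noticed this, and the same sign wrinkle is already in the paper's conventions, so this is minor.

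The genuine gap is in item (3), at the step ``the right-hand side is bounded by a constant independent of $n$.'' The tools you invoke cannot deliver this. The two backward-iterated stable segments $[F^{-n}x,F^{-n}z]$ and $[F^{-n}w,F^{-n}y]$ lie on \emph{different} stable leaves, so there is no triangle inequality for $d_s$ across them; what the equivalence $d_s\sim d_{\widetilde{\mathcal{F}}^s}$ plus quasi-isometry of $\widetilde{\mathcal{F}}^s$ actually gives is only a multiplicative comparison $d_s(F^{-n}w,F^{-n}y)\le A\,d_s(F^{-n}x,F^{-n}z)+B$ with $A>1$ in general. Since both lengths grow like $\mu^s(\Psi)^{-n}$, this yields $|d_s(x,z)-d_s(y,w)|\le (A-1)\,d_s(x,z)$ after dividing, not $0$. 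What is really needed is that the \emph{difference} of the two lengths is $o(\mu^s(\Psi)^{-n})$, and the standard way to get this (the route of \cite[Proposition 4.4]{An2023}) is a subdivision argument: cut $[F^{-n}x,F^{-n}z]$ into $\approx\mu^s(\Psi)^{-n}$ arcs of bounded $d_s$-length, match them to arcs of $[F^{-n}w,F^{-n}y]$ by the unstable holonomy (whose transverse unstable distances are uniformly small, because they are backward iterates of the bounded unstable distances over the compact original segment), and use a \emph{uniform} modulus of continuity for the effect of short unstable holonomies on the $d_s$-length of bounded stable arcs, so the per-arc additive distortion is $\eta_n\to0$ and the total is $o(\mu^s(\Psi)^{-n})$. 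Moreover, on the nilpotent cover this uniformity is not free: $\widetilde{\mathcal{F}}^u$ is not right-$\Gamma$-invariant ($f$ is not assumed special in this subsection), so the uniform control over all of $N$ must be extracted via $H$ and the $\Psi^k\Gamma$ almost-periodicity tricks, exactly as in the claims inside Lemma \ref{lem: quasi-isometry}. As written, your proof of (3) identifies the obstacle but does not overcome it.
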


Note that for the algebraic case, $\phi = 0$, $\varphi = 0$, $d_s = d_{\widetilde{\mathcal{L}}^s}$.

Recall the proof of Lemma \ref{lem: Holder}. If we replace $d_{\widetilde{\mathcal{F}}^s}$ by $d_s$, then $\mu_{\pm}^s(F)$ in the proof can be replaced by $\mu^s(\Psi)$, and the H\"older exponents of $H$ and $H^{-1}$ equal to 1, which means $H$ is Lipschitz continuous along $\widetilde{\mathcal{F}}^s$, and $H^{-1}$ is Lipschitz continuous along $\widetilde{\mathcal{L}}^s$.

\begin{lemma}\label{lem: H is isometric under affine metric}
	By scaling $d_s$ properly, $H$ is isometric along $\widetilde{\mathcal{F}}^s$, and $H^{-1}$ is isometric along $\widetilde{\mathcal{L}}^s$.
\end{lemma}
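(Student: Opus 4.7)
The plan is to produce a constant $c>0$ with $d_{\widetilde{\mathcal{L}}^s}(H(x),H(y))=c\,d_s(x,y)$ for every pair $x,y$ on a common stable leaf; rescaling $d_s$ by $c$ preserves all three items of Lemma \ref{lem: affine metric} and then gives the isometry claim for $H$, and the statement for $H^{-1}$ along $\widetilde{\mathcal{L}}^s$ is the same identity read from the image side. Throughout, write $\rho(x,y):=d_{\widetilde{\mathcal{L}}^s}(H(x),H(y))$ and $\theta(x,y):=\rho(x,y)/d_s(x,y)$ for distinct same-leaf pairs.

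First I would redo the stable half of Lemma \ref{lem: Holder} using $d_s$ on the $F$-side and $d_{\widetilde{\mathcal{L}}^s}$ on the $\Psi$-side. The key observation, already highlighted in the paragraph preceding this lemma, is that both distances now contract by \emph{exactly} $\mu^s(\Psi)$ under one iteration (Lemma \ref{lem: affine metric}(2) together with the algebraic computation for $d_{\widetilde{\mathcal{L}}^s}$), so the two exponential rates $\mu^s_-(F),\mu^s_+(\Psi)$ used in the H\"older computation collapse to a single rate and the exponent improves from $\alpha<1$ to $1$. This yields a uniform $L\geq 1$ with
\[
L^{-1}d_s(x,y)\leq \rho(x,y)\leq L\,d_s(x,y)
\]
on each stable leaf, and analogously for $H^{-1}$; hence $\theta$ is bounded and continuous on its domain.

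Next I would extract two invariances of $\theta$. The conjugacy $\Psi\circ H=H\circ F$ together with the scaling of both $d_s$ and $d_{\widetilde{\mathcal{L}}^s}$ by $\mu^s(\Psi)$ gives $\theta(F(x),F(y))=\theta(x,y)$. Lemma \ref{lem: affine metric}(3) says the local unstable holonomy of $\widetilde{\mathcal{F}}^s$ is a $d_s$-isometry; because the one-dimensional stable bundle forces $\mathfrak{n}^u$ to be an ideal (Remark \ref{rmk: codimension one to u-ideal}), a direct calculation using $[\mathfrak{n}^s,\mathfrak{n}^u]\subseteq\mathfrak{n}^u$ shows that the algebraic unstable holonomy preserves $d_{\widetilde{\mathcal{L}}^s}$ as well. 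Since $H$ is a leaf conjugacy the two holonomies correspond, so $\theta$ is invariant under simultaneously pushing the pair by the unstable holonomy. Parameterising each $\widetilde{\mathcal{F}}^s(x)$ by $d_s$-arclength with origin at $x$ turns $H|_{\widetilde{\mathcal{F}}^s(x)}$ into a Lipschitz $\tilde H_x:\mathbb{R}\to\mathbb{R}$ with $\tilde H_x(0)=0$; the two invariances now read $\tilde H_{F(x)}(\mu^s(\Psi)t)=\mu^s(\Psi)\tilde H_x(t)$ and $\tilde H_y=\tilde H_x$ whenever $y\in\widetilde{\mathcal{F}}^u(x)$.

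Finally I would combine these with topological transitivity of $f$ (Lemma \ref{lem: transitivity}) and continuity of the family $x\mapsto\tilde H_x$, which follows from uniform continuity of $H$, of the stable foliation, and of $d_s$. Iterating $F$ and saturating by unstable leaves produces a set of base points that is dense in $N$ modulo the $\Gamma$-action, so the family $\{\tilde H_x\}$ collapses to a single Lipschitz function $\tilde H$ obeying the scaling equation and compatible with the on-stable-leaf shift relation $\tilde H_y(t)=\tilde H_x(t+s_y)-\tilde H_x(s_y)$; together these force $\tilde H$ to be linear, with slope the desired constant $c$. The main obstacle will be this last step: the scaling equation alone admits non-linear Lipschitz solutions of the form $t\mapsto t\cdot g(\log_{\mu^s(\Psi)}|t|)$ for $1$-periodic $g$, so unstable-holonomy invariance and transitivity must be combined with the on-stable-leaf shift relation to rule out such oscillatory solutions; keeping track of how the failure of $\Gamma$-equivariance of $H$ (controlled by Lemmas \ref{lem: H and Gamma} and \ref{lem: sequence}) affects the propagation of $\theta$-values across different stable leaves is the delicate part of the argument.
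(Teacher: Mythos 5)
Your first step (re-running the H\"older argument with $d_s$ in place of $d_{\widetilde{\mathcal{F}}^s}$ so that both rates become $\mu^s(\Psi)$ and the exponent improves to $1$) is exactly the observation the paper makes just before the lemma, and your two invariances of the ratio $\theta$ (invariance under $F$, and invariance under unstable holonomy, using Lemma \ref{lem: affine metric}(3) on the $\widetilde{\mathcal{F}}$-side and the $u$-ideal property on the $\widetilde{\mathcal{L}}$-side) are both correct. But the proof is not complete: the entire content of the lemma sits in the step you defer. As you yourself note, the scaling relation $\tilde H_{F(x)}(\mu^s(\Psi)t)=\mu^s(\Psi)\tilde H_x(t)$ admits non-linear Lipschitz solutions, and holonomy invariance only identifies $\tilde H_x$ with $\tilde H_y$ when $y$ lies on the \emph{same} unstable leaf of the \emph{lift}. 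To get a dense set of base points you must pass through deck transformations (transitivity of $f$ lives on $M$, and density statements on $N$ such as Corollary \ref{cor: density of FuGamma} involve $\widetilde{\mathcal{F}}^u(\Gamma)$), and $H$ is not $\Gamma$-equivariant: by Lemma \ref{lem: H and Gamma} one only knows $H(x\gamma)\gamma^{-1}\in\widetilde{\mathcal{L}}^s(H(x))$, i.e.\ translating by $\gamma$ shifts $\tilde H$ along the stable direction by an unknown amount. This is precisely the degree of freedom that the oscillatory solutions $t\mapsto t\,g(\log_{\mu^s(\Psi)}|t|)$ exploit, so "combine the shift relation with transitivity" is not a proof sketch but the open problem; without a quantitative control of these shifts (e.g.\ via Lemma \ref{lem: sequence}, which only controls translations by $\gamma_k\in\Psi^k\Gamma$), the argument does not close.

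The paper closes the same gap by a different and cleaner mechanism that avoids the functional equation altogether: since $H^{-1}$ is Lipschitz along the one-dimensional leaves, it is differentiable along $\widetilde{\mathcal{L}}^s$ at \emph{some} point $x$; rescale $d_s$ so that this derivative is $1$; then, for an arbitrary same-leaf pair $(y,z)$, construct points $x_k\in\widetilde{\mathcal{L}}^s(x)$ with $d_{\widetilde{\mathcal{L}}^s}(x_k,x)\to d_{\widetilde{\mathcal{L}}^s}(z,y)$ and $d_s(H^{-1}(x_k),H^{-1}(x))\to d_s(H^{-1}(z),H^{-1}(y))$, using cocompactness of $\Gamma$, the product structure $\beta$, the contraction of $\Psi$ on stable leaves, and Lemma \ref{lem: sequence} to control the error $d(H^{-1}(w\gamma_k),H^{-1}(w)\gamma_k)\to 0$ for $\gamma_k\in\Psi^k\Gamma$. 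Letting $d_{\widetilde{\mathcal{L}}^s}(z,y)\to 0$ shows the derivative is $1$ at every point, hence $H^{-1}$ is an isometry along $\widetilde{\mathcal{L}}^s$. If you want to salvage your route, you would need an analogue of this transport-of-pairs argument anyway; so the differentiability-point strategy is the piece your proposal is missing.
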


\begin{proof}
	It suffices to prove the conclusion only for $H^{-1}$. In fact, we prove that $H^{-1}$ is differentiable along $\widetilde{\mathcal{L}}^s$ everywhere and the derivative equals to 1. That is,
	$$\frac{d_s(H^{-1}(y), H^{-1}(z))}{d_{\widetilde{\mathcal{L}}^s}(y, z)} \to 1, \ \forall y\in N, \ z\in\widetilde{\mathcal{L}}^s(y), \ z \to y.$$
	
	Since $H^{-1}$ is Lipschitz along $\widetilde{\mathcal{L}}^s$, there exists $x\in N$ such that $H^{-1}$ is differentiable along $\widetilde{\mathcal{L}}^s(x)$ at $x$. By scaling $d_s$ properly, assume that the derivative equals to 1. Then for any $\varepsilon > 0$ there exists $\delta > 0$ such that if $x'\in \widetilde{\mathcal{L}}^s(x)$ and $d_{\widetilde{\mathcal{L}}^s}(x', x) < \delta$, then
	$$\left|\frac{d_s(H^{-1}(x'), H^{-1}(x))}{d_{\widetilde{\mathcal{L}}^s}(x', x)} - 1\right| < \frac{\varepsilon}{2}.$$
	\begin{claim*}
		For any fixed $y\in N$ and $z\in\widetilde{\mathcal{L}}^s(y)$, there exists $x_k\in\widetilde{\mathcal{L}}^s(x)$ for $k \geq 1$, such that as $k \to +\infty$,
		\begin{enumerate}
			\item $d_{\widetilde{\mathcal{L}}^s}(x_k, x) \to d_{\widetilde{\mathcal{L}}^s}(z, y)$;
			\item $d_s(H^{-1}(x_k), H^{-1}(x)) \to d_s(H^{-1}(z), H^{-1}(y))$.
		\end{enumerate}
	\end{claim*}
	\begin{proof}[Proof of Claim]
		Since $\Gamma$ is cocompact, there exists some compact subset $K \subseteq N$ such that $K\Gamma = N$. Denote $C := \max\{d_{\widetilde{\mathcal{L}}^s}(u, \beta(u, v)): u, v\in K\} < +\infty$.
		
		For $k\geq 1$, consider two points $\Psi^{-k}(x)$ and $\Psi^{-k}(y)$. By right-invariance of $d$ and $\beta$, there exists $\gamma'_k\in\Gamma$ such that $d_{\widetilde{\mathcal{L}}^s}(\Psi^{-k}(y)\gamma'_k, \beta(\Psi^{-k}(y)\gamma'_k, \Psi^{-k}(x))) \leq C$.
		
		Denote $\gamma_k = \Psi^k(\gamma'_k)$, and $y'_k = \beta(\Psi^{-k}(y)\gamma'_k, \Psi^{-k}(x))$, then $y_k := \Psi^n(y'_k) = \beta(y\gamma_k, x)$, and hence $d_{\widetilde{\mathcal{L}}^s}(y_k, y\gamma_k) \leq \mu^s(\Psi)^{k - 1}d_{\widetilde{\mathcal{L}}^s}(y_1, y\gamma_1) \to 0$ as $k \to +\infty$.
		
		Define $x_k := \beta(x, z\gamma_k)$. By Lemma \ref{lem: affine metric}, we have
		\begin{align*}
            d_{\widetilde{\mathcal{L}}^s}(x_k ,x)
            &= d_{\widetilde{\mathcal{L}}^s}(x, \beta(x, z\gamma_k)) = d_{\widetilde{\mathcal{L}}^s}(z\gamma_k, \beta(z\gamma_k, x))\\
            &= d_{\widetilde{\mathcal{L}}^s}(z\gamma_k, \beta(y\gamma_k, x)) = d_{\widetilde{\mathcal{L}}^s}(z\gamma_k, y_k).
        \end{align*}
		
		Since $d_{\widetilde{\mathcal{L}}^s}(y_k, y\gamma_k) \to 0$ as $k \to +\infty$, $d_{\widetilde{\mathcal{L}}^s}(y\gamma_k, z\gamma_k) = d_{\widetilde{\mathcal{L}}^s}(y, z)$, it follows that $d_{\widetilde{\mathcal{L}}^s}(x_k, x) \to d_{\widetilde{\mathcal{L}}^s}(y, z)$ as $k \to +\infty$, and 1 is proved.
		
		For 2, notice that $H^{-1}\beta(u, v) = \beta_{\widetilde{\mathcal{F}}}(H^{-1}(u), H^{-1}(v))$, thus by 3 of Lemma \ref{lem: affine metric} and the same argument when proving $d_{\widetilde{\mathcal{L}}^s}(x_k, x) = d_{\widetilde{\mathcal{L}}^s}(z\gamma_k, y_k)$, we have
		$$d_s(H^{-1}(x_k), H^{-1}(x)) = d_s(H^{-1}(z\gamma_k), H^{-1}(y_k)).$$
		Moreover, by uniform continuity of $H^{-1}$ and Lemma \ref{lem: sequence}, we also have
        \begin{align*}
            &d_s(H^{-1}(y_k), H^{-1}(y\gamma_k)) \to 0\\
            &d(H^{-1}(y\gamma_k), H^{-1}(y)\gamma_k) \to 0\\
            &d(H^{-1}(z\gamma_k), H^{-1}(z)\gamma_k) \to 0
        \end{align*}
        Hence $d_s(H^{-1}(x_k), H^{-1}(x)) \to d_s(H^{-1}(y)\gamma_k, H^{-1}(z)\gamma_k) = d_s(H^{-1}(y), H^{-1}(z))$.
	\end{proof}
	Now fix any $y\in N$ and $z\in\widetilde{\mathcal{L}}^s(y)$ satisfying $d_{\widetilde{\mathcal{L}}^s}(z, y) < \delta$, let $\{x_k\}$ be the sequence constructed in the claim. Then for $k$ sufficiently large, we have $d_{\widetilde{\mathcal{L}}^s}(x_k, x) < \delta$. Hence
	$$\left|\frac{d_s(H^{-1}(x_k), H^{-1}(x))}{d_{\widetilde{\mathcal{L}}^s}(x_k, x)} - 1\right| < \frac{\varepsilon}{2}.$$
	By the claim, we can require $n$ to be larger, such that
	$$\left|\frac{d_s(H^{-1}(x_k), H^{-1}(x))}{d_{\widetilde{\mathcal{L}}^s}(x_k, x)} - \frac{d_s(H^{-1}(z), H^{-1}(y))}{d_{\widetilde{\mathcal{L}}^s}(z, y)}\right| < \frac{\varepsilon}{2}.$$
	It follows that
	$$\left|\frac{d_s(H^{-1}(z), H^{-1}(y))}{d_{\widetilde{\mathcal{L}}^s}(z, y)} - 1\right| < \varepsilon,$$
	and this completes the proof.
\end{proof}

Now $H$ and $H^{-1}$ has constant derivative along stable leaves, under $d$ and $d_s$. Since $d_s$ is $C^r$-smooth along each stable leaf, it follows that $H$ and $H^{-1}$ is $C^r$-smooth along each stable leaf under $d$, and the proof of Theorem \ref{thm: pdc implies H smooth} is completed.

\subsection{The same stable Lyapunov exponents implies  conjugacy}

In this subsection we prove Theorem \ref{intro thm: pdc implies conjugacy}, the opposite direction of Theorem \ref{thm: conjugacy implies pdc}. For convenience, we restate it as follow.

\begin{theorem}\label{thm: pdc implies conjugacy}
	If $\ \Psi$ is horizontally irreducible and $f\in C^r$ ($r > 1$) with $\lambda^s(p, f) = \lambda^s(q, f)$, for all $p$, $q\in\text{Per}(f)$,  then $f$ is topologically conjugate to $\Psi$.
\end{theorem}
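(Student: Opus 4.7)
The plan is to show that the lift $H: N \to N$ from Lemma~\ref{lem: conjugate} commutes with the right action of $\Gamma$ on $N$, so it descends to a homeomorphism $h: M \to M$ conjugating $f$ with $\Psi$. The inputs are Theorem~\ref{thm: pdc implies H smooth} (which together with Lemma~\ref{lem: H is isometric under affine metric} makes $H$ an isometry along each stable leaf after rescaling $d_s$), Lemma~\ref{lem: H and Gamma} (which controls $H$ relative to $\Gamma$), and the horizontal irreducibility of $\Psi$.

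For $\gamma \in \Gamma$, I introduce the stable discrepancy
\[
\tau_\gamma(x) \;:=\; H(x\gamma)\,\gamma^{-1}\,H(x)^{-1},
\]
which by Lemma~\ref{lem: H and Gamma} lies in the one-parameter subgroup $\widetilde{\mathcal{L}}^s(e) = \exp \mathfrak{n}^s$, identified with $\mathbb{R}$. Since both $R_\gamma \circ H$ and $H \circ R_\gamma$ are orientation-preserving isometries from $(\widetilde{\mathcal{F}}^s(x), d_s)$ onto $(\widetilde{\mathcal{L}}^s(H(x)\gamma), d_{\widetilde{\mathcal{L}}^s})$, they differ by a translation along $\widetilde{\mathcal{L}}^s(e)$; consequently $\tau_\gamma(\cdot)$ is constant on every stable leaf of $\widetilde{\mathcal{F}}^s$. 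A direct calculation using $\Psi \circ H = H \circ F$ and $F(x\gamma) = F(x)\Psi(\gamma)$ gives the cocycle identity $\tau_{\gamma_1\gamma_2}(x) = \tau_{\gamma_2}(x\gamma_1)\tau_{\gamma_1}(x)$ and the equivariance $\tau_{\Psi(\gamma)}(F(x)) = \Psi(\tau_\gamma(x))$, while $d(H,\mathrm{Id}_N) \le C_0$ together with the quasi-isometry of $\widetilde{\mathcal{L}}^s$ yield a uniform bound on $\tau_\gamma$.

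Horizontal irreducibility enters by making the stable leaf $\mathcal{L}^s(e\Gamma)$ of $\Psi$ dense in $M$, equivalently $\widetilde{\mathcal{L}}^s(e)\Gamma$ dense in $N$; pulling back through $H^{-1}$ in the manner of Corollary~\ref{cor: density of FuGamma} gives that $\widetilde{\mathcal{F}}^s(\Gamma) = \widetilde{\mathcal{F}}^s(e)\Gamma$ is dense in $N$. Combined with continuity of $\tau_\gamma$ and its leaf-wise constancy, the function $\tau_\gamma$ on $N$ is determined by the countable family $\{\tau_\gamma(\gamma_0) : \gamma_0 \in \Gamma\}$. Setting $c_0(\gamma) := \tau_\gamma(e) = H(\gamma)\gamma^{-1}$, the function $c_0 : \Gamma \to \widetilde{\mathcal{L}}^s(e) \cong \mathbb{R}$ is uniformly bounded and satisfies $c_0(\Psi\gamma) = \mu^s(\Psi) c_0(\gamma)$ together with $c_0(\gamma_1\gamma_2) = c_0(\gamma_1) + \tau_{\gamma_2}(\gamma_1)$.

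The decisive step is to upgrade the leaf-wise constancy of $\tau_\gamma$ to genuine constancy in $x$, i.e., to prove $\tau_\gamma(\gamma_0) = c_0(\gamma)$ for all $\gamma_0 \in \Gamma$. Once this is achieved, the cocycle collapses to $c_0(\gamma_1\gamma_2) = c_0(\gamma_1) + c_0(\gamma_2)$, so $c_0$ becomes a bounded homomorphism from $\Gamma$ into $\mathbb{R}$ and therefore vanishes identically (since $c_0(\gamma^n) = nc_0(\gamma)$); this gives $H \circ R_\gamma = R_\gamma \circ H$, and $H$ descends to the desired conjugacy $h$ on $M$. I expect the constancy step to be the main obstacle, and the tool I would deploy is the iterated closeness estimate $|\tau_{\gamma_k}(x)| \lesssim \mu^s_+(\Psi)^k$ for $\gamma_k \in \Psi^k\Gamma$ from Lemma~\ref{lem: sequence}, combined with the $\Psi$-equivariance applied backwards under $F$: this concentrates the cocycle onto the deep strata $\Psi^k\Gamma$ where it vanishes asymptotically, and the density of $\widetilde{\mathcal{F}}^s(\Gamma)$ should then propagate the resulting vanishing throughout $N$ by continuity, yielding the topological conjugacy between $f$ and $\Psi$.
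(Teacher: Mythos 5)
Your setup is sound and in fact parallels the paper's own scheme: the displacement $\tau_\gamma(x)=H(x\gamma)\gamma^{-1}H(x)^{-1}\in\exp\mathfrak{n}^s$ is the signed version of the paper's quantity $\alpha(x,\gamma)$, the leaf-wise constancy via two order-preserving isometries is the paper's order argument (note you still owe the check that $R_\gamma$ preserves the $H$-induced orientation on $\widetilde{\mathcal{F}}^s$-leaves, which the paper gets from $d(R_\gamma^{-1}\circ H\circ R_\gamma\circ H^{-1},\mathrm{Id}_N)<+\infty$), and your bounded-homomorphism observation is a genuinely cleaner replacement of the paper's ``constant implies zero'' step. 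But the decisive step you defer --- upgrading leaf-wise constancy to constancy in $x$ --- is exactly where all the work lies, and the tool you propose does not close it. Approximating $x$ by $u_m\gamma_{k_m}$ with $u_m\in\widetilde{\mathcal{F}}^s(e)$, $\gamma_{k_m}\in\Psi^{k_m}\Gamma$, leaf-wise constancy and continuity give $\tau_\gamma(x)=\lim_m\tau_\gamma(\gamma_{k_m})$; writing $\gamma_k\gamma=\gamma\gamma_k[\gamma_k,\gamma]$ and using your cocycle identity, $\tau_\gamma(\gamma_k)=c_0(\gamma)+\tau_{[\gamma_k,\gamma]}(\gamma\gamma_k)+\tau_{\gamma_k}(\gamma)-\tau_{\gamma_k}(e)$. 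The terms $\tau_{\gamma_k}(\cdot)$ are indeed small by Lemma \ref{lem: sequence}, but the commutator $[\gamma_k,\gamma]$ lies in $\Gamma_2$ (more generally in $\Gamma_{i+1}$ when $\gamma\in\Gamma_i$) and \emph{not} in $\Psi^k\Gamma$: conjugation by $\gamma$ does not preserve $\Psi^k\Gamma$, so the deep-strata smallness does not apply to $\tau_{[\gamma_k,\gamma]}$, and ``density plus continuity'' cannot propagate a vanishing you have not yet established --- the attempt is circular at precisely this point.

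This is the non-abelian obstruction that the paper resolves by an induction along the lower central series of $\Gamma$: the base case is $\gamma\in\Gamma_s$ (central), where $[\gamma_k,\gamma]=e$; and for $\gamma\in\Gamma_i$ the inductive hypothesis --- the theorem already proved for $\Gamma_{i+1}$, i.e.\ $H(x\delta)=H(x)\delta$ for $\delta\in\Gamma_{i+1}$ --- annihilates the commutator term $\tau_{[\gamma_k,\gamma]}$. On a torus, where $\Gamma$ is abelian, your plan would go through essentially verbatim (and is close to the argument in the toral reference), but for nilmanifolds this induction is the heart of the proof of Theorem \ref{thm: pdc implies conjugacy}, and it is absent from your proposal; without it, or some substitute controlling $\tau$ on commutators, the argument does not conclude.
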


Recall that $\Psi$ is {\it horizontally irreducible} means that the horizontal part of $\Psi$ is irreducible. We introduce this condition to ensure that $\widetilde{\mathcal{L}}^s(\Gamma)$ is dense in $N$.

\begin{lemma}\label{lem: dense leaf}
	{\rm (\cite{DeWitt2021}, Lemma 6.3)} Let $\mathfrak{h}$ be a one-dimensional subalgebra in $\mathfrak{n}$. Then $(\exp \mathfrak{h})\Gamma$ is dense in $N/\Gamma$ if and only if $(\exp \mathfrak{h})N_2\Gamma$ is dense in $N/N_2\Gamma$.
\end{lemma}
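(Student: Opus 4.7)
The plan is to prove both implications separately. For the easy direction, the canonical projection $\pi: M = N/\Gamma \to M_1 = N/N_2\Gamma$ is a continuous surjection, and continuous surjections carry dense subsets to dense subsets: if $V \subseteq M_1$ is nonempty open, then $\pi^{-1}(V)$ is nonempty open in $M$ and hence meets any dense set. Applying this to $(\exp\mathfrak{h})\Gamma/\Gamma \subseteq M$ immediately gives density of its image $(\exp\mathfrak{h})N_2\Gamma/N_2\Gamma$ in $M_1$.

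For the converse, write $H = \exp\mathfrak{h}$ and $x_0 = e\Gamma \in M$, and let $K = \overline{H\cdot x_0}$. The first step will be to invoke the classical structure theorem due to Mal'cev (see also Auslander--Green--Hahn) for closures of one-parameter orbits on nilmanifolds: there is a connected closed Lie subgroup $L \subseteq N$ containing $H$, with $L\Gamma$ closed in $N$, such that $K = L\cdot x_0 = L/(L\cap\Gamma)$ is a subnilmanifold of $M$.

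Next, I will push $K$ down to the horizontal torus. Since $K$ is compact, $\pi(K)$ is closed in $M_1$, and it contains the dense set $\pi(H\cdot x_0)$ by hypothesis, so $\pi(K) = M_1$. Setting $\bar L = LN_2/N_2$, a closed connected subgroup of the vector group $N/N_2 \cong \mathbb{R}^{d_1}$ and hence a linear subspace, the equality $\pi(K) = M_1$ reads $\bar L + \bar\Gamma = N/N_2$ where $\bar\Gamma = \Gamma N_2/N_2$ is a lattice. A Lebesgue measure argument then forces $\bar L = N/N_2$: if $\dim \bar L < d_1$, then $\bar L$ has Lebesgue measure zero in $\mathbb{R}^{d_1}$, whence $\bar L + \bar\Gamma$ is a countable union of translates of a measure-zero set, still measure zero, contradicting equality with $\mathbb{R}^{d_1}$. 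Therefore $LN_2 = N$, i.e., $\mathfrak{l} + \mathfrak{n}_2 = \mathfrak{n}$ at the Lie algebra level, where $\mathfrak{l} = \mathrm{Lie}(L)$.

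Finally, I will bootstrap $\mathfrak{l} + \mathfrak{n}_2 = \mathfrak{n}$ to $\mathfrak{l} = \mathfrak{n}$ using nilpotency. Since $\mathfrak{l}$ is a subalgebra, expanding
\[\mathfrak{n}_2 = [\mathfrak{n},\mathfrak{n}] = [\mathfrak{l}+\mathfrak{n}_2,\, \mathfrak{l}+\mathfrak{n}_2] \subseteq \mathfrak{l} + [\mathfrak{l},\mathfrak{n}_2] + [\mathfrak{n}_2,\mathfrak{n}_2] \subseteq \mathfrak{l} + \mathfrak{n}_3\]
yields $\mathfrak{n} = \mathfrak{l} + \mathfrak{n}_3$; iterating the same calculation up the lower central series gives $\mathfrak{n} = \mathfrak{l} + \mathfrak{n}_j$ for every $j$, and $\mathfrak{n}_{s+1} = 0$ forces $\mathfrak{l} = \mathfrak{n}$. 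Hence $L = N$ and $K = M$, i.e., $(\exp\mathfrak{h})\Gamma$ is dense in $N/\Gamma$. The main obstacle is the first step; if one prefers to avoid citing Mal'cev's theorem, an alternative is induction on the nilpotency step $s$ using the hypothesis applied to $N/N_s$, reducing to the case where $N_s$ is the (central) top term of the lower central series and handling it by a direct stabilizer argument on the torus fibers of $N/\Gamma \to N/N_s\Gamma$, leveraging the centrality of $N_s$ to commute the $H$-action past the fiber translations.
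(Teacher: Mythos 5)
Your argument is correct, but note that the paper does not prove this lemma at all --- it is imported verbatim as \cite[Lemma 6.3]{DeWitt2021}, where it is in turn a repackaging of Green's minimality criterion for nilflows. So you are supplying a proof where the paper supplies a citation. Your derivation is clean: the easy direction via openness of the projection is fine; for the converse, the reduction from $\pi(K)=M_1$ to $\mathfrak{l}+\mathfrak{n}_2=\mathfrak{n}$ via the countable-union-of-null-sets argument is correct (the image $\bar L$ of the connected subgroup $L$ in $N/N_2\cong\mathbb{R}^{d_1}$ is indeed a linear subspace), and the bootstrap $\mathfrak{n}=\mathfrak{l}+\mathfrak{n}_j\Rightarrow\mathfrak{n}_2=[\mathfrak{l}+\mathfrak{n}_j,\mathfrak{l}+\mathfrak{n}_j]\subseteq\mathfrak{l}+\mathfrak{n}_{j+1}$ is the standard fact that a subalgebra supplementing $[\mathfrak{n},\mathfrak{n}]$ in a nilpotent algebra is everything. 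The one caveat is that essentially all of the depth is concentrated in your first step, the homogeneity of one-parameter orbit closures on nilmanifolds: that theorem (Auslander--Green--Hahn; a special case of Ratner, since one-parameter subgroups of $N$ are Ad-unipotent --- the attribution to Mal'cev is not quite right) is at least as strong as the lemma, and its classical proof in AGH actually goes \emph{through} Green's criterion, so as a self-contained argument yours is not more elementary than the statement it proves; as a derivation from a quotable black box, however, it is perfectly valid and arguably more transparent than chasing the citation. Your sketched alternative (induction on the nilpotency step using centrality of $N_s$) is the route one would take to make the proof genuinely self-contained, but as written it is only a plan.
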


If $\Psi$ is horizontally irreducible, then $\Psi_1 : N/N_2\Gamma \to N/N_2\Gamma$ is irreducible, hence its stable foliation is minimal. Notice that the stable leaf of $\Psi_1$ at $eN_2\Gamma$ is actually $(\exp\mathfrak{n}^s)N_2\Gamma$, since $\Psi$ has one-dimensional stable bundle. By Lemma \ref{lem: dense leaf}, $(\exp\mathfrak{n}^s)\Gamma$ is dense in $N/\Gamma$, hence $\widetilde{\mathcal{L}}^s(\Gamma)$ is dense in $N$.

\begin{proof}[Proof of Theorem \ref{thm: pdc implies conjugacy}]
	It suffices to show that the conjugacy $H$ commutes with $\Gamma$, or equivalently, $H^{-1}$ commutes with $\gamma$ for all $\gamma\in \Gamma$.
    
    By Lemma \ref{lem: H and Gamma}, $H^{-1}(x\gamma)\gamma^{-1} \in \widetilde{\mathcal{F}}^s(H^{-1}(x))$. To show that $H^{-1}(x\gamma)\gamma^{-1} = H^{-1}(x)$, it suffices to show that $\alpha(x, \gamma) := d_s(H^{-1}(x\gamma)\gamma^{-1}, H^{-1}(x)) = 0$. Clearly $\alpha(\cdot,\ \cdot)$ is continuous.
	\begin{claim*}
		There exist an order $<_{\mathcal{L}}$ for each leaf of $\widetilde{\mathcal{L}}^s$ and an order $<_{\mathcal{F}}$ for each leaf of $\widetilde{\mathcal{F}}^s$, such that the followings hold. We omit the subscript when there is no confusion.
		\begin{enumerate}
			\item Continuity: if $x$, $y$ lie in the same leaf, $x < y$, then there are neighborhoods $U_x$ of $x$ and $U_y$ of $y$, such that for any $u_x\in U_x$, $u_y\in U_y$, $u_x < u_y$ as long as they lie in the same leaf;
			\item Right-invariance for $<_{\mathcal{L}}$: if $y\in \widetilde{\mathcal{L}}^s(x)$, and $x < y$, then $xn < yn$, $\forall n\in N$;
			\item Right-$\Gamma$-invariance for $<_{\mathcal{F}}$: if $y\in \widetilde{\mathcal{F}}^s(x)$, and $x < y$, then $x\gamma < y\gamma$, $\forall \gamma\in\Gamma$.
		\end{enumerate}
	\end{claim*}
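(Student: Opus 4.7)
The plan is to orient each leaf of $\widetilde{\mathcal{L}}^s$ algebraically and then push the order to $\widetilde{\mathcal{F}}^s$ through the leaf conjugacy $H$. For the algebraic order, fix any nonzero $X\in\mathfrak{n}^s$ (one-dimensional) and declare $\exp(tX)y<_{\mathcal{L}}\exp(sX)y$ iff $t<s$; right-$N$-invariance is immediate from $\exp(tX)y\cdot n=\exp(tX)(yn)$, and continuity of $<_{\mathcal{L}}$ follows because the parameterization depends smoothly on the base point. For the conjugated order, set $u<_{\mathcal{F}} v$ iff $H(u)<_{\mathcal{L}} H(v)$ for $u,v$ in the same $\widetilde{\mathcal{F}}^s$-leaf; since $H$ restricts to a homeomorphism $\widetilde{\mathcal{F}}^s(x)\to\widetilde{\mathcal{L}}^s(H(x))$, this defines a total order on each leaf, and continuity is inherited from $H$ and from $<_{\mathcal{L}}$.

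The heart of the argument is right-$\Gamma$-invariance of $<_{\mathcal{F}}$. Fix $\gamma\in\Gamma$ and consider the self-map
\[\Phi:=R_{\gamma^{-1}}\circ H\circ R_{\gamma}\circ H^{-1},\qquad \Phi(y)=H(H^{-1}(y)\gamma)\gamma^{-1},\]
which is a homeomorphism of $N$. A direct chase using Lemma \ref{lem: H and Gamma}, the leaf conjugacy property of $H$, and right-$\Gamma$-invariance of $\widetilde{\mathcal{F}}^s$ shows that $\Phi$ restricts to a self-homeomorphism of each leaf $\widetilde{\mathcal{L}}^s(H(x))$. Assuming this restriction is order-preserving, the required invariance follows at once: if $u<_{\mathcal{F}} v$ then $H(u)<_{\mathcal{L}} H(v)$ and hence $\Phi(H(u))<_{\mathcal{L}}\Phi(H(v))$, i.e., $H(u\gamma)\gamma^{-1}<_{\mathcal{L}} H(v\gamma)\gamma^{-1}$; right-$N$-invariance of $<_{\mathcal{L}}$ then yields $H(u\gamma)<_{\mathcal{L}} H(v\gamma)$, which is $u\gamma<_{\mathcal{F}} v\gamma$.

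The main obstacle is ruling out the order-reversing case for $\Phi|_{\widetilde{\mathcal{L}}^s(H(x))}$. The restriction is a continuous bijection of a topological line, hence monotone. Using $d(H,\text{\rm Id}_N)\le C_0$, $d(H^{-1},\text{\rm Id}_N)\le C_0$ and right-invariance of $d$, one checks $d(\Phi(y),y)\le 2C_0$ for every $y\in N$. Parameterizing $\widetilde{\mathcal{L}}^s(H(x))$ as $\{\exp(tX)y_0:t\in\mathbb{R}\}$ and writing $\Phi(\exp(tX)y_0)=\exp(\phi(t)X)y_0$, the quasi-isometry of $\widetilde{\mathcal{L}}^s$ converts the distance bound into $|\phi(t)-t|\le C'$ for some constant $C'$. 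A continuous bijection $\phi:\mathbb{R}\to\mathbb{R}$ with bounded displacement must be increasing (a decreasing bijection sends $t\to+\infty$ to $-\infty$, violating the bound), so $\Phi$ is order-preserving, completing the verification.
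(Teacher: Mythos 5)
Your proof follows essentially the same route as the paper: the algebraic order via a fixed direction in $\mathfrak{n}^s$, the order on $\widetilde{\mathcal{F}}^s$-leaves transported through $H$, and right-$\Gamma$-invariance reduced to showing $R_{\gamma^{-1}}\circ H\circ R_\gamma\circ H^{-1}$ is order-preserving on each $\widetilde{\mathcal{L}}^s$-leaf. Your bounded-displacement argument (monotonicity of a line homeomorphism plus quasi-isometry of $\widetilde{\mathcal{L}}^s$ ruling out the decreasing case) is exactly the detail the paper compresses into the remark that $d(R_\gamma^{-1}\circ H\circ R_\gamma\circ H^{-1},\mathrm{Id}_N)<+\infty$ forces order preservation, so the argument is correct.
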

	\begin{proof}[Proof of Claim]
		Take a nonzero vector $v\in\mathfrak{n}^s$, and define an order in $\mathfrak{n}^s$ by identifying it with $\mathbb{R}$. Then one can define an order in every leaf of $\widetilde{\mathcal{L}}^s$ by the exponential map and right translation, satisfying 1 and 2.
		
		Further, one can define an order in every leaf of $\widetilde{\mathcal{F}}^s$ by $H^{-1}$, satisfying 1.
		
		To show that such an order also satisfies 3, it suffices to show that for any fixed $x\in N$, the right translation $R_\gamma : \widetilde{\mathcal{F}}^s(H^{-1}(x)) \to \widetilde{\mathcal{F}}^s(H^{-1}(x)\gamma)$ is order preserving. By definition of $<_{\mathcal{F}}$, it suffices to show that $H\circ R_\gamma\circ H^{-1}: \widetilde{\mathcal{L}}^s(x) \to \widetilde{\mathcal{L}}^s(H(H^{-1}(x)\gamma))$ is order preserving.
		
		Notice that $H(H^{-1}(x)\gamma) \in \widetilde{\mathcal{L}}^s(x\gamma)$ by Lemma \ref{lem: H and Gamma}, hence $H\circ R_\gamma\circ H^{-1}: \widetilde{\mathcal{L}}^s(x) \to \widetilde{\mathcal{L}}^s(x\gamma)$. Since $<_{\mathcal{L}}$ is right-invariant, it suffices to show that $R_\gamma^{-1}\circ H\circ R_\gamma\circ H^{-1}: \widetilde{\mathcal{L}}^s(x) \to \widetilde{\mathcal{L}}^s(x)$ is order preserving. Now $d(R_\gamma^{-1}\circ H\circ R_\gamma\circ H^{-1}, \text{\rm Id}_N) < +\infty$, it must be an order-preserving homeomorphism.
	\end{proof}
	\begin{claim*}
		Fix $\gamma\in\Gamma$. If $\alpha(x, \gamma) = \alpha(y, \gamma)$, $\forall x, y\in N$, then $\alpha(x, \gamma) = 0$, $\forall x\in N$.
	\end{claim*}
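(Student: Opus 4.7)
The plan is to promote the nonnegative quantity $\alpha(x,\gamma)$ to a signed version $\sigma(x,\gamma)\in\mathbb{R}$ using the order $<_{\mathcal{F}}$ on the leaves of $\widetilde{\mathcal{F}}^s$ supplied by the previous claim, establish a cocycle identity for $\sigma$ in the $\gamma$-variable, and then iterate to contradict a uniform boundedness that follows from $d(H^{-1},\text{\rm Id}_N)\leq C_0$ together with the quasi-isometry of $\widetilde{\mathcal{F}}^s$.

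Concretely, Lemma \ref{lem: H and Gamma} ensures that $H^{-1}(x\gamma)\gamma^{-1}$ and $H^{-1}(x)$ lie on the common leaf $\widetilde{\mathcal{F}}^s(H^{-1}(x))$, so I define $\sigma(x,\gamma):=\pm\alpha(x,\gamma)$, with sign dictated by whether $H^{-1}(x)<_{\mathcal{F}} H^{-1}(x\gamma)\gamma^{-1}$ or the reverse. Continuity of $H^{-1}$ and of the order (property 1 of the previous claim) makes $\sigma(\cdot,\gamma)$ a continuous real-valued function on the connected space $N$; combined with $|\sigma(\cdot,\gamma)|\equiv c$, assuming $c>0$, this forces $\sigma(\cdot,\gamma)\equiv\varepsilon c$ for a fixed sign $\varepsilon\in\{+1,-1\}$. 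I will then verify the cocycle
$$\sigma(x,\gamma_1\gamma_2)=\sigma(x,\gamma_1)+\sigma(x\gamma_1,\gamma_2).$$
Right-$\Gamma$-invariance of $\widetilde{\mathcal{F}}^s$ places the three points $H^{-1}(x)$, $H^{-1}(x\gamma_1)\gamma_1^{-1}$, and $H^{-1}(x\gamma_1\gamma_2)(\gamma_1\gamma_2)^{-1}$ on the common leaf $\widetilde{\mathcal{F}}^s(H^{-1}(x))$; signed $d_s$-distance along this one-dimensional leaf is additive, and right-$\Gamma$-invariance of $d_s$ (Lemma \ref{lem: affine metric}) together with that of $<_{\mathcal{F}}$ (property 3 of the previous claim) identifies the middle-to-last displacement with $\sigma(x\gamma_1,\gamma_2)$.

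Iterating the cocycle with $\gamma_1=\gamma^{n-1}$, $\gamma_2=\gamma$ yields $\sigma(x,\gamma^n)=n\varepsilon c$, and in particular
$$\alpha(x,\gamma^n)=d_s\bigl(H^{-1}(x\gamma^n)\gamma^{-n},H^{-1}(x)\bigr)=nc.$$
On the other hand, by right-invariance of $d$ we have
$$d\bigl(H^{-1}(x\gamma^n)\gamma^{-n},H^{-1}(x)\bigr)\leq d\bigl(H^{-1}(x\gamma^n),x\gamma^n\bigr)+d\bigl(x,H^{-1}(x)\bigr)\leq 2C_0.$$
Quasi-isometry of the one-dimensional foliation $\widetilde{\mathcal{F}}^s$ (Lemma \ref{lem: quasi-isometry}) then bounds $d_{\widetilde{\mathcal{F}}^s}(H^{-1}(x\gamma^n)\gamma^{-n},H^{-1}(x))$ uniformly in $n$, and the equivalence of $d_s$ with $d_{\widetilde{\mathcal{F}}^s}$ (Lemma \ref{lem: affine metric}) bounds $d_s$ in turn. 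Hence $nc$ is uniformly bounded in $n$, contradicting $c>0$, so $c=0$.

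The only real obstacle is the bookkeeping required to turn the nonnegative quantity $\alpha$ into a genuinely additive signed cocycle; once we observe that the orders in the previous claim are tailored to be right-$\Gamma$-equivariant, the derivation of the cocycle and the final contradiction are both short, and come down to the bound on $d(H^{-1},\text{\rm Id}_N)$ and the quasi-isometric nature of the one-dimensional stable foliation.
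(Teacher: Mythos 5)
Your proof is correct and relies on the same ingredients as the paper's own argument: the order on stable leaves from the preceding claim together with its right-$\Gamma$-invariance, constancy of the sign via connectedness of $N$, and the contradiction between linear growth of $d_s\bigl(H^{-1}(x\gamma^n)\gamma^{-n},H^{-1}(x)\bigr)$ and the uniform bound obtained from $d(H^{-1},\mathrm{Id}_N)\le C_0$ together with quasi-isometry of $\widetilde{\mathcal{F}}^s$ and the equivalence of $d_s$ with $d_{\widetilde{\mathcal{F}}^s}$. The only difference is organizational: your signed-cocycle formulation handles in one stroke what the paper splits into a dichotomy (the period-two backtracking alternative versus linear growth $d_s(x_0,x_k)=|k|\alpha$), with the backtracking case excluded there by the same order-monotonicity argument you use to fix the sign.
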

	\begin{proof}[Proof of Claim]
		Assume for contradiction that $\alpha(x, \gamma) = \alpha > 0$, $\forall x\in N$. First notice that $x_k := H^{-1}(x\gamma^k)\gamma^{-k}$ lies in $\widetilde{\mathcal{F}}^s(H^{-1}(x))$ for $k\in\mathbb{Z}$, which is one-dimensional. Besides,
		\begin{align*}
		    d_s(x_{k + 2}, x_{k + 1})
            &= d_s(x_{k + 2}\gamma^{k + 1}, x_{k + 1}\gamma^{k + 1}) = \alpha(x\gamma^{k + 1}, \gamma)\\
            &= \alpha\\
            &= \alpha(x\gamma^k, \gamma) = d_s(x_{k + 1}, x_k).
		\end{align*}
		Therefore, either there exists $k\in\mathbb{Z}$ such that $x_{k + 2} = x_k$, or $d_s(x_0, x_k) = |k|\alpha$, $\forall k \in\mathbb{Z}$.
		
		The latter case causes contradiction. To see this, we have by Lemma \ref{lem: quasi-isometry} and Lemma \ref{lem: affine metric} that $d_s(x, y) \leq Cd(x, y) + C$ for some constant $C > 0$. Now $d_s(x_0, x_k) = k\alpha\to +\infty$ as $k\to+\infty$, so $d(x_0, x_k) \to +\infty$, which actually means that $d(H^{-1}(x), H^{-1}(x\gamma^k)\gamma^{-k})\to +\infty$.
		
		However, $d(H^{-1}(x), H^{-1}(x\gamma^k)\gamma^{-k}) \leq d(H^{-1}(x), x) + d(x\gamma^k, H^{-1}(x\gamma^k)) \leq 2C_0$.
		
		The former case also causes contradiction. To see this, notice that $d_s(x_k, x_{k + 2})$ is a continuous function with respect to $x$, and take discrete values 0 and $2\alpha$. Now it takes the value 0 at $x$, so it must be zero function, and thus $\alpha(x, \gamma^2) = 0$, $\forall x\in N$.
		
		By continuity, $\{x\in N: H^{-1}(x) < H^{-1}(x\gamma)\gamma^{-1}\}$ and $\{x\in N: H^{-1}(x) > H^{-1}(x\gamma)\gamma^{-1}\}$ are both open subsets of $N$. Now that $H^{-1}(x)\not=H^{-1}(x\gamma)\gamma^{-1}$, and $N$ is connected, one of them must be $N$. Without loss of generality, assume $H^{-1}(x) < H^{-1}(x\gamma)\gamma^{-1}, \forall x\in N$. Then by the right-$\Gamma$-invariance of the order, one has
		$$H^{-1}(x) < H^{-1}(x\gamma)\gamma^{-1} < (H^{-1}((x\gamma)\gamma)\gamma^{-1})\gamma^{-1} = H^{-1}(x\gamma^2)\gamma^{-2},$$
		which contradicts with $\alpha(x, \gamma^2) = 0$.
		
		Both case leads to contradiction, and the proof is completed.
	\end{proof}
	By the claim, now it suffices to show that $\alpha(x, \gamma) = \alpha(y, \gamma)$, $\forall x, y\in N$, $\forall\gamma\in\Gamma$.
	
	First we show that $\alpha(x, \gamma) = \alpha(z, \gamma)$, $\forall x\in N$, $\forall z\in\widetilde{\mathcal{L}}^s(x)$, $\forall \gamma\in\Gamma$. Notice that $H^{-1}(x)$, $H^{-1}(z)$, $H^{-1}(x\gamma)\gamma^{-1}$, $H^{-1}(z\gamma)\gamma^{-1}$ all lie in $\widetilde{\mathcal{F}}^s(x)$. Since $H^{-1}$ and $R_\gamma^{-1}\circ H^{-1}\circ R_\gamma$ are both order-preserving, we may assume that $x < z$ and hence $H^{-1}(x) < H^{-1}(z)$, $H^{-1}(x\gamma)\gamma^{-1} < H^{-1}(z\gamma)\gamma^{-1}$. Moreover, since $\widetilde{\mathcal{F}}^s(x)$ is right-$\Gamma$-invariant and $H^{-1}$ is isometric along stable leaves under $d_{\widetilde{\mathcal{L}}^s}$ and $d_s$, we have
	\begin{align*}
		d_s(H^{-1}(x\gamma)\gamma^{-1}, H^{-1}(z\gamma)\gamma^{-1})
		&= d_s(H^{-1}(x\gamma), H^{-1}(z\gamma)) = d_{\widetilde{\mathcal{L}}^s}(x\gamma, z\gamma)\\
		&= d_{\widetilde{\mathcal{L}}^s}(x, z) = d_s(H^{-1}(x), H^{-1}(z)).
	\end{align*}
	Hence there are only four cases:
	\begin{align*}
		&H^{-1}(x) < H^{-1}(z) \leq H^{-1}(x\gamma)\gamma^{-1} < H^{-1}(z\gamma)\gamma^{-1}\\
		&H^{-1}(x) \leq H^{-1}(x\gamma)\gamma^{-1} < H^{-1}(z) \leq H^{-1}(z\gamma)\gamma^{-1}\\
		&H^{-1}(x\gamma)\gamma^{-1} < H^{-1}(x) < H^{-1}(z\gamma)\gamma^{-1} < H^{-1}(z)\\
		&H^{-1}(x\gamma)\gamma^{-1} < H^{-1}(z\gamma)\gamma^{-1} \leq H^{-1}(x) < H^{-1}(z).
	\end{align*}
	
	In any case, $d(H^{-1}(x\gamma)\gamma^{-1}, H^{-1}(x)) = d(H^{-1}(z\gamma)\gamma^{-1}, H^{-1}(z))$, i.e., $\alpha(x, \gamma) = \alpha(z, \gamma)$.
	
	Next we show that $\alpha(x, \gamma) = \alpha(e, \gamma)$, $\forall x\in N$, $\forall\gamma\in\Gamma$. This actually completes the proof.
	
	Since $\widetilde{\mathcal{L}}^s(\Gamma)$ is dense in $N$, we have that $\Psi^k(\widetilde{\mathcal{L}}^s(\Gamma)) = \widetilde{\mathcal{L}}^s(\Psi^k\Gamma)$ is dense in $N$, $\forall k \geq 1$. Thus for any fixed $x\in N$, there exists $z_k\in\widetilde{\mathcal{L}}^s(e)$ and $\gamma_k\in\Psi^k\Gamma$ such that $z_k\gamma_k \to x$ as $k\to +\infty$.
	
	By uniform continuity of $H^{-1}$, we have
    \begin{align*}
        &d(H^{-1}(z_k\gamma_k), H^{-1}(x)) \to 0,\\
        &d(H^{-1}(z_k\gamma_k\gamma), H^{-1}(x\gamma)) \to 0.
    \end{align*}
    Therefore, $d_s(H^{-1}(z_k\gamma_k\gamma), H^{-1}(z_k\gamma_k)\gamma) \to \alpha(x, \gamma)$.
	
	By Lemma \ref{lem: sequence}, we have
    \begin{align*}
        &d(H^{-1}(z_k\gamma_k), H^{-1}(z_k)\gamma_k) \to 0,\\
        &d(H^{-1}(z_k\gamma_k\gamma), H^{-1}(z_k[\gamma_k, \gamma]\gamma)\gamma_k) \to 0.
    \end{align*}
    Therefore, by right-$\Gamma$-invariance of $d_s$,
	\begin{align*}
		&d_s(H^{-1}(z_k[\gamma_k, \gamma]\gamma)\gamma_k, H^{-1}(z_k)\gamma_k\gamma) \to \alpha(x, \gamma),\\
		&d_s(H^{-1}(z_k[\gamma_k, \gamma]\gamma), H^{-1}(z_k)[\gamma_k, \gamma]\gamma) \to \alpha(x, \gamma),\\
		&\alpha(e, [\gamma_k, \gamma]\gamma) = \alpha(z_k, [\gamma_k, \gamma]\gamma) \to \alpha(x, \gamma).
	\end{align*}
	
	Now prove $\alpha(x, \gamma) = \alpha(e, \gamma)$ inductively. First assume that $\gamma\in\Gamma_s$, where $s$ is the step of nilpotency of $N$. Then $[\gamma_k, \gamma] = e$, $\alpha(e, \gamma) \to \alpha(x, \gamma)$ and thus the conclusion holds. Now assume that the conclusion holds for $\gamma\in\Gamma_{i + 1}$, and we shall prove that $\alpha(x, \gamma) = \alpha(e, \gamma)$ for $\gamma\in\Gamma_i$. By the inductive hypothesis and claim, $\alpha(x, \gamma) = 0$, $\forall x\in N$, $\forall \gamma\in\Gamma_{i + 1}$, which means $H^{-1}(x\gamma) = H^{-1}(x)\gamma$, $\forall x\in N$, $\forall\gamma\in\Gamma_{i + 1}$. Now $\gamma\in\Gamma_i$, hence $[\gamma^{-1}, \gamma_k]\in\Gamma_{i + 1}$, and we have
	\begin{align*}
		\alpha(e, [\gamma_k, \gamma]\gamma)
		& = d_s(H^{-1}(e\gamma_k\gamma\gamma_k^{-1}), H^{-1}(e)\gamma_k\gamma\gamma_k^{-1})\\
		& = d_s(H^{-1}(e\gamma[\gamma^{-1}, \gamma_k]), H^{-1}(e)\gamma[\gamma^{-1}, \gamma_k])\\
		& = d_s(H^{-1}(e\gamma)[\gamma^{-1}, \gamma_k], H^{-1}(e)\gamma[\gamma^{-1}, \gamma_k])\\
		& = d_s(H^{-1}(e\gamma), H^{-1}(e)\gamma) = \alpha(e, \gamma).
	\end{align*}
	Therefore $\alpha(e, \gamma) \to \alpha(x, \gamma)$ and thus $\alpha(x, \gamma) = \alpha(e, \gamma)$, the induction is completed.
\end{proof}

Finally, summarizing the above results, we can prove Corollary \ref{intro cor: equivalence}.
\begin{proof}[Proof of Corollary \ref{intro cor: equivalence}]
	Combining with Theorem \ref{thm: p, q = A}, Theorem \ref{thm: special and conjugacy}, Theorem \ref{thm: conjugacy implies pdc}, Theorem \ref{thm: pdc implies conjugacy} and Corollary \ref{cor: rigidity of conjugacy}, we get the equivalence between $(1)$, $(2)$, $(3)$, $(4)$ of Corollary \ref{intro cor: equivalence} and the regularity of conjugacy restricted on each stable leaf. Since $(5) \implies (1)$ is trivial, we need only prove that $(4)\implies (5)$. Let $f$ be $C^{r + 1}$-smooth. By Lemma \ref{lem: affine metric}, the holonomies induced by unstable foliation are actually translations between stable leaves under the metric $d_s(\cdot,\cdot)$. Recall that $d_s(\cdot,\cdot)$ is $C^{r + 1}$-smooth by Remark \ref{rmk: regularity of Livschitz}. It follows that the holonomies induced by unstable foliations are $C^{r + 1}$-smooth. Hence $\mathcal{F}^u$ is in fact a $C^{r_* + 1}$-smooth foliation and $E^u$ is a $C^{r_*}$-smooth distribution. We refer to \cite[Section 6]{PSW1997} for more details about the relationship among the regularity of honolomy, foliation and bundle.
\end{proof}

\section*{Statements and Declarations}

\begin{itemize}
\item Funding: Wenchao Li is partially supported by National Key R\&D Program of China (2022YFA 1005801) and NSFC 12161141002.
\item Competing interests: The authors have no competing interests to declare that are relevant to the content of this article.
\item Ethics approval and consent to participate: Not applicable.
\item Consent for publication: Not applicable.
\item Data availability: Not applicable.
\item Materials availability: Not applicable.
\item Code availability: Not applicable.
\item Author contribution: Ruihao Gu wrote the introduction part of this manuscript in consultation with Wenchao Li. Wenchao Li wrote the rest of this manuscript in consultation with Ruihao Gu.
\end{itemize}

\bibliographystyle{alpha}
\bibliography{Stable_LSR_of_Nil-Endo}

\end{document}